\definecolor{darkgreen}{cmyk}{1,0,1,.2}
\definecolor{m}{rgb}{1,0.1,1}
\definecolor{green}{cmyk}{1,0,1,0}
\definecolor{test}{rgb}{1,0,0}
\definecolor{cmyk}{cmyk}{0,1,1,0}
\newtheorem{Equation}{}[section]
\newtheorem{theorem}[Equation]{Theorem}
\newtheorem{proposition}[Equation]{Proposition}
\newtheorem{lemma}[Equation]{Lemma}
\newtheorem{corollary}[Equation]{Corollary}
\newtheorem{definition}[Equation]{Definition}
\newtheorem{conjecture}[Equation]{Conjecture}
\newtheorem{remark}[Equation]{Remark}
\def\Id{\operatorname{Id}}
\def\Ch{\operatorname{Ch}}
\def\Thom{\operatorname{Thom}}
\def\I{\operatorname{I}}
\def\Id{\operatorname{Id}}
\def\oH{\operatorname{H}}
\def\HP{\operatorname{HP}}
\def\Supp{\operatorname{Supp}}
\def\Sign{\operatorname{Sign}}
\def\Tr{\operatorname{Tr}}
\def\TR{\operatorname{TR}}
\def\tr{\operatorname{tr}}
\def\sign{\operatorname{sign}}
\def\C{\mathbb C}
\def\N{\mathbb N}
\def\R{\mathbb R}
\def\LL{\mathbb L}
\def\Z{\mathbb Z}
\def\maA{{\mathcal A}}
\def\maB{{\mathcal B}}
\def\maE{{\mathcal E}}
\def\maF{{\mathcal F}}
\def\maM{{\mathcal M}}
\def\maG{{\mathcal G}}
\def\maG{{\mathcal G}}
\def\maH{{\mathcal H}}
\def\maK{{\mathcal K}}
\def\maL{{\mathcal L}}
\def\maN{{\mathcal N}}
\def\what{\widehat}
\def\tm{{\widetilde m}}
\def\tv{{\widetilde v}}
\def\tV{{\widetilde V}}
\def\hD{\what{D}} 
\def\hA{\what{A}} 
\def\hU{\what{U}} 
\def\hV{\what{V}} 
\def\hT{\what{T}} 
\def\hP{\what{P}}
\def\hN{\what{N}} 
\def\hmaF{\what{\maF}}
\def\tM{\widetilde{M}}
\def\tV{\widetilde{V}}
\def\hm{\what{m}} 
\def\hM{\what{M}}
\def\om{\mathfrak{m}}
\def\hat{\what}
\def\dd{\displaystyle}
\def\pa{\partial}
\def\ep{\epsilon}
\begin{document}



\title[Transverse  NCG of  foliations \today]
{Transverse  noncommutative geometry of  foliations \\
\today}

\author[M-T. Benameur]{Moulay-Tahar Benameur}
\address{UMR 5149 du CNRS, IMAG, Universit\'{e} Montpellier, France}
\email{moulay.benameur@univ-montp2.fr}
\author[J.  L.  Heitsch \today]{James L.  Heitsch}
\address{Mathematics, Statistics, and Computer Science, University of Illinois at Chicago} 
\email{heitsch@uic.edu}

\thanks{Mathematical subject classification (1991). 19L47, 19M05, 19K56.\\
Key words: $C^*$-algebras, K-theory, spectral triples, foliations.}

\maketitle

\bigskip
 \begin{center}
\dedicatory{\em{Dedicated with respect and admiration to  Alain Connes \\ on the occasion of his seventieth birthday}}
\end{center}

\begin{abstract}\
We define an $L^2$-signature for  proper actions on spaces of leaves of  transversely oriented foliations with bounded geometry. This is achieved  by using the Connes  fibration to reduce the problem to the case of Riemannian bifoliations where we show  that any transversely elliptic first order operator in an appropriate Beals-Greiner calculus, satisfying the usual axioms, gives rise to a semi-finite spectral triple over the crossed product algebra of the foliation by the action, and hence a periodic cyclic cohomology class through the Connes-Chern character. The Connes-Moscovici hypoelliptic signature operator yields an example of such a triple and  gives the differential definition of our ``$L^2$-signature''.  For Galois coverings of bounded geometry foliations,  we also define an Atiyah-Connes semi-finite spectral triple which generalizes to Riemannian bifoliations the Atiyah approach to the $L^2$-index theorem. The compatibility of the two spectral triples with respect to  Morita equivalence is proven, and by using an Atiyah-type theorem proven in \cite{BenameurHeitsch17}, we deduce some integrality results for Riemannian foliations with torsion-free monodromy groupoids. 
\end{abstract}

\tableofcontents

\noindent

\section{Introduction} 

The  local index theorem is  a central result in Connes' noncommutative geometry which applies to all non (necessarily) commutative manifolds, more briefly  called {\em{noncommutative manifolds}}. The main ingredient in this general index formula is  the noncommutative Guillemin-Wodzicki residue which allows the production of all the needed local invariants. The characteristic homology classes, such as the Atiyah-Singer ${\what A}$-class, now have to be replaced by their corresponding classes in (periodic) cyclic cohomology, via the Connes-Chern character. 
When the noncommutative manifold is commutative, the local index theorem is equivalent to the famous Atiyah-Singer formula by using standard relations between   the Guillemin-Wodzicki residues for Dirac operators appearing in the local formula  and the Atiyah-Singer characteristic classes of the given manifold. In particular, one recovers the ${\what A}$-class for the Dirac operator and the $\LL$-class for the signature operator. When the noncommutative manifold describes  the (possibly highly singular) space of leaves of a smooth foliation, the local index theorem gives  a totally new index formula, which was one of the main initial motivations for the local index theorem. See \cite{ConnesBook, CM95, CM98, CM01}.
In this latter situation and when the foliation is transversely oriented and almost Riemannian, Connes and Moscovici used a hypo-elliptic Riemannian operator and a noncommutative pseudodifferential calculus similar to  the Beals-Greiner calculus \cite{BealsGreiner}, to build the noncommutative manifold which suitably describes this transverse Riemannian structure \cite{CM95}.  The local index formula for this particular noncommutative manifold led to a deep Hopf index theorem \cite{CM98, CM01} and allowed Connes and Moscovici to relate their local residues with some more involved topological classes appearing in the work of Gelfand-Fuks.

\medskip

In this paper, we use the Connes formalism of noncommutative manifolds   to investigate a larger class of foliations. More precisely, we consider the case of a singular (transversly oriented) foliated manifold which is the quotient of  a smooth regular foliation  $({\what M}, \hmaF)$ under a proper action of a countable discrete group  $\Gamma$.  Then, based on a semi-finite version of the Connes-Moscovici construction, we obtain a periodic cyclic cohomology class representing the transverse $L^2$-signature for proper  actions on foliations. This cohomology class reduces to the Connes-Moscovici class when the group $\Gamma$ is trivial.  On the other hand, for general $\Gamma$ and when the foliation is zero dimensional, we obtain the signature class for proper actions, a cyclic cohomology class of the algebra $C_c^\infty (\hM)\rtimes \Gamma$, which is a differential answer to the signature problem for proper actions adressed in topological terms in   \cite{Mischenko}, and which generalizes the classical description in terms of an index class  \cite{AtiyahSingerIII}. For free actions on manifolds, our class corresponds to the $\LL$-class in the quotient manifold $M=\hM/\Gamma$.  For general $\Gamma$ and general foliations, we thus get a  semi-finite Riemannian index map 
$$
{\Sign}_{\hM, \hmaF; \Gamma} ^\perp\; : \; K_* (C^*({\what M}, \hmaF)\rtimes \Gamma) \longrightarrow \R.
$$
Here $C^*({\what M}, \hmaF)$ can be taken to be  Connes $C^*$-algebra of the foliation $({\what M}, \hmaF)$ or the maximal $C^*$-algebra of the monodromy groupoid $\what{G}$, and $C^*({\what M}, \hmaF)\rtimes \Gamma$ is the crossed product $C^*$-algebra for the $\Gamma$-action. In order to define such a morphism in general, our method  relies on Connes reduction method to the almost Riemannian case where  the transverse Connes-Moscovici signature operator is well defined and embodies  a {\em{semi-finite}} noncommutative manifold.  This is a notion which slightly generalizes Connes' noncommutative manifolds to encompass the semi-finite index theory, see   \cite{BenameurFack}.  It is worth pointing out that  the extra action of $\Gamma$ requires working on the ambient manifold $\what M$ with its foliation,  without restricting to a $\Gamma$-compatible  transversal. This introduces some extra technicalities which are due to the non-integrability of the normal bundle, while for trivial $\Gamma$, the restriction to a transversal is more natural, see   \cite{CM95}. 

\medskip

 For smooth  bifoliations of bounded geometry, and in order to prove the axioms for our semi-finite spectral triples,   we use an adapted Beals-Greiner calculus. Many proofs are easy  extensions of Kordyukov's work \cite{K97, K07} to the class of bifoliations on the one hand and to the class of bounded geometry foliations on the other, with the extra data of a proper action of a countable group which is dealt with by using von Neumann algebras and semi-finite index theory. Exactly as in the case of Riemannian foliations \cite{K97}, our semi-finite spectral triple  is regular and can as well be defined using the monodromy groupoid $\what{G}$ and the corresponding convolution algebra.  We thus get a well defined real-valued index morphism on the $K$-theory of our algebra $C_c^\infty (\what\maG)\rtimes \Gamma$ which admits a well defined ``extension'' to the $K$-theory of the appropriate completion $C^*$-algebra. The Connes-Moscovici hypo-elliptic Riemannian operator then yields an important example of such a semi-finite spectral triple and gives an interesting definition of the singular signature for spaces of leaves with proper actions. 
More precisely, we define, for any elliptic first order (in the new calculus) operator $\what{D}$ satisfying the usual conditions with respect to our bounded geometry data, a regular semi-finite spectral triple based on a suitable   semi-finite von Neumann algebra $\maN$ with its faithful normal semi-finite positive trace $\TR$ (see Section \ref{ProperActions}) and we get

\medskip

{\bf{Theorem }}\ref{CM-triple}
{\em{Let ${\what D}$ be a first order  $C^\infty$-bounded uniformly supported pseudodifferential  operator with associated operator $\what{D}_\rtimes$, which is uniformly transversely elliptic in the Connes-Moscovici pseudo' calculus, and has a holonomy invariant transverse principal symbol $\sigma ({\what D})$.
Then the triple $(\maB, (\maN, \TR),  {\what D}_\rtimes)$, where $\maB$ is the convolution crossed product algebra of smooth compactly supported functions, is a  semi-finite spectral triple which is finitely summable of dimension equal to the Beals-Grieiner codimension of the bifoliation. }}

\medskip

Working with  compactly supported  functions on the involved groupoids is clearly restrictive in the study of bounded geometry bifoliations.   However, since we are mainly interested in the cocompact case, we have chosen to postpone this discussion to a forthcoming paper where we use an appropriate larger algebra of uniformly supported functions. 
We also focus in the second half of this paper on the special case of free and proper actions, and we prove the compatibility of our spectral triple with the Atiyah-Connes semi-finite spectral triple. 
If we assume that the action of $\Gamma$ is free, so that the quotient foliation $(M, \maF)$ is regular with bounded geometry, and we are in the situation of a Galois covering  $(\what M, \hmaF) \to (M, \maF)$ so that $\Gamma \simeq \pi_1M/\pi_1\hM$, then we can alternatively follow Atiyah's approach to the Galois index theorem.   We then obtain another semi-finite index map  based on the Atiyah von Neumann algebra $\maM$ of $\Gamma$-invariant operators with its semi-finite trace $\tau$, which we naturally call the Atiyah-Connes spectral triple, and which induces an $L^2$ Riemannian index map
$$
{\Sign}_{M, \maF; \Gamma} ^\perp\; : \; K_* (C^*(M, \maF)) \longrightarrow \R,
$$
where now $C^*(M, \maF)$ is the $C^*$-algebra  downstairs. Then we prove the following

\medskip

{\bf Theorem } \ref{typeII}
{\em{Let $(\hM, \what\maF\subset \what\maF')\to (M, \maF\subset \maF')$ be a Galois covering of bifoliations. Assume that ${\what D}$ is a transversely elliptic $\Gamma$-invariant pseudodifferential operator in the Connes-Moscovici sense for the foliation $\what\maF'$, which is essentially self-adjoint  with  the holonomy invariant {\em{transverse principal symbol}}. 
Then  the triple $(\maA, (\maM, \tau), \what{D})$, where $\maA$ is the convolution algebra of compactly supported smooth functions on the groupoid downstairs,   is a semi-finite spectral triple which is finitely summable of dimension equal to the Beals-Greiner codimension. }}

\medskip

If we work with the {\underline{monodromy groupoids}}, then  the crossed product $C^*$-algebra $C^*({\what M}, \hmaF)\rtimes \Gamma$ is Morita equivalent  to $C^*(M, \maF)$ and  we prove in this case the expected compatibility of  the  two semi-finite index maps.   That is, denoting by $\Phi_*: K_* (C^*(M, \maF))\stackrel{\simeq}{\to} K_* (C^*({\what M}, \hmaF)\rtimes \Gamma)$  the Morita isomorphism which is induced by a $C^*$-algebra Mishchenko homomorphism $\Phi$, we prove \\

{\bf{Theorem }} \ref{Versus}
{\em{Assume that ${\what D}$ is a transversely elliptic $\Gamma$-invariant pseudodifferential operator from the Connes-Moscovici calculus ${\Psi'}^1 (\hM, \what\maF'; \what\maE)^\Gamma$ for the foliation $\what\maF'$, which is essentially self-adjoint and has holonomy invariant {\em{transverse principal symbol}}. Then the Connes-Chern characters of the semi-finite spectral triple  $(\maA, \maM, \what{D})$   coincides with the pull-back under the Morita map $\Phi$ of the Connes-Chern character of the semi-finite spectral triple $(\maB, \maN, \what{D}_\rtimes)$. }}

\medskip

Therefore, we get in particular the following compatibility of $L^2$-signatures of the space of leaves
$$
{\Sign}_{\hM, \hmaF; \Gamma} ^\perp \circ\Phi_*\; = \; {\Sign}_{M, \maF; \Gamma} ^\perp.
$$
Since the Atiyah-Connes spectral triple is not defined for non-free actions,  the semi-finite Riemannian map ${\Sign}_{\hM, \hmaF; \Gamma} ^\perp$ is the precise generalization to proper non-free actions of Atiyah's $L^2$ $\Gamma$-signature for spaces of leaves.

\medskip

We state in this situation the following conjecture which generalizes a well-known  conjecture for groups which is due to Baum and Connes.

\medskip

{\bf{Conjecture }} \ref{Rational}\
{\em{Let $G$ be the monodromy groupoid of the foliation $(M, \maF)$ with maximal $C^*$-algebra $C^*G$. Then the transverse signature morphism  $
\Sign_{\hM, \hmaF; \Gamma}^\perp \; : \; K_0(C^*G) \longrightarrow \R$ is
always   rational.}}

\medskip

Notice that  the Connes-Moscovici noncommutative manifold associated in this free and proper case to the quotient foliation $(M, \maF)$ also yields a (type I)  Riemannian map which is now integer valued since it describes a type I Fredholm pairing and does not see the  Galois covering $\hM\to M$
$$
{\Sign}_{M, \maF} ^\perp\; : \; K_* (C^*(G)) \longrightarrow \Z.
$$
When the foliation $\maF$ is zero dimensional, it is well known, thanks to the Atiyah $L^2$-index theorem,  that the two maps $\Sign_{M, \maF; \Gamma}^\perp$ and ${\Sign}_{M, \maF} ^\perp$  coincide, see \cite{AtiyahCovering}.  So, in this case the map $\Sign_{M, \maF; \Gamma}^\perp$ is even integral. 
On the other hand if the foliation $\maF$ is top dimensional then   $C^*(G)$ is Morita equivalent to the maximal $C^*$-algebra of the fundamental group $\pi_1 M$, and the above map ${\Sign}_{M, \maF; \Gamma} ^\perp$ reduces to the morphism induced by the regular trace on $C^*(\pi_1M)$ while ${\Sign}_{M, \maF} ^\perp$ reduces to the morphism induced by the averaging trace on $C^* (\pi_1M)$, that is the trace given by 
the trivial $1$-dimensional representation of $\pi_1M$. Therefore, this is a well known conjecture due to Baum and Connes. Notice again that if $\Gamma$ is torsion free then the coincidence of these two maps is ensured on the range of the maximal Baum-Connes map for $\pi_1M$ and hence $\Sign_{M, \maF; \Gamma}^\perp$ is integral in this case as far as the Baum-Connes conjecture is known to be true. 

\medskip

The expected Atiyah $L^2$ index theorem for general  Galois coverings of spaces of leaves  is probably true.  We conjecture that if  the maximal Baum-Connes map for the monodromy groupoid is surjective and the fundamental groups of the leaves are torsion free, then the  map 
$
\Sign_{M, \maF; \Gamma}^\perp \; : \; K_0(C^*G) \longrightarrow \R,
$
is integral.  In \cite{BenameurHeitsch17}, we concentrated on the simpler case of Riemannian foliations and we used the results of \cite{BenameurHeitschSymbols} to deduce the Atiyah theorem in this case. In particular, we obtained under this Baum-Connes assumption that the Atiyah-Connes Riemannian map defined above using our semi-finite spectral triple, is integer valued in the torsion free case. 

\bigskip

{\em Acknowledgements.}  The authors would like to thank Alan Carey, Adam Rennie and Thierry Fack for several helpful discussions.

\section{Notations and preliminaries}

\subsection{Connes algebras of foliations}

We now review the relation between different algebras associated by Alain Connes with smooth foliations. Since the ambient manifold will not always be compact, the constructions need to be adapted, especially the parametrix construction for  bounded geometry foliations. We shall mainly be interested in a foliation $(\hM, \hmaF)$ which is endowed with a proper  action of a countable group $\Gamma$ and a simplifying assumption is that  the quotient space $M=\hM/\hmaF$ is a compact Hausdorff space with its possibly singular foliation $\maF$. So,  we see $(\hM, \hmaF, \Gamma)$ as a smooth proper realization of $(M, \maF)$ so that the resulting invariants will not depend on this choice of realization.    We set  $\om =\dim (\hM)$,  $p=\dim (\hmaF)$, and $q=\om -p$ for the codimension of the foliation.  We denote by $\what{\maG}$ the holonomy groupoid of $(\hM, \hmaF)$ and by $\what{G}$ its monodromy groupoid. The space $\what{\maG}$ (resp. $\what{G}$) is the space of holonomy (resp. homotopy) equivalence classes of leafwise paths with fixed end-points.  Recall that $\what{G}$ and $\what{\maG}$ are smooth manifolds which may fail to be Hausdorff. In order to  keep this paper in a reasonable and readable form, we assume that all manifolds are Hausdorff, so the manifold $\hM$ is identified with the closed submanifold of units of $\what{G}\simeq \what{G}^{(1)}$ and $\what{\maG}\simeq \what{\maG}^{(1)}$ respectively. The source and range maps on both groupoids are denoted $s$ and $r$ respectively. For $A, B\subset \hM$ we use the classical notations $\what{G}_A=s^{-1} (A)$, $\what{G}^B=r^{-1} (B)$ and $\what{G}_A^B=\what{G}_A\cap \what{G}^B$, and similarly for the holonomy groupoid $\what{\maG}$. 

The quotient map $p:\what{G}\to \what{\maG}$ is then a smooth ''covering''  map. More precisely, for any $\hm\in \hM$ and denoting by $\what{L}_{\hm}$ the leaf through $\hm$, we have Galois coverings
$$
\what{G}_{\hm} \longrightarrow \what{\maG}_{\hm} \stackrel{r}{\longrightarrow} \what{L}_{\hm}.
$$
The first map $\what{G}_{\hm}\rightarrow \what{\maG}_{\hm}$ is the universal cover of the holonomy-leaf $\what{\maG}_{\hm}$, while the composite map $ \what{G}_m\rightarrow \what{L}_{\hm}$ is the universal cover of the leaf $\what{L}_{\hm}$. If $\Gamma_{\hm}$ is the subgroup of $\what{G}_{\hm}^{\hm}$ of those classes with trivial holonomy, then $\Gamma_{\hm}$ is the normal subgroup of $\what{G}_{\hm}^{\hm} =\pi_1(\what{L}_{\hm}, \hm)$ corresponding to the first Galois cover $\what{G}_{\hm}\to \what{\maG}_{\hm}$, and $\what{\maG}_m$ can thus be identified with $\what{G}_{\hm}/\Gamma_{\hm}$. Notice that $\pi_1 (\what{\maG}_{\hm}, \hm)= \Gamma_{\hm}$. Finally, the holonomy cover $\what{\maG}_{\hm}\rightarrow \what{L}_{\hm}$ is the Galois cover corresponding to the holonomy group $\what{\maG}_{\hm}^{\hm}$. 

The spaces $C_c^{\infty} (\what{G})$ and $C_c^{\infty} (\what{\maG})$ of smooth compactly supported functions on $\what{G}$ and $\what{\maG}$ respectively, are endowed with their usual structures of involutive convolution algebras. More precisely, we choose a fixed Lebesgue-class measure $\alpha=(\alpha_{\hm})_{\hm\in \hM}$ on the leaf-manifold ($\hM$ viewed as  a discrete union of leaves, so with discrete transverse topology).   By pulling back, we get Haar systems $(\tilde\eta_{\hm})_{\hm\in \hM}$ on $(\what{G}_{\hm})_{\hm\in \hM}$ and $(\eta_{\hm})_{\hm\in \hM}$ on $(\what{\maG}_{\hm})_{\hm\in \hM}$ which are compatible. So  for instance the measure $\tilde\eta_{\hm}$ is $\what{G}_{\hm}^{\hm}$ invariant and induces the Lebesgue-class measure on $\what{L}_{\hm}$. In addition, $\tilde\eta_{\hm}$ is $\Gamma_{\hm}$-invariant and induces the measure $\eta_{\hm}$ on $\what{\maG}_{\hm}$. 

Given $k, k'\in C_c^{\infty} (\what{\maG})$, the rules are
$$
(kk') (\alpha) := \int_{\what{\maG}_{s(\alpha)}} k(\alpha{\alpha'}^{-1}) k'(\alpha') d\eta_{s(\alpha)} (\alpha'), \;\; k^* (\alpha):= {\overline{k(\alpha^{-1})}},\quad \alpha\in \what{\maG}.
$$
The same rules are defined on $C_c^{\infty} (\what{G})$ by replacing $\eta$ by $\tilde\eta$.  We may complete $C_c^\infty (\what{\maG})$ with respect to the standard $L^1$ norm for the groupoid with Haar system $(\what{\maG}, \eta)$ and obtain an involutive Banach convolution algebra $L^1(\what{\maG}, \eta)$, see for instance \cite{Renault}. A similar completion defines the involutive Banach convolution algebra $L^1(\what{G}, \tilde\eta)$.  We denote by $C^*(\what{G})$ and $C^*(\what{\maG})$ the {\underline{maximal}} completion $C^*$-algebras defined as usual using all $L^1$-continuous $*$-representations, see again \cite{Renault}.
There is a well defined continuous $*$-homomorphism of involutive Banach algebras
$$
\varphi: L^1 (\what{G}, \tilde\eta) \longrightarrow L^1 (\what{\maG}, \eta) \text{ given by } \varphi (k) (\alpha) := \sum_{p(\tilde\alpha) = \alpha} k(\tilde\alpha)
$$
which maps $C_c (\what{G})$ (resp. $C_c^{\infty} (\what{G})$) into $C_c (\what{\maG})$ (resp. $C_c^{\infty} (\what{\maG})$). It is clear that for any $k\in C_c(\what{G})$ the function $\varphi (k)$ is well defined and belongs to $C_c(\what{\maG})$, and a direct inspection shows that
$$
\vert\vert \varphi (k)\vert\vert_{L^1(\what{\maG},\eta)}\;  \leq\;  \vert\vert k\vert\vert_{L^1(\what{G}, \tilde\eta)}.
$$
 It is also clear that $\varphi (k^*)=\varphi (k)^*$ for any $k\in L^1(\what{G})$. Moreover, due to our choice of compatible Haar systems, we also have
$$
\varphi (kk') = \varphi (k) \varphi (k'), \quad k, k'\in L^1(\what{G}, \tilde\eta).
$$
Hence, we end up with a well defined $C^*$-algebra homomorphism that will be used in the sequel
$$
\varphi : C^* (\what{G}) \longrightarrow C^* (\what{\maG}).
$$

Using the Riemannian metric, we can define a {\underline{continuous and proper}} lengh function $\vert\cdot\vert: \what\maG\to [0, +\infty[$ defined by taking the infimum over  a given class in  $\what\maG$. This was proven in \cite{TuHyperbolic}[page 134] in the case of compact ${\what M}$ and Hausdorff $\what\maG$, but the proof extends immediately if we work in the category of bounded-geometry manifolds. Notice that 
$$
\vert \what\gamma\vert = 0 \Leftrightarrow \what\gamma\in \what\maG^{(0)}\simeq {\what M}, \quad
\vert\what\gamma\what\gamma '\vert \leq \vert\what\gamma\vert\times \vert \what\gamma '\vert\quad \text{ and } \quad \vert \what\gamma^{-1}\vert = \vert \what\gamma\vert. 
$$
A closed ball neighborhood of a subset $\hA$ in $\what\maG$  is given by  
$$
B(\hA, R) :=\{\what\gamma\in \what\maG\text{ such that } \exists \gamma_1\in \hA\text{ with }\vert \what\gamma\gamma_1^{-1}\vert \leq R\}.
$$
The properness of the length function means precisely that for any $R\geq 0$, the restriction 
$$
(s, r) : B({\what M}, R) \longrightarrow {\what M}\times {\what M} \text{ is a proper map}.
$$
It implies for instance that for any compact subset $\hA$ in $\what\maG$, the space $B(\hA, R)$ is also compact. In particular, given a compact subset $\hA$ in ${\what M}$ the space $\what\maG^{\hA} \cap B({\what M}, R) $ will always be a compact subspace of $\what\maG$. A smooth function or a section $k$ on $\what\maG$ will be said to be uniformly supported  if the support of $k$ is contained in a ball neighborhood $B({\what M}, R)$ of ${\what M}$ for some fixed $R$.  It is said to be smoothly bounded  if all derivatives of $k$ are (uniformly) bounded on $\what\maG$. The least $R$ such that the support of $k$ is contained in the $R$-ball around ${\what M}$ is sometimes called the propagation of $k$. It is then easy to check that the space $C^\infty_u(\what\maG)$ of smooth uniformly supported and smoothly bounded functions on $\what\maG$ is an involutive convolution algebra with the propagation being subadditive. Moreover, the subspace $C_c^\infty (\what\maG)$ of smooth compactly supported functions is a two-sided involutive ideal in $C^\infty_u (\what\maG)$. Notice also that when the manifold $\what M$ is compact, the two algebras $C^\infty_u(\what\maG)$ and $C_c^\infty (\what\maG)$ coincide. 

\subsection{Semi-finite spectral triples}

We briefly  review the notion of regular finite dimensional Connes-von Neumann spectral triples as introduced in \cite{BenameurFack}. In this paper, we shall  also call them semi-finite spectral triples. All von Neumann algebras considered here will be weakly closed $C^*$-subalgebras of the algebra $B(\maH)$ of bounded operators on a given (separable) Hilbert space $\maH$, and will never be type III von Neumann algebras. So they will always  have positive normal semi-finite faithful traces.  Clearly, finite von Neumann algebras are semi-finite. 

\begin{remark} (Dixmier)
If $\tau$ is a normal trace, then $\tau$ is semi-finite iff 
$$
\tau(T)=\sup \{\tau (S), 0\leq S\leq T, \tau(S) < +\infty\}, \quad \forall T \geq 0.
$$
\end{remark}

Suppose $\maM$ is a semi-finite von Neumann algebra  with a faithful semi-finite normal positive trace $\tau$.  For any $p\geq 1$, the non-commutative $L^p$-space $L^p(\maM, \tau)$, as well as the Dixmier ideal $L^{p, \infty} (\maM, \tau)$, is well defined \cite{BenameurFack}. Recall that a $\tau$-measurable operator $T$ belongs to $L^p(\maM, \tau)$ if $\tau \left((T^*T)^{p/2}\right) < +\infty$. The closure of $L^p(\maM, \tau) \cap \maM$ in $\maM$ is the ideal of $\tau$-compact operators and is denoted $\maK (\maM, \tau)$. 

Following \cite{BenameurFack}, see also \cite{Benameur2003}, we introduce the following generalization of the classical notion of Connes' spectral geometry.

\begin{definition}\cite{BenameurFack}\label{def.spectral}\
A $p$-summable ($p\geq 1$) semi-finite spectral triple is a triple $({\mathcal A}, \maM, D)$ where
\begin{enumerate}
\item $\maM\subset B(\maH)$ is a von Neumann algebra faithfully represented in the separable Hilbert space $\maH$ 
and endowed with a (positive) normal semi-finite faithful trace $\tau$;
\item ${\mathcal A}$ is a $*$-subalgebra of the von Neumann algebra $\maM$;
\item $D$ is a $\tau$-measurable (so $\maM$-affiliated) self-adjoint operator such that
\begin{itemize}
\item  $\forall a\in \maA$, the operator $a(D+i)^{-1}$ belongs to the Dixmier ideal 
$L^{p,\infty}(\maM,\tau)$;
\item  Every element $a\in {\mathcal A}$ preserves the domain of $D$ and the commutator $[D,a]$ 
extends to an element of $\maM$.
\end{itemize}
\end{enumerate}
When $\maM$ is $\Z_2$-graded with ${\mathcal A}$ even and $D$ odd, we say that $({\mathcal A}, \maM, D)$ is even and denote by $\gamma\in \maM$ the
grading involution. Otherwise, $({\mathcal A}, \maM, D)$ is called an odd triple.
\end{definition}

When $\maM=B(\maH)$ with its usual trace of operators, we recover the classical notion of spectral triple as introduced by A. Connes, see for instance \cite{ConnesNCG}. In \cite{BenameurFack}, many geometric examples of semi-finite {\em{regular}} spectral triples are described.  The spectral triple $({\mathcal A}, \maM, D)$ is regular  if the following extra-condition is satisfied 
\begin{itemize}
\item For any $a\in {\mathcal A}$, the operators $a$ and $[D,a]$ belong to $\bigcap_{n\in \N} Dom(\delta^n)$, where
$\delta$ is the unbounded derivation of $\maM$ given by $\delta(b)=[|D|,b]$.
\end{itemize}

Since $
L^{p, \infty} (\maM, \tau) \subset L^q(\maM, \tau)\cap \maM \subset \maK (\maM, \tau)$, for any $q\geq p$,
the operator $a(D+i)^{-1}$ is automatically $\tau$-compact for any $a\in \maA$. We did not insist in this defintion on the minimality of $p$, but one prefers to work with the least $p$ such that the axioms are satisfied. It is sometimes desirable to work with more general spectral triples which are not finite dimensional.  In that case,  the condition on $a(D+i)^{-1}$ can be weakened into the assumption that this operator satisfies some heat condition or is just $\tau$-compact when no explicit formula is needed. 

\section{Overview of the NCG of bounded-geometry bifoliations}

In this section we review the Connes-Moscovici spectral triple associated with the transverse structure of a bounded-geometry Riemannian bifoliation, also sometimes  called a bounded-geometry almost-Riemannian foliation according to the terminology used in \cite{ConnesTransverse}. We have chosen to work on the ambient manifold without the expansive choice of a complete transversal so that our constructions are natural and immediately  extend to the setting of proper actions of discrete groups, as will be explained in the next section.  In the case of Riemannian foliations of compact manifolds, the construction is due to Kordyukov, \cite{K97}, and we shall show that  his constructions can be extended with no serious obstacles. Note that the Connes-Moscovici pseudodifferential calculus ${\Psi'}^* ({\what M}, \what\maF'; \what\maE)$ \cite{CM95}, associated to a given  foliation $\what\maF'$ and with the coefficient Hermitian bundle $\what\maE$, can be recovered from our calculus defined below for a bifoliation, when this latter is just  $0\subset \what\maF'$.

\subsection{The $\Psi$DO' calculi on bounded geometry bifoliations}

Let   $\what\maF$ be a smooth foliation on a smooth connected Riemannian manifold $\what M$. The dimension of the foliation $\what\maF$ is $p$ and its codimension is $q$. 
This paragraph is devoted to the bifiltered  pseudodifferential calculus, as described for instance in \cite{K97}, taking into account an extra larger foliation of bounded geometry on ${\what M}$. We are mainly interested in almost-Riemannian foliations which correspond to holonomy invariant (orthogonal) triangular structures as described in \cite{ConnesTransverse} or in \cite{CM95}.   We shall sometimes call them Riemannian bifoliations. Notice though that  the calculus is valid for all bifoliated manifolds and follows usual constructions that we review below.  

We assume that the foliated manifold $({\what M}, \what\maF)$ has $C^\infty$ bounded geometry.   This means that the manifold ${\what M}$ has $C^\infty$ bounded geometry and that moreover, all the leaves satisfy the same bounded geometry assumption. Recall that this means that the injectivity radius associated with the given Riemannian metric is positive so that we have well defined barycentric coordinates which are then assumed to have $C^\infty$-bounded changes, and that the curvature tensor $R$ is $C^\infty$-bounded. The positive injectivity radius insures completeness, i.e. all geodesics can be extended indefinitely. Bounded geometry manifolds have open covers of finite multiplicity by Riemannian balls of a fixed radius, which are domains of injectivity of the exponential map. Also, the complete Riemannian distance 
$d({\what x}, {\what y})$  is then well defined for any $({\what x}, {\what y})\in {\what M}^2$ with the usual properties. Finally, there exist smooth $C^\infty$-bounded partitions of unity which are subordinate to such covers. All these properties are expanded in the seminal monograph \cite{Shubin92}. 

An important class of examples is furnished by Galois coverings of smooth foliated compact  manifolds, where one chooses an invariant metric and the induced metric on the leaves. The larger class of examples given by proper cocompact actions will be treated in Section \ref{ProperActions}, along  with Connes' spectral geometry. 

All vector bundles over ${\what M}$ will also have $C^\infty$ bounded geometry, and we shall denote by $C^\infty_b ({\what M}, \what\maE)$ the space of $C^\infty$-bounded sections of such a given bundle $\what\maE$ over ${\what M}$. We can then choose a $C^\infty$-bounded Hermitian structure and consider as well the space $L^2 ({\what M}, \what\maE)$ of $L^2$-sections of $\what\maE$. In fact, the Sobolev spaces associated with $\what\maE$ are also well defined, see again \cite{Shubin92}.
 In the sequel, for any such vector bundles $\what\maE$ and $\what\maE'$ over ${\what M}$, a differential operator 
$$
\hD: C^\infty ({\what M}, \what\maE) \longrightarrow C^\infty ({\what M}, \what\maE '),
$$
will always be assumed to have $C^\infty$-bounded coefficients. This is the so-called class of $C^\infty$-bounded differential operators and it means that in all barycentric coordinates (so with $C^\infty$-bounded change of coordinates), the matrix-coefficents of $\hD$ are $C^\infty$-bounded with the bounds independent of the local metric discs. Notice that all geometric operators do have $C^\infty$-bounded coefficients and we are mainly interested in these operators. \\

We assume from now on  that we have two smooth foliations $\what\maF$ and $\what\maF'$ such that $T\what\maF\subset T\what\maF'$. We  denote by ${\what V}$ the quotient bundle $T\what\maF' / T\what\maF$, which can conveniently be identified with a subbundle of the tangent bundle $T{\what M}$ which is strictly transverse to the foliation $\hmaF$ (the orthogonal subbundle for the fixed metric), and such that the direct sum bundle $T\hmaF \oplus {\what V}$ is the integrable subbundle $T\what\maF'$ of $T{\what M}$. The holonomy  groupoid of the foliation $\what\maF$ is denoted $\what\maG$. 
\begin{remark}
Although we shall only be concerned with   the case where $\what{V}$ is also integrable, we do not make this assumption. 
\end{remark}

 Without further choices or assumptions, the bundle ${\what V}$ is a subbundle of the transverse bundle ${\what \nu}=\nu_{\what\maF} := T{\what M}/T{\what\maF}$ and the quotient bundle ${\what N}:=T{\what M}/T\what\maF'$ is then also a ${\what\maG}$-equivariant bundle. To sum up, we have two foliations ${\what\maF}'\supset {\what\maF}$  on ${\what M}$ and the transverse bundle of $\what\maF$ in $\what\maF'$ is  preserved by the holonomy action associated with $\what\maF$. In the case where the foliation $\what\maF'$ is maximal with leaves the connected components of ${\what M}$, we recover the usual situation of a single foliation. Another situation which gives back the case of a single foliation corresponds to $\what\maF' = \what\maF$, however, our choice of the Beals-Greiner pseudodifferential bifiltration given below imposes  that the single foliation case rather corresponds to the first situation, i.e. $T\what\maF' = T{\what M}$. The interesting new situations occur though when $0 < v=\dim (\hV) < q$. 

We can modify the bifiltered pseudodifferential calculus of \cite{K97} so it fits, when $p=0$, with the classical ``Heisenberg-type'' calculus introduced in \cite{BealsGreiner} and used in the seminal paper \cite{CM95}. Our main objective is to allow transversely hypo-elliptic operators which are not transversely elliptic in the classical calculus associated with the foliation $\what\maF$. 

 In a local chart, we thus have a decomposition of $\R^q$ into $\R^v\times \R^n$ where $v=\dim ({\what V})$ and $n=q-v=\dim ({\what N})$, the dimension of the ambient manifold being $\dim (\hM)= \om = p+q=p+v+n$.  A similar decomposition holds on the covectors in $\R_q=(\R^q)^*$, and we decompose any transverse covector $\eta$ into $(\eta_v, \eta_n)$ accordingly. We introduce the new radial action of $\R^*_+$ on $ \R_p\times \R_v\times \R_n$ by setting 
$$
\lambda\cdot \xi = \lambda\cdot (\zeta, \eta)  = \lambda\cdot (\zeta, \eta_v, \eta_n) := (\lambda\zeta, \lambda\eta_v, \lambda^2\eta_n).
$$
Here $\zeta$ represents the covector along the leaves of $\what\maF$ and $\eta=(\eta_v, \eta_n)$ is the transverse covector to $\what\maF$, while $(\zeta, \eta_v)$ corresponds to the covector along the leaves of the larger foliation $\what\maF'$. We shall also denote $\lambda\cdot \eta$ the restricted action
$$
\lambda\cdot \eta =  (\lambda\eta_v, \lambda^2\eta_n).
$$
The ``homogeneous norm" for this action is defined as $\vert\eta\vert ' = \left(\vert \eta_v\vert^4 + \vert\eta_n\vert ^2\right)^{1/4}$. 

We shall also sometimes use the notation 
$$
{\vert\xi \vert '} ^2 =\vert\zeta\vert^2+  {\vert \eta\vert '}^2.
$$
Moreover, given a multi-index $\alpha$ we set 
$$
<\alpha>:=\sum_{i=1}^{p+v} \alpha_i + 2\, \sum_{i=1}^n \alpha_{p+v+i}.
$$
The bifiltered symbols can now be defined.

\begin{definition}
An element
$k(z,x,y,\sigma,\zeta,\eta) \in C^{\infty}(\I^p \times \I^p \times \I^q \times \R^p \times \R^p \times \R^q, M_{a }(\C))$ belongs to the class ${S'}^{m,\ell} (\I^p \times \R^n, \R^p, M_{a }(\C))$  if for any multi-indices $\alpha$, $\beta$,  and $\gamma$, there is a constant $C_{\alpha,\beta, \gamma} > 0$ so that 
\begin{equation}\label{LocalSymbol}
\| \pa^{\alpha}_{\zeta,\eta} \pa^{\beta}_{\sigma} \pa^{\gamma}_{z,x,y} k(z,x,y,\sigma,\zeta,\eta)\| \,\, \leq \,\, C_{\alpha,\beta, \gamma}(1 + |\zeta|+|\eta|')^{m- <\alpha>}(1+|\sigma|)^{\ell - |\beta|}.
\end{equation}
\end{definition}
Examples are given by homogeneous symbols. The smooth function $k(z,x,y,\sigma,\zeta,\eta)$ is (positively) homogeneous of bidegree $(m, \ell)$ if for $\vert \sigma\vert \geq 1$, $\vert\zeta\vert \neq 0$ and $\vert\eta\vert' >> 0$ for instance, we have
$$
k(z,x,y, \lambda_1 \sigma, \lambda_2\cdot (\zeta, \eta)) = \lambda_1^\ell \lambda_2^m \; k(z,x,y,\sigma,\zeta,\eta), \quad \text{ for any }\lambda_1 >0 \text{ and } \lambda_2 >0.
$$
As for the usual pseudodifferential calculus, we shall only consider in this paper $1$-step classical symbols, i.e. those which have an assymptotic expansion
$$
k \sim \sum_{j\geq 0, j'\geq 0} k_{m_0-j, \ell_0-j'},
$$
where each $k_{m, \ell}$ is homogeneous of bidegree $(m, \ell)$ and $\sim$ means that for any $m < m_0 , \ell< \ell_0$ the difference $k- \sum_{m< m'\leq m_0, \ell < \ell'\leq \ell_0} k_{m, \ell}$ satisfies the estimate \eqref{LocalSymbol} for the order $(m, \ell)$. From now on and for simplicity, ${S'}^{m,\ell} (\I^p \times \R^n, \R^p, M_{a }(\C))$ will denote the classical $1$-step polyhomogeneous symbols in the sense described above.

Any symbol $k\in {S'}^{m,\ell} (\I^p \times \R^n, \R^p, M_{a }(\C))$ defines an operator 
$A:C^{\infty}_c(\I^{\om}, \C^{a
}) \to C^{\infty}(\I^{\om}, \C^{a })$ by the quantization formula
\begin{equation}\label{theta}
 Au(x,y) \,\, = \,\, (2\pi)^{-2p-q}\; \int e^{i[(x-x'-z)\zeta +(y-y')\eta + z\sigma]} \; k(z,x,y,\sigma,\zeta,\eta)\, u(x',y') \; dz dx' dy' d\zeta  d\eta d\sigma.
\end{equation}
The distributional Schwartz kernel of $A$ is thus the oscillating integral
$$
K_A (x,y;x',y') \,\, = \,\, (2\pi)^{-2p-q}\;  \int e^{i[(x-x'-z)\zeta +(y-y')\eta + z\sigma]}\;  k(z,x,y,\sigma,\zeta,\eta)\;  dz d\zeta  d\eta d\sigma.
$$
The space of such operators with proper support  is denoted ${\Psi'}^{m,\ell}(\I^{\om},\I^p, \C^{a })$.
Recall that $K_A$ is properly supported if the restrictions of the two projections $\I^{\om} \times \I^{\om} \to \I^{\om}$ to the support of $K_A$ are proper. 

If we denote by $\Delta_{\R^p}$, $\Delta_{\R^{p+v}}$ and $\Delta_{\R^n}$ the Laplacian operators in $\R^p$, $\R^{p+v}$ and $\R^n$, then the local operator 
$$
\left(I + \Delta_{\R^{p+v}}^2 + \Delta_{\R^n}\right)^{m/4} \; \left(I + \Delta_{\R^p}\right)^{\ell/2},
$$
can be defined so as to belong to the class ${\Psi'}^{m,\ell}(\I^{\om},\I^p, \C^{a })$.
\\

The space ${\Psi'}^{m, -\infty}(\I^{\om},\I^p, \C^{a })$ is  defined as the intersection space
$$
{\Psi'}^{m, -\infty}(\I^{\om},\I^p, \C^{a }) \; := \; \bigcap_{\ell \in \Z} {\Psi'}^{m,\ell}(\I^{\om},\I^p, \C^{a }).
$$
We also have a convenient direct description of the operators from ${\Psi'}^{m, -\infty}(\I^{\om},\I^p, \C^{a }) $ as follows. We  consider the space 
${S'}^{m,-\infty}(\I^p \times \I^p \times \I^q  \times \R^q, M_{a }(\C))$  of $M_{a }(\C)$ valued functions $a(x,x',y, \eta)$,  so that for any multi-indices $\alpha$ and $\beta$,  there is a constant $C_{\alpha,\beta} > 0$ so that 
$$
\| \pa^{\alpha}_{\eta}\pa^{\beta}_{x,x',y} a(x,x',y,\eta)\| \,\, \leq \,\, C_{\alpha,\beta}(1 + |\eta|')^{m- <\alpha>}.
$$
Here $\alpha= (\alpha_1, \cdots, \alpha_q)$ and $<\alpha> = \sum_{i=1}^v \alpha_i + 2\sum_{i=1}^n \alpha_{v+i}$. The associated operator is  given by the quantization formula 
$$
Au(x,y) \,\, = \,\, (2\pi)^{-q} \int   e^{i(y-y')\eta}a(x,x',y,\eta)u(x',y')  dx'dy'd\eta, \text{ for  } u \in C^{\infty}_c(\I^{\om}, \C^{a }),  x, x' \in \I^p \text{ and }y,y' \in \I^q.
$$
Again, we may (and will only) consider the $1$-step classical symbols which are defined as expansions of (positively) homogeneous symbols, i.e. symbols $a$ as before with the condition
$$
a(x,x',y, \lambda\cdot \eta) = \lambda^m a(x,x',y,\eta), \quad \text{ for }\vert \eta\vert' >> 0 \text{ and }\lambda >0.
$$
We may as well consider the spaces of pseudodifferential operators as above but with the extra condition that  their Shwartz kernels are compactly supported in $\I^{\om}\times \I^{\om}$.  Then we get the classes 
$$
{\Psi'}_c^{m,\ell}(\I^{\om},\I^p, \C^{a })\quad\text{ and }\quad {\Psi'}_c^{m, -\infty}(\I^{\om},\I^p, \C^{a }).
$$
If $a \in {S'}^{m,-\infty}(\I^p \times \I^p \times \I^q  \times  \R^q, M_{a }(\C))$  is a classical symbol as above (with properly supported Schwartz kernel for $A$) then we write $A \in {\Psi'}^{m,-\infty}(\I^{\om},\I^p, \C^{a })$. Notice that if we identify  $u\in C^{\infty}_c(\I^{\om}, \C^{a })$ with an element ${\bar u}$ of $C_c^\infty (\I^p, C_c^\infty (\I^q, \C^a))$ then the operator $A$ becomes the usual leafwise smoothing operator with values in the Connes-Moscovici pseudodifferential calculus, i.e.
$$
A{\bar u} (x) := \int_{\I^p} A (x, x') {\bar u} (x') dx',
$$
where $A(x, x')$ is a  operator from the pseudodifferential calculus defined in \cite{CM95} and denoted there $\Psi$DO' of order $m$.
Given such $a\in {S'}^{m,-\infty}(\I^p \times \I^p \times \I^q  \times \R^q, M_{a }(\C))$, the principal symbol is defined as the top-degree homogeneous part of $a$, given as the limit as $\lambda\to +\infty$ of $\frac{a (x,x',y, \lambda\cdot \eta) }{\lambda^m}$.  We shall see that composition of principal symbols is then  given by convolution over $\I^p$ of composition of symbols in the sense of \cite{CM95}. 

\begin{proposition}\label{m,-infty}
The above two definitions of the class ${\Psi'}^{m,-\infty}(\I^{\om}, \I^p, \C^a)$  coincide.  A similar statement holds for compactly supported operators.
\end{proposition}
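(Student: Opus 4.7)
My plan is to establish the two inclusions separately. I treat the properly supported case; the compactly supported variant is entirely analogous, with ``proper support'' replaced by ``compact support in $\I^{\om}\times\I^{\om}$'' throughout.

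For the inclusion \emph{second description $\subseteq$ first description}, I would start with an amplitude $a(x,x',y,\eta)\in {S'}^{m,-\infty}(\I^p\times\I^p\times\I^q\times\R^q, M_a(\C))$ whose associated operator is properly supported. Up to multiplying $a$ by a smooth cutoff $\phi(x-x')$ equal to $1$ on a neighborhood of the leafwise diagonal containing the kernel support, one may assume $a$ has compact support in the variable $w:=x-x'$. Its partial Fourier transform
$$
b(x,y,\zeta,\eta) \;:=\; \int e^{-iw\zeta}\, a(x,x-w,y,\eta)\, dw
$$
is then well defined pointwise and, by repeated integration by parts in $w$, is rapidly decreasing in $\zeta$ together with all its derivatives while remaining a classical symbol of order $m$ in $\eta$ (uniformly in $(x,y)$). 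Choose Schwartz functions $\rho, \chi$ on $\R^p$ normalized by $\int\rho = \int\chi = 1$, and set
$$
k(z,x,y,\sigma,\zeta,\eta) \;:=\; (2\pi)^p\, e^{iz(\zeta-\sigma)}\, \rho(z)\, \chi(\sigma)\, b(x,y,\zeta,\eta).
$$
A direct computation, integrating first in $\sigma$ (which produces $\tilde k(z,x,y,\zeta,\eta) = e^{iz\zeta}\rho(z) b(x,y,\zeta,\eta)$) and then in $z$, shows that this $k$ yields, via \eqref{theta}, exactly the operator defined by $a$. The bifiltered estimates $k\in {S'}^{m,\ell}$ hold for every $\ell\in\Z$: arbitrary-order $\sigma$-decay is supplied by $\chi$, the factors of $z$ produced by $\sigma$- or $\zeta$-derivatives of $e^{iz(\zeta-\sigma)}$ are absorbed by the Schwartz factor $\rho(z)$, and the Schwartz decay of $b$ in $\zeta$ ensures that all $(\zeta,\eta)$-derivatives are dominated by $(1+|\zeta|+|\eta|')^{m-<\alpha>}$.

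For the converse inclusion, start with $k\in {S'}^{m,\ell}$ for every $\ell$ and manipulate the Schwartz kernel directly. The rapid $\sigma$-decay justifies integrating it out first:
$$
\tilde k(z,x,y,\zeta,\eta)\; :=\; (2\pi)^{-p}\int e^{iz\sigma}\, k(z,x,y,\sigma,\zeta,\eta)\, d\sigma,
$$
yielding a function Schwartz in $z$ that remains a classical symbol of order $m$ in $(\zeta,\eta)$. After the translation $z\mapsto z+(x-x')$ in \eqref{theta}, one performs the $z$-integration (legitimate by Schwartz decay in $z$) to obtain the auxiliary symbol
$$
b(x,y,\zeta,\eta) \;:=\; \int e^{-iz\zeta}\, \tilde k(z,x,y,\zeta,\eta)\, dz,
$$
which is Schwartz in $\zeta$ while inheriting the order-$m$ symbol estimates in $\eta$. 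Finally, define
$$
a(x,x',y,\eta)\;:=\; (2\pi)^{-p}\int e^{i(x-x')\zeta}\, b(x,y,\zeta,\eta)\, d\zeta,
$$
which is smooth, Schwartz in $x-x'$, uniformly bounded together with all $(x,x',y)$-derivatives, and a classical symbol of order $m$ in $\eta$; hence $a \in {S'}^{m,-\infty}$ in the second sense. Reversing the orders of integration produces the identity $K_A(x,y;x',y') = (2\pi)^{-q}\int e^{i(y-y')\eta} a(x,x',y,\eta)\, d\eta$, as required.

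The main technical difficulty is the systematic bookkeeping of the bifiltered symbol estimates, where one must ensure that every derivative lands in the correct symbol class. The key elementary observation underlying both constructions is that a factor Schwartz in one variable times a classical symbol of order $m$ in another variable is dominated by $C_N(1+|\cdot|)^{-N}(1+|\eta|')^{m}$ for every $N$, which comfortably fits into the class ${S'}^{m,\ell}$ for arbitrary $\ell$. With this observation in place, the two constructions are essentially inverse oscillatory-integral manipulations of one another, and the normalization conditions $\int\rho = \int\chi = 1$ ensure that the identification of operators is exact, not merely modulo smoothing.
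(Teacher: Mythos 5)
Your second half (from a bifiltered symbol in $\bigcap_\ell {S'}^{m,\ell}$ to the reduced amplitude $a(x,x',y,\eta)$) is in spirit the direction the paper proves, but your first half has a genuine gap. The symbol $k(z,x,y,\sigma,\zeta,\eta)=(2\pi)^p e^{iz(\zeta-\sigma)}\rho(z)\chi(\sigma)\,b(x,y,\zeta,\eta)$ does reproduce the operator $A$, yet it does \emph{not} lie in ${S'}^{m,\ell}$ for any $\ell$: the class requires every $\zeta$-derivative to gain a factor $(1+|\zeta|+|\eta|')^{-1}$ in the \emph{joint} anisotropic weight. Differentiating your $k$ in $\zeta_j$ produces $iz_j\,\rho(z)\chi(\sigma)e^{iz(\zeta-\sigma)}b$ plus $\rho(z)\chi(\sigma)e^{iz(\zeta-\sigma)}\partial_{\zeta_j}b$; the factor $iz_j$ is merely bounded, and $\partial_{\zeta_j}b$ gains decay only in $\zeta$, never in $\eta$, so at bounded $\zeta$ and $|\eta|'\to\infty$ the required bound $(1+|\zeta|+|\eta|')^{m-1}(1+|\sigma|)^{\ell}$ fails. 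Concretely, take $m=0$ and $a(x,x',y,\eta)=\varphi(x)\varphi(x')$: then $b(x,y,\zeta,\eta)=\varphi(x)e^{-ix\zeta}\hat\varphi(-\zeta)$ is independent of $\eta$, and at suitable fixed $(z,\sigma,\zeta,x)$ the quantity $\partial_{\zeta_1}k$ is a nonzero constant in $\eta$, while membership in ${S'}^{0,\ell}$ would force it to be $O\bigl((1+|\eta|')^{-1}\bigr)$. Your closing ``key elementary observation'' (Schwartz in one variable times order $m$ in the other fits into ${S'}^{m,\ell}$) is true only for the zeroth-order estimate and false once $(\zeta,\eta)$-derivatives are counted with the weight $<\alpha>$; this is exactly where the anisotropy of the class bites, and it is why this inclusion (the one the paper leaves as an exercise) needs a symbol whose $\zeta$- and $\eta$-dependence are genuinely coupled, as in the model symbols of type $\bigl(1+(|\zeta|^2+|\eta_v|^2)^2+|\eta_n|^2\bigr)^{m/4}$, rather than a product of a function of $\zeta$ with a symbol in $\eta$.

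For the converse inclusion your reorganization also drops the point that makes the paper's argument work. You claim $b(x,y,\zeta,\eta)=\int e^{-iz\zeta}\tilde k\,dz$ is ``Schwartz in $\zeta$''; but $z$ ranges over the bounded cube $\I^p$ (so ``Schwartz decay in $z$'' is vacuous), integration by parts in $z$ produces boundary terms, and without rapid $\zeta$-decay your final $\zeta$-integral defining $a$ is not absolutely convergent, nor are the $(x,x',y)$-derivative estimates controlled (each $\partial_{x'}$ brings a power of $\zeta$ out of the phase). The paper instead keeps a single oscillatory integral, substitutes $\sigma\mapsto\zeta+\sigma$ so that the rapid decay of the symbol in its $\sigma$-slot yields joint integrable decay in $(\zeta,\sigma)$, and closes with the elementary estimate $\sup_{t\ge 0}(1+t)^{-m}\int(1+|\zeta|)^{m'}(1+|\zeta|+t)^{m}(1+|\zeta|+|\sigma|)^{\ell}\,d\sigma\,d\zeta<\infty$ for $\ell$ sufficiently negative; you should redo your second half along these lines and find a correct construction (not the product ansatz) for the first half.
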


\begin{proof}\
Fix $A\in \bigcap_{\ell\in \Z} {\Psi'}^{m, \ell} (\I^{\om}, I^p, \C^a)$ with symbol $a$.   Then we need to show that the following expression  yields an operator in ${\Psi'}^{m, -\infty} (\I^{\om}, I^p, \C^a)$
$$
k(x, x', y; \eta) := \frac{1}{(2\pi)^{2p}} \int_{I^p\times \R^{2p}} a (s; x, y, \zeta, \eta; \zeta+\sigma) e^{i\left[ (x-x')\zeta + s\sigma \right]} \;\; ds \; d\zeta\; d\sigma.
$$
Note that $k$ is a well defined distribution as an oscillatory integral. Moreover, applying the dominated convergence theorem, we can write for any given multi-indices $\alpha_1, \alpha_2, \alpha_3$ and $\beta$,
\begin{eqnarray*}
\pa_x^{\alpha_1} \pa_{x'}^{\alpha_2} \pa_{y}^{\alpha_3} \pa_{\eta}^{\beta}  \;\; k(x, x', y ; \eta) & = & \int \pa_x^{\alpha_1} \pa_{x'}^{\alpha_2} \pa_{y}^{\alpha_3} \pa_{\eta}^{\beta} \left[a (s; x, y, \zeta, \eta; \zeta+\sigma) e^{i\left[ (x-x')\zeta + s\sigma \right]} \right]\;\; ds \; d\zeta\; d\sigma\\
& = & \sum_{\gamma \leq  \alpha_1} \int  (-i\zeta)^{ \alpha_2} \pa_x^{\gamma}  \pa_{y}^{\alpha_3} \pa_{\eta}^{\beta} a (s; x, y, \zeta, \eta; \zeta+\sigma) (i\zeta)^{\gamma} e^{i\left[ (x-x')\zeta + s\sigma \right]} \;\; ds \; d\zeta\; d\sigma.
\end{eqnarray*}
Hence there exists a constant $C\geq 0$ such that
$$
\left\vert \pa_x^{\alpha_1} \pa_{x'}^{\alpha_2} \pa_{y}^{\alpha_3} \pa_{\eta}^{\beta}  \;\; k(x, x', y ; \eta) \right\vert \; \leq \; C \sum_{\gamma \leq  \alpha_1} \int (1+\vert\zeta\vert)^{\vert\alpha_2\vert + \vert \gamma\vert} \; (1+\vert\zeta\vert+ \vert \eta\vert ')^{m-<\beta>} (1+\vert\zeta\vert+ \vert \sigma\vert )^{\ell} \; d\sigma d \xi.
$$
Notice now that for any $(m, m')\in \Z^2$, an easy exercise shows that there exists $\ell_0 (m, m')\in \Z$ such that for any $\ell \leq \ell_0(m, m')$, we have
$$
\sup_{t \geq 0}\;  \frac{1}{(1+t)^m}\,  \int_{\R^{2p} } (1+ \vert \zeta\vert)^{m'} (1+\vert\zeta\vert+ t)^{m} (1+\vert\zeta\vert+ \vert \sigma\vert )^{\ell} \; d\sigma d \zeta \; < +\infty.
$$
Applying this estimate, we obtain for $\ell$ negative enough
$$
\left\vert \pa_x^{\alpha_1} \pa_{x'}^{\alpha_2} \pa_{y}^{\alpha_3} \pa_{\eta}^{\beta}  \;\; k(x, x', y ; \eta) \right\vert \; \leq \;  C(\alpha, \beta, \ell) (1+\vert\eta\vert)^{m-< \beta>}. 
$$
The converse inclusion is proven similarly and is left as an exercise. 
\end{proof}

We now extend these definitions to the global situation of our bounded geometry bifoliation $({\what M}, \what\maF\subset\what\maF')$. Since we shall only need operators with finite propagation for a complete metric on $\what M$, we shall restrict to the class of finite propagation pseudodifferential operators. The general theory for properly supported operators is more involved, see \cite{Shubin87} for the non-foliated case.

Let $\hU \simeq \I^p\times \I^q$ be a distinguished foliation chart for $\what\maF$ so that the restriction $\what\maE | _{\hU} \simeq \hU\times \C^a$. Let $\hU \times_{\what\gamma} \hU'\simeq \I^{\om}\times \I^q$ be a  chart for the holonomy groupoid $\what\maG$ corresponding to $\what\gamma\in \what\maG_{\hU}^{\hU'}$ with $\hU'$ a distinguished chart with the same properties,  \cite{ConnesIntegration}. The definition of a bifoliation allows us to assume that any such local chart of the holonomy groupoid of $\what\maF$ is compatible  with  the larger foliation $\what\maF'$, so
$$
\hU \times_{\what\gamma} \hU'\simeq \I^{\om}\times \I^{v}\times \I^{q-v}.
$$    
Using these charts and trivializations, any element $A_0 \in {\Psi'}^{m,\ell}(\I^{\om},\I^p, \C^{a })$,  defines an operator  (recall that our local operators are assumed to be properly supported)
\begin{equation}\label{LocalOperator}
A_0 (\what\gamma) :C^{\infty}_c(\hU, \what\maE) \longrightarrow C^{\infty}_c(\hU' ; \what\maE).
\end{equation}
Such operator will be our local model and will be called an elementary (local) operator of class ${\Psi'}^{m, \ell}$ on the bifoliated manifold $({\what M}, \what\maF\subset\what\maF' )$.  The same construction for $(m,-\infty)$ classes yields elementary (local) operators of class ${\Psi'}^{m, -\infty}$. {{Recall that we only consider local open sets which satisfy the usual condition on bounded-geometry manifolds and our distinguished open covers will always be assumed to have bounded diameters. Also, all operators are assumed to have the $C^\infty$-bounded coefficients, with bounds independent of the chosen local charts. Recall that an operator $A$ acting on the sections of $\what\maE$ over ${\what M}$, has  finite propagation if there exists a constant $C>0$, such that for any $\varphi, \varphi' \in C^\infty ({\what M})$ with $d(\Supp (\varphi), \Supp (\varphi')) > C$, one has $
M_{\varphi'} \circ A \circ M_\varphi = 0.$
The operator $M_\bullet$ is as usual multiplication  by the smooth function $\bullet$ and $d$ is our fixed complete distance. The class ${\Psi'}^{m, \ell}$  (resp. ${\Psi'}^{m, -\infty}$) that we shall consider here will be composed of locally finite sums of such elementary operators of class ${\Psi'}^{m, \ell}$  (resp. ${\Psi'}^{m, -\infty}$), so will always be assumed to have  finite propagation and are also called uniformly supported. More precisely,  a finite propagation linear map $A: C_c^\infty ({\what M}; \what\maE) \rightarrow C_c^\infty ({\what M}; \what\maE)$ is  of class ${\Psi'}^{m, \ell}$ (resp. ${\Psi'}^{m, -\infty}$) if it coincides,  in any local charts $ \hU$  and $\hU'$ as above, with a  finite sum of elementary operators  of class ${\Psi'}^{m, \ell}$ (resp. of class ${\Psi'}^{m, -\infty}$), and  with a global bound on the number of these elementary operators. It will be compactly supported if it has compact support in ${\what M}\times {\what M}$. On the other hand, a uniform smoothing operator is an operator with smooth Schwartz kernel $k$ which is uniformly supported (or equivalently has finite propagation with respect to the complete distance) and such that $k$ is $C^\infty$-bounded, see \cite{Shubin92}. This latter property means that  we can estimate the derivatives of $k$ in local coordinates by constants which do not depend on the local chart and hence are uniform bounds over ${\what M}\times{\what M}$. Such a uniform smoothing  operator induces a bounded operator  between any {\underline{usual}} Sobolev spaces of the bounded-geometry manifold ${\what M}$, \cite{Shubin92}.   The space (obviously a $*$-algebra) of  uniform  smoothing operators is denoted by $\Psi^{-\infty} ({\what M}, \what\maE)$, while we denote by ${\Psi}_c^{-\infty} ({\what M}, \what\maE)$ the space of compactly supported smoothing operators.  Again see the seminal monograph \cite{Shubin92} for a complete exposition of all these properties and results.}}

\begin{definition}\label{BigradedPseudo}\cite{K97}
\begin{itemize}
\item Denote by ${\Psi'}^{m,\ell} ({\what M}, \what\maF\subset \what\maF'; \what \maE)$ the space of operators of the form $T=A + R$ where $A$ is a (finite propagation)   operator of class  ${\Psi'}^{m,\ell}$ and $R\in \Psi^{-\infty}({\what M}, \what\maE)$ is a uniform  smoothing operator. 
\item Denote by ${\Psi'}_{c}^{m,\ell} ({\what M}, \what\maF\subset \what\maF'; \what \maE)$ the space of operators of the form $T=A + R$ where $A$ is a compactly supported   operator of class ${\Psi'}^{m,\ell}$ and $R\in \Psi_c^{-\infty}({\what M}, \what\maE)$ is a compactly supported  smoothing operator. 
\end{itemize}
\end{definition}

So, an operator $T\in {\Psi'}^{m,\ell} ({\what M}, \what\maF\subset \what\maF'; \what \maE)$  preserves the space of compactly supported smooth sections. If  $A$ is associated with a distributional kernel $k$, then finite propagation is equivalent to  the existence of a constant $C>0$ such that 
$$
d_{\hM} (\hm, \hm ') > C \quad \Longrightarrow \quad k(\hm, \hm') = 0.
$$
Differential operators are automatically uniformly supported ($C=0$ works). Note that  composition of a compactly supported operator with a uniformly supported operator is compactly supported.  

\begin{proposition}
If $A \in {\Psi'}^{m,\ell}({\what M},  \what\maF\subset \what\maF'; \what\maE)$ then the formal adjoint $A^*$ belongs to ${\Psi'}^{m,\ell} ({\what M}, \what\maF\subset  \what\maF'; \what\maE)$. Moreover,  if $B \in {\Psi'}^{m',\ell'}({\what M}, \what\maF\subset \what\maF'; \what\maE)$, then  $A\circ B \in  {\Psi'}^{m+m',\ell+\ell'}({\what M},  \what\maF\subset \what\maF';  \what\maE)$. The corresponding statement holds for compactly supported operators.
 \end{proposition}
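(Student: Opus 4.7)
The plan is to reduce to local charts via Definition \ref{BigradedPseudo}, handle the formal adjoint and composition separately in the local model, and then globalize using the finite-propagation (uniformly supported) structure together with the bounded-geometry locally finite cover. Throughout, I would use that the uniform smoothing ideal $\Psi^{-\infty}({\what M},\what\maE)$ is a two-sided $*$-ideal: it is closed under adjoints, and its composition with any ${\Psi'}^{m,\ell}$-operator is again a uniform smoothing operator because finite-propagation composition preserves uniform supports and the Sobolev mapping properties of the bifiltered calculus turn any composition with an order $(-\infty,-\infty)$ kernel into a $C^\infty$-bounded kernel.

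For the formal adjoint, I would first write $A=A_0+R$ with $R$ a uniform smoothing operator, and reduce $A_0$ to an elementary operator \eqref{LocalOperator} of class ${\Psi'}^{m,\ell}$ on a chart $\hU\times_{\what\gamma}\hU'$. In local coordinates the adjoint is given by the standard Kuranishi trick: starting from the quantization formula \eqref{theta}, conjugation and the interchange of $(x,y)$ with $(x',y')$ lead to an oscillatory integral which, after the usual change of variables in the fibre variables $z$ and an asymptotic expansion, is again the quantization of a symbol satisfying the estimate \eqref{LocalSymbol} with the same bidegree $(m,\ell)$. The anisotropic scaling $\lambda\cdot(\zeta,\eta_v,\eta_n)=(\lambda\zeta,\lambda\eta_v,\lambda^2\eta_n)$ is compatible with this procedure because differentiation in $\eta_n$ only costs weight $-2$ per derivative, which is exactly what $\langle\alpha\rangle$ records. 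Finally, $R^*$ is again a uniform smoothing operator, and the sum of finitely many elementary adjoints remains finite propagation.

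For the composition, I would split $A=A_1+R_1$, $B=A_2+R_2$, and dispose of the three mixed terms $A_1R_2$, $R_1A_2$, $R_1R_2$ by the ideal property above. For $A_1\circ A_2$, finite propagation is additive, so it suffices to work locally on a single chart triple $\hU\to\hU'\to\hU''$ and to compose two elementary operators of classes ${\Psi'}^{m,\ell}$ and ${\Psi'}^{m',\ell'}$. In this local setting, one writes the composite kernel as an oscillatory integral in the combined phase, then applies stationary phase/asymptotic expansion to produce a symbol in ${S'}^{m+m',\ell+\ell'}$. The key observation is that the leafwise symbol in $\sigma$ composes by the usual rule (adding $\ell$ orders), while the transverse symbol in $(\zeta,\eta)$ composes in the Connes-Moscovici ${\Psi\mathrm{DO}}'$ sense for each fixed $\sigma$, which by the classical Beals-Greiner result adds the $m$ orders. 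Proposition \ref{m,-infty} is exactly what allows us to identify the $(m,-\infty)$ slice with the Connes-Moscovici transverse calculus of \cite{CM95}, where composition with the correct bidegree is already established.

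The main obstacle will be verifying that the two filtrations interact correctly under composition, i.e.\ that the oscillatory integral produced by composing symbols estimated by $(1+|\zeta|+|\eta|')^{m-\langle\alpha\rangle}(1+|\sigma|)^{\ell-|\beta|}$ can be integrated in the auxiliary phase variables without spoiling either exponent. This requires a careful bookkeeping of how $|\zeta|$ (the leafwise covector that appears \emph{in the transverse weight}) couples to $|\sigma|$ after the stationary phase reduction, exactly as in the proof of Proposition \ref{m,-infty} where negative $\ell$-decay in $\sigma$ is traded for polynomial growth in $\zeta$. The bounded-geometry hypothesis is used only to ensure that all constants in \eqref{LocalSymbol} can be chosen uniformly across charts, so that the locally finite sum of elementary pieces defines a genuine operator of class ${\Psi'}^{m+m',\ell+\ell'}$ in the sense of Definition \ref{BigradedPseudo}. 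The compactly supported case is identical, since compact support is preserved under both adjoints and compositions.
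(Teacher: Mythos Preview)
Your approach is essentially the same as the paper's: reduce to the local model by using that $\Psi^{-\infty}$ is a two-sided $*$-ideal (the paper invokes \cite{Shubin92}, \cite{K97}, and Remark~\ref{rhodelta} for this), then localize via a bounded-geometry partition of unity, and finally handle the elementary operators on $\I^p\times\I^v\times\I^{q-v}$. The only difference is presentational: where you sketch the Kuranishi/stationary-phase argument for the local composition and adjoint, the paper simply cites the bisingular calculus of Antoniano--Uhlmann \cite{AntonianoUhlmann} and Greenleaf--Uhlmann \cite{GreenleafUhlmann} (together with \cite{K97}) for the local result, which already contains the bidegree additivity you are after; your invocation of Proposition~\ref{m,-infty} is not really needed here.
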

 
\begin{proof}\  
Using the first appendix in \cite{Shubin92}, the results of \cite{K97}, and Remark \ref{rhodelta} below, we deduce that for any $R\in \Psi^{-\infty} ({\what M}, \what\maE)$ and any $A\in {\Psi'}^{m,\ell}({\what M}, \what\maF\subset \what\maF'; \what\maE)$, the composite operators $A\circ R$ and $R\circ A$  are uniform smoothing operators. 
Therefore, we only need to show the proposition for locally finite operators of class ${\Psi'}^{m,\ell}$. Using a locally finite partition of unity of $\hM$ associated with an open cover as above (with bounded diameters), this is reduced to considering an elementary operator  $A$ from sections over $\hU$ to sections over $\hU'$ as above.  The proof is thus reduced to the same result in $\I^{\om}=\I^p\times \I^{v}\times  \I^{q-v}$, where this proposition is proven as an easy extension of the main result in \cite{AntonianoUhlmann}, see also Proposition 1.39 of \cite{GreenleafUhlmann}, as well as \cite{K97}. 
The same proof works for the formal adjoint $A^*$. 
The proof for compactly supported operators is the same and we need only note that  composition of compactly supported operators is compactly supported and the adjoint of a compactly supported operator is compactly supported. 
\end{proof}

When ${\what\maF}'=T{\what M}$ is the tangent bundle and ${\what M}$ is compact, we recover the class of pseudodifferential operators ${\Psi}^{m,\ell} ({\what M}, \what\maF; \what \maE)$ considered in \cite{K97}. When the foliation $\what\maF$ is the foliation by points (zero-dimensional), we recover the class ${\Psi'}^m ({\what M}, \what\maF'; \what\maE)$ of pseudodifferential operators on ${\what M}$ described in \cite{CM95}, based on the Beals-Greiner calculus for the splitting $T{\what M} = T\what\maF' \oplus \what\nu_{\what\maF'}$. So, we have
$$
{\Psi}^{m,\ell} ({\what M}, \what\maF; \what \maE) = {\Psi'}^{m,\ell} ({\what M}, \what\maF\subset T{\what M}; \what \maE)\text{ and }{\Psi'}^m ({\what M}, \what\maF'; \what\maE)={\Psi'}^{m,0} ({\what M}, 0\subset T{\what \maF}'; \what \maE).
$$
 Moreover, the Connes-Moscovici pseudodifferential operators  ${\Psi'}^m  ({\what M},  \what\maF'; \what \maE)$,  \cite{CM95}, belong to the class ${\Psi'}^{m, 0} ({\what M}, \what\maF\subset \what\maF'; \what \maE)$ for any subfoliation $\what\maF$ of $\what\maF'$. Any leafwise classical pseudodifferential operator of order $\ell$ on $({\what M}, \hmaF)$ acting on the sections of $\what\maE$ yields an operator in the class $\Psi^{0, \ell}  ({\what M}, \what\maF; \what \maE)$ but with the uniform support defined as an operator in ${\what M}$. Indeed, obvious general functoriality properties hold for our pseudodifferential operators, with respect to the category of bifoliations.

\begin{remark}\label{rhodelta}\
If we extend  the class ${\Psi}^{m,\ell}$ used in \cite{K97} by introducing the H\"ormander weights $(\varrho, \delta)$ for the global index $m$, then
$$
{\Psi'}^{m,\ell} \; \subset \; {\Psi}_{0, 1/2}^{m/2,\ell}  \text{ for } m <0 \; \text{ and } \; {\Psi'}^{m,\ell} \; \subset \; {\Psi}_{0, 1/2}^{m,\ell}   \text{ for } m \geq 0.
$$
\end{remark}

 \medskip
 
Recall that the  cotransverse bundle $\what\nu^*\subset T^*{\what M}$ is the annihilator of $T\what\maF$.  It is isomorphic to $\what{N}^*\oplus \what{V}^*$ where $\what{N}^*\subset T^*{\what M}$ is the annihilator of $T\what\maF'$.  The bundle $\what{V}^*$ is naturally identified with the subbundle  of $T^*\what\maF'$ which is the annihilator of  $T\what\maF$.   We identify it with the dual bundle to the quotient bundle $T\what\maF'/T\what\maF$.   The holonomy action generated by the foliation $\what\maF$ on the cotransverse bundle $\what\nu^*$ induces the holonomy action on the bundle $\what{N}^*\oplus \what{V}^*$. Note that the choice of a supplementary subbundle to $T\what\maF'$ in $T{\what M}$  yields the identification of $\hV^*$ with a subbundle of the transverse bundle $\what\nu^*$ which, in general,  is not preserved by the holonomy action.  So the holonomy action is triangular with respect to the  decomposition  $\what\nu^*\simeq \what{N}^*\oplus \what{V}^*$, i.e. it maps $\what{N}^*$ to itself.

We define similarly the space of leafwise smoothing operators as follows.

\begin{definition}\label{Psi}\
The space  of uniformly supported $C^\infty$-bounded leafwise smoothing order $m$ operators is
$$
{\Psi'}^{m, -\infty}({\what M},  \what \maF\subset \what\maF'; \what\maE) \,\, = \,\, \bigcap_{\ell} {\Psi'}^{m,\ell}({\what M},  \what\maF\subset \what\maF'; \what\maE).
$$
Similarly,  the space  of compactly supported leafwise smoothing order $m$ operators is 
$$
{\Psi'}_c^{m, -\infty}({\what M},  \what \maF\subset \what\maF'; \what\maE) \,\, = \,\, \bigcap_{\ell} {\Psi'}_c^{m,\ell}({\what M},  \what\maF\subset \what\maF'; \what\maE).
$$
\end{definition}

The spaces ${\Psi'}^{m, -\infty}({\what M}, \what\maF\subset \what\maF'; \what\maE)$ and ${\Psi'}_c^{m, -\infty}({\what M}, \what\maF\subset \what\maF'; \what\maE)$ can also be defined directly at the level of symbols as proven above for the local operators. Indeed, the space ${\Psi'}^{m,-\infty}({\what M}, \what\maF\subset \what\maF'; \what\maE)$ of Definition \ref{Psi} is easily identified with the space of   linear maps $T$ on $C_c^\infty ({\what M}, \what\maE)$ of the form $T=A + K$ with $A$  uniformly supported of type  ${\Psi'}^{m, -\infty}$ and $K \in \Psi^{-\infty} ({\what M}; \what\maE)$ is a uniform  smoothing operator.  A similar description holds for compactly supported operators.

The $*$-algebra of all bifiltered uniformly (respectively compactly) supported pseudodifferential operators is
$$
{\Psi'}^{\infty, \infty}({\what M}, \what\maF\subset \what\maF'; \what\maE):=\bigcup_{m, \ell \in \Z} {\Psi'}^{m, \ell}({\what M}, \what\maF\subset \what\maF'; \what\maE),  
$$
respectively
$${\Psi'}_c^{\infty, \infty}({\what M}, \what\maF\subset \what\maF'; \what\maE):=\bigcup_{m, \ell \in \Z} {\Psi'}_c^{m, \ell}({\what M}, \what\maF\subset \what\maF'; \what\maE)\;.
$$
The spaces
$$
{\Psi'}^{\infty, -\infty}({\what M}, \what\maF\subset \what\maF'; \what\maE):=\bigcup_{m\in \Z} {\Psi'}^{m, -\infty}({\what M}, \what\maF\subset \what\maF'; \what\maE)
$$ 
and  
$$
{\Psi'}_c^{\infty, -\infty}({\what M}, \what\maF\subset \what\maF'; \what\maE):=\bigcup_{m\in \Z} {\Psi'}_c^{m, -\infty}({\what M}, \what\maF\subset \what\maF'; \what\maE)
$$
 are also $*$-algebras.  The spaces
$$
{\Psi'}^{-\infty, -\infty}({\what M}, \what\maF\subset \what\maF'; \what\maE):=\bigcap_{m\in \Z} {\Psi'}^{m, -\infty}({\what M}, \what\maF\subset \what\maF'; \what\maE) 
$$
and  
 $$
 {\Psi'}_c^{-\infty, -\infty}({\what M}, \what\maF\subset \what\maF'; \what\maE):=\bigcap_{m\in \Z} {\Psi'}^{m, -\infty}({\what M}, \what\maF\subset \what\maF'; \what\maE)
$$
coincide respectively with the two-sided $*$-ideals ${\Psi'}^{-\infty}({\what M}; \what\maE)$ and ${\Psi'}_c^{-\infty}({\what M}; \what\maE)$.

The space  ${S'}_{hom}^m (\hN^*\oplus\hV^*; {\what\maE})$ is the space  of smooth sections $p$ of the pull-back of the bundle ${\what\maE}$ to $(r^*\hN^*\oplus r^*\hV^*) \smallsetminus \what\maG$ whose support projects to a uniform subset of $\what\maG$ and which are $m$-homogeneous in the sense that 
$$
p(\what\gamma, \lambda\cdot \eta) = \lambda^m p(\what\gamma, \eta), \quad\forall \lambda >0. 
$$
That such homogeneous symbols are defined globally as sections over $(r^*\hN^*\oplus r^*\hV^*) \smallsetminus \what\maG$ is proven in \cite{CM95}. By a uniform subset of $\what\maG$ we mean here a uniform neighborhood of ${\what M}$ in the Hausdorff groupoid $\what\maG$. We may define this notion, using a complete Riemannian lengh function $\vert\cdot\vert$ on the groupoid $\what\maG$ as explained in the previous section.  That is as a subset which is contained in some ball-neighborhood of ${\what M}$ in $\what\maG$ of the type $\{\vert\what\gamma\vert \leq R\}$. When ${\what M}$ is compact, a uniform subset is just a relatively compact subset.
Here $(\what\gamma, \eta)\in r^*\hN^*\oplus r^*\hV^*$ (i.e. $\eta\in \hN_{r(\what\gamma)}^*\oplus \hV_{r(\what\gamma)}^*$) and we only need to impose homogeneity for instance when $\vert\eta\vert' >> 0$.  As in the usual pseudodifferential calculus, we introduce the class ${S'}^m$ of classical symbols of order $\leq m$ as those symbols which have, in local charts, an asymptotic expansion into homogeneous ones. Composition of symbols $p_1$ and $p_2$ of types ${S'}^{m_1}$ and ${S'}^{m_2}$  gives a symbol of type ${S'}^{m_1+m_2}$.  See for instance \cite{CM95} for the details. Given such $p$ of order $m$, the principal symbol is given by the usual formula 
$$
\sigma (\what\gamma, \eta):=\lim_{\lambda\to +\infty} \frac{1}{\lambda^m} \; p(\what\gamma, \lambda \cdot \eta).
$$
So, the principal symbol is a well defined global section which lives in ${S'}_{hom}^m (\hN^*\oplus\hV^*; {\what\maE})$.
The formula for the composition of such homogeneous symbols is given, \cite{CM95, K97}, by
 $$
(\sigma_1\sigma_2) (\what\gamma, \eta)  = \int_{ {\what\gamma}' \in \what\maG^{ r({\what\gamma}) } } \sigma_1 ( {\what\gamma} ', \eta) \circ  W_{{\what\gamma}'} 
[\tilde{\sigma}_2 ( {{\what\gamma}'}\,^{-1} {\what\gamma},  h_{\what\gamma'} \eta)] d\what\eta^{r(\what\gamma)} ({\what\gamma}'), \text{ for }(\what\gamma, \eta)\in r^*\hN^*\oplus r^*\hV^*.
$$
While $W_{{\what\gamma}'}[\bullet]$ is  conjugation by the linear isomorphism from $\what\maE_{s(\what\gamma ')}$ to $\what\maE_{r(\what\gamma ')}$ given by the holonomy action that we have fixed on $\what\maE$, the transformation $h_{{\what\gamma}'}$ is the transpose of the holonomy isomorphism induced by ${\what\gamma'}\,^{-1}$, a linear isomorphism from $\hN_{s(\what\gamma ')}^*\oplus \hV_{s(\what\gamma ')}^*$ to $\hN_{r(\hat\gamma ')}^*\oplus \hV_{r(\hat\gamma ')}^*$, which in general is not diagonal and does not respect the homogeneity condition. So, $h_{{\hat\gamma}'}$ is given by a triangular matrix 
$$
A=\left(\begin{array}{cc} A_{nn} & A_{nv}\\ 0 & A_{vv}\end{array}\right),
$$
and the formula above means that we  replace $\sigma_2 ( {{\hat\gamma}'}\,^{-1} {\hat\gamma},  h_{\hat\gamma'} \eta)$ by the top-degree homegenous part in $\eta$, which is denoted  $\tilde{\sigma}_2 ( {{\hat\gamma}'}\,^{-1} {\hat\gamma},  h_{\hat\gamma'} \eta)$.  See again \cite{CM95}, page 18, for the precise expansion of $\sigma_2 ( {{\hat\gamma}'}\,^{-1} {\hat\gamma},  h_{\hat\gamma'} \eta)$ with respect to $A_{nv} \eta_v$  and which justifies the replacement.
To sum up,  the principal symbol of a given $A\in {\Psi'}^{m, -\infty}({\what M}, \hmaF, \hV; {\what\maE})$ can be defined using local charts and is  well  defined as a global section which belongs precisely to ${S'}_{hom}^m (\hN^*\oplus\hV^*; {\what\maE})$.  Moreover the expected compatibility with the composition of operators  holds as far as we write down a meaningful composition formula for the principal  symbols. Moreover, by reducing to the case of the local operators   $A_0(\hat\gamma)$ for some classes $\hat\gamma\in \what\maG_{\hU}^{\hU'}$, it is easy to extend the Connes-Moscovici proof \cite{CM95}[pages 18-19] and to deduce the following
  
\begin{proposition}
 The  principal symbol induces an involutive algebra homomorphism 
 $$
 \sigma: {\Psi'}^{m, -\infty}({\what M}, \what\maF\subset  \what\maF'; \what\maE) \longrightarrow {S'}_{hom}^m (\hN^*\oplus\hV^*; {\what\maE}).
 $$
 \end{proposition}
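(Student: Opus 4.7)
The strategy is to reduce the statement to local computations in charts adapted to the bifoliation and to invoke the Connes-Moscovici symbol analysis from \cite{CM95}.

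First, using a locally finite cover of $\what\maG$ by charts of the form $\hU \times_{\what\gamma} \hU'$ that are compatible with both $\hmaF$ and $\what\maF'$, together with a $C^\infty$-bounded partition of unity subordinate to such a cover, any $A \in {\Psi'}^{m,-\infty}({\what M}, \hmaF \subset \what\maF'; \what\maE)$ decomposes as $A = \sum_i A_i + R$, where each $A_i$ is an elementary operator in the sense of (\ref{LocalOperator}) and $R \in \Psi^{-\infty}({\what M}, \what\maE)$ is a uniform smoothing operator. The global bound on the number of local pieces required by Definition \ref{BigradedPseudo}, together with the finite-propagation assumption, guarantee that the sum is locally finite and that the uniform support condition is preserved. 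Since uniform smoothing operators have vanishing top-degree transverse symbol, the candidate $\sigma(A)$ is obtained by summing the leading homogeneous parts of the reduced symbols of the $A_i$ produced by Proposition \ref{m,-infty}.

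Next, I would verify that these locally defined principal symbols glue into a global section of the pullback of $\what\maE$ to $r^*\hN^* \oplus r^*\hV^* \setminus \what\maG$. The compatibility of adapted charts with the bifoliation forces every transition map to act triangularly on the cotransverse bundle $\hat\nu^* \simeq \hN^* \oplus \hV^*$, preserving the subbundle $\hN^*$; this is precisely the triangular holonomy action recalled earlier in the excerpt. Under the anisotropic dilation $\lambda \cdot \eta = (\lambda \eta_v, \lambda^2 \eta_n)$, the top-degree coefficient therefore transforms covariantly as a section of $r^*(\hN^* \oplus \hV^*)$ by the usual Connes-Moscovici covariance argument \cite{CM95}, and the uniform support condition on the resulting global symbol follows from finite propagation of the $A_i$.

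For multiplicativity, given $A_j \in {\Psi'}^{m_j,-\infty}$ ($j=1,2$), the previous proposition already yields $A_1 A_2 \in {\Psi'}^{m_1+m_2,-\infty}$. Reducing once more to pairs of elementary operators composed through a common intermediate chart, the product becomes an oscillatory integral in the leafwise variable $z$; in the anisotropic rescaling $\lambda \to \infty$, stationary phase extracts a leading contribution whose $z$-integration, after passage to the groupoid, becomes the convolution over $\what\gamma' \in \what\maG^{r(\what\gamma)}$ in the stated formula. The triangular matrix $h_{\what\gamma'}$ is the Jacobian transpose of the change of transverse covariables, and the replacement of $\sigma_2(\what\gamma'^{-1}\what\gamma, h_{\what\gamma'}\eta)$ by its top-degree homogeneous part $\tilde\sigma_2$ corresponds to expanding along the off-diagonal block $A_{nv}\eta_v$ and discarding contributions of strictly subleading anisotropic weight. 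The involution identity $\sigma(A^*) = \sigma(A)^*$ is routine from the standard local adjoint formula at the symbol level.

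The main obstacle is the bookkeeping in the multiplicativity step: one must check that, in the expansion of $\sigma_2(\what\gamma'^{-1}\what\gamma, h_{\what\gamma'}\eta)$ along $A_{nv}\eta_v$, every term of weight strictly less than $m_1 + m_2$ for the $\lambda$-scaling really does vanish in the limit, while the remaining leading term is exactly $\tilde\sigma_2$. This is precisely the content of \cite{CM95}, pp.~18--19; its extension to the bifoliated bounded-geometry setting is routine because the subfoliation $\hmaF$ affects only the leafwise $z$-integration, over which one is already smoothing thanks to the $-\infty$ regularity in that direction, so all the analytic estimates transfer intact.
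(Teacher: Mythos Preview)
Your proposal is correct and follows essentially the same route as the paper: reduce to elementary operators $A_0(\what\gamma)$ in groupoid charts $\hU\times_{\what\gamma}\hU'$ and then invoke the Connes--Moscovici argument of \cite{CM95}, pp.~18--19, for the well-definedness and multiplicativity of the anisotropic principal symbol. In fact the paper's own proof is only a one-line pointer to this reduction, so your outline is already more detailed than what appears there.
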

 
If we consider  the  Connes-Moscovici pseudo' calculus ${\Psi'}^* ({\what M}, \what\maF', \what\maE)$ as defined in \cite{CM95}, with respect to the splitting of the tangent bundle $T{\what M}$ into $T\what\maF '\oplus \what{N}$, then any $P\in {\Psi'}^m ({\what M}, \what\maF', \what\maE)$ has a well defined  principal symbol $\sigma (P)$ which is an $m$-homogeneous (with respect to the scaling $\lambda\cdot \xi$) section over $T^*\what{M}\simeq T^*\what\maF' \oplus \what{N}^*$, which is compatible with compositions of operators, and which can be restricted to an $m$-homogeneous section over $\what{V}^*\oplus \what{N}^*$. This was proven  in \cite{CM95}.  In addition, we have the following compatibility result.

\begin{proposition}
Given operators $P_1\in {\Psi'}^{m_1} ({\what M}, \what\maF'; \what\maE)$ and $P_2\in {\Psi'}^{m_2, -\infty}({\what M}, \what\maF\subset  \what\maF'; \what\maE)$, the operators $P_1P_2$ and $P_2P_1$ belong to $ {\Psi'}^{m_1+m_2, -\infty}({\what M}, \what\maF\subset  \what\maF'; \what\maE)$.  In addition, for any $\what\gamma\in \what\maG$ and any $\eta\in \what{V}_{r(\what\gamma)}^*\oplus \what{N}_{r(\what\gamma)}^*$,
$$
\sigma (P_1P_2) (\what\gamma, \eta) = \sigma(P_1)( r(\what\gamma), \eta) \; \sigma(P_2) (\what\gamma, \eta) \text{ and } \sigma (P_2P_1) (\what\gamma, \eta) =  \sigma(P_2) (\what\gamma, \eta) \; \sigma(P_1)( s(\what\gamma), ^t(h_{\what\gamma, *})\;\eta).
$$
\end{proposition}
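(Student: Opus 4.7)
The plan is to deduce the membership statement from the bifiltered composition theorem already established, and to obtain the symbol formulas by a local computation in distinguished holonomy charts that, after accounting for the absence of $\sigma$-dependence in $P_1$, reduces to the purely transverse composition formula of \cite{CM95}.

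For the membership assertion, I would exploit the inclusion ${\Psi'}^{m_1}({\what M}, \what\maF'; \what\maE) \subset {\Psi'}^{m_1, 0}({\what M}, \what\maF \subset \what\maF'; \what\maE)$ recorded just after Definition \ref{BigradedPseudo}: a Connes--Moscovici operator on $({\what M},\what\maF')$, viewed in the bifiltered calculus associated with $\what\maF\subset \what\maF'$, has a symbol independent of $\sigma$ and thus sits at the $\ell=0$ level. Since $P_2 \in \bigcap_{\ell\in\Z}{\Psi'}^{m_2,\ell}({\what M}, \what\maF\subset \what\maF'; \what\maE)$, the composition proposition proved just above yields, for every $\ell \in \Z$,
\[
P_1 P_2,\ P_2 P_1 \;\in\; {\Psi'}^{m_1,0}\cdot {\Psi'}^{m_2,\ell} \;\subset\; {\Psi'}^{m_1+m_2,\ell},
\]
and intersecting over $\ell$ gives the required membership in ${\Psi'}^{m_1+m_2,-\infty}({\what M}, \what\maF \subset \what\maF'; \what\maE)$.

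For the symbol formulas I would pass to local computations using a bounded-geometry partition of unity subordinate to distinguished holonomy charts $\hU \times_{\what\gamma} \hU'$ compatible with the bifoliation. The direct description of ${\Psi'}^{m_2,-\infty}$ provided by Proposition \ref{m,-infty} writes $P_2$ locally as $\int e^{i(y-y')\eta}\, a(x,x',y,\eta)\,d\eta$, while $P_1$, of Connes--Moscovici type, is locally a Beals--Greiner oscillatory integral in $(\zeta,\eta)$ with no $\sigma$-variable. Composing kernels and extracting the top-degree anisotropic-homogeneous term, the absence of $\sigma$-dependence in $P_1$ causes the convolution over $\what\maG^{r(\what\gamma)}$ in the general composition rule to collapse to evaluation at a single point, and I would read off the pointwise product $\sigma(P_1)(r(\what\gamma),\eta)\,\sigma(P_2)(\what\gamma,\eta)$ for $\sigma(P_1 P_2)(\what\gamma,\eta)$. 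For $P_2 P_1$ the kernel of $P_1$ is evaluated on the source side of $\what\gamma$; identifying the two local transverse frames along $\what\gamma$ then substitutes ${}^t(h_{\what\gamma,*})\eta$ for the covector in $\sigma(P_1)$, and the triangular (but not diagonal) character of $h_{\what\gamma,*}$ on $\hN\oplus \hV$ is handled by keeping only the top-degree homogeneous part of the pull-back, exactly as in the Connes--Moscovici truncation \cite[pp.~18--19]{CM95}.

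The hard part will be the bookkeeping for the full asymptotic expansion of the composite kernel: one has to verify that every subleading term inherits both the anisotropic order decrease and, crucially, the leafwise smoothing property $\ell=-\infty$. This rests on the fact that differentiation in $(\eta_v,\eta_n)$ of a symbol of bigraded order $(m,-\infty)$ preserves $\ell=-\infty$, together with the observation that the triangular holonomy change of variables on $\hN^*\oplus \hV^*$ does not reintroduce any $\sigma$-dependence into the resulting symbols. The argument then proceeds in parallel with \cite[pp.~18--19]{CM95} and Kordyukov's uniform extension in \cite{K97}, the additional input being the bounded-geometry uniformity of all estimates, supplied by the standing $C^\infty$-bounded geometry hypothesis.
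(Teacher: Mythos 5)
Your proposal is correct and follows essentially the same route as the paper: the paper likewise views $P_1$ as an element of ${\Psi'}^{m_1,0}({\what M}, \what\maF\subset \what\maF'; \what\maE)$ (its symbol being independent of $z$ and $\sigma$), invokes the bifiltered composition result for each $\ell$ to get membership in ${\Psi'}^{m_1+m_2,-\infty}$, and then obtains the symbol formulas by a local computation with the quantization formulae, exactly as you describe. Your write-up merely spells out in more detail the local bookkeeping that the paper dismisses as ``a straightforward computation.''
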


\begin{proof}
The operator $P_1\in {\Psi'}^{m_1} ({\what M}, \what\maF'; \what\maE)$ acts on smooth compactly supported sections as an element of ${\Psi'}^{m_1,0} ({\what M}, \what\maF\subset \what\maF'; \what\maE)$, therefore the composite operators $P_1P_2$ and $P_2P_1$  act as operators from $ {\Psi'}^{m_1+m_2, -\infty}({\what M}, \what\maF\subset  \what\maF'; \what\maE)$. Reducing to local charts, if $P_1$ is the quantization of the classical symbol $p_1$ in the Connes-Moscovici calculus, then $P_1$ is the quantization in our calculus ${\Psi'}^{m_1,0}$ of the symbol 
$$
k_1 (z, x, y, \sigma, \zeta, \eta) = p_1 (x, y, \zeta, \eta).
$$
Using the quantization formulae, a straightforward computation then gives the formula for the principal symbol of the composition. 
\end{proof}

\begin{corollary}\label{Commutator}\
Assume that $P_1\in {\Psi'}^{m_1} ({\what M}, \what\maF'; \what\maE)$ has a holonomy invariant transverse principal symbol and that $P_2\in {\Psi'}^{m_2, -\infty}({\what M}, \what\maF\subset  \what\maF'; \what\maE)$.  Then
$$
\sigma [P_1, P_2] (\what\gamma, \eta) = [\sigma (P_1) (r(\what\gamma), \eta), \sigma (P_2)(\what\gamma, \eta)].
$$
In particular, if $P_1$ or $P_2$  is a scalar operator, then the commutator operator
$$
[P_1, P_2] = P_1P_2 - P_2P_1
$$
lives in $ {\Psi'}^{m_1+m_2 -1 , -\infty}({\what M}, \what\maF\subset  \what\maF'; \what\maE)$
\end{corollary}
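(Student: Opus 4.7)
The plan is to reduce Corollary \ref{Commutator} to a direct algebraic manipulation using the composition formulas for principal symbols established in the preceding proposition. Write
$$
\sigma[P_1, P_2](\what\gamma, \eta) \;=\; \sigma(P_1 P_2)(\what\gamma, \eta) \;-\; \sigma(P_2 P_1)(\what\gamma, \eta),
$$
and substitute the two formulas
$$
\sigma(P_1 P_2)(\what\gamma, \eta) \,=\, \sigma(P_1)(r(\what\gamma), \eta)\,\sigma(P_2)(\what\gamma, \eta),
$$
$$
\sigma(P_2 P_1)(\what\gamma, \eta) \,=\, \sigma(P_2)(\what\gamma, \eta)\,\sigma(P_1)\bigl(s(\what\gamma), {}^t(h_{\what\gamma,*})\eta\bigr).
$$
Both are legitimate because $P_1 \in {\Psi'}^{m_1,0}(\what M, \what\maF \subset \what\maF'; \what\maE)$ and the composition is well-defined in the bifiltered calculus.

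The key point is then the holonomy invariance hypothesis on $\sigma(P_1)$. By definition, holonomy invariance means precisely that the transverse principal symbol is constant along holonomy, i.e.
$$
\sigma(P_1)\bigl(s(\what\gamma), {}^t(h_{\what\gamma,*})\eta\bigr) \;=\; \sigma(P_1)\bigl(r(\what\gamma), \eta\bigr),
$$
for every $\what\gamma \in \what\maG$ and every $\eta \in \what V^*_{r(\what\gamma)} \oplus \what N^*_{r(\what\gamma)}$. Substituting this identity into the formula for $\sigma(P_2 P_1)$ and subtracting yields
$$
\sigma[P_1, P_2](\what\gamma, \eta) \,=\, \bigl[\sigma(P_1)(r(\what\gamma), \eta),\, \sigma(P_2)(\what\gamma, \eta)\bigr],
$$
as claimed.

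For the ``in particular'' statement, observe that if either $\sigma(P_1)$ or $\sigma(P_2)$ takes scalar values, then the pointwise commutator of matrices vanishes identically, so $\sigma[P_1, P_2] = 0$ as a section of ${S'}^{m_1+m_2}_{hom}(\what N^* \oplus \what V^*; \what\maE)$. The previous proposition already ensures $[P_1, P_2] \in {\Psi'}^{m_1+m_2, -\infty}(\what M, \what\maF \subset \what\maF'; \what\maE)$; vanishing of the top-order principal symbol then improves the bigraded order to $(m_1+m_2-1, -\infty)$ by the standard asymptotic expansion characterization of the classical symbol class. The only conceptual input beyond bookkeeping is the holonomy invariance, which is exactly what is needed to cancel the off-diagonal dependence introduced by the transpose $h_{\what\gamma,*}$ in the composition formula; I do not expect any genuine obstacle beyond carefully invoking the formulas already proved.
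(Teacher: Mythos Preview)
Your proof is correct and follows essentially the same approach as the paper's own argument: apply the two composition formulas from the preceding proposition, use holonomy invariance to rewrite $\sigma(P_1)(s(\what\gamma), {}^t(h_{\what\gamma,*})\eta)$ as $\sigma(P_1)(r(\what\gamma), \eta)$, and then invoke the one-step polyhomogeneous structure to drop the order when the top symbol vanishes. The paper's proof is terser but contains exactly the same ingredients.
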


\begin{proof}
Since the transverse principal symbol of $P_1$ is holonomy invariant, we have
$$
\sigma(P_1)( s(\what\gamma), ^th_{\what\gamma, *}\eta) = \sigma(P_1)( r(\what\gamma), \eta).
$$
The result follows. If $P_1$ or $P_2$ is a scalar operator, then the commutator $[\sigma (P_1) (r(\what\gamma), \eta), \sigma (P_2)(\what\gamma, \eta)]$ vanishes and hence the principal symbol of $[P_1, P_2]$ has a zero $m_1+m_2$-homogeneous component with respect to our action $\lambda\cdot \eta$. Then since we only deal with one-step polyhomogeneous classical symbols, we have that 
$$
[P_1, P_2] \; \in \;  {\Psi'}^{m_1+m_2 -1 , -\infty}({\what M}, \what\maF\subset  \what\maF'; \what\maE).
$$
\end{proof}

\subsection{Transversal Beals-Greiner order}

\begin{definition}
A classical uniformly supported  pseudodifferential  operator $P$ from the Connes-Moscovici calculus ${\Psi'}^\infty ({\what M}, \what\maF'; \what\maE)$ is uniformly elliptic if  there exists a constant $C>0$ such that the principal symbol $\sigma (\hP)$ of $\hP$, a section over $T^*\what\maF'\oplus \hN^*$, satisfies the estimate 
$$
\vert < \sigma (\hP) (\hm, \xi) (u) , u> \vert \;\; \geq \;\; C <u, u >, \quad \forall u \in \what\maE_{\hm} \text{ and }\forall \xi\not=0.
$$
Such  operator will be called {\underline{uniformly transversely elliptic}} if we only impose this condition under the assumption $\xi\in \hV^*_{\hm}\oplus \hN_{\hm}^*\smallsetminus \{0\}$. 
\end{definition}
Recall that the principal symbol $\sigma (\hP)$ is $m$-homogeneous with respect to the scaling $\lambda\cdot\eta$ defined before. 
So, uniformly elliptic operators are uniformly transversely elliptic, but the class of uniformly transversely elliptic operators is more interesting. The transverse principal symbol of an operator $P$ from the Connes-Moscovici pseudodifferential calculus ${\Psi'}^m ({\what M}, \what\maF'; \what\maE)$ will then be the restriction of the principal symbol $\sigma (P)$ to the subbundle $\hV^*\oplus \hN^*$ of $T^*\what\maF' \oplus \hN^*$.
Following \cite{K97} and \cite{CM95}, we define the transverse order of a pseudodifferential operator in the Connes-Moscovici calculus ${\Psi'}^\infty ({\what M}, \what\maF'; \what\maE)$ as follows.

\begin{definition}
An order $\ell$ uniformly  supported classical operator $P$ in the Connes-Moscovici calculus ${\Psi'}^\ell ({\what M}, \what\maF'; \what\maE)$  has transversal order $m\leq \ell$  (with respect to $\what\maF$) if $P$ has order $m$ in some conic neighborhood $U_\ep=\{\vert\zeta\vert < \ep \vert\eta\vert'\}$ of the total space of the cotransverse subbundle $\what\nu^*$ to $\what\maF$ in the cotangent bundle $T^*{\what M}$, with the lengh function being as above given by $\vert\zeta\vert + \vert \eta\vert'$. 

We denote by ${\Psi'}^m(\what\nu ^*, \what\maE)$ (resp. ${\Psi'}_c^m(\what\nu ^*, \what\maE)$) the space of such uniformly  (resp. compactly) supported pseudodifferential operators with transversal order $\leq m$ in some conic neighborhood of $\what\nu^*$. 
\end{definition}

For the convenience of the reader, we give a proof of the following lemma which will be used  in the sequel. Recall the adapted Sobolev spaces of Appendix \ref{Sobolev}.

\begin{lemma}\label{TransversalOrder}
Assume that $P\in {\Psi'}^\ell (\what M, \what\maF'; \what\maE)$ is a  uniformly supported order $\ell$ operator which has transversal order $m\leq \ell$. Then the operator $P$ belongs to our pseudodifferential  class  ${\Psi'}^{m, \ell-m} ({\what M}, \what\maF\subset \what\maF'; \what\maE)$. In particular,  for every $s, k$, the operator $P$ extends to a bounded operator
$$
P_{s,k}: {\oH'}^{s,k} ({\what M}, \what\maF\subset \what\maF';\what\maE) \longrightarrow {\oH'}^{s-m,k-\ell+m} ({\what M}, \what\maF\subset \what\maF' ;\what\maE).
$$
\end{lemma}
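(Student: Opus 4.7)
The plan is to reduce to local model operators in distinguished bifoliation charts. Using a $C^\infty$-bounded locally finite partition of unity subordinate to a bounded-geometry cover of $\what M$, together with the uniform support of $P$, the problem is reduced to verifying the bifiltered inclusion for a single elementary operator on $\I^\om \simeq \I^p \times \I^v \times \I^n$, with all constants uniform in the chart thanks to the $C^\infty$-bounded geometry.

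In such a chart, $P$ is the Connes-Moscovici quantization of a classical symbol $p(x, y, \zeta, \eta)$ of order $\ell$, with $\eta = (\eta_v, \eta_n)$, and the transversal order hypothesis means that $p$ has order $m$ on a conic neighborhood $U_\ep = \{|\zeta| < \ep |\eta|'\}$ of $\what\nu^*$. Choose a smooth conic cutoff $\chi(\zeta, \eta)$ equal to $1$ on $U_{\ep/2}$ and supported in $U_\ep$ (for $|\eta|'$ large), and decompose $p = p_1 + p_2$ with $p_1 = \chi p$ and $p_2 = (1-\chi)p$. Then $p_1$ is a classical Connes-Moscovici symbol of order $m$, so
$$
P_1 \; \in \; {\Psi'}^m(\what M, \what\maF'; \what\maE) \; \subset \; {\Psi'}^{m, 0}(\what M, \what\maF \subset \what\maF'; \what\maE) \; \subset \; {\Psi'}^{m, \ell-m},
$$
using $\ell \geq m$ and the embeddings recorded just after Definition \ref{BigradedPseudo}.

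For $p_2$, supported where $|\zeta| \geq (\ep/2)|\eta|'$ so that $|\zeta|$ dominates $|\eta|'$, set $q(x, y, \zeta, \eta) := p_2(x, y, \zeta, \eta)(1+|\zeta|^2)^{-(\ell-m)/2}$; on the support of $p_2$ this is a classical Connes-Moscovici symbol of order $m$. Let $Q$ be its Connes-Moscovici quantization, and let $\Lambda$ be a uniformly supported realization of the leafwise $\what\maF$ operator $(I - \Delta_{\what\maF})^{(\ell-m)/2}$, obtained by truncating its Schwartz kernel on $\what\maG$ at the scale of the injectivity radius (the truncation error being a uniform smoothing operator). Then $\Lambda$ is a classical leafwise pseudodifferential operator of order $\ell-m$ along the leaves of $\what\maF$, so by the functoriality stated just after Definition \ref{BigradedPseudo}, $\Lambda \in {\Psi'}^{0, \ell-m}(\what M, \what\maF \subset \what\maF'; \what\maE)$. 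A direct calculation from the quantization \eqref{theta} gives the bifiltered amplitude of $Q \circ \Lambda$ as
$$
k(z, x, y, \sigma, \zeta, \eta) \; = \; q(x, y, \zeta, \eta) \, (1+|\sigma|^2)^{(\ell-m)/2},
$$
whose Beals-Greiner estimate factors as the order-$m$ Connes-Moscovici estimate for $q$ in $(\zeta, \eta)$ times the order-$(\ell-m)$ estimate for $(1+|\sigma|^2)^{(\ell-m)/2}$ in $\sigma$; hence $Q\Lambda \in {\Psi'}^{m, \ell-m}$. Since the leading Connes-Moscovici symbol of $Q\Lambda$ equals $q \cdot (1+|\zeta|^2)^{(\ell-m)/2} = p_2$ on the support of $p_2$, the remainder $P_2 - Q\Lambda$ has Connes-Moscovici order at most $\ell-1$ and still vanishes on a slightly smaller conic neighborhood of $\what\nu^*$. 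Iterating this factorization on the remainder and summing asymptotically in the bifiltered symbol class (the standard Borel construction, performed with uniformly $C^\infty$-bounded data) places $P_2$, and therefore $P$, in ${\Psi'}^{m, \ell-m}(\what M, \what\maF \subset \what\maF'; \what\maE)$.

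The Sobolev boundedness is then immediate from the general continuity of the bifiltered class ${\Psi'}^{m, \ell-m}$ on the anisotropic scale ${\oH'}^{\bullet,\bullet}$, proven in Appendix \ref{Sobolev} following \cite{K97} and extended to bounded geometry with uniform constants. The main obstacle is controlling the uniformity of constants in the asymptotic summation: the cutoffs $\chi$, the factorization $q = p_2(1+|\zeta|^2)^{-(\ell-m)/2}$, and the truncation defining $\Lambda$ must all be chosen uniformly across charts, which is precisely what the uniform support of $P$ and the $C^\infty$-bounded geometry hypothesis guarantee.
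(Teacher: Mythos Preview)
Your approach reaches the right conclusion but takes a substantially more circuitous route than the paper, and one step is misstated.

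The paper's argument is entirely at the level of a single symbol estimate. After the same reduction to a local chart, it simply verifies that the full symbol $p(x,y;\zeta,\eta)$ of $P$ obeys
\[
\bigl|\partial^\alpha_{\zeta,\eta}\partial^\beta_{x,y}p\bigr|\;\leq\;C_{\alpha,\beta}\,(1+|\zeta|+|\eta|')^{\,m-\langle\alpha\rangle}\,(1+|\zeta|)^{\,\ell-m},
\]
by a two-line case split: on $U_\ep$ one has the order-$m$ estimate and multiplies by $(1+|\zeta|)^{\ell-m}\geq 1$; off $U_\ep$ one has $1+|\zeta|+|\eta|'\leq(1+1/\ep)(1+|\zeta|)$, so the order-$\ell$ estimate immediately gives the displayed bound. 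One then views $p$ as a $(z,\sigma)$-independent bifiltered amplitude and concludes. There is no cutoff decomposition, no operator factorization, no iteration, and no asymptotic summation.

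Your argument introduces all four, and the factorization step is wrong as written: the amplitude $k(z,x,y,\sigma,\zeta,\eta)=q(x,y,\zeta,\eta)\,(1+|\sigma|^2)^{(\ell-m)/2}$ is \emph{not} the bifiltered amplitude of the composition $Q\circ\Lambda$; composing two pseudodifferential operators does not produce the product of their amplitudes. What is true---and what actually finishes your argument in one stroke---is that this $k$ quantizes \emph{exactly} to $P_2$ under formula~\eqref{theta}: the $z$-integral yields $(2\pi)^p\delta(\sigma-\zeta)$, so the $\sigma$-integral replaces $\sigma$ by $\zeta$ and one recovers the CM quantization of $q\,(1+|\zeta|^2)^{(\ell-m)/2}=p_2$. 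Since $k\in S'^{\,m,\ell-m}$ by the order-$m$ estimate on $q$ and the order-$(\ell-m)$ estimate on $(1+|\sigma|^2)^{(\ell-m)/2}$, this already shows $P_2\in{\Psi'}^{m,\ell-m}$ with no remainder and no iteration. The introduction of $\Lambda$, the claim about $Q\Lambda$, and the Borel summation are therefore all superfluous, and the asymptotic summation in particular adds a layer of uniformity bookkeeping that the paper's direct estimate avoids entirely.
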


\begin{proof}
Since $P$ is uniformly supported (and neglecting uniform smoothing operators for which the lemma is obvious), we may use a partition of unity argument to reduce the proof to the case of a local operator.   But then we know that the  total symbol $p$ of $P$ satisfies  estimates  in every (relatively compact) local chart from a uniform atlas as in the previous section, namely
$$
\vert\pa_{\zeta, \eta}^\alpha \pa_{x,y}^\beta p(x,y;\zeta,\eta) \vert \leq C_{\alpha, \beta}  (1+\vert \zeta\vert+\vert\eta\vert')^{\ell-<\alpha>}.
$$
We need to  check  that the total symbol $p$  satisfies the estimates
\begin{equation}\label{estimate}
\vert\pa_{\zeta, \eta}^\alpha \pa_{x,y}^\beta p(x,y;\zeta,\eta) \vert \leq C_{\alpha, \beta}  (1+\vert \zeta\vert+\vert\eta\vert')^{m-<\alpha>} (1+\vert\zeta\vert)^{\ell-m}.
\end{equation}
This is obviously true if $(x,y;\zeta,\eta)$ belongs to the conic neighborhood $U_\ep=\{\vert\zeta\vert < \ep \vert\eta\vert'\}$  where $P$ has the transverse order $m$, by simply using the fact that $\ell-m\geq 0$. Outside $U_\ep$ the estimate also holds easily by using the inequality 
$$
1+\vert \zeta\vert+\vert\eta\vert' \leq \left(1+\frac{1}{\ep}\right) \left(1+\vert\zeta\vert\right).
$$ 
Since any such symbol which satisfies the estimates \eqref{estimate}, defines by quantization an element of class ${\Psi'}^{m, \ell-m}$ by simply setting 
$$
a(s, x, y; \zeta, \eta, \sigma) := p(x,y;\zeta,\eta),
$$
the proof is complete. 
\end{proof}

\begin{lemma}\label{TraceClass}\
Assume that $m<-(v+2n)$ and that $\ell < -p$. Then any compactly supported operator $P$ in ${\Psi'}_c^{m, \ell} ({\what M}, \what\maF\subset\what\maF'; \what\maE)$ extends to a bounded operator  on the Hilbert space $L^2({\what M}; \what\maE)$, which is  trace class. 
\end{lemma}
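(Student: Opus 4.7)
The strategy is to factor $P$ as a product of two Hilbert--Schmidt operators on $L^2({\what M};\what\maE)$, which will place it in the trace class. This mimics the classical argument that a pseudodifferential operator of order strictly less than $-\dim X$ on a closed manifold $X$ is trace class; the novelty here is that the Beals--Greiner bi-filtered order replaces the usual scalar order, and the non-compactness of $\what M$ forces one to distinguish compactly supported from merely uniformly supported operators.

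\textbf{Hilbert--Schmidt threshold.} I first establish the auxiliary fact that any $Q\in{\Psi'}_c^{m',\ell'}({\what M},\what\maF\subset\what\maF';\what\maE)$ with $m'<-(v+2n)/2$ and $\ell'<-p/2$ is Hilbert--Schmidt. Via a uniform partition of unity this reduces to the elementary local operators \eqref{LocalOperator}, where one estimates $\|K_Q\|_{L^2}^2$ directly from the symbol bound \eqref{LocalSymbol} together with Plancherel. The decisive input is the anisotropic polar decomposition $(\eta_v,\eta_n)=(\lambda\omega_v,\lambda^2\omega_n)$ with volume element $\lambda^{v+2n-1}\,d\lambda\,d\omega$, which makes $\int_{\R^q}(1+|\eta|')^{2m'}\,d\eta<\infty$ precisely when $2m'<-(v+2n)$; the analogous leafwise bound requires $2\ell'<-p$. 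Compact support of $Q$ then turns the resulting pointwise kernel estimate into $\|K_Q\|_{L^2({\what M}\times{\what M})}<\infty$.

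\textbf{Factorization.} Choose $\chi,\chi'\in C_c^\infty({\what M})$ equal to $1$ on a neighborhood of the compact projections of $\operatorname{supp}K_P$, so that $P=\chi P\chi'$. Pick an invertible positive elliptic operator $\Lambda$ in the calculus of bi-order $(-m/2,-\ell/2)$ (existence follows by complex powers of a standard positive elliptic generator of bi-order $(2,2)$, built from the Beals--Greiner-type transverse and leafwise Laplacians adapted to the pair $\what\maF\subset\what\maF'$), and let $\Lambda'\in{\Psi'}^{m/2,\ell/2}$ be a parametrix satisfying $\Lambda\Lambda'=I+R$ with $R\in\Psi^{-\infty}({\what M};\what\maE)$. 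Then
$$
P\;=\;\chi P\chi'\;=\;(\chi P\Lambda)(\Lambda'\chi')\;-\;\chi PR\chi'.
$$
The factor $\chi P\Lambda\in{\Psi'}_c^{m/2,\ell/2}$ is compactly supported (composing a compactly supported operator with a uniformly supported one stays compactly supported, by the Hopf--Rinow compactness of closed balls in the bounded-geometry manifold $\what M$), and with $m/2<-(v+2n)/2$ and $\ell/2<-p/2$ it is HS by the previous paragraph. For the factor $\Lambda'\chi'$: although $\Lambda'$ is only uniformly supported, the product $\Lambda'\chi'$ has kernel supported in $B(\operatorname{supp}\chi',R_0)\times\operatorname{supp}\chi'$ where $R_0$ is the propagation of $\Lambda'$, a compact set again by Hopf--Rinow; so $\Lambda'\chi'\in{\Psi'}_c^{m/2,\ell/2}$ is HS for the same reason. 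The remainder $\chi PR\chi'$ is compactly supported smoothing of arbitrarily negative bi-order, hence trace class directly. Thus $P$ is trace class.

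\textbf{Main obstacle.} The non-compactness of $\what M$ is the principal subtlety: a uniformly supported pseudodifferential operator of HS bi-order need \emph{not} be Hilbert--Schmidt on $L^2({\what M})$, because its kernel fails to be integrable over the non-compact diagonal, so the classical closed-manifold trick of writing $P$ against $(1+\Delta)^{-s}$ for $s$ large does not transfer verbatim. The device that makes the factorization work is squeezing the parametrix $\Lambda'$ between the compactly supported cutoffs $\chi,\chi'$, relying on the bounded-geometry feature that closed metric balls are compact so that ``compactly supported times uniformly supported is compactly supported''. A secondary verification is the existence of $\Lambda$ with prescribed real bi-order inside the bifiltered calculus, which is standard for elliptic positive generators via complex powers.
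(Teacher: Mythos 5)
Your overall strategy is the paper's own: first establish that compactly supported operators of bi-order $(m',\ell')$ with $m'<-(v+2n)/2$ and $\ell'<-p/2$ are Hilbert--Schmidt (same reduction to elementary local operators, same anisotropic count in which $v+2n$ plays the role of the transverse homogeneous dimension and $p$ the leafwise one), and then write $P$ as a product of two such operators, so that $P$ is trace class. The skeleton of the argument therefore matches the paper's proof.

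The one step I would not accept as written is the production of the auxiliary operator $\Lambda$. You assert the existence of an invertible positive elliptic element of bi-order $(-m/2,-\ell/2)$ of the bifiltered calculus via complex powers of a generator of bi-order $(2,2)$, together with a parametrix $\Lambda'$ in the calculus with $\Lambda\Lambda'=I+R$, $R\in\Psi^{-\infty}$. In this paper the classes ${\Psi'}^{m,\ell}$ are by definition finite-propagation (uniformly supported) operators modulo uniform smoothing operators, and the uniform smoothing operators are themselves required to have finite propagation; complex powers of an elliptic operator on a non-compact manifold do not have finite propagation, and no parametrix construction for the $(m,\ell)$-bifiltration is developed in the paper, so neither $\Lambda$ nor $\Lambda'$ is available off the shelf, and justifying them would require substantial extra machinery. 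The paper avoids this entirely: after using the compact support of $P$, it works with the exact local model operators $A_{m',\ell'}=(I+\Delta_{\R^{p+v}}^2+\Delta_{\R^n})^{m'/4}(I+\Delta_{\R^p})^{\ell'/2}$, which are Fourier multipliers in the chart and hence exactly inverse to $A_{-m',-\ell'}$, and writes $P=(P\,A_{m',\ell'})(M_\psi A_{-m',-\ell'}M_\varphi)$ with compactly supported cutoffs chosen so that $P=P\,M_\varphi$ and $P\,A_{m',\ell'}=P\,A_{m',\ell'}M_\psi$; both factors are then compactly supported of half the bi-order, hence Hilbert--Schmidt, and no remainder term arises at all. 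Since in your argument $\Lambda$ and $\Lambda'$ only ever occur multiplied by the compactly supported cutoffs $\chi,\chi'$, you can substitute the paper's local device verbatim and your proof closes; but as stated, the existence of $\Lambda,\Lambda'$ inside this particular calculus is a genuine gap rather than a routine remark.
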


\begin{proof}\
That $P$ extends to a bounded operator has already been proven for $m\leq 0$ and $\ell\leq 0$ and even for $P$ uniformly supported. So we need to prove the trace-class property. We shall first prove that if $m<-(v+2n)/2$ and $\ell < -p/2$, then any compactly supported operator in  ${\Psi'}_c^{m, \ell} ({\what M}, \what\maF\subset\what\maF'; \what\maE)$ yields a Hilbert-Schmidt operator on the Hilbert space $L^2({\what M}; \what\maE)$. Using again a partition of unity argument, we can reduce to the local situation and the operator is associated with a symbol $k(z, x, y; \zeta, \eta, \sigma)$ satisfying the $(m,\ell)$ pseudodifferential estimates. Its Schwartz kernel is then given by 
$$
K(x,y;x',y') = (2\pi)^{-2p-q} \int e^{i[(x-x'-z)\zeta+(y-y')\eta+z\sigma]} k(z, x, y; \zeta, \eta, \sigma) dz d\sigma d\zeta d\eta,
$$
so we need to show that $\dd \int \vert K(x,y;x',y')\vert ^2 dx dy dx' dy' < +\infty$. Using the pseudodifferential estimates, for $M$ and $N$ large enough,
$$
\vert \partial_z^M \partial_{x, y}^N k(z, x, y; \zeta, \eta, \sigma)\vert \leq C_{ M, N} (1+\vert \xi\vert')^m (1+\vert\sigma\vert)^\ell.
$$
Then, we easily deduce the existence of a constant $C\geq 0$ depending on the parameters such that 
\begin{multline*}
\int \vert K(x,y;x',y')\vert ^2 dx dy dx' dy' \leq C \int (1+ \vert\xi-\xi'\vert)^{-N} (1+\vert\sigma-\zeta\vert)^{-M}  \\(1+\vert\sigma'-\zeta'\vert)^{-M'}
(1+\vert \xi\vert')^m (1+\vert\sigma\vert)^\ell (1+\vert \xi'\vert')^m (1+\vert\sigma'\vert)^\ell \; d\sigma d\sigma' d\xi d\xi'.
\end{multline*}
Using Petree's inequality, one deduces the existence of a constant $C'\geq 0$ such that
$$
\int \vert K(x,y;x',y')\vert ^2 dx dy dx' dy' \leq C' \int (1+ \vert \xi-\xi'\vert)^{-N+\vert m\vert+ \vert\ell\vert} (1+\vert\zeta\vert)^{2\ell} (1+\vert\xi\vert')^{2m} d\xi' \,d\xi.
$$
Therefore, it remains to show that $\int (1+\vert\zeta\vert)^{2\ell} (1+\vert\xi\vert')^{2m} d\xi <+\infty$. 
But,
\begin{eqnarray*}
\int (1+\vert\zeta\vert)^{2\ell} (1+\vert \zeta\vert + \vert \eta\vert')^{2m} d\zeta d\eta  & = & \int (1+\vert\zeta\vert)^{2\ell+2m} \left( 1+\frac{\vert \eta\vert'}{1+\vert\zeta\vert} \right)^{2m} d\zeta d\eta\\
& = & \int (1+\vert\eta\vert')^{2m} d\eta \; \times \; \int (1+\vert\zeta\vert)^{2m+2\ell+v+2n} d\zeta.
 \end{eqnarray*}
The integral $\dd \int (1+\vert\eta\vert')^{2m} d\eta$ behaves like the integral $\dd \int (1+{\vert\eta\vert'}^4)^{m/2} d\eta$ and one has
\begin{eqnarray*}
\int (1+{\vert\eta\vert'}^4)^{m/2} d\eta & = & \int (1+\vert\eta_v\vert^4+\vert\eta_n\vert^2)^{m/2} d\eta_v d\eta_n\\
&= & \int (1+\vert\eta_v\vert^4)^{m/2} \left(1+\left\vert \frac{\eta_n}{(1+\vert\eta_v\vert^4)^{1/2}}\right\vert^2\right)^{m/2} d\eta_v d\eta_n\\
& = & \int (1+\vert\eta_v\vert^4)^{m/2+n/2)} d\eta_v \; \times \; \int (1+\vert\eta_n\vert^2)^{m/2} d\eta_n.
\end{eqnarray*}
Therefore, it converges if and only if $-2m-2n > v$ and $-m> n$, i.e. $m < -(n + v/2)$. 
If we also assume that $\ell <-p/2$, we have 
$$
2m+2\ell+v+2n < -(2n+v)-p + v + 2n = -p.
$$
Therefore, $\dd \int (1+\vert\zeta\vert)^{2m+2\ell+v+2n} d\zeta < +\infty$. 

If  $m<-(v+2n)$ and $\ell < -p$, then classical arguments show that there exist $P_1$ and $P_2$ in ${\Psi'}_c^{m_1, \ell_1} ({\what M}, \what\maF\subset\what\maF'; \what\maE)$ such that $P=P_1P_2$, with $m_1<-(v+2n)/2$ and $ \ell_1 <-p/2$. This can be justified by reducing to the local picture again and by using the powers, as pseudodifferential operators in our calculus, of the local Lapacian operators to get $L^2$-invertible operators of any bi-order $(m', \ell')$ as follows. In local coordinates we may consider the operators
$$
A_{m', \ell'} := (I+\Delta_{\R^{p+v}}^2 + \Delta_{\R^n})^{m'/4} \; (I+\Delta_{\R^p})^{\ell'/2} \;\; \in {\Psi'}^{m', \ell'}.
$$
Since $P$ is compactly supported, we can in particular find a smooth compactly supported function $\varphi$  on $\what M$ such that
$$
P \; =  P \, M_{\varphi}.
$$
Moreover, we can find a smooth compactly supported function $\psi$ on $\what M$ such that $P\, A_{m', \ell'} = P\, A_{m', \ell'}\, M_\psi$. Now we can write 
$$
P = (P\; A_{m', \ell'}) \; (M_\psi \; A_{-m', -\ell'} \; M_\varphi).
$$
Choosing appropriately $m'$ and $\ell'$, we obtain the claimed decomposition $P=P_1P_2$. Therefore, using the corresponding bounded operators on the Hilbert space $L^2 ({\what M}; \what\maE)$, we see that $P$ is the composition of two Hilbert-Schmidt operators, and hence that $P$ is a trace-class operator.
\end{proof}

\begin{corollary}
If $P\in {\Psi'}_c^{m,\ell} ({\what M}, \what\maF\subset\what\maF'; \what\maE)$ is compactly supported with $m < 0$ and $\ell < 0$, then $P$ extends to a compact operator on $L^2  ({\what M}; \what\maE)$. 
\end{corollary}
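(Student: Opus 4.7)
I will deduce compactness of $P$ on $L^2$ from a Rellich--Kondrachov-type argument, combining the compact support of $P$ with the fact that $m,\ell < 0$ forces $P$ to land in a Sobolev space of strictly positive bi-regularity.

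By the Sobolev mapping property of the bifiltered calculus (a direct extension of Lemma~\ref{TransversalOrder} and the Sobolev machinery of Appendix~\ref{Sobolev}), the operator $P \in {\Psi'}_c^{m,\ell}({\what M}, \what\maF\subset\what\maF'; \what\maE)$ extends to a bounded linear map
\[
P \,:\, L^2({\what M}; \what\maE) \,\longrightarrow\, {\oH'}^{-m,-\ell}({\what M}; \what\maE),
\]
whose target Sobolev space has both regularity indices strictly positive. Since the Schwartz kernel $K_P$ is compactly supported, I pick $\chi \in C_c^\infty({\what M})$ equal to $1$ on the two projections of $\Supp K_P$ to ${\what M}$ and set $K := \Supp \chi$. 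Then $P = M_\chi P M_\chi$, so $P$ factors as
\[
L^2({\what M}; \what\maE) \,\xrightarrow{\,M_\chi P M_\chi\,}\, \bigl\{u \in {\oH'}^{-m,-\ell}({\what M};\what\maE) \,:\, \Supp u \subset K\bigr\} \,\hookrightarrow\, L^2({\what M};\what\maE);
\]
the first arrow is bounded, and it suffices to show that the second (inclusion) is compact.

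Compactness of this inclusion is a Rellich--Kondrachov statement on the compact set $K$. Under the Beals--Greiner scaling $\lambda\cdot \eta = (\lambda\eta_v, \lambda^2\eta_n)$, the bifiltered weight $(1+|\eta|')^{-m}(1+|\sigma|)^{-\ell}$ still dominates, up to constants, a strictly positive classical Sobolev weight in every direction. Concretely, for $K$-supported sections the ${\oH'}^{-m,-\ell}$-norm majorizes a classical Sobolev $H^\alpha$-norm for some $\alpha = \alpha(m,\ell) > 0$ (for instance $\alpha = \min(-m/2, -\ell)$), and the standard Rellich--Kondrachov theorem on the compact set $K$ then furnishes the required compact embedding of $K$-supported $H^\alpha$-sections into $L^2(K)$. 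Assembling, $P$ is a composition of bounded operators with a compact embedding, and hence compact on $L^2({\what M};\what\maE)$.

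The main obstacle is the bifiltered-to-classical Sobolev embedding for $K$-supported sections. In the mixed scaling of the bifiltered calculus the $\eta_n$-direction is weighted with half the classical weight, so one has to verify on the Fourier side, in a local chart from our $C^\infty$-bounded atlas, that strict positivity of the bi-indices $(-m,-\ell)$ yields a strictly positive number of classical derivatives in every transverse direction. This is a Peetre-type comparison of weights, done uniformly across the atlas, and is insensitive in an essential way to the compact-support or bounded-geometry restrictions.
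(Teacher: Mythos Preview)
Your argument is correct but follows a different strategy from the paper's. You use a Rellich--Kondrachov embedding: the Sobolev mapping property (Proposition~\ref{SobolevBound}) sends $L^2$ into ${\oH'}^{-m,-\ell}$, the compact support localizes the range, and then the inclusion of compactly supported ${\oH'}^{-m,-\ell}$-sections into $L^2$ is compact. The last step is in fact already available from Remark~\ref{Comparison}, which gives ${\oH'}^{s,k}\subset {\oH}^{s/2}$ for $s,k\geq 0$, so classical Rellich--Kondrachov applies with $\alpha=-m/2$; you could cite this directly rather than sketch a Peetre-type comparison. The paper instead argues spectrally: it observes that $(P^*P)^N\in {\Psi'}_c^{2mN,2\ell N}$, invokes Lemma~\ref{TraceClass} to conclude that $(P^*P)^N$ is trace-class for $N$ large, and then uses the spectral theorem and polar decomposition to deduce that $P$ itself is compact. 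Your route is more geometric and does not require the trace-class Lemma~\ref{TraceClass}; the paper's route is shorter given that Lemma~\ref{TraceClass} is already in hand and avoids any discussion of Rellich embeddings for the bifiltered scale.
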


\begin{proof}
Using Remark \ref{Comparison} of the Appendix, we deduce that $P$ extends to a bounded operator on the Hilbert space  $L^2  ({\what M}; \what\maE)$. Moreover, the operator $(P^*P)^N$ belongs to ${\Psi'}_c^{2mN, 2\ell N} ({\what M}, \what\maF\subset\what\maF'; \what\maE)$ and hence for $N$ large enough and using Lemma \ref{TraceClass}, the operator $(P^*P)^N$ is a bounded trace-class operator which is therefore a compact operator. Using the spectral theorem together with the polar decomposition shows that $P$ is itself a compact operator on the Hilbert space $L^2  ({\what M}; \what\maE)$. 
\end{proof}

The following corollary will be used in the sequel. 

\begin{corollary}\label{Traceability}\
Assume that $P\in {\Psi'}^{m,-\infty} ({\what M}, \what\maF\subset\what\maF'; \what\maE)$. Then
\begin{enumerate}
\item If $m\leq 0$ then the operator $P$ extends to a bounded operator on $L^2 ({\what M}; \what\maE)$.
\item If $m < -(v+2n)$ (resp. $m < 0$) and $P$ is compactly supported, then the bounded extension of $P$ to  $L^2 ({\what M}; \what\maE)$ is a trace-class operator (resp. a compact operator). 
\end{enumerate}
\end{corollary}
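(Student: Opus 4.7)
The plan is simply to unpack the definition
\[
{\Psi'}^{m,-\infty}({\what M}, \what\maF\subset\what\maF'; \what\maE) \;=\; \bigcap_{\ell\in \Z}{\Psi'}^{m,\ell}({\what M}, \what\maF\subset\what\maF'; \what\maE)
\]
and, for each of the three claims, to select a suitable value of $\ell$ that reduces the statement to a result proved earlier for the bifiltered class ${\Psi'}^{m,\ell}$. So there is really no new analytic input, only bookkeeping on the bi-indices.

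For item (1), I would take $\ell = 0$. Then $P \in {\Psi'}^{m,0}({\what M}, \what\maF\subset\what\maF'; \what\maE)$ with $m\leq 0$, and the $L^2$-boundedness of uniformly supported operators in this subclass has already been established (it is precisely the boundedness regime recalled at the start of the proof of Lemma \ref{TraceClass}, and follows locally from the Calder\'on--Vaillancourt type statement underlying Lemma \ref{TransversalOrder}). For the trace-class part of item (2), since $P$ is compactly supported we have $P \in {\Psi'}_c^{m,-\infty} \subset {\Psi'}_c^{m,\ell}$ for every $\ell$; in particular I would pick any $\ell < -p$, so that $m < -(v+2n)$ and $\ell < -p$ simultaneously, and invoke Lemma \ref{TraceClass} directly. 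For the compactness part, I would analogously pick any $\ell < 0$, whence $P\in {\Psi'}_c^{m,\ell}$ with both $m<0$ and $\ell<0$, and apply the preceding Corollary (which used $(P^*P)^N \in {\Psi'}_c^{2mN,2\ell N}$ and polar decomposition) to conclude compactness.

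There is no serious obstacle here: the only point that requires a moment of care is that the splitting arguments in the previous results (compact support, partition of unity, factorization through powers of local Laplacians) pass through unchanged because the hypothesis $P\in {\Psi'}^{m,-\infty}$ is stronger than any single ${\Psi'}^{m,\ell}$ hypothesis we need to invoke. Thus the corollary is a formal consequence of Lemma \ref{TraceClass} and its preceding corollary, obtained by choosing $\ell$ as small as the ambient statement requires.
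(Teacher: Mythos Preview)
Your proposal is correct and matches the paper's approach: the paper gives no explicit proof for this corollary, treating it as an immediate consequence of the intersection definition ${\Psi'}^{m,-\infty}=\bigcap_\ell {\Psi'}^{m,\ell}$ together with Lemma~\ref{TraceClass} and the compactness corollary preceding it, exactly via the bookkeeping you describe.
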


\subsection{Transversely elliptic Connes-Moscovici operators}

Recall that the Hermitian bundle $\what\maE$ is assumed to be holonomy equivariant with respect to the foliation $\what\maF$ with the unitary action $W_{\what\gamma}: \what\maE_{s(\what\gamma)}\to \what\maE_{r(\what\gamma)}$ for any $\what\gamma\in \what\maG$.  
Then the representation  $\pi$ of the algebra $C_u^\infty (\what\maG)$ of smooth uniformly supported functions for the monodromy groupoid $\what\maG$ of $({\what M}, \what\maF)$ is  involutive.   For $\xi\in L^2({\what M}; \what\maE)$, it is given by
$$
[\pi (k) \xi] (\hm) := \int_{\what\gamma \in  \what\maG^{\hm}}  k(\what\gamma) \; W_{\what\gamma} [\xi (s(\what\gamma)] \; d\what\eta^{\hm} (\what\gamma).
$$

\begin{lemma}\label{pi(k)}
For any $k\in C_u^\infty (\what\maG)$, the operator $\pi (k)$ belongs to ${\Psi'}^{0, -\infty} ({\what M}, \what\maF\subset \what\maF'; \what\maE)$. Moreover, if $k\in C_c^\infty (\what\maG)$ is compactly supported then $\pi (k)$ belongs to ${\Psi'}_c^{0, -\infty} ({\what M}, \what\maF\subset \what\maF'; \what\maE)$.
\end{lemma}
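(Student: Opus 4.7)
The strategy is to localize $\pi(k)$ in a uniform atlas of distinguished foliation charts that are compatible with the bifoliation $\what\maF\subset\what\maF'$, compute its Schwartz kernel in such charts, and show that it arises from an $\eta$-independent symbol of class ${S'}^{0,-\infty}$. The compatibility with the calculus ${\Psi'}^{0,-\infty}$ then follows from Proposition \ref{m,-infty}.

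First I would fix a $C^\infty$-bounded partition of unity $(\varphi_i)$ subordinate to a uniform open cover $(\hU_i)$ of $\what M$ by distinguished charts for $\what\maF$ compatible with $\what\maF'$, so each $\hU_i$ is identified with $\I^p\times\I^v\times\I^n$. The bounded geometry hypothesis ensures that these charts can be chosen with uniformly bounded diameters and with uniform bounds on all changes of coordinates. Writing
$$
\pi(k) \;=\; \sum_{i,j} M_{\varphi_j}\,\pi(k)\,M_{\varphi_i},
$$
the fact that $k$ is uniformly supported in some tube $B(\what M, R)$ together with the uniform geometry on $\what\maG$ guarantees that only pairs $(i,j)$ with $d(\hU_i,\hU_j)\leq R'$ contribute, and the number of such $j$ for fixed $i$ (and vice versa) is uniformly bounded. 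This reduces the statement to a uniform local statement, plus a finite propagation argument.

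Next I would fix such a pair $(i,j)$ and analyze the piece $\pi_{ij}(k)=M_{\varphi_j}\pi(k)M_{\varphi_i}$. Its Schwartz kernel is supported on the part of $\what\maG$ lying over $\hU_j\times\hU_i$, which decomposes into a finite disjoint union of connected components indexed by homotopy classes $\what\gamma\in\what\maG_{\hU_i}^{\hU_j}$. Each component is a bisection of $\what\maG$ and, by the very definition of a groupoid chart for the bifoliation, can be trivialized by coordinates $(x,y)\in\I^p\times\I^q$ on $\hU_i$ and $(x',y')\in\I^p\times\I^q$ on $\hU_j$ in such a way that the local diffeomorphism of plaques induced by $\what\gamma$ sends the plaque $\{y=y_0\}$ of $\hU_i$ to the plaque $\{y'=y_0\}$ of $\hU_j$ (we absorb the holonomy diffeomorphism $\tau$ into the choice of transverse coordinate on $\hU_j$, which is legitimate since the $y$-coordinate was only fixed up to a plaque-preserving diffeomorphism). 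In these coordinates, and trivializing $\what\maE$ over both charts, the component kernel takes the form
$$
K_{\what\gamma}(x',y';x,y) \;=\; \tilde k_{\what\gamma}(x,x',y)\,\delta(y'-y)\, J_{\what\gamma}(y),
$$
where $\tilde k_{\what\gamma}$ is the smooth function on the component obtained by reading $k$ in the chart and $J_{\what\gamma}(y)$ is a smooth matrix-valued factor incorporating the Jacobian of the leafwise measure and the unitary holonomy $W_{\what\gamma}$ on $\what\maE$.

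Writing $\delta(y'-y)=(2\pi)^{-q}\int e^{i(y'-y)\eta}\,d\eta$ exhibits this kernel as the one produced by the direct quantization formula of Proposition \ref{m,-infty} from the symbol
$$
a_{\what\gamma}(x,x',y,\eta) \;:=\; \tilde k_{\what\gamma}(x,x',y)\, J_{\what\gamma}(y),
$$
which is independent of $\eta$. For a multi-index $\alpha\neq 0$ we have $\partial_\eta^\alpha a_{\what\gamma}=0$, while for $\alpha=0$ the quantity $\partial^\beta_{x,x',y}a_{\what\gamma}$ is bounded by a constant $C_\beta$, independently of the chart and of the component, by the $C^\infty$-boundedness of $k$ on $\what\maG$ together with the bounded geometry of the atlas and of the bundle $\what\maE$. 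This is precisely the symbol estimate
$$
\bigl\|\partial_\eta^\alpha\partial^\beta_{x,x',y} a_{\what\gamma}(x,x',y,\eta)\bigr\| \;\leq\; C_{\alpha,\beta}(1+|\eta|')^{-<\alpha>},
$$
valid for every $m\leq 0$ (in particular for $m=0$), so $a_{\what\gamma}\in{S'}^{0,-\infty}$ with uniform constants. Summing the finitely many components contributing to $\pi_{ij}(k)$ and then assembling the locally finite family $(\pi_{ij}(k))_{i,j}$ produces an element of ${\Psi'}^{0,-\infty}(\what M,\what\maF\subset\what\maF';\what\maE)$ by Definitions \ref{BigradedPseudo} and \ref{Psi}. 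If $k\in C_c^\infty(\what\maG)$, then only finitely many $(i,j)$ contribute and the resulting kernel is compactly supported in $\what M\times\what M$, yielding membership in ${\Psi'}_c^{0,-\infty}$.

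The main technical point is the uniformity of the symbol estimates across the bounded-geometry manifold: one must check that the trivialization of $\what\maG_{\hU_i}^{\hU_j}$ absorbing the holonomy into a relabeling of the transverse coordinate can be chosen with $C^\infty$-bounded transition functions, which follows from the bounded geometry of $(\what M,\what\maF)$ and its holonomy pseudogroup. Once this is in place, the remainder of the argument is the routine verification that a symbol constant in the cotangent variable lies in ${S'}^{0,-\infty}$.
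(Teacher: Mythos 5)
Your proposal is correct and follows essentially the same route as the paper's proof: localize $\pi(k)$ to groupoid charts where the transverse coordinate is shared between source and range charts, observe that the kernel is the quantization of an $\eta$-independent symbol (via Fourier inversion in the transverse variable), and note the uniform symbol estimates follow from the $C^\infty$-boundedness of $k$, with compact support of $k$ giving the compactly supported class. Your version merely spells out the partition-of-unity and holonomy bookkeeping that the paper compresses into ``we may reduce to the local picture.''
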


\begin{proof}
As above, we may  reduce to the local picture and  assume that the support of $k$ is contained in an open set $(\hV, \what\gamma_0, \hV') \simeq \I^p\times \I^p\times \I^q$, where $\hV$ and $\hV'$ are charts for the foliation in ${\what M}$ and $\what\gamma_0\in \what\maG_{\hV}^{\hV'}$, \cite{ConnesIntegration}. We denote by $\hm_0=s(\what\gamma_0)\in \hV$ and $\hm'_0=r(\what\gamma_0)\in \hV'$ the source and range of the class $\what\gamma_0$. We can also assume that the bundle $\what\maE$ is trivilized as $\hV\times \C^a$ over $\hV$. Using the coordinates  $(x', x, y)\in \I^p\times \I^p\times \I^q$, where $(x,y) \in \hV$ and $(x',y) \in \hV'$, $\pi (k) : C_c^\infty (\hV, \C^a) \rightarrow C_c^\infty (\hV', \C^a)$ is given by
$$
\pi (k) (u) (x',y) = \int_{\I^p} k(x', x, y)  u(x,y) dx.
$$
Viewing the smooth uniformly  supported and smoothly bounded function $k$ on $(V, \gamma_0, V')$ as a function on $\simeq \I^p\times \I^p\times \I^q$, we see that it satisfies the estimate for a symbol of class $(0, -\infty)$ (it does not depend on the convector variable $\eta$).  For fixed $x\in \I^p$, and writing
$$
u(x,y)=\frac{1}{(2\pi)^q} \int_{\I^q\times \R^q} u(x, y') e^{i(y-y')\eta} dy' d\eta,
$$
we see that $\pi (k)$ is a pseudo differential operator of class ${\Psi'}^{(0,-\infty)}$ and in fact also $\Psi^{0, -\infty}$. If $k$ is compactly supported then obviously $\pi(k)$ is a finite sum of compactly supported operators of class  ${\Psi'}^{(0,-\infty)}$ and hence is compactly supported. 
\end{proof}

The above representation $\pi$ will only be  used for $k\in \maA=C_c^\infty (\what\maG)$.
The restriction of the principal symbol of a uniformly transversely elliptic operator to some punctured conic neighborhood of the cotransverse subbundle $\what\nu^*$  to $\what\maF$ as before, is invertible with uniformly bounded inverse. Applying the  parametrix construction (see \cite{Hormander}), we thus get

\begin{theorem}\label{Parametrix}\
Let $P\in {\Psi'} ^\ell({\what M}, \what\maF'; \what\maE)$ be a uniformly supported pseudodifferential operator which is uniformly transversely elliptic with respect to $\what\maF$. Then there exists $Q \in {\Psi'} ^{-\ell} ({\what M}, \what\maF'; \what\maE)$  such that
$$
R=I-QP \text{ and  }S=I-PQ \;\; \in  {\Psi'} ^{-\infty} (\what\nu ^*, \what\maE) \cap {\Psi'} ^0({\what M}, \what\maF'; \what\maE).
$$
\end{theorem}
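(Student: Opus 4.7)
The plan is to follow the classical Seeley--H\"ormander parametrix construction, but carried out only along the cotransverse directions so that the remainder gains transverse order, not order in the full Connes--Moscovici sense. All iterations have to be carried out with uniform bounds in order to stay in the uniformly supported class on our bounded-geometry bifoliation.

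First, using the uniform transverse ellipticity of $P$, on some open conic neighborhood $U_\epsilon=\{\vert\zeta\vert<\epsilon\vert\eta\vert'\}$ of $\what\nu^*\smallsetminus 0$ the principal symbol $\sigma(P)$ is invertible with uniformly bounded inverse. Choose a cutoff function $\chi$ on $T^*\hM$ which is homogeneous of degree $0$ (for the scaling $\lambda\cdot\xi$) outside a compact set in the fibers, supported in $U_\epsilon$ and identically equal to $1$ on a smaller conic neighborhood $U_{\epsilon'}\supset\what\nu^*\smallsetminus 0$. Define the principal symbol $q_{-\ell}:=\chi\cdot\sigma(P)^{-1}$, which is a classical, $C^\infty$-bounded Connes--Moscovici symbol of order $-\ell$, and let $Q_0\in {\Psi'}^{-\ell}(\hM,\what\maF';\what\maE)$ be any uniformly supported quantization. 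By the composition formula of the Connes--Moscovici calculus recalled in the excerpt, the principal symbol of $T_0:=I-Q_0P\in{\Psi'}^0(\hM,\what\maF';\what\maE)$ equals $(I-\chi)$ on $\hV^*\oplus\hN^*$, so $T_0$ has vanishing top symbol in a conic neighborhood of $\what\nu^*$; hence $T_0\in{\Psi'}^{-1}(\what\nu^*,\what\maE)$.

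Next, I would run the standard geometric-series iteration in the transverse order filtration. The key symbolic computation is that composition of operators in $\Psi'^m(\what\nu^*,\what\maE)$ with Connes--Moscovici operators respects the transverse order, so $T_0^k\in{\Psi'}^{-k}(\what\nu^*,\what\maE)\cap{\Psi'}^0(\hM,\what\maF';\what\maE)$. Taking a Borel-type asymptotic sum (carried out in each local chart from a fixed uniformly locally finite atlas, and reglued with a $C^\infty$-bounded partition of unity so uniform bounds persist), I would produce $Q\in{\Psi'}^{-\ell}(\hM,\what\maF';\what\maE)$ with
\[
Q\sim\Bigl(\sum_{k\ge 0}T_0^k\Bigr)Q_0,
\]
in the sense that for every $N$ the truncation differs from $Q$ by an element of ${\Psi'}^{-\ell-N}(\what\nu^*,\what\maE)$. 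Then $R:=I-QP$ has transverse order $-\infty$ while remaining of Connes--Moscovici order $0$, which is precisely ${\Psi'}^{-\infty}(\what\nu^*,\what\maE)\cap{\Psi'}^0(\hM,\what\maF';\what\maE)$.

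A symmetric construction on the right produces $Q'\in{\Psi'}^{-\ell}(\hM,\what\maF';\what\maE)$ with $I-PQ'$ in the same intersection class. Multiplying $I-PQ'=S'$ on the left by $Q$ and using $QP=I-R$ yields $Q-Q'\in{\Psi'}^{-\infty}(\what\nu^*,\what\maE)$, so the same $Q$ serves as both a left and a right parametrix, and we may set $S:=I-PQ$. The main technical obstacle is bookkeeping: one must verify that the Borel summation preserves uniform supports and uniform $C^\infty$-bounds on the symbol estimates across the whole bounded-geometry manifold, which is ensured by choosing local representatives with propagation bounded independently of the chart and by exploiting the finite multiplicity of our distinguished atlas, together with Remark \ref{rhodelta} to control the ${\Psi'}^{m,\ell}$-inclusions under composition. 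Once this uniformity is secured, the algebraic manipulations are identical to the classical parametrix argument in \cite{K97,CM95,Hormander}.
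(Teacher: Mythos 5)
Your proposal is correct and is essentially the same argument the paper intends: the paper's proof is a one-line appeal to "the classical construction of the parametrix away from the characteristic variety" (citing H\"ormander), and your microlocal cutoff near $\what\nu^*$, Neumann-series iteration in the transverse-order filtration with uniform (bounded-geometry) asymptotic summation, and the standard left/right parametrix comparison are precisely the details being invoked.
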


\begin{proof}
This is the classical construction of the parametrix away from the characteristic variety of the pseudodifferential operator $P$. See for instance \cite{Hormander}. 
\end{proof}

We can now deduce the following.

\begin{corollary}\label{TraceParametrix}\
Let $P\in {\Psi'}^\ell ({\what M}, \what\maF'; \what\maE)$ be a  uniformly supported pseudodifferential operator from the Connes-Moscovici calculus associated with the foliation $\what\maF'$. Assume that $P$ is uniformly transversely elliptic with respect to $\what\maF$ and let $Q \in {\Psi'}^{-\ell} ({\what M}; \what\maF'; \what\maE)$ be a parametrix as in Theorem \ref{Parametrix}. Then for any $k\in C_c^\infty (\what\maG)$ the operators $\pi (k) (I-QP)$ and $\pi (k) (I-PQ)$ are trace-class operators in the Hilbert space $L^2({\what M}, \what\maE)$.
\end{corollary}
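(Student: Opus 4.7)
The plan is to combine the three main ingredients already developed in this section: the parametrix statement of Theorem \ref{Parametrix}, the conversion of transversal order into the bifiltered class provided by Lemma \ref{TransversalOrder}, and the description of $\pi(k)$ in the bifiltered calculus given by Lemma \ref{pi(k)}. Once the remainders $R=I-QP$ and $S=I-PQ$ are placed in a sufficiently negative bifiltered class and we compose with $\pi(k)$, the trace-class conclusion follows immediately from Corollary \ref{Traceability}.

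First, apply Theorem \ref{Parametrix} to $P$. This yields $R,S\in {\Psi'}^{-\infty}(\what\nu^*,\what\maE)\cap {\Psi'}^0({\what M},\what\maF';\what\maE)$; that is, $R$ and $S$ have global Connes-Moscovici order $0$ while their transversal order is $-\infty$, meaning at most $m$ for every $m\in\Z$. Next, for an arbitrary $m<0$ apply Lemma \ref{TransversalOrder} with $\ell=0$ and transversal order $m$ to deduce that both $R$ and $S$ belong to ${\Psi'}^{m,-m}({\what M},\what\maF\subset\what\maF';\what\maE)$. Since $m$ is arbitrary, we may in fact take $m$ as negative as we wish.

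Secondly, by Lemma \ref{pi(k)} applied to $k\in C_c^\infty(\what\maG)$, the operator $\pi(k)$ belongs to the compactly supported class ${\Psi'}_c^{0,-\infty}({\what M},\what\maF\subset\what\maF';\what\maE)=\bigcap_{\ell\in\Z}{\Psi'}_c^{0,\ell}({\what M},\what\maF\subset\what\maF';\what\maE)$. Composing, for any fixed $m<0$ and any $\ell\in\Z$ one has
\[
\pi(k)\cdot R\in {\Psi'}_c^{0,\ell}\circ {\Psi'}^{m,-m}\subset {\Psi'}^{m,\ell-m}({\what M},\what\maF\subset\what\maF';\what\maE),
\]
and the support remains compact because the composition of a compactly supported operator with a uniformly supported (finite propagation) operator is compactly supported. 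Intersecting over all $\ell\in\Z$ we conclude
\[
\pi(k)\,R\;\in\; {\Psi'}_c^{m,-\infty}({\what M},\what\maF\subset\what\maF';\what\maE).
\]
The same reasoning applies verbatim to $\pi(k)\,S$.

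Finally, choose $m<-(v+2n)$. Corollary \ref{Traceability}(2) then yields that both $\pi(k)(I-QP)$ and $\pi(k)(I-PQ)$ extend to trace-class operators on $L^2({\what M},\what\maE)$, as desired. No step is genuinely delicate here; the only point requiring a little care is that when composing $\pi(k)$ with the remainders, one must know that the bifiltered estimates combine correctly under composition in this non-standard Beals-Greiner calculus, but this is precisely what was established in the composition proposition proved earlier in this section.
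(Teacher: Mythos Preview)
Your proof is correct and follows essentially the same route as the paper: place the remainders $R,S$ in the bifiltered class via Lemma \ref{TransversalOrder}, use Lemma \ref{pi(k)} to put $\pi(k)$ in ${\Psi'}_c^{0,-\infty}$, compose, and invoke the trace-class criterion. The only cosmetic difference is that the paper fixes a specific transversal order (namely $-(v+2n)-1$) and a specific leafwise order from the outset and then cites Lemma \ref{TraceClass} directly, whereas you keep $m$ arbitrary and intersect over all $\ell$ before applying Corollary \ref{Traceability}(2); these are equivalent.
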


\begin{proof}
Recall that  the codimension of $\what\maF$ is $q$ and that its dimension is $p$, so $p+q= \om =\dim {\what M}$. Moreover, we have the decomposition $q=v+n$ where $v$ is the rank of $\what V\simeq T\what\maF ' /T\what\maF$ and $n=q-v$ is the rank of $\what N\simeq T\what M/T\what\maF '$. By Proposition \ref{Parametrix}, we know in particular that
$$
I-QP \text{ and  }I-PQ \;\; \in   {\Psi'}^0({\what M}; \what\maE) \cap {\Psi'}^{-n-2v-1} (\what\nu ^*, \what\maE).
$$
On the other hand, by Proposition \ref{TransversalOrder}, we deduce that 
$$
I-QP \text{ and  }I-PQ \;\; \in   {\Psi'}^{-(n+2v)-1, n+2v+1} ({\what M}, \what\maF\subset \what\maF' ; \what\maE).
$$
But for any $k\in C_c^\infty (\what\maG)$ we proved in Lemma \ref{pi(k)} that $\pi (k) \in {\Psi'}_c^{0, -\infty} ({\what M}, \what\maF\subset\what\maF' ; \what\maE)$. Therefore, 
$$
\pi (k) \in {\Psi'}_c^{0, -(p+v+2n)-2} ({\what M}, \what\maF \subset \what\maF'; \what\maE).
$$
As a consequence, we obtain
$$
\pi (k) (I-QP) \text{ and  }\pi (k) (I-PQ) \;\; \in   {\Psi'}_c^{-(v+2n)-1, -p-1} ({\what M}, \what\maF\subset \what\maF' ; \what\maE).
$$
The proof is completed using Lemma  \ref{TraceClass}.
\end{proof}

\begin{proposition}
Assume that $P\in {\Psi'}^1 ({\what M}, \what\maF'; \what\maE)$ is a uniformly supported pseudodifferential operator from the Connes-Moscovici calculus. Assume that $P$ is uniformly transversely elliptic and that it induces an invertible operator on $L^2 ({\what M}, \what{\maE})$ (injective with dense range and bounded inverse). Then for any $k\in C_c^\infty (\what\maG)$, the operator $\pi (k) P^{-1}$ is a compact operator on  the Hilbert space $L^2({\what M}, \what\maE)$. More precisely, it belongs to the Schatten ideal $\maL^r(L^2({\what M}, \what\maE))$ for any $r > v+2n$.
\end{proposition}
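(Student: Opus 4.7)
The plan is to decouple the problem using a parametrix for $P$, then handle the two resulting pieces with the tools already established. Apply Theorem \ref{Parametrix} to obtain $Q\in {\Psi'}^{-1}(\what M,\what\maF';\what\maE)$ with $R:=I-QP$ and $S:=I-PQ$ lying in ${\Psi'}^{-\infty}(\what\nu^*,\what\maE)\cap {\Psi'}^{0}(\what M,\what\maF';\what\maE)$. Since $Q$ is order $-1$ in the Connes--Moscovici calculus it extends to a bounded operator on $L^2(\what M,\what\maE)$, and multiplying the identity $QP=I-R$ on the right by the bounded operator $P^{-1}$ gives the key algebraic decomposition
\[
P^{-1} \;=\; Q + R P^{-1},
\qquad \text{hence} \qquad
\pi(k)P^{-1} \;=\; \pi(k)Q \;+\; \pi(k) R \, P^{-1}.
\]

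The second summand is easy: Corollary \ref{TraceParametrix} shows that $\pi(k)R$ is trace class on $L^2(\what M,\what\maE)$, and since $P^{-1}$ is a bounded operator the product $\pi(k)RP^{-1}$ is trace class, hence lies in every Schatten ideal $\maL^r$ with $r\geq 1$.

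For the main term $A:=\pi(k)Q$, combine Lemma \ref{pi(k)}, which gives $\pi(k)\in {\Psi'}_c^{0,-\infty}(\what M,\what\maF\subset\what\maF';\what\maE)$, with the inclusion ${\Psi'}^{-1}(\what M,\what\maF';\what\maE)\subset {\Psi'}^{-1,0}(\what M,\what\maF\subset\what\maF';\what\maE)$ recorded earlier; the composition is compactly supported because $\pi(k)$ is, so $A\in {\Psi'}_c^{-1,-\infty}(\what M,\what\maF\subset\what\maF';\what\maE)$. Then $A^*A\in {\Psi'}_c^{-2,-\infty}$, and its $N$-th power lies in ${\Psi'}_c^{-2N,-\infty}$. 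By Lemma \ref{TraceClass} this is trace class as soon as $2N>v+2n$, which gives $\tr((A^*A)^N)<\infty$, i.e.\ $A\in \maL^{2N}$ for every integer $N>(v+2n)/2$, and in particular $A$ is compact. This already proves compactness of $\pi(k)P^{-1}$ and gives $\pi(k)P^{-1}\in \maL^{r}$ for every $r\geq 2N_0$, where $2N_0$ is the smallest even integer strictly larger than $v+2n$.

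The main obstacle, and the only delicate point, is refining this to the sharp threshold $r>v+2n$. The integer-power argument above loses a factor because it works only with even integers, so one needs a continuous interpolation. I would obtain this either by constructing complex powers $(A^*A)^s$ (for $s>0$) inside the bigraded Beals--Greiner calculus in the style of Seeley, which live in ${\Psi'}_c^{-2s,-\infty}$ and are therefore trace class exactly when $2s>v+2n$ by Lemma \ref{TraceClass}, or equivalently by deriving the Weyl-type bound $\mu_k(A)=O\!\bigl(k^{-1/(v+2n)}\bigr)$ from the hypoelliptic structure of $\sigma(A^*A)$ on $\hV^*\oplus\hN^*$ with the anisotropic scaling $\lambda\cdot\eta=(\lambda\eta_v,\lambda^2\eta_n)$; summing $\mu_k(A)^r$ then gives convergence for every real $r>v+2n$. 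Combined with the trace-class estimate for $\pi(k)RP^{-1}$ this yields $\pi(k)P^{-1}\in \maL^r$ for all $r>v+2n$.
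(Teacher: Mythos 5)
Your proposal follows essentially the same route as the paper: the identical parametrix decomposition $\pi(k)P^{-1}=\pi(k)R\,P^{-1}+\pi(k)Q$ with $R=I-QP$, the second term handled by Corollary \ref{TraceParametrix} plus boundedness of $P^{-1}$, and the first term by placing $\pi(k)Q$ in ${\Psi'}_c^{-1,-\infty}$ via Lemma \ref{pi(k)} and the summability results. The only difference is that the paper disposes of the Schatten statement by citing Corollary \ref{Traceability}(2) directly, whereas you spell out the even-integer-power argument and are more explicit (indeed more careful than the paper) about the interpolation or Weyl-type estimate needed to reach the sharp exponent $r>v+2n$.
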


\begin{proof}
Let $Q \in {\Psi'}^{-1} ({\what M}; \what\maF'; \what\maE)$ be a parametrix of $P$ as in Proposition \ref{Parametrix}.  Then we get
$$
\pi (k) P^{-1} = \pi (k) R P^{-1} + \pi (k) Q \text{ where } R=I-QP.
$$
From the previous corollary, we know that $\pi (k) R $ (and hence also $\pi (k) R P^{-1}$) is a bounded trace class operator. 

Since $Q\in {\Psi'}^{-1, 0} ({\what M}, \what\maF\subset \what\maF'; \what\maE)$, we have $\pi (k) Q\in {\Psi'}^{-1, -\infty} ({\what M}, \what\maF\subset \what\maF'; \what\maE)$. Applying  (2) of Proposition \ref{Traceability}, we deduce that  $\pi (k) Q$ extends to a bounded operator on $L^2({\what M}, \what\maE)$ which  belongs to the claimed Schatten ideal. In particular, it is compact.
\end{proof}

\begin{corollary}\label{Resolvent}
Let ${\what D} \in {\Psi'}^{1} ({\what M}, \what\maF'; \what\maE)$ be a uniformly supported uniformly transversely elliptic pseudodifferential operator from the Connes-Moscovici calculus as before. Assume that ${\what D}$  induces an essentially self-adjoint operator on the Hilbert space  $L^2({\what M}, \what\maE)$. Then for any $k\in C_c^\infty(\what\maG)$, the operator $\pi_{\what\maE} (k) (\what{D}+i)^{-1}$ belongs to the Schatten ideal $\maL^r(L^2({\what M}, \what\maE))$ for any $r > v+2n$.  In particular,  it is  a compact operator.
\end{corollary}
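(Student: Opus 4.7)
The plan is to reduce the statement directly to the preceding proposition by taking $P := \hD + i$ in place of $P$.

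First, since $\hD$ is essentially self-adjoint on $L^2({\what M}, \what\maE)$, its closure is self-adjoint with purely real spectrum, and consequently the closure of $\hD + i$ is bijective with bounded inverse of norm at most $1$, by the spectral theorem. This is precisely the ``invertibility on $L^2$ with bounded inverse'' hypothesis required in the preceding proposition.

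Next I would verify that $\hD + i$ satisfies the remaining hypotheses of that proposition. The identity $I$ lies in ${\Psi'}^0({\what M}, \what\maF'; \what\maE)$, has propagation zero, and is therefore trivially uniformly supported; consequently $\hD + i \in {\Psi'}^1({\what M}, \what\maF'; \what\maE)$ is a uniformly supported pseudodifferential operator in the Connes--Moscovici calculus. Since $iI$ has principal symbol zero as an operator of order $1$, the Connes--Moscovici principal symbol of $\hD + i$ coincides with that of $\hD$, so $\hD + i$ remains uniformly transversely elliptic with respect to $\what\maF$.

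Applying the preceding proposition to $P = \hD + i$ therefore yields, for every $k \in C_c^\infty(\what\maG)$,
$$
\pi_{\what\maE}(k)(\hD+i)^{-1} \; \in \; \maL^r\bigl(L^2({\what M}, \what\maE)\bigr) \qquad \text{for all } r > v + 2n,
$$
and compactness follows since $\maL^r$ is contained in the ideal of compact operators for any finite $r$. The only step with any real content is the invertibility of $\hD + i$ on $L^2$, which is a standard consequence of essential self-adjointness via the spectral theorem; everything else is a routine verification that the hypotheses of the preceding proposition transfer unchanged from $\hD$ to $\hD + i$.
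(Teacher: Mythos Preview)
Your proof is correct and follows exactly the same approach as the paper, which simply says ``apply the previous proposition to $P=D+i$.'' You have supplied the routine verifications (invertibility from essential self-adjointness, preservation of uniform transverse ellipticity since $iI$ has vanishing order-$1$ principal symbol) that the paper leaves implicit.
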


\begin{proof}
Simply apply the previous proposition to $P=D+i$.
\end{proof}

\subsection{The CM operator for (strongly) Riemannian bifoliations}\label{CM-operator}

An important example of a uniformly supported uniformly transversely elliptic pseudodifferential operator satisfying the assumptions of  Corollary \ref{Resolvent}  is the transverse signature operator associated with Riemannian bifoliations. In this case, an interesting transversely hypo-elliptic Dirac-type operator is given by the Connes-Moscovici construction that can be adapted to foliations as we now explain.   See \cite{CM95}.   In particular, we return to the general situation of smooth bounded geometry bifoliations.

\begin{definition}
A smooth bifoliation $({\what M}, \what\maF\subset\what\maF')$  is a Riemannian bifoliation if there exists a  metric $g$ on the transverse bundle $\nu_{\what\maF} = T{\what M}/T{\what\maF}$ such that
\begin{enumerate}
\item The restriction of $g$ to the subbundle $T\what\maF'/ T\what\maF$ is a holonomy invariant metric.
\item The induced metric on the quotient bundle $T{\what M}/T{\what\maF}'$ is a holonomy invariant metric.
\end{enumerate}
The bifoliation $({\what M}, \what\maF\subset\what\maF')$ will be called strongly Riemannian if it is Riemannian and there exists an integrable subbundle $\what{V}$ which is such that $T\what\maF'=T\what\maF\oplus \what{V}$ and the  metric induced from that on $T\what\maF'/ T\what\maF$ is holonomy invariant. 
\end{definition}

In the above definition, holonomy invariance is  understood with respect to the foliation $\what\maF$. 
It means that   the action of the holonomy transformation associated with  some $\gamma\in {\what\maG}_x^y$,  is given  by a matrix of the form
$$
\left(
\begin{array}{cc}
\psi^{11} & 0 \\
\psi^{21} & \psi^{22}\\
\end{array}
\right)
$$
where $\psi^{22}$ is an orthogonal transformation from ${\what V}_x$ to ${\what V}_y$ and $\psi^{11}$
corresponds to the induced action on ${\what N}=T{\what M}/T{\what \maF}'$,  and is also orthogonal.

The vector bundle ${\what V}\oplus {\what N}$ is thus (non-canonically) isomorphic, as a ${\what\maG}$-equivariant vector bundle, to the transverse bundle $\nu_{\what\maF}=T{\what M}/T{\what\maF}$. 

Suppose that $(\hM, \what\maF\subset \what\maF')$ is a strongly Riemannian smooth bifoliation  as before with the integrable bundle $\what{V}$. Assume that the bundles $\what{V}$ and $\what{N}$ are  oriented and even dimensional. We fix adapted Riemannian structures on $T\hM$, $\what{V}$ and $\what{N}$ and hence adapted Hermitian structures on all the exterior powers of these bundles. By adapted, we mean that they are holonomy invariant.  So, we have two transverse foliations $\what\maF$ and $\what{V}$ whose direct sum $T\what\maF'$ is still integrable with the previously explained conditions on the holonomy action of $\what\maF$. 
 We thus have the Riemannian gradings $\gamma_{\what{V}}$ and $\gamma_{\what{N}}$ on the exterior powers of $\what{V}^*$ and $\what{N}^*$ respectively as well as a volume element which trivializes the bundle $\Lambda^v\what{V}^*\otimes \Lambda^n\what{N}^*\simeq \Lambda^q\nu_{\what\maF}^*$. 

Take for $\what\maE$ the holonomy equivariant (w.r.t. $\what\maF$) Hermitian bundle 
$$
\what\maE\; = \; \Lambda^\bullet (\what{V}^*\otimes \C) \otimes \Lambda^\bullet (\nu_{\what\maF'}^*\otimes \C).
$$
The de Rham differential along the leaves of the foliation $\what{V}$  is denoted $d_{\what{V}}$. Since $\nu_{\what\maF'}$ and $\Lambda^\bullet (\nu_{\what\maF'}^*\otimes \C)$ are flat bundles along the leaves of $\what\maF'$, they are also flat along the leaves of $\what{V}$.  So the differential $d_{\what{V}}$ is a well defined first order differential operator with $d_{\what{V}}^2=0$ acting on smooth sections of $\what\maE$.
With respect to the bi-grading of forms 
$$
\Lambda^{a, b}=  \Lambda^a (\what{V}^*\otimes \C) \otimes \Lambda^b (\nu_{\what\maF'}^*\otimes \C),
$$
the differential $d_{\what{V}}$ is the component of bidegree $(1, 0)$, so the component which sends $\Lambda^{a, b}$ to $\Lambda^{a+1, b}$. We denote by $Q_{\what{V}}$ the second order essentially self-adjoint {\em{signature  operator}}  along the leaves of $\what{V}$.    By definition, see \cite{CM95}, this is given by
$$
Q_{\what{V}} \; := \; d_{\what{V}}\, d_{\what{V}}^*\; - \; d_{\what{V}}^*\, d_{\what{V}}.
$$
Here $d_{\what{V}}^*$ is the formal adjoint of $d_{\what{V}}$ as an operator on $\what\maH = L^2(\hM, \what\maE)$. The operator $Q_{\what{V}}$ is then  an elliptic operator along the leaves of the foliation generated by $\what{V}$.  Indeed  one can easily check that
$$
Q_{\what{V}} \; \sim \; [\gamma_{\what{V}} d_{\what{V}} \gamma_{\what{V}} \, , \, d_{\what{V}}] \; = \; \gamma_{\what{V}} d_{\what{V}} \gamma_{\what{V}} d_{\what{V}} - d_{\what{V}}\gamma_{\what{V}} d_{\what{V}} \gamma_{\what{V}} \quad \text{ up to zero-th order operators}.
$$
On the other hand, using our choice of a normal bundle $\what{N}$ to $\what\maF'$ and the isomorphism $\nu_{\what\maF'}\simeq \what{N}\subset T\hM$, we have a well defined transverse component to $\what\maF'$ of the de Rham differential on the ambient manifold $\hM$ which acts on $\what\maE$. More precisely, this is the restriction to the smooth sections of $\what\maE$ of the component of tridegree $(0, 0, 1)$ corresponding to the decomposition of forms induced by the isomorphism
$$
T\hM \simeq T\what\maF \oplus \what{V} \oplus \nu_{\what\maF'}.
$$
This latter component thus  corresponds to differentation in the transverse directions to $\what\maF'$, which we view as acting on the smooth sections of $\what\maE$.   It is denoted $d_{\what{N}}$ to indicate the dependence on the choice of $\what{N}$. The formal adjoint of $d_{\what{N}}$ as an operator on $C_c^\infty (\hM; \what\maE)$ is denoted $d_{\what{N}}^*$.    Set 
$$
Q_{\what{N}} \; := \; d_{\what{N}}\, + \,d_{\what{N}}^*,
$$
and  (see \cite{CM95})
$$
Q\; :=\; Q_{\what{V}} (-1)^{\partial_{\nu_{\what\maF'}}} \; + \; Q_{\what{N}},
$$
where $\partial_{\nu_{\what\maF'}}$ is the form degree of the $\Lambda^\bullet\nu_{\what\maF'}^*$ components. 

\begin{lemma}
The operator $Q$ belongs to the Connes-Moscovici pseudo' calculus ${\Psi'}^{2} (\hM, \what\maF'; \what\maE)$ associated with the foliation  $\what\maF'$ and is a uniformly transversely elliptic operator (with respect to the foliation $\what\maF$).
\end{lemma}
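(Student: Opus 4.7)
The plan is to verify the two claims in turn: membership of $Q$ in the Connes-Moscovici calculus ${\Psi'}^2(\hM, \what\maF'; \what\maE)$, and invertibility of $\sigma(Q)$ on $(\hV^*\oplus\hN^*)\setminus 0$ uniformly in $\hm$. The guiding principle throughout is the anisotropic Beals-Greiner scaling $\lambda\cdot(\eta_v,\eta_n) = (\lambda\eta_v,\lambda^2\eta_n)$, which assigns weight $1$ to $\eta_v\in \hV^*$ and weight $2$ to $\eta_n\in \hN^*$.

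For membership, since $\what V$ is integrable and $T\what V \subset T\what\maF'$, the leafwise operators $d_{\what V}$ and $d_{\what V}^*$ are first-order differential operators whose principal symbols involve only $\eta_v$. Being $1$-homogeneous in the Beals-Greiner sense, they are of CM-order $1$, and $Q_{\what V} = d_{\what V}d_{\what V}^* - d_{\what V}^*d_{\what V}$ is of CM-order $2$. The operator $d_{\what N}$ differentiates once in the $\what N$-direction, so its principal symbol is linear in $\eta_n$ and therefore $2$-homogeneous under the Beals-Greiner scaling; the same holds for $d_{\what N}^*$, so $Q_{\what N} = d_{\what N}+d_{\what N}^*$ is already of CM-order $2$. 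Multiplication by the grading $\epsilon = (-1)^{\partial_{\nu_{\what\maF'}}}$ is a bounded bundle involution and preserves the order. Hence $Q = Q_{\what V}\epsilon + Q_{\what N} \in {\Psi'}^2(\hM, \what\maF'; \what\maE)$. The bounded geometry assumption, combined with the holonomy invariance of the transverse metric supplied by the strongly Riemannian hypothesis, ensures that all symbol estimates are uniform in $\hm$ and that $Q_{\what V}$ carries a holonomy-invariant transverse principal symbol.

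For uniform transverse ellipticity I compute $\sigma(Q)^2$. Since $\epsilon$ commutes with $Q_{\what V}$ (which preserves the $\nu_{\what\maF'}$-degree) and anticommutes with $Q_{\what N}$ (which changes it by $\pm 1$), a direct expansion yields
$$
Q^2 \;=\; Q_{\what V}^2 \;+\; Q_{\what N}^2 \;+\; [Q_{\what N}, Q_{\what V}]\,\epsilon.
$$
Because the principal symbols $\sigma(Q_{\what V})$ and $\sigma(Q_{\what N})$ act on disjoint tensor factors of $\what\maE = \Lambda^\bullet \hV^* \otimes \Lambda^\bullet \nu_{\what\maF'}^*$, they commute as symbols, so $[Q_{\what N}, Q_{\what V}]$ is of CM-order strictly less than $4$ and contributes nothing to the top-order symbol. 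Using $d_{\what V}^2 = (d_{\what V}^*)^2 = 0$ gives $Q_{\what V}^2 = (d_{\what V}d_{\what V}^*)^2 + (d_{\what V}^*d_{\what V})^2 = \Delta_{\what V}^2$, with principal symbol $|\eta_v|^4\,\Id$. The Clifford identity for exterior and interior multiplication produces $\sigma(Q_{\what N}^2) = |\eta_n|^2\,\Id$. Combining,
$$
\sigma(Q)^2 \;=\; \bigl(|\eta_v|^4+|\eta_n|^2\bigr)\Id \;=\; {|\eta|'}^{4}\,\Id,
$$
which is invertible for every $(\eta_v,\eta_n)\neq 0$; bounded geometry upgrades this pointwise invertibility to a uniform lower bound on $\sigma(Q)^{-1}$.

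The only delicate step, and the main potential obstacle, is the cancellation of the cross term in $Q^2$. This relies on the interplay between the bigrading of $\what\maE$ and the commutation/anticommutation properties of the Connes-Moscovici sign twist $\epsilon$, combined with the fact that the two principal symbols act on independent tensor factors, so that $[Q_{\what N}, Q_{\what V}]$ is subprincipal. Once this is established, the remainder is routine bookkeeping in the anisotropic calculus.
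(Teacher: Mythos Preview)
Your proof is correct and follows essentially the same approach as the paper: verify that $Q$ is a differential operator of Connes--Moscovici order $2$, then show uniform transverse ellipticity by computing $\sigma(Q)^2 = (|\eta_v|^4 + |\eta_n|^2)\Id$. The paper simply asserts this symbol formula without justification, whereas you supply the full expansion of $Q^2$ and the argument that the cross term $[Q_{\what N},Q_{\what V}]\epsilon$ is subprincipal; this is exactly the detail the paper elides.
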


\begin{proof}
Any differential operator belongs to the Connes-Moscovici calculus although the order and principal symbols are defined in a different way. So $Q$ belongs to  ${\Psi'}^{2} (\hM, \what\maF'; \what\maE)$.   Note that the principal symbol of $Q^2$ only depends on the transverse covectors and is given by 
$$
\sigma_4 (Q^2) (\hm, \eta) = \sigma_2(Q)(\hm, \eta)^2 = \vert \eta_v\vert ^4 + \vert \eta_n\vert^2.  
$$
Since all our geometric data are $C^\infty$-bounded, we deduce that $Q$ is uniformly transversely elliptic.
\end{proof}

Since the manifold $\hM$ as well as the foliations have bounded geometry, classical arguments \`a la Chernoff \cite{Chernoff} (see also \cite{Shubin92} and \cite{Shubin87}) show that the operator $Q$ has uniformly bounded coefficients.  Moreover by classical arguments, the operator $Q^2$ can be extended to a non-negative self-adjoint operator that we still denote by $Q^2$ for simplicity. 

\begin{definition}\cite{CM95}\
The CM transverse signature operator $D^{\sign}$ is defined by the spectral formula
$$
D^{\sign}\; := \; \frac{1}{\pi\sqrt{2}}\int_0^\infty  Q (Q^2+\lambda)^{-1} \frac{d\lambda}{ \lambda^{1/4}}.
$$
\end{definition}
\noindent
Following the same proof as in \cite{CM95}, one can show that the operator $D^{\sign}$ belongs to ${\Psi'}^1 (\hM, \what\maF'; \what\maE)$ and is a uniformly transversely elliptic operator (with the holonomy invariant principal symbol). 

The above constructions on (strongly) Riemannian bifoliations  actually allow one to deduce  all the important topological results  on general smooth foliations of bounded geometry. We now explain the idea behind this reduction construction, which is due to Alain Connes.   It goes back to his reduction method from type III to type II von Neumann algebras as intensively exploited in his breakthrough results on the classification problem of type III von Neumann algebras.  See \cite{ConnesTransverse}.  Fix a smooth foliation $(M, \maF)$ and denote by ${\what M}_x$,  $x\in M$,  the set of positive definite quadratic forms on the transverse bundle $\nu_x=T_xM/T_x\maF$. If $q$ is the codimension of the foliation $\maF$, then ${\what M}_x$ can be identified with the homogeneous space $GL^+_q(\R)/SO_q(\R)\simeq \R\times SL_q(\R)/SO_q(\R)$. The tangent space to $GL^+_q(\R)/SO_q(\R)$ can in turn be easily identified with the space  ${\mathcal S}$ of symmetric $q$-matrices. There is a $GL^+_q$-invariant metric on the manifold $GL^+_q(\R)/SO_q(\R)$ given for $A\in GL^+_q (\R)$ and $(B,C)\in {\mathcal S}^2$ by
$$
g_{[A]}(B,C)=<A^{-1}B,A^{-1}C>_{HS} \text{ with } <\bullet, \bullet>_{HS} \text{ the Hilbert-Schmidt scalar product.}
$$
The family ${\what M}= ({\what M}_x)_{x\in M}$ is then a smooth fibration over $M$.  Indeed $\pi:{\what M}\to M$ is  the fiber bundle of Euclidean metrics on the transverse bundle $\nu=TM/T\maF$.   So, the fibers of this fibration 
are contractible manifolds of nonpositive sectional curvature. In fact, it is easy to see that  the fibers of ${\what M}$
are all diffeomorphic to some $\R^N$.  The group $GL^+_q (\R)$ acts on the left on ${\what M}$ by fibre-preserving diffeomorphisms. 

There is a lift of the foliation $\maF$ to ${\what M}$, of the same dimension, denoted ${\what\maF}$.  If $(x, [A]) \in {\what M}_x$, its leaf consists of all $(y, [B]) \in {\what M}$ where $y \in L_x$, the leaf of $x$,  and there is a path $\gamma$ in $L_x$ starting at $x$ and ending at $y$, so that the induced action of the holonomy along $\gamma$ on the fibers of ${\what M}$ takes $[A]$ to $[B]$.

There is a second foliation, denoted ${\what\maF}'$, containing ${\what\maF}$, whose leaves are the  inverse images of the leaves of $F$ under $\pi:{\what M}\to M$.  It may also be described as the foliation associated to the subbundle $T{\what\maF} \otimes \what{V}$ of $T{\what M}$,  which is the kernel of $p\circ \pi_*$.   Here $p:TM\to \nu$ is the projection to  $\nu$, and $\what{V} = \ker(\pi_*) \subset T{\what M}$.  

\begin{lemma}\label{ConnesFibration}\cite{ConnesTransverse}\
Let $(M, \maF)$ be a smooth bounded-geometry  foliation and ${\what M}$  the fiber bundle of all the Euclidean metrics on $\nu$, the normal bundle of $\maF$. Then $\hM$ is endowed with a strongly Riemannian bifoliation consisting of  ${\what\maF}$ and $\what\maF'$.
 \end{lemma}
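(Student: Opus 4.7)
The plan is to construct the bifoliation and the required metric directly from the tautological structure of $\hM$ as the bundle of Euclidean metrics on $\nu = TM/T\maF$. First I would describe the two foliations concretely. The leaves of $\hmaF'$ are $\pi^{-1}(L)$ for leaves $L$ of $\maF$, so $T\hmaF' = \ker(p\circ \pi_*)$ where $p\colon TM\to \nu$, giving a smooth integrable subbundle of $T\hM$. The subfoliation $\hmaF$ is defined so that $(y,[B])$ lies in the leaf through $(x,[A])$ iff $y$ is in $L_x$ and there is a leafwise path $\gamma$ from $x$ to $y$ whose $\maF$-holonomy $h_\gamma\colon \nu_x\to\nu_y$ carries $[A]$ to $[B]$. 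Consequently $T\hmaF$ is the horizontal lift of $T\maF$ via the canonical $\maF$-partial (Bott) connection on $\hM\to M$. The vertical subbundle $\hV:=\ker(\pi_*)$ is tautologically integrable (its leaves are the fibers of $\pi$), transverse to $T\hmaF$, and by construction $T\hmaF'=T\hmaF\oplus \hV$, which is the splitting needed for the strongly Riemannian condition.

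Next I would define the transverse metric. The transverse bundle splits canonically as $\nu_{\hmaF}\simeq \hV\oplus \pi^*\nu$ via the identification $T\hM/T\hmaF'\simeq \pi^*\nu$. The metric $g$ on $\nu_{\hmaF}$ is declared to be the orthogonal direct sum of two pieces. On the horizontal piece $\pi^*\nu$ the metric is tautological: at $\hm=(x,[A])$, the inner product on $\pi^*\nu_{\hm}=\nu_x$ is the Euclidean structure $[A]$ itself. On the vertical piece $\hV$ I put the $GL_q^+(\R)$-invariant symmetric-space metric on $GL_q^+(\R)/SO_q(\R)$ recalled above, transported to each fiber $\hM_x$ (this is well-defined because any frame of $\nu_x$ identifies $\hM_x$ with $GL_q^+/SO_q$ up to a left $GL_q^+$-action, under which the metric is invariant by construction).

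The heart of the proof is checking that both summands are invariant under the holonomy of $\hmaF$. Let $\hat\gamma$ be an $\hmaF$-leafwise path from $\hm=(x,[A])$ to $\hm'=(y,[B])$, covering a path $\gamma\subset L_x$. On the horizontal summand the induced action on $\pi^*\nu$ is just the $\maF$-holonomy $h_\gamma\colon \nu_x\to \nu_y$, and the very definition of $\hmaF$ gives $h_\gamma^*[B]=[A]$, which is exactly the required invariance and yields condition (2) of the Riemannian bifoliation definition. On the vertical summand, identify $\hV_{\hm}\simeq T_{[A]}(GL_q^+/SO_q)$ and $\hV_{\hm'}\simeq T_{[B]}(GL_q^+/SO_q)$ via frames; then the vertical holonomy is the differential of the map $[A']\mapsto h_\gamma\cdot [A']$, which in those frame coordinates is left multiplication by an element of $GL_q^+$, hence an isometry of the symmetric-space metric. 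This yields condition (1), and together with the explicit complement $\hV$ from the first step it upgrades the structure to strongly Riemannian.

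Finally I would deal with bounded geometry by extending $g$ to a full Riemannian metric on $T\hM$ via any bounded-geometry leafwise metric on $(M,\maF)$ pulled back to $T\hmaF$, so that everything is assembled from bounded-geometry data on $M$ plus the homogeneous, complete geometry of the fibers. The main obstacle is the holonomy-invariance argument of the third paragraph: one must make precise that the lifted $\hmaF$-holonomy on the vertical directions is really by left translations in the symmetric space $GL_q^+/SO_q$. The argument is essentially local, since in a foliation chart of $\maF$ the bundle $\hM$ trivializes and the holonomy acts linearly on $\nu$, inducing a $GL_q^+$-action on the fiber $GL_q^+/SO_q$; but it is this computation that forces the $GL_q^+$-invariant fiber metric rather than, say, the $O(q)$-invariant round one.
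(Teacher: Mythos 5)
Your proposal is correct and follows essentially the same route as the paper: the splitting $\nu_{\hmaF}\cong \pi^*\nu\oplus\hV$ with the tautological metric on $\pi^*\nu$ and the $GL^+_q$-invariant (Hilbert--Schmidt) symmetric-space metric on the fibers, integrability of $\hV=\ker\pi_*$ giving the strong Riemannian splitting $T\hmaF'=T\hmaF\oplus\hV$, and holonomy invariance checked on each summand exactly as in the paper (the paper verifies the vertical invariance by the explicit Hilbert--Schmidt computation, which is your left-translation argument made concrete).
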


\begin{proof}\
The normal bundle to ${\what\maF}$ is isomorphic to the bundle
$$
\nu_{\what\maF} \cong \pi^*\nu  \oplus {\what V}.
$$
The Hilbert-Schmidt scalar products on the fibres of ${\what M}$  give a smooth metric on the bundle ${\what V}$.  
On the other hand any element $[A]$ of ${\what M}_x$ is itself a metric on $\nu_x$ and hence yields a scalar product on $\nu_x$.  The vertical bundle ${\what V}$ is clearly integrable and strictly transverse to the foliation. It is also clear that ${\what V}$ is preserved by the holonomy action of $\what\maF$ and that the direct sum subbundle $T\what \maF\oplus {\what V}$ of the tangent bundle $T{\what M}$ is integrable and generates the foliation $\what \maF'$. Therefore,   $({\what M}, \what\maF \subset \what\maF')$ is a smooth bifoliation. Moreover, an easy inspection shows that $\what M$ as well as its foliations $\what\maF$ and $\what\maF'$ do have bounded geometry since  $(M, \maF)$ has bounded geometry.

The action of the holonomy transformations of $\what\maF$ on the transverse bundle  $\nu_{\what\maF}$ can be described as the action induced on the metrics over $\nu$ from that on $\nu$. More
precisely, any holonomy transformation $\psi$  corresponds to a holonomy transformation $\varphi$ in $(M, \maF)$. So, if $g$ is a metric  on $\nu_x$  then the holonomy transformation $\psi$ associated with an element of $\what\maG_{g}^{g'}$  acts by
$$
\psi(g)(X,Y)= g(\varphi_*^{-1}(X),\varphi_*^{-1}(Y)),\quad \forall (X,Y)\in \nu_x^2,
$$
where $\varphi$ is the holonomy transformation associated with the projected element in $\maG_{\pi(g)}^{\pi(g')}$. 

The action on the vertical bundle is thus given by the differential of the above action and we have $\forall X\in T{\what V}_g$ and with respect to the metric defined on ${\what V}$,
$$
\|\psi_*(X)\|^2=\|D_x(\varphi)B_g^{-1}XD_x(\varphi)^{-1}\|_{HS}=\| B_g^{-1}X\|_{HS}=\|X\|^2,
$$
where $B_g$ is an element of $GL^+_q(\R)$ representing a class corresponding to $g$.
Whence the action of $\psi_*$ on the transverse bundle to $({\what M},\what\maF)$ decomposes with respect to the vertical bundle, and any supplementary bundle, in the required triangular form
$$
\left( 
\begin{array}{cc}
\psi^{11} & 0 \\
\psi^{21} & \psi^{22}\\
\end{array}
 \right)
$$
where $\psi^{22}$ is an orthogonal transformation from $TV_g$ to $TV_{g'}$. That $\psi^{11}$
is also isometric is in fact obvious and is a tautology.   In particular, if $\pi(g) = x$ and $Y\in  (\pi^* \nu)_{g} \cong \nu_x$, then  $\psi_*(Y)=(\psi^{11}(Y),\psi^{21}(Y))$ and we have
$$
\|\psi^{11}(Y)\|^2=\|\pi_*(D_g(\psi)(Y))\|^2=\psi(g)(\pi_*(D_g(\psi)(Y)),\pi_*(D_g(\psi)(Y))).
$$
But $\pi_*\circ D_g(\psi)=D_x(\varphi)\circ \pi_*$ and we finally obtain
$$
\|\psi^{11}(Y)\|^2=\psi(g)(D_x(\varphi)(\pi_*(Y)),D_x(\varphi)(\pi_*(Y)))
$$
$$
=
g((D_x(\varphi)^{-1}\circ D_x(\varphi)\circ \pi_*)(Y),(D_x(\varphi)^{-1}\circ D_x(\varphi)\circ \pi_*)(Y))=\|Y\|^2.
$$
\end{proof}

\section{The NCG of  proper bifoliated  actions} \label{ProperActions}

\subsection{Algebras associated with  (bi)foliated actions}
 
Let $(\what M, \what\maF\subset  \what\maF')$ and $\what\maE$ be as above, of bounded geometry, together with a right  action of a countable discrete  group $\Gamma$ by $T\what\maF$-preserving diffeomorphisms which also preserves $T\what\maF'$.  So  $\Gamma$ acts by diffeomorphisms of $\what M$ which send leaves of $\what\maF$ to leaves of $\what\maF$ and also leaves of $\what\maF'$ to leaves of $\what\maF'$. In particular,  all leaves (of $\what\maF$ as well as of $\what\maF'$) in a given orbit are diffeomorphic.  We  will assume that $\hM$ is endowed with a $\Gamma$-invariant Riemannian metric and that there is a $\Gamma$-invariant Hermitian structure on $\what\maE$. We then consider  the space $L^2 (\hM, \what\maE)$ of $L^2$-sections of $\what\maE$, which is  defined with respect to these $\Gamma$-invariant structures, so that it furnishes a unitary representation of $\Gamma$.

 The normal bundle $\what\nu$ to the foliation $\what\maF$, 
will  be identified with the orthogonal bundle to $T\what\maF$ with respect to the $\Gamma$-invariant metric, and is an example of bounded geometry $\Gamma$-equivariant vector bundle over $\hM$ and it is also of bounded geometry over each leaf. This normal bundle is  endowed with the linear action of the holonomy pseudogroup of the foliation $\what\maF$ which commutes with the  action of $\Gamma$. The same properties hold for all functorially associated bundles such as its dual bundle $\what\nu^*$ and its exterior powers. Also, when the normal  bundle $\what\nu$ is $K$-oriented with a $\Gamma$-invariant spin$^c$ structure, the associated spinor bundle is endowed with the action of the holonomy pseudo group of the foliation $\what\maF$ and is $\Gamma$-equivariant again with commuting actions. This spinor bundle is then of bounded geometry over $\hM$ as well as over all leaves.

We fix  a Hausdorff Lie groupoid $\what\maG$ which generates the foliation $(\hM, \what\maF)$ and which is a quotient of the monodromy groupoid and a covering of the holonomy groupoid. For simplicity, the reader may assume that the groupoid $\what\maG$ coincides with the holonomy groupoid and that this latter is Hausdorff. The classical convolution $*$-algebra associated with the groupoid $\what\maG$ is $\maA:=C_c^\infty (\what\maG)$.  Recall that we also have the larger algebra $C_u^\infty (\what\maG)$.  Observe that the group $\Gamma$ also acts on $\what\maG$ by groupoid isomorphisms so that the source and range maps $s$ and $r$ are $\Gamma$-equivariant. Indeed, $\Gamma$ acts on the monodromy groupoid and this action descends to an action on the holonomy groupoid since it obviously respects the holonomy equivalence relation. We denote by $\what\maG\rtimes \Gamma$ the crossed product groupoid which is obviously a Lie groupoid with the same unit space $\hM$ and with the rules
$$
s (\what\gamma, g) = s (\what\gamma) g, \quad r (\what\gamma, g) = r (\what\gamma), \quad\text{ and } \quad (\what\gamma, g) (\what\gamma ', g') = (\what\gamma (\what\gamma' g^{-1}), gg') \: \text{ if }\: r(\what\gamma ') = s (\what\gamma)g.
$$
The fibers of this Lie groupoid are the cartesian products of the fibers of $\what\maG$ with the group $\Gamma$ and are hence endowed with the invariant Haar system $\what\eta\otimes \delta$ where $\what\eta$ is the $\what\maG$-invariant Haar system (given by lifting  a Lebesgue measure on the leaves of $(\hM, \what\maF)$) and $\delta$ is the $\Gamma$-invariant counting measure on $\Gamma$. The convolution $*$-algebra of smooth compactly supported functions on the groupoid  $\what\maG\rtimes \Gamma$ is denoted $\maB$. 
 Given $\varphi, \psi \in \maB$, recall that
$$
(\varphi \psi) (g; \what\gamma) \,\, := \sum_{g'\in \Gamma} \int_{\what\maG^{{\what r} (\what\gamma)}} \varphi (g'; {\what\gamma} ') \psi ({g'}^{-1} g; (({\what\gamma}')^{-1} \what\gamma)g') d\what\eta^{{\what r} (\what\gamma)} \quad\text{ and } \quad \varphi^* (g; \what\gamma)\,\, := \,\,{\overline{ \varphi(g^{-1}; {\what\gamma}^{-1} g)}}.
$$
The algebra $\maB$ may be identified with  the convolution algebra of finitely supported functions on $\Gamma$ with values in the convolution $*$-algebra $\maA$. Then we can rewrite the algebra structure as
$$
(\varphi*\psi)(g) \,\,=\sum_{g_1g_2=g} \varphi (g_1) * g_1 (\psi (g_2)) \quad\text{ and }\quad \varphi^* (g) = [g \varphi (g^{-1})]^*.
$$
Notice that all these rules make sense for the larger algebra $C^\infty_u(\what\maG\rtimes \Gamma)$  of finitely supported functions on $\Gamma$ with values in $C_u^\infty (\what\maG)$.

An interesting situation occurs for proper  actions of countable discrete groups. In this case, the quotient $M=\hM/\Gamma$ is Hausdorff and  we call any such pair $(\hM, \Gamma)$ a proper smooth presentation for the  space $M$. 

Since $\Gamma$ acts by isometries of $\hM$, preserves the bifoliation $\what\maF\subset \what\maF'$, and preserves the Hermitian structure on $\what\maE$, it acts by filtration-preserving automorphisms on the spaces ${\Psi'}^{m,\ell} (\hM, \what\maF\subset  \what\maF'; \what\maE)$ and ${\Psi'}_c^{m,\ell} (\hM, \what\maF\subset  \what\maF'; \what\maE)$. Moreover, this is an action by $*$-automorphisms of ${\Psi'}^{\infty,\infty} (\hM, \what\maF\subset  \what\maF'; \what\maE)$.  In particular, $\Gamma$ also acts on each ${\Psi'}^{m,-\infty} (\hM, \what\maF\subset  \what\maF'; \what\maE)$ and ${\Psi'}_c^{m,-\infty} (\hM, \what\maF\subset  \what\maF'; \what\maE)$. For simplicity, we shall concentrate on the action on the compactly supported operators and  we introduce the (algebraic) crossed product class ${\Psi'}_c^{m,\ell} (\hM, \what\maF\subset  \what\maF'; \what\maE)\rtimes \Gamma$. This  is the space of finitely supported functions on $\Gamma$ which take values in ${\Psi'}_c^{m,\ell} (\hM, \what\maF\subset  \what\maF'; \what\maE)$. When $\Gamma$ acts properly and cocompactly on $\hM$,  this is the natural space of operators for our study here, but in general one might need a slightly larger algebra of vertically compactly supported operators. For simplicity, we shall avoid this discussion here and leave the easy extension to the interested reader. 
Composition of such elements involves the action of $\Gamma$ on ${\Psi'}_c^{\infty,\infty} (\hM, \what\maF\subset  \what\maF'; \what\maE)$. More precisely, 
$$
(T\circ S) (g):= \sum_{kh=g} [h^{-1} T(k)] \circ S (h), \text{ for } T, S \in {\Psi'}_c^{\infty,\infty} (\hM, \what\maF\subset  \what\maF'; \what\maE)\rtimes \Gamma. 
$$
Indeed,  the elements of ${\Psi'}_c^{m,\ell} (\hM, \what\maF\subset  \what\maF'; \what\maE)\rtimes \Gamma$ act on $C_c(\Gamma, C_c^\infty (\hM, \what\maE))$ as 
$$
T (\xi) (g)  := \sum _{g_1g_2=g} [T (g_1) \circ U_{g_1}] (\xi (g_2)),
$$
where $U_g$ is the unitary on $L^2 (\hM, \what\maE)$ corresponding to the action of $g\in \Gamma$, and which preserves $C_c^\infty (\hM, \what\maE)$. The action of $U_g$, for $\xi\in L^2 (\hM, \what\maE)$, is given by 
$$
[U_g \xi] (\hm) := g \xi(\hm g), \text{ where we use the action of $\Gamma$ on $\what\maE$.}
$$
Similar definitions give the spaces
$$
{\Psi'}_c^{m,-\infty} (\hM, \what\maF\subset  \what\maF';  \what\maE)\rtimes \Gamma,  \quad{\Psi'}_c^{\infty,-\infty} (\hM, \what\maF\subset  \what\maF'; \what\maE)\rtimes \Gamma 
\quad \text{ and } \quad  \Psi_c^{-\infty} (\hM; \what\maE)\rtimes \Gamma.
$$

Denote by $\what{\oH}_\Gamma^{m,\ell}$ the space $\ell^2(\Gamma, {\oH'}^{m,\ell}(\hM, \what\maF\subset  \what\maF'; \what\maE))$ of $\ell^2$ functions on $\Gamma$ with values in the bigraded Sobolev space ${\oH'}^{m,\ell}(\hM, \what\maF\subset  \what\maF'; \what\maE)$ associated with the  bifoliation $\what\maF\subset \what\maF'$ (see in Appendix \ref{Sobolev}). Set 
$$\Psi_{c,\Gamma}^{m,\ell} \,\, = \,\, {\Psi'}_c^{m,\ell} (\hM, \what\maF\subset  \what\maF'; \what\maE)\rtimes \Gamma
\quad \text{ and } \quad \Psi_{\Gamma}^{m,\ell} \,\, = \,\, {\Psi'}^{m,\ell} (\hM, \what\maF\subset  \what\maF'; \what\maE)\rtimes \Gamma,
$$  where the crossed products are algebraic.

\begin{proposition}\label{Bounded}
\begin{enumerate}
\item If $A\in \Psi_{\Gamma}^{m_1,\ell_1}$ and $B \in \Psi_{\Gamma}^{m_2,\ell_2}$, then $A\circ B\in  \Psi_{\Gamma}^{m_1+m_2,\ell_1+\ell_2}$.   Moreover, if $A$ or $B$ belongs to $\Psi_{c,\Gamma}^{*,*}$ then so does the composition. 
\item If $A\in \Psi_{\Gamma}^{m,\ell}$ then for any $(s,k)\in \R^2$, the operator $A$ extends to a bounded operator 
$$
A_{s,k}:\what\oH_\Gamma^{s,k}  \longrightarrow \what\oH_\Gamma^{s-m,k-\ell}.
$$
\end{enumerate}
\end{proposition}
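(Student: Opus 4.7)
\textbf{Plan of proof for Proposition \ref{Bounded}.}
An element of $\Psi_{\Gamma}^{m_1,\ell_1}$ is by construction a finitely supported map $T:\Gamma \to {\Psi'}^{m_1,\ell_1}(\hM, \what\maF\subset \what\maF'; \what\maE)$, and the composition is defined by
\[
(T\circ S)(g) \;=\; \sum_{k h = g} [h^{-1} T(k)] \circ S(h).
\]
The sum is finite for each $g$, and if $\Supp T\subset F_1$ and $\Supp S\subset F_2$ then $\Supp(T\circ S)\subset F_1F_2$, so the composition has finite support. Because $\Gamma$ acts by filtration-preserving $*$-automorphisms of ${\Psi'}^{\infty,\infty}(\hM, \what\maF\subset \what\maF'; \what\maE)$, one has $h^{-1}T(k)\in {\Psi'}^{m_1,\ell_1}$; combined with the composition theorem for the bifiltered calculus proven earlier, each term lies in ${\Psi'}^{m_1+m_2,\ell_1+\ell_2}$, and the finite sum preserves this class. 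For the compactly supported statement, use that $\Gamma$ acts by diffeomorphisms of $\hM$, so $h^{-1}$ maps ${\Psi'}_c^{*,*}$ to itself, and recall that the composition of a compactly supported operator with a uniformly supported one is compactly supported.

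For (2), on the dense subspace of $\xi\in C_c(\Gamma, C_c^\infty(\hM, \what\maE))$ we have
\[
A(\xi)(g) \;=\; \sum_{g_1\in F} A(g_1)\, U_{g_1}\bigl(\xi(g_1^{-1}g)\bigr),
\]
where $F=\Supp A$ is finite. Each $A(g_1)\in {\Psi'}^{m,\ell}(\hM, \what\maF\subset \what\maF'; \what\maE)$ extends to a bounded operator ${\oH'}^{s,k}\to {\oH'}^{s-m,k-\ell}$ by the standard Sobolev mapping result for the bifiltered pseudodifferential calculus (as used in Lemma \ref{TransversalOrder} and Appendix \ref{Sobolev}). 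Since $\Gamma$ acts by isometries of $\hM$ preserving the bifoliation $\what\maF\subset\what\maF'$ and by unitaries of $\what\maE$, the operator $U_{g_1}$ is an isometry of ${\oH'}^{s,k}$. Applying Cauchy--Schwarz over the finite set $F$ and using that $g\mapsto g_1^{-1}g$ is a bijection of $\Gamma$ that leaves the $\ell^2(\Gamma)$-norm invariant, one concludes
\[
\|A\xi\|_{\what\oH_\Gamma^{s-m,k-\ell}}^{2} \;\leq\; |F|\, \max_{g_1\in F} \|A(g_1)\|_{{\oH'}^{s,k}\to {\oH'}^{s-m,k-\ell}}^{2}\, \|\xi\|_{\what\oH_\Gamma^{s,k}}^{2},
\]
and boundedness follows by density.

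The main obstacle is verifying that $U_g$ acts isometrically (or at worst, boundedly with uniform control) on the bigraded Sobolev spaces ${\oH'}^{s,k}$. This reduces to arranging that the reference leafwise and transversely hypoelliptic operators used in the appendix to define these Sobolev norms can be chosen $\Gamma$-invariantly; this is possible because the metric, the bifoliation and the Hermitian structure on $\what\maE$ are all $\Gamma$-invariant and the $\Gamma$-action is by filtration-preserving $*$-automorphisms of the bifiltered calculus, so powers of such defining operators commute with $U_g$ and the associated Sobolev norms are $\Gamma$-invariant. Everything else is routine bookkeeping inherited from the non-equivariant results proved earlier in this section.
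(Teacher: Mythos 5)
Your proof is correct and follows essentially the same route as the paper: part (1) is the termwise application of the bifiltered composition theorem together with the filtration-preserving $\Gamma$-action and finiteness of supports, and part (2) bounds $A$ by the finitely many Sobolev norms of the $A(g_1)$ (the paper uses the constant $C_{s,k}(A)=\sum_{g}\Vert A(g)\Vert_{s,k}^{s-m,k-\ell}$ in a triangle-inequality estimate where you use Cauchy--Schwarz over the finite support), with the boundedness of the $U_{g_1}$ on ${\oH'}^{s,k}$ left implicit in the paper's ``straightforward estimate'' and made explicit by you. One minor inaccuracy in your justification: the bigraded Sobolev norms of Appendix \ref{Sobolev} are defined via a bounded-geometry cover, a $C^\infty$-bounded partition of unity and local Fourier transforms rather than via powers of $\Gamma$-invariant reference hypoelliptic operators, but your conclusion stands, since an isometric, bifoliation- and Hermitian-structure-preserving $g$ carries one admissible atlas and partition to another and the spaces are independent of these choices up to equivalence of norms (and only finitely many $g$ occur, so no uniformity in $g$ is even needed).
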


\begin{proof}
We have already explained why  the isometric action of $\Gamma$ automatically preserves ${\Psi'}^{\infty, \infty}(\hM, \what\maF\subset  \what\maF'; \what\maE)$ and ${\Psi'}_c^{\infty, \infty}(\hM, \what\maF\subset  \what\maF'; \what\maE)$ and their bifiltrations. Let $A\in \Psi_{\Gamma}^{m_1,\ell_1}$ and $B \in \Psi_{\Gamma}^{m_2,\ell_2}$.   Then for any $g_1, g_2\in \Gamma$, we deduce that 
$$
A(g_1) \in {\Psi'}^{m_1,\ell_1} (\hM, \what\maF\subset \what\maF'; \what\maE) \quad \text{ and } \quad g_1 B(g_2) \in {\Psi'}^{m_2,\ell_2} (\hM, \what\maF\subset \what\maF'; \what\maE).
$$
Therefore, $A(g_1)\circ g_1 B(g_2)\in {\Psi'}^{m_1+m_2,\ell_1+\ell_2} (\hM, \what\maF\subset\what\maF'; \what\maE)$. Since for any $g\in \Gamma$, the operator $(AB)(g)$ is a finite sum of operators of this kind, we get that $(AB)(g)\in {\Psi'}^{m_1+m_2,\ell_1+\ell_2} (\hM, \what\maF\subset\what\maF'; \what\maE)$. But $AB$ is then obviously finitely supported, hence it belongs to $  \Psi_{\Gamma}^{m_1+m_2,\ell_1+\ell_2}$. The second statement of (1) is also clear since composition of a compactly supported operator with a uniformly supported operator is a compactly supported operator. 

For (2),  fix $A\in \Psi_{\Gamma}^{m,\ell}$ and $s,k$ as above. We know that for any $g\in \Gamma$, $\vert\vert A(g)\vert\vert_{s,k}^{s-m, k-\ell} < +\infty$.  Set $C_{s,k}(A) := \sum_{g\in \Gamma} \vert\vert A(g)\vert\vert_{s,k}^{s-m, k-\ell}$ where the sum is of course finite.  Then a straightforward estimate shows that for any $\xi\in \what\maH_\Gamma^{s, k}$
$$
\vert\vert A(\xi)\vert\vert_{s-m, k-\ell} \leq C_{s,k}(A) \times \vert\vert \xi\vert\vert_{s,k}.
$$
\end{proof}

As explained above, if we   assume that the Hermitian structure on $\what\maE$ is $\Gamma$-invariant and that $\Gamma$ acts by isometries on $\hM$, then  the induced action $U$ of $\Gamma$ on the Hilbert space $\what\maH:=L^2(\hM, \what\maE)$ is a unitary representation. For any operator $T$ on the Hilbert space  $\what\maH$ and any $g\in \Gamma$, we denote by $g T$ the operator obtained by conjugation, i.e. $g T:= U_g T U_g^{-1}$.  Any finitely supported operator-valued function $A:\Gamma \to B(\what\maH)$ acts on the Hilbert space  of $\ell^2$ functions from $\Gamma$ to $\what\maH$, which is as usual identified with the spacial tensor product $\ell^2\Gamma\otimes \what\maH$, through the operator $\lambda (A)$ defined by
 $$
 \lambda (A) (\delta_g\otimes \xi) = \sum_{k\in \Gamma} \delta_{kg} \otimes [U_k\circ A(k)] (\xi)\quad\text{ for } g\in \Gamma\text{ and } \xi\in \what\maH.
 $$ 
As usual $\delta_g$ is the delta function at $g\in \Gamma$, that is the characteristic function of $\{g\}$.  Notice that the space $\C (\Gamma, B(\what\maH))$ of such operator-valued finitely supported functions $A$ is a unital $*$-algebra for the rules
 $$
 (AB) (g) := \sum_{kh=g} [h^{-1} A(k)] \circ B(h) \,\, \in B( \what\maH)\quad \text{ and } \quad A^* (g):= g^{-1} A(g^{-1})^*.
 $$ 
 Then, $\lambda$ is a $*$-representation in $\ell^2\Gamma\otimes \what\maH$ as can be checked easily. 
 
\begin{definition}
The weak closure of the $*$-algebra $\lambda \left(\C (\Gamma, B(\what\maH))\right)$ in $B(\what\maH\otimes \ell^2\Gamma)$ is denoted $\maN$.  
\end{definition}

 $\maN$ is the crossed product von Neumann algebra which will be used in the sequel. 

The group $\Gamma$ acts on $\ell^2\Gamma\otimes \what\maH$ through the representation 
$$
V:= L\otimes U, \,\,  \text{ where } L \text{ is the left regular representation},
$$
and
$$
V_k (\xi) (g) := U_k (\xi (k^{-1}g)) \quad \text{ or } \quad V_k (\xi) (g; \hm) :=  k\cdot \xi (k^{-1} g; \hm k)  \in E_{\hm}.
$$
On the other hand, any operator $T\in B(\what\maH)$ acts on $\ell^2\Gamma\otimes \what \maH$ as $I\otimes T$. 
The von Neumann algebra  $\maN$ coincides by definition with the weak closure of the $*$-algebra generated in $B(\ell^2\Gamma \otimes \what \maH)$ by the operators $I\otimes T$ and all unitaries $V_g$ for $T\in B(\what\maH)$ and $g \in \Gamma$. In particular,  all operators in $\maN$ commute with the right representation $R$ defined on $\what\maH$ by $R_g (\xi) (k, \hm) = \xi (kg, \hm)$. 

Denote by  $W$  the action of $\what\maG$ given by the holonomy groupoid action on $\what\maE$.  Recall that $W_{\what\gamma}: \what\maE_{{\what s}(\what\gamma)} \to \what\maE_{{\what r}(\what\gamma)}$ is a linear isomorphism with the usual functorial properties.  
The $*$-algebra $\maB:=C_c^\infty (\what\maG\rtimes \Gamma)$ is represented in the Hilbert space $\ell^2\Gamma\otimes \what\maH$ by the formula
$$
\what\pi (\varphi) (\xi) (g, \hm) :=  \sum_{g_1\in \Gamma} \int_{\what\gamma\in \what\maG^{\hm}} \varphi (g_1, \what\gamma ) W_{\what\gamma} \left[ (U_{g_1}  \xi_{g_1^{-1}g} ) ({\what s}({\what\gamma})) \right] d\what\eta^{\hm} (\what\gamma).
$$
For any $g \in \Gamma$ and any $\what\gamma\in \what\maG$, the compatibility of the action of $\Gamma$ with the foliation means that $(g \cdot)\circ W_{\what\gamma k} = W_{\what\gamma} \circ (g\cdot )$. This representation is  obviously well defined on the bigger algebra of continuous compactly supported functions but this will not be needed here. When the holonomy action $W$ preserves the Hermitian structure of $\what\maE$, the above representation $\what\pi $ is an involutive representation. We shall describe examples of such bundles provided by bundles functorially associated  with the transverse bundle to the foliation when this latter is Riemannian, or more generally functorially associated  with the splitting of the normal bundle when this latter is almost Riemannian.  We shall in fact only need this construction in the almost Riemannian case with a bundle which is holonomy invariant (and $\Gamma$-invariant) as a Hermitian bundle, so we assume from now on that the representation $\what\pi$ is involutive. 

\begin{remark}
Recall the averaging representation $\pi$ of $C_u^\infty (\what\maG)$ in $\what\maH$ defined   for $k\in C_u^\infty (\what\maG)$ and $v\in \what\maH$ by
$$
\pi (k) (v)( \hm) := \int_{\what\maG^{\hm}} k (\what\gamma) W_{\what\gamma} v(s(\what\gamma))  d\what\eta^{\hm} (\what\gamma).
$$
This representation $\what\pi$ can be reinterpreted as 
$$
\left[\what\pi (\varphi) (\xi)\right]_g:=\sum_{g_1g_2=g} \pi (\varphi _{g_1}) (U_{g_1} \xi_{g_2}).
$$
\end{remark}

Notice that if $\varphi=f\delta_\alpha$ then $[\what\pi (\varphi) (\xi)]_g= \pi(f) \circ [V_\alpha \xi]_g$ and hence $\what\pi (\varphi)$ belongs to $\maN$. This shows that $\what\pi (\varphi)$ belongs  to $\maN$ for any $\varphi\in \maB$.  Viewed as an infinite matrix of operators in $\what\maH$, indexed by  $\Gamma\times \Gamma$, one  checks that
$$
\what\pi (\varphi)_{g',g} =  \pi (\varphi _{g'g^{-1}}) \circ U_{g'g^{-1}}, \quad  \forall g', g\in \Gamma.
$$
We actually have the following 
\begin{lemma}
For any $\varphi \in \maB=C^\infty_c (\what\maG)\rtimes \Gamma$,  the operator $\what\pi (\varphi)$ belongs to the von Neumann algebra $\maN$.
\end{lemma}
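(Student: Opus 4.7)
The plan is to reduce the statement to the single-monomial case $\varphi = f\delta_\alpha$ already worked out in the preceding remark, and then invoke linearity. Since $\varphi \in \maB$ is a finitely supported function on $\Gamma$ with values in $C_c^\infty(\what\maG)$, we can write it as a finite sum $\varphi = \sum_{\alpha \in F} \varphi_\alpha\,\delta_\alpha$ for some finite subset $F \subset \Gamma$ and $\varphi_\alpha := \varphi(\alpha,\cdot) \in C_c^\infty(\what\maG)$. The map $\what\pi$ is linear in $\varphi$ (directly from its defining formula) and $\maN$ is a linear subspace of $B(\what\maH \otimes \ell^2\Gamma)$, so it is enough to verify that $\what\pi(f\delta_\alpha) \in \maN$ for each $f \in C_c^\infty(\what\maG)$ and $\alpha \in \Gamma$.

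For such a monomial, I would first record the identity already observed in the remark: a direct substitution in the defining formula for $\what\pi$ yields
\[
[\what\pi(f\delta_\alpha)(\xi)]_g(\hm) \;=\; \int_{\what\gamma \in \what\maG^{\hm}} f(\what\gamma)\, W_{\what\gamma}\bigl[(U_\alpha\xi_{\alpha^{-1}g})({\what s}(\what\gamma))\bigr]\, d\what\eta^{\hm}(\what\gamma) \;=\; \bigl[\pi(f)\circ (V_\alpha \xi)_g\bigr](\hm),
\]
since only the index $g_1 = \alpha$ contributes and $[V_\alpha\xi]_g = U_\alpha \xi_{\alpha^{-1}g}$. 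This shows the factorization
\[
\what\pi(f\delta_\alpha) \;=\; (I_{\ell^2\Gamma}\otimes \pi(f)) \circ V_\alpha
\]
as bounded operators on $\ell^2\Gamma \otimes \what\maH$.

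The operator $\pi(f)$ is well-defined and bounded on $\what\maH = L^2(\hM,\what\maE)$: by Lemma \ref{pi(k)}, $\pi(f) \in {\Psi'}_c^{0,-\infty}(\hM,\what\maF\subset\what\maF';\what\maE)$, and Corollary \ref{Traceability}(1) ensures bounded extension to $\what\maH$. Therefore $I\otimes\pi(f)$ is a bounded operator on $\ell^2\Gamma\otimes\what\maH$ of the form $I\otimes T$ with $T\in B(\what\maH)$, which lies in $\maN$ by construction of $\maN$. Likewise $V_\alpha \in \maN$ since $\maN$ was defined as the weak closure of the $*$-algebra generated by the operators $I\otimes T$ and the unitaries $V_g$. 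Since $\maN$ is an algebra, the product $(I\otimes\pi(f))\circ V_\alpha = \what\pi(f\delta_\alpha)$ lies in $\maN$, and summing over $\alpha \in F$ gives $\what\pi(\varphi) \in \maN$.

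There is no substantive obstacle here; the only point to verify carefully is the factorization identity (a bookkeeping check using the compatibility $(g\cdot)\circ W_{\what\gamma k} = W_{\what\gamma}\circ(g\cdot)$ is \emph{not} needed in this one-monomial case, since the action on the second factor decouples into $\pi(f)$ and $U_\alpha$). The result should be viewed as making precise the already-stated remark, so a short direct argument suffices.
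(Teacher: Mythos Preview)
Your proposal is correct and follows essentially the same approach as the paper: the paper's argument is precisely the sentence immediately preceding the lemma, which observes the factorization $\what\pi(f\delta_\alpha) = (I\otimes\pi(f))\circ V_\alpha$ for monomials and then invokes linearity, and you have simply written this out with a bit more care about boundedness.
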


In general,  a direct computation shows that for any $T\in \maN$, we have, with the obvious notation,
$$
T_{g',g} =  T _{g'g^{-1}} \circ U_{g'g^{-1}} , \quad \forall g', g\in \Gamma.
$$

Recall that we have assumed that $\Gamma$ acts by unitaries on the Hilbert space $\what\maH$.
\begin{definition}
Let $(f_i)$ be an orthonormal basis  of the Hilbert space $\what\maH$. For any nonnegative operator $T\in \maN$,  set
$$
\TR (T):= \sum_i <T(\delta_e \otimes f_i), \delta_e\otimes f_i> \quad \in \,\, [0, +\infty].
$$
\end{definition}

It is easy to check that the functional  $\TR$ does not depend on the choice of the $(f_i)$.

\begin{proposition}
$\TR$ extends by linearity to a normal semi-finite faithful positive trace on $\maN$.  
\end{proposition}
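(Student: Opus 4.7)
The idea is to recognize $\TR$ as the canonical semi-finite trace on the crossed product von Neumann algebra $B(\what\maH)\rtimes_U\Gamma$. Concretely, denoting by $P_e$ the orthogonal projection of $\ell^2\Gamma\otimes\what\maH$ onto the subspace $\delta_e\otimes\what\maH$, one has the partial-trace formula $\TR(T)=\Tr_{\what\maH}(P_eTP_e)$. Independence of the chosen orthonormal basis, positivity, additivity and positive homogeneity on $\maN_+$ are immediate from the corresponding properties of the ordinary trace on $B(\what\maH)$, and normality follows from the monotone convergence theorem applied to the defining sum: if $T_\alpha\nearrow T$ in $\maN_+$, then $\langle T_\alpha(\delta_e\otimes f_i),\delta_e\otimes f_i\rangle\nearrow\langle T(\delta_e\otimes f_i),\delta_e\otimes f_i\rangle$ for each $i$, and summation gives $\TR(T_\alpha)\nearrow\TR(T)$. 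Extension by linearity to the span of the positive cone of the definition ideal $\{T\in\maN_+:\TR(T)<\infty\}$ is then routine via the positive/negative part decomposition.

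The heart of the argument is the trace identity $\TR(T^*T)=\TR(TT^*)$. The key observation is that the right regular representation $R$ of $\Gamma$ on $\ell^2\Gamma$, tensored with the identity on $\what\maH$, lies in the commutant $\maN'$: the operators $R_h\otimes I$ visibly commute with every $I\otimes S$, and they commute with $V_g=L_g\otimes U_g$ since the left and right regular representations commute on $\ell^2\Gamma$. For any $T\in\maN$ the covariance $(R_h\otimes I)T=T(R_h\otimes I)$ forces the matrix coefficients $T_{(g',j'),(g,j)}:=\langle T(\delta_g\otimes f_j),\delta_{g'}\otimes f_{j'}\rangle$ to depend on $(g',g)$ only through the product $g'g^{-1}$, so that one may write $T_{(g',j'),(g,j)}=a_{j',j}(g'g^{-1})$ for some matrix-valued function $a$ on $\Gamma$. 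A short Parseval expansion then reduces both $\TR(T^*T)=\sum_i\|T(\delta_e\otimes f_i)\|^2$ and $\TR(TT^*)=\sum_i\|T^*(\delta_e\otimes f_i)\|^2$ to the common triple sum $\sum_{i,j,g}|a_{j,i}(g)|^2$, after the change of variable $g\mapsto g^{-1}$ and swapping the indices $i,j$; this proves the identity in $[0,+\infty]$.

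Faithfulness is a direct consequence of the same covariance. If $\TR(T^*T)=0$ then $TP_e=0$; conjugating $P_e$ by $R_h\otimes I\in\maN'$ produces the projection onto $\delta_{h^{-1}}\otimes\what\maH$, and commuting $R_h\otimes I$ past $T$ yields $TP_g=0$ for every $g\in\Gamma$, so $T=0$ since $\sum_g P_g=I$ strongly. For semi-finiteness I would use the projections $P_n:=I\otimes e_n$, where $e_n$ is the orthogonal projection of $\what\maH$ onto $\mathrm{span}(f_1,\dots,f_n)$; these lie in $\maN$, increase strongly to $I$, and satisfy $\TR(P_n)=n<\infty$. For any $T\in\maN_+$ the compressions $T_n:=T^{1/2}P_nT^{1/2}$ form an increasing net with supremum $T$, and the trace property (applied to $x=T^{1/2}P_n$) gives $\TR(T_n)=\TR(P_nTP_n)\leq\|T\|\TR(P_n)<\infty$, so that Dixmier's criterion for semi-finiteness is satisfied.

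The main obstacle is the trace identity; it is the one step that genuinely uses the crossed-product structure of $\maN$, and it hinges on correctly identifying a commuting copy of $\Gamma$ inside $\maN'$ so as to give the matrix coefficients of any $T\in\maN$ their covariant form $a_{j',j}(g'g^{-1})$. All remaining axioms are routine consequences once this covariance is in place.
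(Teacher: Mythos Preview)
Your proposal is correct and follows essentially the same route as the paper's proof: both arguments exploit the covariance $T_{g',g}=T_{g'g^{-1}}$ coming from commutation with the right regular representation $R$ to establish the trace identity $\TR(S^*S)=\TR(SS^*)$ and faithfulness, and both deduce normality from that of the ordinary trace via $\TR(T)=\Tr(T_{e,e})$. Your version is in fact more complete, since you spell out the commutant argument for $R_h\otimes I\in\maN'$ and give an explicit proof of semi-finiteness via the finite-rank compressions $I\otimes e_n$, which the paper leaves implicit.
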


\begin{proof}
Note that for any $T\in \maN$, the entries of the infinite matrix $(T_{\alpha, \beta})_{\alpha, \beta\in \Gamma}$, which are bounded operators in $\what\maH$, satisfy
$$
T_{\alpha, \beta} = T_{e, \beta\alpha^{-1}},
$$
and hence only depend on $\beta\alpha^{-1}$. In particular, $T_{\alpha, e}= T_{e, \alpha^{-1}}$.  In addition, if $T=S^*S$ with $S\in \maN$ then
$$
\TR (T)=\sum_{\alpha\in \Gamma} \Tr ((S_{\alpha,e})^* S_{\alpha,e}).
$$
Now $(S^*)_{e, \alpha} = (S_{\alpha,e})^*$, but $\Tr ((S_{\alpha,e})^* S_{\alpha,e}) = \Tr (S_{\alpha,e}(S_{\alpha,e})^*) = \Tr(S_{e, \alpha^{-1}}(S_{e, \alpha^{-1}})^*)$. This proves that $\TR (S^*S) \geq 0$ and that  $\TR (S^*S)=\TR (SS^*)$.  Moreover, if  $\TR(T)=0$ then we get $\Tr ((S_{\alpha,e})^* S_{\alpha,e})=0$ for any $\alpha$. Since the usual trace $\Tr$ is faithful, we deduce that $S_{\alpha,e}=0$ for any $\alpha\in \Gamma$ and hence since $S\in \maN$, $S=0$. Therefore, $\TR$ is a faithful positive tracial functional as claimed. 
Normality of the trace $\TR$ is a consequence of the normality of the usual trace $\Tr$ since $\TR (T)=\Tr (T_{e,e})$.
\end{proof}

\begin{proposition} 
Suppose that $T$ is an element of $\maN$ such that for any $g',g$ and any $k\in \maA$, the operator $\pi (k) T_{g', g}$ is trace class in $\what\maH$. Then for any $\varphi\in \maB$, the operator $\what\pi (\varphi) T$ belongs to the Schatten ideal of $\TR$-trace class bounded operators in $\maN$ and we have
$$
\TR (\what\pi (\varphi) T) \,\, = \,\,  \sum_{g\in \Gamma} \Tr \left( \pi (g^{-1} \varphi_g) T_{g, e} \right)  \,\,  = \,\,  \sum_{g\in \Gamma} \Tr \left( \pi (\varphi_g) (g\cdot T_{g^{-1}})\right).
$$
\end{proposition}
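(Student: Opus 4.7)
The plan is a three-step matrix-element computation in $\maN$ leveraging the finite support of $\varphi$ over $\Gamma$. First, I will use the elementary factorization $\what\pi(f\delta_\alpha) = \iota(\pi(f))\, V_\alpha$, which follows from the remark immediately preceding the statement (here $\iota(B):=I\otimes B$ embeds $B(\what\maH)$ into $\maN$ as the ``diagonal'' subalgebra, while $V_\alpha\in\maN$ is the unitary implementing the $\Gamma$-action), to write
$$
\what\pi(\varphi)\,T \;=\; \sum_{\alpha\in F}\iota(\pi(\varphi_\alpha))\,V_\alpha\,T,
$$
a finite sum, where $F\subset\Gamma$ is the support of $\varphi$ over $\Gamma$. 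It will then suffice to treat each summand $X_\alpha := \iota(\pi(\varphi_\alpha))\,V_\alpha\,T \in\maN$ separately.

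Second, to establish the $\TR$-trace-class property, I would use the matrix identity $V_\alpha^*\iota(B)\, V_\alpha = \iota(\alpha^{-1}\cdot B)$ (a direct calculation from the definitions of $V_\alpha$ and $\iota$) to obtain
$$
X_\alpha^* X_\alpha \;=\; T^*\,\iota\!\bigl(\pi(\alpha^{-1}\cdot\varphi_\alpha)^*\,\pi(\alpha^{-1}\cdot\varphi_\alpha)\bigr)\,T.
$$
Writing $Y_\alpha := \pi(\alpha^{-1}\cdot\varphi_\alpha)\in B(\what\maH)$ and tracking matrix corners, the key point is that the nontrivial entries of $\iota(Y_\alpha)\,T$ are precisely $Y_\alpha\cdot T_{g',g}$, which are trace-class in $\what\maH$ by the hypothesis applied to $k := \alpha^{-1}\cdot\varphi_\alpha\in\maA$. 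Combined with the finite support of $\varphi$, this yields $X_\alpha\in L^1(\maN,\TR)$ and hence $\what\pi(\varphi)\,T\in L^1(\maN,\TR)$.

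Third, with trace-class in hand, I will compute
$$
\TR(\what\pi(\varphi)\,T) \;=\; \Tr\bigl((\what\pi(\varphi)\,T)_{e,e}\bigr) \;=\; \sum_{\alpha\in F}\Tr\bigl(\pi(\varphi_\alpha)\,U_\alpha\,T_{\alpha^{-1},e}\bigr),
$$
the first equality being the general formula $\TR(S) = \Tr(S_{e,e})$ valid on $L^1(\maN,\TR)$ by linearity and positive/negative decomposition. For the second formula of the statement, I use $U_\alpha T_{\alpha^{-1},e} = U_\alpha T_{\alpha^{-1}} U_{\alpha^{-1}} = \alpha\cdot T_{\alpha^{-1}}$ (exploiting the single-subscript Fourier notation $T_{g',g}= T_{g'g^{-1}}U_{g'g^{-1}}$) and reindex $g:=\alpha$ to obtain $\sum_g\Tr(\pi(\varphi_g)(g\cdot T_{g^{-1}}))$. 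For the first formula, I apply the intertwining $\pi(\varphi_\alpha)\,U_\alpha = U_\alpha\,\pi(\alpha^{-1}\cdot\varphi_\alpha)$ (coming from the compatibility $(g\cdot)\circ W_{\what\gamma g}=W_{\what\gamma}\circ(g\cdot)$ and the definition of $\pi$), together with cyclicity of $\Tr$ on $\what\maH$, to rewrite each summand as $\Tr(\pi(g^{-1}\cdot\varphi_g)\,T_{g,e})$.

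The main obstacle will be the trace-class verification in step two. In a general crossed-product von Neumann algebra, membership in $L^1(\maN,\TR)$ is not captured by pointwise trace-class conditions on Fourier coefficients alone: one typically needs an $\ell^1(\Gamma)$-type summability across the group. What saves us here is the finite support of $\varphi$, which cuts off the sum over $\Gamma$ and reduces the question to the finitely many pointwise trace-class conditions $\pi(k)\,T_{g',g}\in L^1(\what\maH)$ supplied by the hypothesis.
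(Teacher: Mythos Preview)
Your decomposition in step 1 and the trace-formula computation in step 3 are correct and match the paper's own treatment of the $(e,e)$ corner. The gap is in step 2. You correctly name the obstacle in your final paragraph but mis-resolve it: the finite support of $\varphi$ makes the sum over $\alpha\in F$ finite, so $\what\pi(\varphi)T=\sum_{\alpha\in F}X_\alpha$ is a finite sum, but it does \emph{not} reduce the verification of $X_\alpha\in L^1(\maN,\TR)$ to finitely many pointwise conditions. For each fixed $\alpha$ you must show $\iota(Y_\alpha)T\in L^1(\maN,\TR)$, and this involves \emph{all} Fourier coefficients $(\iota(Y_\alpha)T)_h=Y_\alpha T_h$ for $h\in\Gamma$; since $T\in\maN$ need not be finitely supported over $\Gamma$, knowing each $Y_\alpha T_h$ is trace class in $\what\maH$ gives no control on $\TR(|\iota(Y_\alpha)T|)$. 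Your last paragraph conflates the $\Gamma$-sum coming from $\varphi$ (which is finite) with the $\Gamma$-sum coming from $T$ (which is not).

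The paper takes a different route at this point: it first establishes the Hilbert--Schmidt statement --- under the weaker hypothesis that each $\pi(k)T_{g',g}$ is Hilbert--Schmidt it argues $\what\pi(\varphi)T\in L^2(\maN,\TR)$ via the explicit formula $\|A\|_2^2=\sum_{g}\|A_{g,e}\|_2^2$, which is far more tractable in terms of matrix entries than the trace norm --- and only then upgrades to trace class by a classical factorization argument. Following this two-step $L^2$-then-$L^1$ route, rather than attacking $L^1(\maN,\TR)$ directly, is what you are missing.
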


\begin{proof}
We assume first that $T$ is an element of $\maN$ such that for any $g',g$ and any $k\in \maA$, the operator $\pi (k) T_{g', g}$ is Hilbert-Schmidt in $\what\maH$. Then we have
$$
(\what\pi (\varphi) T)_{g, g'} = \sum_{k\in \Gamma} \pi (\varphi_{gk^{-1}}) \circ U_{gk^{-1}} \circ T_{k{g'}^{-1}} \circ U_{k{g'}^{-1}}. 
$$
The Hilbert-Shmidt norm in $\maN$, with respect to the trace $\TR$,  is given by $
\vert\vert A \vert\vert_2^2 = \sum_{g} \vert\vert A_{g, e}\vert\vert_2^2$,
which gives the estimate
$$
\vert\vert \what\pi (\varphi) T \vert\vert_2^2 \leq \sum_{g\in \Gamma} \left( \sum_k \left \vert\left \vert \pi (\varphi_{gk^{-1}}) \circ U_{gk^{-1}} \circ T_{k} \circ U_{k}   \right\vert\right\vert_2\right) ^2.
$$
The sums being finite, we concentrate on each term and we can write
$$
\left \vert\left \vert \pi (\varphi_{gk^{-1}}) \circ U_{gk^{-1}} \circ T_{k} \circ U_{k}   \right\vert\right\vert_2 \leq \left \vert\left \vert \pi ((kg^{-1}) \varphi_{gk^{-1}}) \circ T_{k}   \right\vert\right\vert_2
$$
Thus $\what\pi (\varphi) T$ is a Hilbert-Schmidt operator with respect to the trace $\TR$ in the von Neumann algebra $\maN$ acting in the Hilbert space $\ell^2\Gamma\otimes\what\maH$. 

A classical argument then shows that if the operator $\pi (k) T_{g', g}$ is $\Tr$-class in $\what\maH$ for any $k\in \maA$, then $\what\pi (\varphi) T$ is $\TR$-class operator in the von Neumann algebra $\maN$. Moreover, 
$$
\left(\what\pi (\varphi) T\right)_{e,e} \,\,  =  \,\, \sum_g \what\pi (\varphi)_{e,g} \circ T_{g,e} \,\,  = \,\,  \sum_g  \pi (\varphi_g)\circ U_g \circ T_{g^{-1}}\circ U_{g^{-1}} \,\,  =  \,\, \sum_g  \pi (\varphi_g)  \circ (g \cdot T_{g^{-1}}). 
$$
The sum is of course finite.
\end{proof}

\subsection{A II$_\infty$ triple for proper actions on bifoliations}

  For $p\geq 1$, we denote by $\maL^p(\maN, \TR)$, or simply $\maL^p(\maN)$ the $p$-Schatten space, i.e. the space of $\TR$-measurable operators $A$ such that $(A^*A)^{p/2}$ has finite $\TR$-trace, \cite{Benameur2003, BenameurFack}. We point out that $\maL^p(\maN)\cap \maN$ is a two-sided $*$-ideal in $\maN$ whose closure is the two-sided ideal $\maK(\maN)=\maK (\maN, \TR)$ of $\TR$-compact operators.

\begin{lemma}\label{Schatten}\
Given a bounded operator $T$ on $\what\maH$, we have
\begin{itemize}
\item If $T$ belongs to the Schatten ideal $\maL^r(\what\maH)$ for some $r\geq 1$ then the operator $T\otimes \Id_{\ell^2\Gamma}$ belongs to Schatten ideal $\maL^r(\maN, \TR)\cap \maN$ in the von Neumann algebra $\maN$.
\item If $T$ is a compact operator then $T\otimes id_{\ell^2\Gamma}$ is a $\TR$-compact operator in $\maN$.
\end{itemize}
\end{lemma}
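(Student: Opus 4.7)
The plan is to reduce both statements to the single observation that $T\otimes \Id_{\ell^2\Gamma}$ is exactly the image of $T$ under a trace- and norm-preserving embedding into $(\maN,\TR)$.

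First I would identify $T\otimes\Id_{\ell^2\Gamma}$ with the element $\lambda(\delta_e\otimes T)$ of the generating $*$-algebra $\lambda(\C(\Gamma,B(\what\maH)))$: a direct computation from the definition of $\lambda$ gives
$$
\lambda(\delta_e\otimes T)(\delta_g\otimes\xi) = \delta_g\otimes T\xi,
$$
so $\lambda(\delta_e\otimes T)$ acts diagonally on the decomposition $\ell^2\Gamma\otimes\what\maH=\bigoplus_{g\in\Gamma}\what\maH$ as $T$ on every summand. In particular $T\otimes\Id_{\ell^2\Gamma}\in\maN$, the map $T\mapsto T\otimes\Id_{\ell^2\Gamma}$ is a $*$-homomorphism, and $\|T\otimes\Id_{\ell^2\Gamma}\|=\|T\|$.

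Next I would compute the trace. For any $A\geq 0$ in $B(\what\maH)$ and any orthonormal basis $(f_i)$ of $\what\maH$, the definition of $\TR$ yields
$$
\TR(A\otimes\Id_{\ell^2\Gamma}) = \sum_i\langle(A\otimes\Id)(\delta_e\otimes f_i),\delta_e\otimes f_i\rangle = \sum_i\langle Af_i,f_i\rangle = \Tr(A).
$$
Since $(T\otimes\Id)^*(T\otimes\Id)=(T^*T)\otimes\Id$, we have $|T\otimes\Id|^r=|T|^r\otimes\Id$ for every $r\geq 1$, and therefore
$$
\TR(|T\otimes\Id_{\ell^2\Gamma}|^r) = \Tr(|T|^r) = \|T\|_r^r.
$$
This immediately gives the first bullet: $T\in\maL^r(\what\maH)$ implies $T\otimes\Id_{\ell^2\Gamma}\in\maL^r(\maN,\TR)\cap\maN$, with equality of Schatten norms.

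For the second bullet, I would approximate a compact $T$ in operator norm by a sequence of finite-rank operators $(T_n)$. Each $T_n$ is trace-class on $\what\maH$, so by the first part $T_n\otimes\Id_{\ell^2\Gamma}\in\maL^1(\maN,\TR)\cap\maN\subset\maK(\maN,\TR)$. Because $\|T_n\otimes\Id_{\ell^2\Gamma}-T\otimes\Id_{\ell^2\Gamma}\|=\|T_n-T\|\to 0$ and $\maK(\maN,\TR)$ is norm-closed in $\maN$, the limit $T\otimes\Id_{\ell^2\Gamma}$ is $\TR$-compact.

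There is no real obstacle here; the only point that needs a moment's verification is the trace formula $\TR(A\otimes\Id_{\ell^2\Gamma})=\Tr(A)$, which is just the fact that the distinguished vector $\delta_e\otimes(\cdot)$ used in defining $\TR$ concentrates the trace on a single copy of $\what\maH$ inside $\ell^2\Gamma\otimes\what\maH$.
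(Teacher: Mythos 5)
Your proof is correct. For the first bullet it is essentially the paper's argument: both boil down to noting that $T\mapsto T\otimes\Id_{\ell^2\Gamma}$ is a $*$-homomorphism into $\maN$ (your identification with $\lambda(\delta_e\otimes T)$ is a fine way to see membership in $\maN$, which the paper also gets directly from its description of $\maN$ as generated by the $I\otimes T$ and the $V_g$) and then computing $\TR(A\otimes\Id_{\ell^2\Gamma})=\Tr(A)$ on nonnegative elements, which together with $|T\otimes\Id|^r=|T|^r\otimes\Id$ gives equality of Schatten norms. For the second bullet you take a genuinely different, and more elementary, route: you approximate the compact operator $T$ in norm by finite-rank operators, push these into $\maL^1(\maN,\TR)\cap\maN$ by the first part, and invoke the norm-closedness of $\maK(\maN,\TR)$ (which holds by its very definition as the closure of $\maL^p(\maN,\TR)\cap\maN$). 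The paper instead computes the generalized singular values \`a la Fack--Kosaki, showing $\mu_t^{\TR}(T\otimes\Id_{\ell^2\Gamma})=\mu_{[t]}(T)$ and concluding $\TR$-compactness from $\mu_t^{\TR}\to 0$. Your argument avoids the singular-value machinery altogether; the paper's computation is slightly more informative, since it identifies the exact singular value function of $T\otimes\Id_{\ell^2\Gamma}$ rather than just its compactness. Both are complete proofs.
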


\begin{proof}\
Since the correspondence $T\mapsto T\otimes id_{\ell^2\Gamma}$ respects composition and adjoint, we can reduce the proof by standard arguments to the case of a nonnegative trace-class operator $T$ (so $r=1$). But then a straightforward computation gives$$
\TR (T\otimes id_{\ell^2\Gamma}) = \Tr (T).
$$
For the second item, we may assume that $T$ is  nonnengative with eigenvalues $(\lambda_n)_{n\geq 0}$ which satisfy $\lambda_n\to 0$ as $n\to +\infty$. The singular values $(\mu_t^{\TR})_{t\geq 0}$ of $ T\otimes id_{\ell^2\Gamma}$ with respect to the trace $\TR$ of $\maN$ are then given by \cite{FackKosaki} 
$$
\mu_t(T\otimes id_{\ell^2\Gamma}) =\lambda_{[t]} (T) \quad (= \mu_t(T)),
$$
where $[t]$ is the integral part of $t$. Therefore, we also have $\mu_t^{\TR}\to 0$ as $t\to +\infty$. Thus the operator $ T\otimes id_{\ell^2\Gamma}$ is $\TR$-compact in $\maN$.
\end{proof}

\begin{proposition}\label{Properties-maN}
\begin{enumerate}
\item For $\varphi\in \maB$, the operator ${\what{\pi}} (\varphi)$ belongs to $\Psi_{c,\Gamma}^{0, -\infty}$.
\item  If $T$ is an element of  $\Psi_{\Gamma}^{m, \ell} $ with $m\leq 0$ and $\ell\leq 0$, then the induced operator extends to a bounded operator which then belongs to the von Neumann algebra $\maN$.  
\item If $T$ is an element of  $\Psi^{m, \ell} _{c, \Gamma}$ with $m<-(v+2n)$ and $\ell<-p$, then the induced operator belongs to the Schatten 
ideal $\maL^1(\maN)\cap \maN$ of $\TR$-class operators in $\maN$. 
\item If $T$ is an element of  $\Psi^{m, \ell}_{c, \Gamma}$ with $m<0$ and $\ell<0$, then the induced 
operator belongs to the ideal $\maK (\maN)$ of $\TR$-compact operators in $\maN$.
\end{enumerate}
\end{proposition}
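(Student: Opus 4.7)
The strategy is uniform across the four items: write a general element of the crossed product $\Psi_\Gamma^{m,\ell}$ (respectively $\Psi_{c,\Gamma}^{m,\ell}$) as a finite sum $T=\sum_g T(g)\delta_g$ whose representation on $\ell^2\Gamma\otimes\what\maH$ takes the form $\lambda(T)=\sum_g V_g\,(I\otimes T(g))$, and then reduce every claim to the component-wise statement proven earlier for a single operator on $\what\maH$. Both $V_g$ (which equals $\lambda(I\cdot\delta_g)$) and $I\otimes T(g)$ (which equals $\lambda(T(g)\delta_e)$) lie in $\maN$, so each analytic property of $T$ in $\maN$ will be read off from the corresponding property of the $T(g)$ in $B(\what\maH)$.

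For item (1), the remark just preceding the proposition identifies $\what\pi(\varphi)$ with the crossed-product element whose $g$-coefficient is $\pi(\varphi_g)$. Since $\varphi\in\maB$ has finite support on $\Gamma$ with each $\varphi_g\in C_c^\infty(\what\maG)$, Lemma \ref{pi(k)} places $\pi(\varphi_g)$ in ${\Psi'}_c^{0,-\infty}$, whence $\what\pi(\varphi)\in\Psi_{c,\Gamma}^{0,-\infty}$.

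The remaining items now reduce to the component-wise picture. For (2), apply Proposition \ref{Bounded}(2) with $s=k=0$: $T$ extends to a bounded map $\what\oH_\Gamma^{0,0}\to\what\oH_\Gamma^{-m,-\ell}$, and since $-m,-\ell\geq 0$, the target embeds continuously in $\what\oH_\Gamma^{0,0}=\ell^2\Gamma\otimes\what\maH$. The resulting bounded operator is a finite sum of the $V_g(I\otimes T(g))$, and membership in $\maN$ follows. For (3), each $T(g)\in{\Psi'}_c^{m,\ell}$ with $m<-(v+2n)$ and $\ell<-p$ is trace class on $\what\maH$ by Lemma \ref{TraceClass}, so by Lemma \ref{Schatten}(1) with $r=1$ each $I\otimes T(g)$ lies in $\maL^1(\maN,\TR)\cap\maN$; since $V_g$ is a unitary in $\maN$ and $\maL^1(\maN,\TR)$ is a two-sided $\maN$-ideal, the finite sum $\lambda(T)$ is $\TR$-trace class. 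For (4), apply instead the corollary after Lemma \ref{TraceClass} to obtain that each $T(g)$ is a compact operator on $\what\maH$, then Lemma \ref{Schatten}(2) to land in $\maK(\maN,\TR)$; this ideal again absorbs unitary multiplication from $\maN$ and is closed under finite sums, which gives the conclusion.

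The only genuine bookkeeping point is the verification that $\lambda(T)$ has the claimed form $\sum_g V_g(I\otimes T(g))$: the conjugation conventions implicit in the crossed-product multiplication on $\Psi_\Gamma^{\infty,\infty}$ (with its $\Gamma$-action by conjugation by $U_g$ on ${\Psi'}^{*,*}$) must be matched consistently with the definition of the representation $\lambda$. Once this dictionary between the algebraic crossed product and the concrete finite sums in $\maN$ is recorded, items (2)--(4) are essentially one-line consequences of the single-operator versions already proven on $\what\maH$, and the key analytic input (trace-class, compactness) is supplied in each case by Lemma \ref{TraceClass}, its corollary, and Lemma \ref{Schatten}.
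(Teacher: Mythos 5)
Your proof is correct and follows essentially the same route as the paper: item (1) is exactly the paper's reduction to Lemma \ref{pi(k)}, item (2) is the paper's appeal to Proposition \ref{Bounded}(2) plus the observation that $\lambda(T)$ lies in $\C(\Gamma,B(\what\maH))\subset\maN$, and item (3) is the paper's reduction (via the definition of $\TR$) to Corollary \ref{Traceability}, which you make more explicit through the decomposition $\lambda(T)=\sum_g V_g\,(I\otimes T(g))$, Lemma \ref{Schatten}, and the two-sided ideal property of $\maL^1(\maN,\TR)\cap\maN$. The only genuine divergence is item (4): the paper works at the crossed-product level, noting that a large power of $T^*T$ is $\TR$-trace class and invoking the standard argument that this forces $\TR$-compactness of $T$, whereas you argue componentwise, getting compactness of each $T(g)$ on $\what\maH$ from the corollary to Lemma \ref{TraceClass} and transferring it to $\maK(\maN)$ via Lemma \ref{Schatten}(2) and the ideal property; your variant is slightly more direct and avoids the power trick, at the cost of the (correctly flagged and easily verified) bookkeeping identity $\lambda(T)=\sum_g V_g(I\otimes T(g))$, so both arguments are sound and rest on the same earlier lemmas.
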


\begin{proof}
To prove (1), by the  definitions of $\maB$ and $\Psi_{c, \Gamma}^{0, -\infty}$, we only need to show that for any $k\in \maA$, the operator $\pi (k)$ belongs to the class ${\Psi'}_c^{0, -\infty} (\hM, \what\maF\subset\what\maF'; \what\maE)$. But this is the content of Lemma \ref{pi(k)}. 

Regarding (2), we can apply (2) of Proposition \ref{Bounded} to deduce that $T$ induces a bounded operator on the Hilbert space $\what\maH\otimes \ell^2\Gamma$.  On the other hand we can see such operator (or rather $\lambda (T)$) as an element of $\maN$ since it belongs to $\C(\Gamma, B(\what\maH))$. 

For (3), from the definition of the trace $\TR$ on the von Neumann algebra $\maN$, it is clear that we only need to show that for any $A\in {\Psi'}^{m, \ell} (\what{M}, \what\maF\subset\what\maF'; \what\maE)$, with $m<-(v+2n)$ and $\ell < p$, the corresponding operator $\lambda (A)$ is trace class as an operator on $\what\maH$. But this is precisely the content  of (2) of Corollary \ref{Traceability}.

The proof of (4) is similar since some power of $T^*T$ is then $\TR$-class in $\maN$ and this implies by standard arguments (left as an exercise)  that $T$ itself is $\TR$-compact. 
\end{proof}

We quote the following corollary which will be used in the sequel. 

\begin{corollary}
If $T$ is an element of  $\Psi^{m, \infty} _{ \Gamma}$ with $m<-(v+2n)$, then for any $\varphi\in \maB$, the operator ${\what{\pi}} (\varphi)\circ T$ belongs to the Schatten 
ideal $\maL^1(\maN, \TR)\cap \maN$. If we only assume that $m<0$, then the operator ${\what{\pi}} (\varphi)\circ T$ belongs to the ideal $\maK (\maN)$ of $\TR$-compact operators in the semi-finite von Neumann algebra $\maN$.
\end{corollary}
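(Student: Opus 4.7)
The plan is essentially to combine the three parts of the preceding Proposition \ref{Properties-maN} with the composition rule of Proposition \ref{Bounded}. I would argue as follows.

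First, I would unpack the meaning of $\Psi_\Gamma^{m,\infty}$: writing $\Psi_\Gamma^{m,\infty}=\bigcup_\ell \Psi_\Gamma^{m,\ell}$ in analogy with the definitions of ${\Psi'}^{m,\infty}$ given in the text, we may fix $\ell\in\Z$ with $T\in\Psi_\Gamma^{m,\ell}$. By part (1) of Proposition \ref{Properties-maN}, the operator $\what\pi(\varphi)$ lies in $\Psi_{c,\Gamma}^{0,-\infty}$, meaning that its matrix entries $\what\pi(\varphi)_{g',g}=\pi(\varphi_{g'g^{-1}})\circ U_{g'g^{-1}}$ sit in ${\Psi'}_c^{0,-\infty}({\what M},\what\maF\subset\what\maF';\what\maE)$ and are finitely supported in $g'g^{-1}\in\Gamma$ (since $\varphi\in\maB=C_c^\infty(\what\maG)\rtimes\Gamma$).

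Next I would invoke the composition statement, part (1) of Proposition \ref{Bounded}: the product of $\what\pi(\varphi)\in\Psi_{c,\Gamma}^{0,-\infty}$ with $T\in\Psi_\Gamma^{m,\ell}$ lands in $\Psi_\Gamma^{m,\ell-\infty}=\Psi_\Gamma^{m,-\infty}$. Moreover, since composition of a compactly supported operator with a uniformly supported one is compactly supported (as noted right after Definition \ref{BigradedPseudo}), and since the convolution in $\Gamma$ of a finitely supported $\Gamma$-function with any $\Gamma$-function of arbitrary support is again finitely supported when one of them is finitely supported (here $\what\pi(\varphi)$ is), we conclude
\[
\what\pi(\varphi)\circ T\;\in\;\Psi_{c,\Gamma}^{m,-\infty}\;=\;\bigcap_{\ell'\in\Z}\Psi_{c,\Gamma}^{m,\ell'}.
\]

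Finally I would conclude by specializing the $\ell'$-index and applying parts (3) and (4) of Proposition \ref{Properties-maN}. If $m<-(v+2n)$, choose any $\ell'<-p$; then $\what\pi(\varphi)\circ T\in\Psi_{c,\Gamma}^{m,\ell'}$ with $m<-(v+2n)$ and $\ell'<-p$, so part (3) yields membership in $\maL^1(\maN,\TR)\cap\maN$. If only $m<0$ is assumed, choose any $\ell'<0$; then $\what\pi(\varphi)\circ T\in\Psi_{c,\Gamma}^{m,\ell'}$ with $m<0$ and $\ell'<0$, so part (4) yields $\TR$-compactness in $\maN$.

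No step is really an obstacle here: all the technical content (the pseudodifferential calculus for $\what\pi(\varphi)$, the bi-graded composition formula, and the trace-class/compactness criteria on the semi-finite von Neumann algebra $(\maN,\TR)$) is already encapsulated in the preceding lemmas and the proposition itself. The only mild subtlety is verifying that the crossed-product composition preserves compact support, which is immediate from the fact that $\varphi$ has finite support in $\Gamma$ and compact support in $\what\maG$, so the finitely many terms appearing in $(\what\pi(\varphi)\circ T)(g)=\sum_{kh=g}[h^{-1}\what\pi(\varphi)(k)]\circ T(h)$ are each compactly supported.
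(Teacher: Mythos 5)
Your argument is correct and is exactly the route the paper intends: it mirrors, word for word, the proof given for the analogous corollary in the Galois-covering setting (namely $\what\pi(\varphi)\in\Psi_{c,\Gamma}^{0,-\infty}$ by Proposition \ref{Properties-maN}(1), the bifiltered composition rule of Proposition \ref{Bounded} placing $\what\pi(\varphi)\circ T$ in $\Psi_{c,\Gamma}^{m,-\infty}$, and then items (3) and (4) of Proposition \ref{Properties-maN} with $\ell'<-p$, respectively $\ell'<0$). The only blemish is your aside that convolving a finitely supported $\Gamma$-function with one of arbitrary support stays finitely supported, which is false in general but also unnecessary here, since $\Psi_{\Gamma}^{m,\ell}$ is by definition the algebraic crossed product, so $T$ is already finitely supported in $\Gamma$ and the convolution is trivially finitely supported.
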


\begin{definition}
Given a densely defined operator $(\hD, Dom (\hD))$ in $\what\maH$, the densely defined operator $\hD_\rtimes $ with domain $\ell^2 (\Gamma, Dom (\hD))$ is 
$$
\hD_\rtimes (\xi) (g) := \hD (\xi (g)).
$$
\end{definition}

The space $\ell^2 (\Gamma, Dom (\hD))$ is  defined as the subset of $\ell^2 (\Gamma, \what\maH)$ composed of such $\ell^2$ maps  which are valued in $Dom (\hD)$. So, it is obvious that this is a dense subspace of $\ell^2 (\Gamma, \what\maH)$ whenever $Dom (\hD)$ is a dense subspace of $\what\maH$. Recall that  the polar decomposition holds for any closed densely defined operator. 

\begin{proposition}\label{rtimes}
Let $(\hD, Dom (\hD))$ be a closed densely defined operator and let $\varphi$ be a continuous compactly supported function on $\what\maG\rtimes \Gamma$, then
\begin{itemize}
\item The  operator $(\hD_\rtimes , \ell^2 (\Gamma, Dom (\hD)))$ is a closed operator which is affiliated with the von Neumann algebra $\maN$. 
\item If $\hD$ is essentially self-adjoint so is $\hD_\rtimes$.
\item Assume that for any $g\in \Gamma$, the operator $\pi (\varphi _g)$ preserves $Dom (\hD)$ and that $[\hD, \pi (\varphi _g)]$ extends to a bounded operator on $\what\maH$. Then the operator $\what\pi (\varphi)$ preserves the domain of $\hD_\rtimes$ and  $[\what\pi (\varphi), \hD_\rtimes]$ extends to a bounded operator on $\ell^2\Gamma\otimes \what\maH$, which then belongs to $\maN$. 
\end{itemize}
\end{proposition}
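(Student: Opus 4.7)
For assertion (1), I verify closedness by a fiberwise graph argument: if $\xi_n \in \ell^2(\Gamma, \Dom(\hD))$ satisfies $\xi_n \to \xi$ in $\ell^2(\Gamma,\what\maH)$ and $\hD_\rtimes \xi_n \to \eta$, then for each $g \in \Gamma$ the pair $(\xi_n(g), \hD \xi_n(g))$ converges to $(\xi(g), \eta(g))$; closedness of $\hD$ gives $\xi(g) \in \Dom(\hD)$ and $\hD\xi(g) = \eta(g)$, and summability of the graph norms is inherited from $\xi$ and $\eta$. For affiliation with $\maN$, I identify $\ell^2(\Gamma,\what\maH)$ with $\ell^2\Gamma \otimes \what\maH$ via $\xi \mapsto \sum_g \delta_g \otimes \xi(g)$, under which $\hD_\rtimes$ becomes $\Id_{\ell^2\Gamma} \otimes \hD$. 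The polar decomposition $\overline{\hD} = V|\overline{\hD}|$ has $V$ and every spectral projection $E_\lambda(|\overline{\hD}|)$ in $B(\what\maH)$, so their $\Id$-tensor products lie in $\maN$ by its very definition as containing all $\Id \otimes T$ with $T \in B(\what\maH)$. This establishes affiliation.

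For (2), the same identification shows that $\Id_{\ell^2\Gamma} \otimes \overline{\hD}$ is a closed self-adjoint extension of $\hD_\rtimes$ whenever $\overline{\hD}$ is self-adjoint; equivalently, density of $\Ran(\hD \pm i)$ in $\what\maH$ combined with finite-support truncations gives density of $\Ran(\hD_\rtimes \pm i)$ in $\ell^2(\Gamma,\what\maH)$, hence essential self-adjointness of $\hD_\rtimes$.

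For (3), I exploit the finite support of $\varphi$ in $\Gamma$ to write $\varphi = \sum_\alpha \varphi_\alpha \delta_\alpha$, reducing to the monomial case. A direct calculation gives the factorization $\what\pi(\varphi_\alpha \delta_\alpha) = (\Id_{\ell^2\Gamma} \otimes \pi(\varphi_\alpha)) V_\alpha$ with $V_\alpha = L_\alpha \otimes U_\alpha \in \maN$. Preservation of $\Dom(\hD_\rtimes)$ then follows, provided that $V_\alpha$ preserves it (which rests on the standing $\Gamma$-equivariance of $\hD$, so that $U_\alpha$ preserves $\Dom(\hD)$) and that $\Id \otimes \pi(\varphi_\alpha)$ preserves it (by the hypothesis on $\pi(\varphi_\alpha)$). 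The commutator splits as
\begin{equation*}
[\hD_\rtimes,\, (\Id \otimes \pi(\varphi_\alpha))V_\alpha] \;=\; (\Id \otimes [\hD, \pi(\varphi_\alpha)])\, V_\alpha \;+\; (\Id \otimes \pi(\varphi_\alpha))\, [\hD_\rtimes, V_\alpha],
\end{equation*}
where the first term is bounded and belongs to $\maN$ by the commutator hypothesis together with $V_\alpha \in \maN$, while the second vanishes since $[\hD, U_\alpha] = 0$ under the $\Gamma$-invariance of $\hD$. Summing over the finitely many $\alpha$'s completes the argument. The main subtlety is in (3): the bounded-commutator conclusion rests on the $\Gamma$-equivariance of $\hD$, which is natural for the transverse signature and Dirac-type operators applied in the sequel but is not spelled out in the statement of the proposition.
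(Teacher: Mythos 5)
Your proof is correct and follows essentially the same route as the paper's: the identification $\hD_\rtimes=\Id_{\ell^2\Gamma}\otimes\hD$, affiliation via the partial isometry and the spectral projections of $\vert\hD_\rtimes\vert=\Id\otimes\vert\hD\vert$ lying in $\maN$, fiberwise/range-density arguments for closedness and essential self-adjointness, and a finite convolution estimate (equivalently, your monomial decomposition plus the Leibniz rule) for the commutator. Your closing remark is apt rather than a defect of your argument alone: the paper's own proof writes $[\what\pi(\varphi),\hD_\rtimes](\xi)_g=\sum_{g_1g_2=g}[\pi(\varphi_{g_1}),\hD](\xi_{g_2})$, silently suppressing the $U_{g_1}$ factor coming from $\what\pi(\varphi)(\xi)_g=\sum_{g_1g_2=g}\pi(\varphi_{g_1})(U_{g_1}\xi_{g_2})$, i.e.\ it tacitly uses $U_g\hD=\hD U_g$ on $\Dom(\hD)$ (the $\Gamma$-invariance of $\hD$ that indeed holds in the applications), which is exactly the hypothesis you made explicit.
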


\begin{proof}
That $(\hD_\rtimes , \ell^2(\Gamma, Dom (\hD)))$ is closed is easy to verify. Moreover, the minimal (resp. maximal) domain of $\hD_\rtimes$ coincides with $\ell^2(\Gamma, Dom_{min} (\hD))$ (resp. with $\ell^2(\Gamma, Dom_{max} (\hD))$). The partial isometry in the polar decomposition of $\hD_\rtimes$ coincides with the operator $I \otimes \what U$ where $\what U\in B(\what\maH)$ is the partial isometry appearing in the polar decomposition of $\hD$. But by the definition of $\maN$ we know that $I \otimes \what U \in \maN$. The same argument works for all spectral projections of the modulus $\vert\hD_\rtimes\vert$ of the operator $\hD_\rtimes$, since we have $ \vert\hD_\rtimes\vert = \vert\hD\vert _\rtimes$. Hence,  these spectral projections belong to $\maN$.

For $\xi\in \ell^2\Gamma\otimes \what\maH$, we have
$$
[\what\pi (\varphi), \hD_\rtimes] (\xi)  = [\pi (\varphi (\bullet)), \hD] * \xi  \quad \text{ that is } \quad [\what\pi (\varphi), \hD_\rtimes] (\xi) _g = \sum_{g_1g_2=g} [\pi (\varphi _{g_1}), \hD] (\xi _{g_2}).
$$
Set $C(\varphi):= \sum_{k\in \Gamma} \vert\vert[\pi (\varphi _{k}), \hD] \vert\vert_{B(\what\maH)} <+\infty$, where the sum is  finite since $\varphi$ is compactly supported. Then a straightforward computation gives
$$
\vert\vert[\what\pi (\varphi), \hD_\rtimes] (\xi) \vert\vert_{\ell^2\Gamma\otimes \what\maH} \,\, \leq \,\, C(\varphi) \times  \vert\vert \xi \vert\vert_{\ell^2\Gamma\otimes \what\maH}.
$$
\end{proof}

Recall that all our $\Gamma$-equivariant vector bundles are holonomy equivariant and have $C^\infty$-bounded geometry. We are now in position to state the main theorem. 

\begin{theorem}\label{CM-triple}
Let ${\what D}$ be a first order  $C^\infty$-bounded uniformly supported pseudodifferential  operator acting on the smooth compactly supported sections of $\what\maE$ over ${\what M}$. Assume that
\begin{itemize}
\item ${\what D}$ is uniformly transversely elliptic in the Connes-Moscovici pseudo' calculus ${\Psi'}^{1} ({\what M}, \what\maF'; \what\maE)$, and essentially self-adjoint with initial domain $C_c^\infty ({\what M}, \what\maE)$. 
\item ${\what D}$ has a holonomy invariant transverse principal symbol $\sigma ({\what D})$, a section over the total space of $\what\nu_{\what\maF'}^*  \oplus (T^*\what\maF '\cap \what\nu_{\what\maF})\simeq \what{V}^*\oplus \what{N}^*$.
\end{itemize}
Then the triple $(\maB, \maN,  {\what D}_\rtimes)$ is a  semi-finite spectral triple which is finitely summable of dimension $v+2n$. 
\end{theorem}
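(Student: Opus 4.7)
The plan is to check each clause of Definition \ref{def.spectral} in turn with $p=v+2n$; the semi-finite trace $\TR$ on $\maN$ has already been constructed. Writing a general $\varphi\in\maB$ as a finite sum $\sum_\alpha \varphi_\alpha\delta_\alpha$, a short unwinding of the definition of $\what\pi$ gives
\[
\what\pi(\varphi_\alpha\delta_\alpha)\;=\;(I\otimes \pi(\varphi_\alpha))\,V_\alpha,
\]
and by Lemma \ref{pi(k)} each $\pi(\varphi_\alpha)$ lies in ${\Psi'}_c^{0,-\infty}$ and hence extends to a bounded operator on $\what\maH$. Since $\maN$ is generated by the operators $I\otimes T$ with $T\in B(\what\maH)$ together with the unitaries $V_g$, this shows $\what\pi(\maB)$ is a $*$-subalgebra of $\maN$. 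The operator $\hD_\rtimes=I\otimes\hD$ is essentially self-adjoint and $\maN$-affiliated by Proposition \ref{rtimes}, its spectral projections being of the form $I\otimes E_A\in\maN$, so that $(\hD_\rtimes+i)^{-1}=I\otimes(\hD+i)^{-1}\in\maN$.

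The main step is finite summability. For $\varphi=f\otimes\delta_\alpha$ one computes
\[
\what\pi(\varphi)(\hD_\rtimes+i)^{-1}\;=\;L_\alpha\otimes T_\alpha,\qquad T_\alpha:=\pi(f)\,U_\alpha\,(\hD+i)^{-1},
\]
and hence $\bigl(\what\pi(\varphi)(\hD_\rtimes+i)^{-1}\bigr)^*\bigl(\what\pi(\varphi)(\hD_\rtimes+i)^{-1}\bigr)=I\otimes T_\alpha^*T_\alpha$. Combined with the identity $\TR(I\otimes S)=\Tr(S)$ that underlies Lemma \ref{Schatten}, this identifies the Schatten-$r$ norm in $(\maN,\TR)$ with the usual Schatten-$r$ norm of $T_\alpha$ in $B(\what\maH)$. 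Rewriting $T_\alpha=\pi(f)(U_\alpha\hD U_\alpha^{-1}+i)^{-1}U_\alpha$ and noting that $U_\alpha\hD U_\alpha^{-1}$ remains a uniformly supported, uniformly transversely elliptic, essentially self-adjoint first-order operator in the Connes-Moscovici calculus (since $\Gamma$ acts by bifoliation-preserving isometries), Corollary \ref{Resolvent} yields $T_\alpha\in\maL^r(\what\maH)$ for every $r>v+2n$. Summing over the finitely many $\alpha$ in the support of $\varphi$ and passing to the Dixmier ideal gives $\what\pi(\varphi)(\hD_\rtimes+i)^{-1}\in L^{v+2n,\infty}(\maN,\TR)$.

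For the commutator condition, Proposition \ref{rtimes}(3) reduces matters to showing that each $[\hD,\pi(\varphi_g)]$ extends to a bounded operator on $\what\maH$. A priori the commutator lies in ${\Psi'}_c^{1,-\infty}$, and by Corollary \ref{Commutator} its top-degree symbol is the pointwise bracket $[\sigma(\hD)(r(\what\gamma),\eta),\sigma(\pi(\varphi_g))(\what\gamma,\eta)]$. Now the symbol of $\pi(\varphi_g)$ is $\eta$-independent and equals $\varphi_g(\what\gamma)W_{\what\gamma}$, while the holonomy invariance of $\sigma(\hD)$ gives $\sigma(\hD)(r(\what\gamma),\eta)\circ W_{\what\gamma}=W_{\what\gamma}\circ\sigma(\hD)(s(\what\gamma),{}^th_{\what\gamma,*}\eta)$; multiplying out shows the bracket vanishes identically. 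Consequently $[\hD,\pi(\varphi_g)]\in{\Psi'}_c^{0,-\infty}$, which is bounded by Proposition \ref{Properties-maN}(2). I expect the main obstacle to be the finite-summability step: the issue is not invoking Corollary \ref{Resolvent}, but recasting its output as a Dixmier-ideal estimate inside the semi-finite algebra $(\maN,\TR)$. This forces careful bookkeeping of the crossed-product structure, in particular handling the fact that $\hD$ is not assumed $\Gamma$-invariant, so that the conjugation $U_\alpha\hD U_\alpha^{-1}$ genuinely intervenes before Lemma \ref{Schatten} can transport the classical Schatten bounds into the von Neumann setting.
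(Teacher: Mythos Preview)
Your proof is correct and follows essentially the same route as the paper: affiliation and self-adjointness via Proposition~\ref{rtimes}, finite summability via Corollary~\ref{Resolvent} transported into $(\maN,\TR)$ by Lemma~\ref{Schatten}, and the commutator bound via Corollary~\ref{Commutator} combined with Proposition~\ref{rtimes}(3). Your explicit decomposition $\what\pi(f\delta_\alpha)(\hD_\rtimes+i)^{-1}=L_\alpha\otimes\pi(f)U_\alpha(\hD+i)^{-1}$ and the conjugation $U_\alpha\hD U_\alpha^{-1}$ are in fact a useful clarification, since the statement does not assume $\hD$ is $\Gamma$-invariant and the paper glosses over this point; note only that the phrase ``passing to the Dixmier ideal'' requires the finer estimate of Proposition~\ref{Dixmier}, exactly as in the paper, since membership in $\bigcap_{r>v+2n}\maL^r$ alone does not imply membership in $L^{v+2n,\infty}$.
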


\begin{remark}
We have implicitely used the trace $\TR$ on $\maN$ and the operator ${\what D}_\rtimes$ has domain $\ell^2(\Gamma, Dom (\what{D}))$ with $Dom (\what{D})$ being the domain of the self-adjoint extension of $\what{D}$. 
\end{remark}

\begin{proof}\
By the first and second items of Proposition \ref{rtimes}, we know that $(\ell^2(\Gamma, Dom({\what D}), {\what D}_\rtimes )$ is also self-adjoint and affiliated with the von Neumann algebra $\maN$. By Corollary \ref{Resolvent}, we know that for any $k\in C_c^\infty(\what\maG)$ the operator $\pi (k) (\what{D}+i)^{-1}$ belongs to the Schatten ideal $\maL^r(L^2({\what M}, \what\maE))$ for any $r > v+2n$. Since we obviously have
$$
\left[(\what{D}+i)^{-1}\right]_\rtimes = \left[(\what{D}_\rtimes+i)\right] ^{-1},
$$
we can use again Lemma \ref{Schatten} and Proposition \ref{rtimes} to deduce that for any $\varphi\in \maB$, we have
$$
{\what\pi} (\varphi) \left[(\what{D}_\rtimes+i)\right] ^{-1}\; \in \; \maL^r (\maN, \TR)\quad \forall r > v+2n.
$$

We also proved in Lemma \ref{pi(k)} that $\pi (k)$ belongs to ${\Psi'}_c^{0, -\infty} (\what{M}, \what\maF \subset \what\maF')$.   Therefore, using Corollary \ref{Commutator} since $\what{D}$ has a holonomy invariant transverse principal symbol, we have that the commutator
$$
\left[ \what{D}, \pi_{\what\maE} (k)\right] = \what{D}\pi_{\what\maE} (k) - \pi_{\what\maE} (k)\what{D},
$$
belongs to  ${\Psi'}^{0, -\infty} (\what{M}, \what\maF \subset \what\maF'; \what\maE)$ and yields a bounded operator on the Hilbert space $L^2(\what{M}; \what\maE)$. Applying the third item of Proposition \ref{rtimes}, we see that for any $\varphi\in \maB$ the commutator $[{\what\pi} (\varphi), \what{D}_\rtimes]$ is a well defined bounded operator on $\what\maH\otimes \ell^2\Gamma$ which belongs to $\maN$. It is easy to check using the local Laplacians that the dimension is precisely $v+2n$. This is achieved using Proposition \ref{Dixmier} below, where we compute the semi-finite Dixmier trace as defined in \cite{BenameurFack}.
\end{proof}

Denote by $R$ the vector field generator of the flow $s\mapsto (e^s\eta_v, e^{2s}\eta_n)$ and by $\sigma_{-(v+2n)} (P)$ the principal symbol of an operator $P\in {\Psi'} ^{v+2n, -\infty} (\hM, \what\maF\subset  \what\maF'; \what\maE)$.
Then $\sigma_{-(v+2n)} (P)$ is a homogeneous section over the total space of the bundle $r^*\hV^* \oplus r^* \hN^*$ over  the graph $\what\maG$ of $\what\maF$ for the modified dilations $\lambda\cdot \eta$ defined above. The following is an easy modification of Proposition I.2 in \cite{CM95}, and its proof is omitted.

\begin{proposition}\label{Dixmier}
Let $P\in {\Psi}_{c, \Gamma} ^{-(v+2n), -\infty}$. The induced operator on $\what\maH\otimes \ell^2\Gamma$, which also belongs to the von Neumann algebra $\maN$,  belongs to the Dixmier ideal $\maL^{1,\infty} (\maN, \TR)$ associated with the semi-finite von Neumann algebra $(\maN, \TR)$. Moreover,  for any Dixmier state $\omega$ as in  \cite{BenameurFack}, the Dixmier trace $\TR_\omega (P)$ does not depend on $\omega$ and is given by the following formula
$$
\TR_{\omega} (P) = C(p, v, n) \int_{\vert \eta\vert' = 1} \; \tr \sigma_{-(v+2n)} (P_e) (\hm, \eta)\; i_R (d\hm d\eta),
$$
 where $C(p, v, n)$ is a constant which does not depend on $\omega$ and $P_e$ is the evaluation of $P$ at the unit element $e\in \Gamma$. Note that we have restricted $\sigma_{-(v+2n)} (P_e)$ to $\hM$ and hence the formula only involves the symbol as a section over a ``sphere'' in  the total space of the bundle $\hV^*\oplus \hN^*$ over $\hM$. 
\end{proposition}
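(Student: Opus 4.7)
The strategy is to adapt Connes--Moscovici's proof of Proposition I.2 in \cite{CM95} to the semi-finite bifoliated setting. The crucial structural fact about the trace is that, by definition, $\TR(T) = \Tr(T_{e,e})$ sees only the $(e,e)$-block of $T \in \maN$, and this restriction commutes with the heat-kernel machinery underlying the Dixmier trace.

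First I would establish that the induced operator $\lambda(P)$ lies in $\maL^{1,\infty}(\maN, \TR)$. Decomposing the finitely supported function $P$ as $P = \sum_g P^{(g)}\delta_g$, we have $\lambda(P^{(g)}\delta_g) = V_g(I \otimes P^{(g)})$, and the identity $|\lambda(P^{(g)}\delta_g)| = I \otimes |P^{(g)}|$ combined with the singular-value equality $\mu_t^{\TR}(I \otimes T) = \mu_t(T)$ (as in the proof of Lemma \ref{Schatten}) reduces the question to Dixmier-ideal membership of each $P^{(g)} \in {\Psi'}_c^{-(v+2n),-\infty}(\hM, \what\maF\subset\what\maF';\what\maE)$ in $\maL^{1,\infty}(L^2(\hM;\what\maE))$. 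This is a borderline refinement of Lemma \ref{TraceClass}: the same local Hilbert--Schmidt symbol estimate, together with a Weyl-counting argument for the anisotropic scaling $\lambda\cdot\eta = (\lambda\eta_v, \lambda^2\eta_n)$, yields $\mu_t(P^{(g)}) = O(1/t)$, exactly as in the scalar case of \cite{CM95}.

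Next I would use the heat kernel to show that $\TR_\omega(\lambda(P)) = \Tr_\omega(P_e)$, where $\Tr_\omega$ is the classical Dixmier trace. Taking the Connes--Moscovici hypoelliptic signature operator $\hD$ from Section \ref{CM-operator}, the lift satisfies $\hD_\rtimes^2 = I \otimes \hD^2$, so $e^{-t\hD_\rtimes^2} = I \otimes e^{-t\hD^2}$ is diagonal in the $\Gamma$-block picture. A direct matrix computation then gives
\[
\TR\bigl(\lambda(P)\, e^{-t\hD_\rtimes^2}\bigr) \;=\; \Tr\bigl(P_e\, e^{-t\hD^2}\bigr), \qquad t > 0,
\]
since the $(e,e)$-block of $V_g(I \otimes Q)$ vanishes for $g \neq e$ and equals $Q$ for $g = e$. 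By the semi-finite zeta-function / Dixmier-trace correspondence of \cite{BenameurFack}, applied to the spectral triple established in Theorem \ref{CM-triple} (which ensures that the zeta function $\TR(\lambda(P)(1+\hD_\rtimes^2)^{-s/2})$ has the appropriate meromorphic continuation), this heat-kernel identity transports both measurability and the value of the Dixmier trace from the classical to the semi-finite setting, giving $\TR_\omega(\lambda(P)) = \Tr_\omega(P_e)$.

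Finally, $\Tr_\omega(P_e)$ is computed by Connes--Moscovici's Proposition I.2 applied to $P_e \in {\Psi'}_c^{-(v+2n),-\infty}(\hM, \what\maF\subset\what\maF';\what\maE)$: localizing via a partition of unity and using the local symbolic description from Proposition \ref{m,-infty}, one reduces to a symbol of anisotropic order $-(v+2n)$, and polar integration along orbits of the generator $R$ of the anisotropic scaling yields
\[
\Tr_\omega(P_e) \;=\; C(p,v,n) \int_{\vert\eta\vert'=1} \tr \sigma_{-(v+2n)}(P_e)(\hm,\eta)\, i_R(d\hm\, d\eta),
\]
with $C(p,v,n)$ the universal constant arising from the Wodzicki-type residue for the anisotropic dilation. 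The main obstacle I anticipate is the semi-finite Tauberian step: while the block-diagonal identity for $e^{-t\hD_\rtimes^2}$ is transparent, carrying the Karamata/Tauberian argument across the full Dixmier-trace machinery of $(\maN,\TR)$ requires invoking the regularity and summability assertions already packaged in Theorem \ref{CM-triple}, together with the spectral-theoretic framework of \cite{BenameurFack}.
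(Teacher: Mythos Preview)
The paper does not give a proof of this proposition; it states only that it ``is an easy modification of Proposition I.2 in \cite{CM95}, and its proof is omitted.'' Your proposal follows exactly this indicated route and supplies the details the paper leaves implicit: the reduction of $\TR_\omega$ on $\maN$ to $\Tr_\omega$ on $B(\what\maH)$ via the block-diagonal structure of $\hD_\rtimes$, followed by the anisotropic Wodzicki-residue computation of \cite{CM95}. The identity $\TR\bigl(\lambda(P)\,(1+\hD_\rtimes^2)^{-s/2}\bigr)=\Tr\bigl(P_e\,(1+\hD^2)^{-s/2}\bigr)$, which you obtain from $(1+\hD_\rtimes^2)^{-s/2}=I\otimes(1+\hD^2)^{-s/2}$ together with $\TR(T)=\Tr(T_{e,e})$ and the vanishing of the $(e,e)$-block of $V_g(I\otimes Q)$ for $g\neq e$, is the correct structural observation.

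One small caution on circularity: you justify the zeta/Dixmier correspondence by invoking Theorem \ref{CM-triple}, but the final sentence of that theorem's proof appeals to Proposition \ref{Dixmier} for the exact dimension $v+2n$. This is harmless, since the spectral-triple axioms themselves (self-adjointness, affiliation, Schatten estimates) are established in Theorem \ref{CM-triple} independently of Proposition \ref{Dixmier}; but it is cleaner to note that once the zeta identity above is in hand, the meromorphic behaviour as $s\to 0^+$ and the residue formula follow directly from the symbol calculus on the $P_e$ side via \cite{CM95}, without needing to feed the semi-finite spectral triple back in.
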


\begin{remark}
If we assume furthermore that ${\what D}^2$ is also essentially self-adjoint with the scalar principal symbol in the Connes-Moscovici calculus, then the semi-finite spectral triple $(\maB, \maN,  {\what D}_\rtimes)$ is regular with simple dimension spectrum contained in $\{k\in\N\vert\; k\leq v+2n\}$. This result  will not be used in the sequel and the proof is a straigtforward (although tedious) extension of the proof given for Riemannian foliations in \cite{K97}. 
\end{remark}

In the case of trivial $\Gamma$, Theorem \ref{CM-triple} extends the results of \cite{K97} to non compcat bounded geometry foliations which moreover are not necessarily Riemannian but do satisfy the almost Riemannian condition of Connes-Moscovici \cite{CM95}. For non compact $\what{M}$,  the algebra $C_c^\infty ({\what \maG})$ can  be replaced by a larger algebra and still produce such a spectral triple but we have restricted ourselves to this simplest situation which will be adapted to proper (cocompact) actions in the next sections. As explained in the previous section, replacing the original (bounded-geometry) foliated manifold by its Connes fibration of transverse metrics, we can construct, using Theorem \ref{CM-triple},  additive maps  from the $K$-theory of Connes' $C^*$-algebra of any foliation to the reals by using any operator $\what{D}$ which satisfies the assumptions of Theorem \ref{CM-triple} on this fibration. 
 Recall that for a strongly Riemannian bifoliation with $\what{V}$ and $\what{N}$ oriented and even dimensional,  the CM transverse signature operator $\what{D}^{\sign}$ associated with the Riemannian structures and the transverse orientations was defined in \ref{CM-operator}. Since the metric on $\hM$ and all structures on our bundles are $\Gamma$-invariant, the  operator $\what{D}^{\sign}$ is $\Gamma$-invariant. Thus, we have the following important corollary.

\begin{theorem}\label{CrossedSign}
Assume that $(\hM, \what\maF\subset \what\maF'=\what\maF\oplus \what{V}, \Gamma)$ is a strongly Riemannian bounded geometry bifoliation with the proper bifoliated action of the countable group $\Gamma$ as above.  Then the triple 
$$
(\maB=C_c^\infty (\what\maG) \rtimes \Gamma, (\maN, \TR), \what{D}^{\sign}_\rtimes),
$$
is a semi-finite spectral triple of finite dimension equal to $v+2n$. Moreover, this spectral triple is regular with simple dimension spectrum contained in $\{k\in \N \vert k\leq v+2n\}$.
\end{theorem}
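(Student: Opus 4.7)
The plan is to verify that $\what{D}^{\sign}$ satisfies all the hypotheses of Theorem \ref{CM-triple} and then to address the extra regularity/dimension spectrum statement by adapting Kordyukov's analysis.

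First I would collect the structural facts about $\what{D}^{\sign}$ already recorded in Section \ref{CM-operator}. Namely, from the very construction $\what{D}^{\sign}$ is a first order element of the Connes--Moscovici calculus ${\Psi'}^1(\hM, \what\maF';\what\maE)$, its principal symbol only depends on the transverse covectors and is given, up to the standard CM spectral integral, by the image of $\sigma_2(Q)$; since $Q^2$ has principal symbol $|\eta_v|^4+|\eta_n|^2$, the operator $\what{D}^{\sign}$ is uniformly transversely elliptic in the sense of our definition. Holonomy invariance of the transverse principal symbol follows from the strong Riemannian assumption: the transverse metric on $\hV$ and the induced metric on $\hN$ are holonomy invariant, so the resulting symbols built functorially from these data are invariant under the triangular holonomy action. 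Because all geometric data are $\Gamma$-invariant and $C^\infty$-bounded, $\what{D}^{\sign}$ has $C^\infty$-bounded uniformly supported coefficients and commutes with the $\Gamma$-action.

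Next I would establish essential self-adjointness of $\what{D}^{\sign}$ on $C_c^\infty(\hM;\what\maE)$. Since $\hM$ has bounded geometry, the leafwise operator $Q_{\what V}$ is a $C^\infty$-bounded, uniformly elliptic second order operator along the leaves of $\what\maF'$ and $Q_{\what N}$ is a $C^\infty$-bounded first order symmetric operator transverse to $\what\maF'$. A Chernoff-type argument applied to the underlying complete metric (cf.\ the discussion already invoked in Section \ref{CM-operator}) shows that $Q_{\what V}^2$ is essentially self-adjoint; then the spectral definition
\[
\what{D}^{\sign}=\frac{1}{\pi\sqrt{2}}\int_0^\infty Q(Q^2+\lambda)^{-1}\frac{d\lambda}{\lambda^{1/4}}
\]
produces a bounded Borel function of the self-adjoint closure of $Q$, and hence an essentially self-adjoint operator. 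With these facts in hand, Theorem \ref{CM-triple} applies verbatim and yields that $(\maB,(\maN,\TR),\what{D}^{\sign}_\rtimes)$ is a semi-finite spectral triple, finitely summable of dimension $v+2n$.

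It remains to prove regularity and simple dimension spectrum bounded by $v+2n$. Here the strategy is to reduce to the bifoliation calculus ${\Psi'}^{*,*}(\hM,\what\maF\subset\what\maF';\what\maE)$ and to imitate the argument in \cite{K97} for Riemannian foliations. The key structural input is that $|\what{D}^{\sign}|$ and $(1+\what{D}^{\sign,2})^{1/2}$ can be realized, modulo lower bifiltration order, as elements of ${\Psi'}^{1,0}(\hM,\what\maF\subset\what\maF';\what\maE)$ with scalar transverse principal symbol; then Corollary \ref{Commutator} shows that the derivation $\delta(T)=[|\what{D}^{\sign}|,T]$ lowers bifiltration by one in the transverse direction, so iterated commutators of elements of $\what\pi(\maB)\cup[\what{D}^{\sign}_\rtimes,\what\pi(\maB)]$ stay inside $\Psi_{\Gamma}^{0,-\infty}$; together with Proposition \ref{Properties-maN}, this gives the required smoothness. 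For the dimension spectrum, one adapts the heat/zeta function analysis of \cite{K97} fibrewise along $\Gamma$: the trace $\TR$ evaluates only the $e\in\Gamma$-component, so the zeta function $z\mapsto \TR(\what\pi(\varphi)|\what{D}^{\sign}_\rtimes|^{-z})$ is, up to a conjugation by $V_g$, a finite sum of zeta functions of the form $\Tr(\pi(\varphi_g)\cdot(g\cdot|\what{D}^{\sign}|^{-z}))$, and Proposition \ref{Dixmier} together with the Kordyukov-type local expansion shows these extend meromorphically with simple poles located in integers $\leq v+2n$.

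The main obstacle will be the last step, i.e.\ establishing the simple dimension spectrum, since it requires a careful asymptotic expansion of the transverse heat kernel in the bifiltered pseudodifferential calculus in the presence of a non-free holonomy action twisted by the $\Gamma$-action. All other items follow either from direct verification or by invoking Theorem \ref{CM-triple}; the spectral dimension equal to $v+2n$ is computed via Proposition \ref{Dixmier} applied to $\what\pi(\varphi)(1+\what{D}^{\sign,2})^{-(v+2n)/2}$, whose $e$-component reduces to the CM residue already known to be non-trivial for the transverse signature operator.
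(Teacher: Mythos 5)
Your proposal follows essentially the same route as the paper: verify that $\what{D}^{\sign}$ satisfies the hypotheses of Theorem \ref{CM-triple} (first order element of the Connes--Moscovici calculus, uniform transverse ellipticity from $\sigma(Q^2)(\hm,\eta)=\vert\eta_v\vert^4+\vert\eta_n\vert^2$, holonomy invariance of the transverse symbol from the strongly Riemannian structure, $\Gamma$-invariance, self-adjointness via the spectral formula in $Q$), and then deduce regularity and the simple dimension spectrum contained in $\{k\le v+2n\}$ from the scalar transverse principal symbol of $\what{D}^{\sign,2}$ by the Kordyukov-type extension, exactly as the paper does via the remark following Theorem \ref{CM-triple} and Proposition \ref{Dixmier}. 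The only slip is calling $\what{D}^{\sign}$ a \emph{bounded} Borel function of the closure of $Q$ --- it is the unbounded function $\sign(Q)\,\vert Q\vert^{1/2}$ up to normalization --- but self-adjointness still follows, which is all the argument requires.
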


The Connes-Chern character of this spectral triple, in periodic cyclic cohomology, is called the equivariant transverse signature class of the almost Riemannian foliation $\what\maF$ with its proper action $\Gamma$.  It is denoted
$$
[\what{D}^{\sign}_\rtimes] \; \in \; \HP^0 (C_c^\infty (\what\maG) \rtimes \Gamma). 
$$
Notice that the pairing of this class with $K$-theory automatically extends to the $K$-theory of the maximal $C^*$-completion
$$
\left< \Ch (\bullet)\ ,\; [\what{D}^{\sign}_\rtimes]\right> \; : \; K_0(C^*(\what\maG\rtimes \Gamma) ) \longrightarrow \R.
$$

\begin{proof}
We only need to show that the CM operator $\what{D}^{\sign}$ satisfies the assumptions of Theorem \ref{CM-triple}. But as already observed, the operator $\what{D}^{\sign}$ is a first order  $C^\infty$-bounded uniformly supported pseudodifferential  operator in the Connes-Moscovici calculus for the foliation $\what\maF'$. Moreover, the transverse principal symbol in our Beals-Greiner sense satisfies the relation
$$
\sigma (\what{D}^{\sign})^4 (\hm, \eta) = (\vert\eta_v\vert^4 + \vert\eta_n\vert^2)\; \Id.
$$
Therefore, $\what{D}^{\sign}$ is uniformly transversely elliptic in the Connes-Moscovici pseudo' calculus ${\Psi'}^{1} ({\what M}, \what\maF'; \what\maE)$. Moreover, the operator is self-adjoint by definition since the operators $Q$ and $Q^2$ are self-adjoint. Indeed, $Q$ is actually self-adjoint with the initial domain $C_c^\infty (\hM; \what\maE)$ and we have considered its unique self-adjoint extension in the definition of $\what{D}^{\sign}$ inspired by the same construction for suspensions given in \cite{CM95}. Finally, the transverse principal symbol of $(\what{D}^{\sign})^2$ is scalar and given by
$$
\sigma (\what{D}^{\sign})^2 (\hm, \eta) = {\sqrt{\vert\eta_v\vert^4 + \vert\eta_n\vert^2}}\; \Id.
$$
The proof is now complete.
\end{proof}

Denote by  $\hP\to \hM$ the Connes fibration for a smooth foliation $(\hM, \what\maF)$ of bounded geometry, that is the fiber bundle of all the Euclidean metrics on the normal to the foliation $\what\maF$.    Then $\hP$  is automatically endowed with the bifoliation $\what\maF_{\hP} \subset\what\maF'_{\hP}$ as given in Lemma \ref{ConnesFibration} and it is a strongly Riemannian bifoliation. 
 The proper action of the countable group $\Gamma$ yields a proper action on $\hP$ preserving the foliations. As a corollary of Theorem \ref{CrossedSign}, we have the following  well defined transverse signature morphism for any smooth foliation with bounded geometry. 

\begin{theorem}
Assume that $(\hM, \what\maF)$ is a smooth foliation of bounded geometry and let $\Gamma$ be a countable group which acts properly by $\what\maF$-preserving isometries.  Then the CM transverse signature operator on the Connes fibration $(\hP, \what\maF_{\hP} \subset\what\maF'_{\hP})$ gives a group morphism $\Sign_{\hM, (2)}^\perp$
$$
\Sign^\perp := \left< \Ch (\bullet)\ ,\; [({\what{D}}^{\sign, \hP})_\rtimes]\right> \circ \Thom \; : \; K_0(C^*(\what\maG\rtimes \Gamma) ) \longrightarrow \R.
$$
\end{theorem}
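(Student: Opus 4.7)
The plan is to build $\Sign^\perp$ as the composition
$$
K_0(C^*(\what\maG\rtimes\Gamma)) \xrightarrow{\;\Thom\;} K_*(C^*(\what\maG_{\hP}\rtimes\Gamma)) \xrightarrow{\;\langle\Ch(\bullet),\,[(\what{D}^{\sign,\hP})_\rtimes]\rangle\;} \R,
$$
so each factor is manifestly a group morphism; the content of the theorem is that both factors are well defined in this setting.

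First I would verify that the proper isometric $\Gamma$-action on $(\hM,\what\maF)$ lifts canonically to a proper bifoliated action on the Connes fibration $\hP\to\hM$. Because $\Gamma$ preserves $T\what\maF$, it acts on the normal bundle $\nu=T\hM/T\what\maF$, and $\hP$ is constructed functorially as the bundle of Euclidean metrics on $\nu$; so the lifted action preserves the bifoliation $\what\maF_{\hP}\subset\what\maF'_{\hP}$ from Lemma \ref{ConnesFibration}. Endowing the Connes fibers with their canonical $GL^+_q$-invariant metric makes the lifted action isometric, and properness on $\hP$ follows from properness on $\hM$ together with the compactness of the isotropy action on each fiber. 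Thus $(\hP,\what\maF_{\hP}\subset\what\maF'_{\hP},\Gamma)$ is a strongly Riemannian bounded-geometry bifoliation with a proper bifoliated $\Gamma$-action in the sense of Section \ref{ProperActions}.

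Next I would invoke Theorem \ref{CrossedSign} directly on $\hP$: it furnishes a finitely summable regular semi-finite spectral triple
$$
\bigl(\maB_{\hP},(\maN_{\hP},\TR_{\hP}),(\what{D}^{\sign,\hP})_\rtimes\bigr),\qquad \maB_{\hP}=C_c^\infty(\what\maG_{\hP})\rtimes\Gamma,
$$
whose Connes-Chern character defines a class $[(\what{D}^{\sign,\hP})_\rtimes]\in \HP^0(\maB_{\hP})$. As noted immediately after Theorem \ref{CrossedSign}, the pairing of this class with $K$-theory extends automatically to the maximal $C^*$-completion, giving the group homomorphism $K_0(C^*(\what\maG_{\hP}\rtimes\Gamma))\to\R$ which constitutes the second arrow above.

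For the first arrow, the projection $\hP\to\hM$ is a smooth $\Gamma$-equivariant fiber bundle with fibers diffeomorphic to $\R^N$ for $N=\dim GL_q^+(\R)/SO_q(\R)$, and this fibration is transverse to $\what\maF$ in a way that intertwines the holonomy groupoids $\what\maG_{\hP}$ and $\what\maG$. The corresponding Connes-Thom isomorphism at the level of the crossed product $C^*$-algebras yields $\Thom:K_0(C^*(\what\maG\rtimes\Gamma))\xrightarrow{\simeq} K_{N}(C^*(\what\maG_{\hP}\rtimes\Gamma))$, and composing it with the pairing produces $\Sign^\perp$. The main technical point to address carefully is precisely this construction of the Connes-Thom map equivariantly for both the foliation groupoid and the $\Gamma$-action, together with checking that the parity of the dimension shift is compatible with the Connes-Chern pairing in degree zero (so that the image really lands in $K_0$, possibly after absorbing $N\bmod 2$ via Bott periodicity, or after grading conventions). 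Everything else then follows by direct assembly of Theorem \ref{CrossedSign} with the functoriality of the periodic cyclic pairing.
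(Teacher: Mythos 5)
Your proposal follows essentially the same route as the paper, which states this theorem as a direct corollary of Theorem \ref{CrossedSign}: the $\Gamma$-action lifts to a proper bifoliated action on the Connes fibration $\hP$ (Lemma \ref{ConnesFibration} giving the strongly Riemannian structure), the resulting semi-finite spectral triple pairs with $K_0$ of the maximal completion, and one precomposes with the crossed-product version of Connes' Thom isomorphism from \cite{ConnesTransverse}. Your extra remarks on properness of the lifted action and on the parity of the fiber-dimension shift are sensible refinements of the same argument, not a different proof.
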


Here $\Thom: K_0(C^*(\what\maG\rtimes \Gamma) ) \rightarrow K_0(C^*(\what\maG (\hP, \what\maF_{\hP} )\rtimes \Gamma))$ is the crossed product version of the ``Thom" isomorphism as defined in \cite{ConnesTransverse}, while $\what{D}^{\sign, \hP}$ is the CM signature operator on the strongly Riemannian bifoliation $(\hP, \what\maF_{\hP} \subset\what\maF'_{\hP})$. 

\section{The invariant triple for Galois coverings of bifoliations}

Let $(\hM, \what\maF) \rightarrow (M, \maF)$ be a Galois $\Gamma$-covering of smooth bounded-geometry foliations. So, $\pi_1 \hM$ is a normal subgroup of $\pi_1M$ with quotient group the countable group $\Gamma$
which acts freely and properly on  ${\what M}$ with quotient the smooth manifold $M$. We also assume that this action preserves the foliation $\what\maF$ and induces the foliation $\maF$ on $M$. So $\Gamma$ acts by diffeomorphisms of $\what M$ which preserve the leafwise bundle $T\what\maF$. We shall see that the group $\Gamma$ is somewhat artificial when working with the monodromy groupoid, so that the results of the present section will be equivalent to the same results for the universal cover $\tM\to M$ with the pull-back bifoliation. For simplicity, the reader may assume that $\hM=\tM$ and that $\Gamma=\pi_1M$ although we don't explicitly make this assumption. 

Fix a fundamental domain $\what U\subset \what M$ for the free and proper action of $\Gamma$ and denote by $\chi$ the characteristic function of $\what U$. Recall that we  can assume that 
$$
{\what U} g \cap {\what U}g' \neq \emptyset \Rightarrow g=g' \text{ and } \bigcup_{g\in \Gamma} {\what U} g = \what M.
$$
These conditions are only required up to  negligible subsets of ${\what M}$, assuming if necessary that $\what{U}$ is open, as this will be enough and does not affect the proofs.  For instance, we will need that for almost all $\hm\in {\what M}$,  there is  a unique translate ${\what U}_{\hm} = {\what U} g$ of ${\what U}$ which contains $\hm$. 

\subsection{The rationality conjecture} 

As explained in the sequel,  the groupoid $\what{G}\rtimes \Gamma$ is equivalent to the monodromy groupoid $G$ of the foliation $(M, \maF)$.  Therefore, the transverse signature morphism $\Sign^\perp$ defined above induces, by composition with the Morita isomorphism $
K_0(C^*(G)) \longrightarrow K_0(C^*(\what{G}\rtimes \Gamma))$,
the group morphism
$$
\Sign_{M, (2)}^\perp \; : \; K_0(C^*(G)) \longrightarrow \R.
$$
Here again $C^*(G)$ is the maximal $C^*$-algebra of  $G$. 

\begin{conjecture}\label{Rational}\
Let $(\hM, \what\maF) \rightarrow (M, \maF)$ be a Galois $\Gamma$-covering of smooth bounded-geometry foliations. Then the transverse signature morphism  $
\Sign_{M, (2)}^\perp \; : \; K_0(C^*G) \longrightarrow \R$ is
always   rational.
\end{conjecture}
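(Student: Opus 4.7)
The plan is to attack Conjecture \ref{Rational} through the Baum-Connes assembly map for the monodromy groupoid $G$ of $(M,\maF)$, combined with the equivariant Atiyah-type comparison already established for Riemannian foliations in \cite{BenameurHeitsch17}. First, after applying the Connes fibration $\hP \to \hM$ and the crossed-product Thom isomorphism, one reduces to computing the pairing $\langle \Ch(x), [\what{D}^{\sign,\hP}_\rtimes]\rangle$ for $x\in K_0(C^*(\what{G}(\hP)\rtimes \Gamma))$, which by the Morita equivalence of Section 4 is the evaluation of the $L^2$-transverse signature on classes pulled back from $K_0(C^*G)$. The strategy is then to show that on the image of the maximal assembly map $\mu: K^{\top}_*(G) \to K_*(C^*G)$ the morphism $\Sign^\perp_{M,(2)}$ is integer valued, and to use density/surjectivity of $\mu$ to propagate rationality to all of $K_0(C^*G)$.

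Next, I would implement the first step by running through a geometric cycle $(P, E, f)$ representing a class in $K^{\top}_*(G)$ — that is, a $G$-proper $K$-oriented manifold $P$ with a $G$-equivariant bundle $E$ and classifying map. On such cycles, the Connes-Chern character of $\what{D}^{\sign,\hP}_\rtimes$ pairs with $\mu(P,E,f)$ as an $L^2$-index of a twisted transversely hypoelliptic signature operator on the pulled-back bifoliation. The Atiyah-type theorem we would need identifies this semi-finite $L^2$-index with the corresponding type-I index $\Sign^\perp_{M,\maF}$, which is automatically an integer since it arises from a Fredholm pairing in $B(\maH)$. The technical ingredients would be: regularity and finite dimension of $(\maB,\maN,\what{D}^{\sign}_\rtimes)$ established in Theorem \ref{CrossedSign}; the Connes-Moscovici local formula applied in both the type-I and type-II$_\infty$ settings; and the fact that the residue cocycles coincide since the transverse principal symbols are identical.

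Finally, once integrality is shown on $\Image(\mu) \subset K_0(C^*G)$, rationality on all of $K_0(C^*G)$ should be extracted by the standard trick: for any $\xi\in K_0(C^*G)$, after tensoring with a finite-index inclusion or passing to a suitable dense subalgebra where Chern characters behave well, one should be able to express $\Sign^\perp_{M,(2)}(\xi)$ as $\tfrac{1}{N}\cdot(\text{integer})$ for some $N$ depending only on torsion in the isotropy groups of $G$.

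The hard part, and the genuine obstacle to turning this plan into a theorem rather than a conjecture, is precisely the generalization of the Atiyah covering theorem to the transverse setting in the presence of leaf-holonomy torsion and non-trivial torsion in the leaf fundamental groups. In \cite{BenameurHeitsch17} the authors obtain the required $L^2 = $ type-I identity only under the twin hypotheses that leaf fundamental groups are torsion-free and the maximal Baum-Connes map for $G$ is surjective. Removing these assumptions is conjectural: one does not currently have a local cohomological mechanism to handle torsion contributions to the semi-finite Dixmier trace in Proposition \ref{Dixmier}, and it is exactly this torsion that could, a priori, produce irrational denominators. The content of Conjecture \ref{Rational} is that, although integrality may fail, only finitely many primes can appear, and proving even this refined rationality seems to require substantially deeper input from the transverse noncommutative index theory than is currently available.
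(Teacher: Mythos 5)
The statement you are addressing is stated in the paper as Conjecture \ref{Rational}; the paper contains no proof of it, and your text, despite being organized as a proof plan, does not supply one either --- as you yourself concede in your final paragraph. What the paper actually establishes (Theorem \ref{AtiyahTheorem}, resting on \cite{BenameurHeitsch17}) is exactly the partial result your first two steps reconstruct: for Riemannian foliations, \emph{if} the monodromy groupoid $G$ is torsion free \emph{and} the maximal Baum--Connes map for $G$ is surjective, then the pairing with the Connes--Chern character of the semi-finite triple is integer valued, because on the range of the assembly map it is identified with the type I Connes--Moscovici pairing, which is a Fredholm index. Up to that point your outline is consistent with the paper's use of the Connes fibration, the Thom isomorphism, the Morita compatibility of Theorem \ref{Versus}, and the $L^2=\,$type I comparison of \cite{BenameurHeitsch17}.

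The genuine gap is your third step, and it is not a technical loose end but the entire content of the conjecture. First, you invoke ``density/surjectivity of $\mu$'' to pass from the image of the assembly map to all of $K_0(C^*G)$: surjectivity of the maximal Baum--Connes map for a monodromy groupoid is not known in this generality, and there is no density argument available --- $K_0(C^*G)$ carries no topology in which the image of $\mu$ is dense in a way that would propagate integrality or rationality of a trace-type pairing. Second, your proposed mechanism for torsion, namely that $\Sign^{\perp}_{M,(2)}(\xi)$ should equal $\tfrac{1}{N}$ times an integer with $N$ controlled by torsion in the isotropy, is an assertion with no construction behind it: no localization of the semi-finite Dixmier trace of Proposition \ref{Dixmier} at torsion elements is carried out (or currently known), and ``tensoring with a finite-index inclusion or passing to a suitable dense subalgebra'' is not an argument one can run on $K_0$ of a maximal groupoid $C^*$-algebra. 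Note that already in the extreme case where $\maF$ is top dimensional the conjecture reduces to the rationality statement for the regular trace on $C^*(\pi_1M)$ for groups with torsion, which is itself an open conjecture of Baum and Connes; so any purported proof along your lines would in particular settle that, which signals that the missing step cannot be routine. In short: your steps one and two correctly summarize the paper's known integrality theorem under extra hypotheses, but the passage to unconditional rationality is missing, and the statement remains, as in the paper, a conjecture.
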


When one assumes furthermore that $G$ is torsion free, meaning that the fundamental groups of the leaves are torsion free, then using the main result of \cite{BenameurHeitsch17}, we shall see  that the transverse signature morphism  $
\Sign_{M, (2)}^\perp \; : \; K_0(C^*G) \longrightarrow \R$ is actually integer valued on the range of the Baum-Connes map for $G$. Therefore, the surjectivity of the Baum-Connes map will imply the integrality and hence a positive answer to the above conjecture. 

We have stated the above conjecture for the signature operator because of its implications in topology, but it should be clear that this conjecture can be stated for all the spectral triples constructed above, using transversely elliptic operators from the Connes-Moscovici calculus. We prove in \cite{BenameurHeitsch17} that when $G$ is torsion free, the morphism $\Sign_{M, (2)}^\perp$ is actually integer valued, when restricted to  the range of the maximal  Baum-Connes map for the monodromy groupoid $G$. So,  the above conjecture is true for all smooth foliations whose monodromy groupoid has a surjective maximal Baum-Connes map and is torsion free.

\begin{remark}\
 In the case of a foliation $\maF$ with a single leaf $M$, the conjecture becomes the claim that the regular trace induces an integer-valued group morphism of the $K$-theory of the maximal $C^*$-algebra of the fundamental group $\pi_1M$, when we assume that this latter is torison free, and is rational in general. But the torsion free case is well known to be  a consequence of Atiyah's $L^2$-index  theorem \cite{AtiyahCovering}, while the general case is a well known conjecture due to Baum and Connes.
\end{remark}

Now suppose that $(\hM, \what\maF \subset \what\maF') \rightarrow (M, \maF \subset \maF')$ is a Galois covering, where the proper and free action of the countable discrete group $\Gamma$ preserves the foliations $\what\maF$ and $\what\maF'$ and induces the bifoliation $\maF\subset \maF'$ of $M$. Notice that the   leaves of $\what\maF$ in ${\what M}$ are the connected components of the inverse images of the leaves of $\maF$ in $M$, and similarly for $\what\maF'$. An interesting situation occurs when $M$ is compact but we don't impose this condition in general. As in the previous sections, the transverse bundle  $\what\nu$ (resp. $\nu$) to the foliation $\what\maF$ (resp. $\maF$), is replaced by the $\Gamma$-equivariantly isomorphic bundle 
$$
T\what\maF'/T\what\maF\; \oplus\; T{\what M} /T\what\maF' \; \; \; \text{ (resp. }T\maF'/T\maF\;  \oplus\;  TM /T\maF').
$$ 
The bundle $T\what\maF'/T\what\maF \oplus T{\what M} /T\what\maF'$ is  endowed with the diagonal action of the group $\Gamma$ and the holonomy action of $\what\maF$ is well defined on this bundle.  We choose again a $\Gamma$-invariant metric on $\what M$ which allows us to identify $T{\what M} /T\what\maF'$ with the $\Gamma$-equivariant orthogonal bundle $\hN$ to $T\what\maF'$.  We may also  identify $T\what\maF'/T\what\maF$ with the $\Gamma$-equivariant orthogonal ${\what V}$ to $T\what\maF$ in $T\what\maF'$. The holonomy action then need not to be diagonal, but is supposed to be  triangular, see \cite{CM95}. The same situation occurs downstairs on $M$ with the bundle $T\maF'/T\maF \oplus TM /T\maF'$. 

\subsection{The monodromy Morita equivalence}

Recall that we are working here with the monodromy groupoids, which act on all the geometric data through the holonomy action by composing with the covering map from monodromy to holonomy. As the quotient $\what{G} /\Gamma$ is diffeomorphic to $G$, we also have the convenient identifications
$$
C^\infty_u (\what{G})^\Gamma \; \simeq \; C^\infty_u (G)\text{ and }C^\infty_{c-\Gamma} (\what{G})^\Gamma \; \simeq \; C^\infty_c (G),
$$
where  $\left(\bullet\right)^\Gamma$ means the $\Gamma$-invariant elements, and $C^\infty_{c-\Gamma} (\what{G})$ is the subspace of smooth functions on $\what{G}$ whose support is $\Gamma$-compact, that is whose support projects to a compact subspace of $G$.  Using this identification, we can make  $C^\infty_u (G)$ act on $L^2 (\hM; \what\maE)$ by $\Gamma$-invariant operators.  
There exists a smooth   function, 
$$
\varrho: \hM \rightarrow [0, 1]    \quad \text{ such that } \quad   \sum_{g\in \Gamma} g^*\varrho^2 = 1,
$$
and such that the restriction of the projection $\hM\to M$ to the support of $\varrho$ is proper. We denote by $A=C^*G$ the maximal $C^*$-algebra completion of $C_c^\infty (G)$  and by $B=C^*\what{G}\rtimes\Gamma$ the maximal $C^*$-algebra completion of $\maB=C_c^\infty (\what{G}\rtimes \Gamma)$.

\begin{proposition}
For  $\varphi\in C_c^\infty (G)$,  $\what\gamma\in \what{G}$,  $g\in \Gamma$, and $\gamma=p(\what\gamma)\in G$, set
$$
\Phi (\varphi) (\what\gamma ; g) := \varphi (\gamma) \varrho (r(\what\gamma)) \varrho (s(\what\gamma) g).
$$
Then  $\Phi (\varphi)$ is a smooth compactly supported function on $\what{G}\rtimes \Gamma$. Moreover,
\begin{enumerate}
\item  The map $\Phi$ is a morphism of  $*$-algebras.
\item  $\Phi$  induces a $K$-theory isomorphism from  $K_*(A)$ to $K_*(B)$ which does not depend on the choice of the cutoff function $\varrho$.
\end{enumerate}
\end{proposition}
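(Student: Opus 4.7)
The plan is to verify each assertion in turn, reducing the $K$-theory statement to the Morita equivalence of the groupoids $G$ and $\what{G}\rtimes\Gamma$ induced by the $\Gamma$-principal covering $\what{G}\to G$. Smoothness of $\Phi(\varphi)$ is clear. For the support, $(\what\gamma,g)\in\Supp\Phi(\varphi)$ forces $p(\what\gamma)\in\Supp\varphi$ (compact in $G$), $r(\what\gamma)\in\Supp\varrho$, and $s(\what\gamma)g\in\Supp\varrho$. Since $\Supp\varphi$ is compact and the restriction of $\hM\to M$ to $\Supp\varrho$ is proper, $\what\gamma$ is confined to a compact subset of $\what{G}$, and then only finitely many $g\in\Gamma$ satisfy $s(\what\gamma)g\in\Supp\varrho$.

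For the $*$-homomorphism property, I would expand $(\Phi(\varphi_1)\Phi(\varphi_2))(g;\what\gamma)$ using the crossed-product convolution on $\what{G}\rtimes\Gamma$, substitute the definition of $\Phi$, and use that $p$ intertwines source, range, inversion, and multiplication with those of $G$. After regrouping, the expression factors as $(\varphi_1\ast\varphi_2)(p(\what\gamma))\,\varrho(r(\what\gamma))\,\varrho(s(\what\gamma)g)$ times an interior sum of the form $\sum_{g'\in\Gamma}\varrho(s(\what\gamma')g')^2=1$, which collapses to give $\Phi(\varphi_1\ast\varphi_2)(g;\what\gamma)$. The identity $\Phi(\varphi^*)=\Phi(\varphi)^*$ follows immediately since $\varrho$ is real-valued and $p$ intertwines the two involutions. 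Extension to the maximal $C^*$-completions is then automatic from the Hilbert bimodule construction described below.

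For the $K$-theory isomorphism, the idea is to realize $\Phi$ as a corner embedding in a Morita equivalence bimodule. Let $\maE_0:=C_c^\infty(\what{G})$ with right $\maA$-action through $p$ and left $\maB$-action by the natural convolution of $\what{G}\rtimes\Gamma$, equipped with $\maA$- and $\maB$-valued inner products given by the standard Muhly--Renault--Williams prescription for the $\Gamma$-covering of groupoids $\what{G}\to G$. Completing gives an imprimitivity bimodule $\maE$ implementing a Morita equivalence between $A$ and $B$, in which $\Phi(\varphi)\in\maB$ coincides with the rank-one operator $\Theta_{\varrho,\,\varrho\cdot\varphi^*}$; thus $\Phi$ embeds $A$ as a full corner of $\maK(\maE)\simeq B$ and induces the Rieffel--Morita isomorphism $\Phi_*:K_*(A)\to K_*(B)$. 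Independence of the cutoff is immediate: any two admissible cutoffs yield homotopic, indeed unitarily equivalent, corner inclusions of $A$ into $\maK(\maE)$. The main obstacle is the verification of the imprimitivity bimodule structure---$\maA$- and $\maB$-linearity and positivity of both inner products, adjointability of both actions, and fullness in the sense that the $C^*$-closures exhaust the compact operators on the other side---which is a standard application of the groupoid equivalence theorem to the free and proper $\Gamma$-covering $\what{G}\to G$, but requires careful bookkeeping with the Haar systems, the cutoff $\varrho$, and the $\Gamma$-equivariance.
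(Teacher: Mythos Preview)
Your proposal is correct and follows essentially the same route as the paper. For part (1) you outline the identical computation: expand the crossed-product convolution, insert the definition of $\Phi$, and collapse the inner sum $\sum_{g'}\varrho(s(\what\gamma')g')^2=1$; the paper carries this out line by line. For part (2) both you and the paper build the imprimitivity $(B,A)$-bimodule on (the completion of) $C_c(\what{G})$ and identify $\Phi$ with the corner embedding determined by the rank-one projection coming from $\varrho$; the paper writes down the two inner products and the left action explicitly and checks the fundamental compatibility $\Xi(\langle f_1,f_2\rangle)(h)=f_1\langle f_2,h\rangle$, whereas you invoke the Muhly--Renault--Williams groupoid equivalence theorem for the free proper $\Gamma$-covering $\what{G}\to G$, which is the same mechanism packaged abstractly. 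Your compact-support argument and your remark on cutoff-independence via homotopy of corner inclusions are details the paper leaves implicit.
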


\begin{proof}
Fix $\varphi, \varphi'\in C_c^\infty (G)$, then we have
\begin{eqnarray*}
(\Phi\varphi)(\Phi\varphi') (\what\gamma; g) & = & \sum_{g_1\in \Gamma} \int_{\what{G}^{r(\what\gamma)}} \;  (\Phi\varphi)(\what\gamma_1; g_1) (\Phi\varphi')((\what\gamma_1^{-1}\what\gamma) g_1; g_1^{-1} g)\;  d\what\eta^{r(\what\gamma)} (\what\gamma_1)\\
& = & \sum_{g_1\in \Gamma} \int_{\what{G}^{r(\what\gamma)}} \; \varphi (\gamma_1) \varrho (r(\what\gamma)) \varrho (s(\what\gamma_1) g_1) 
\varphi' (\gamma_1^{-1}\gamma) \varrho (s'(\what\gamma_1)g_1) \varrho (s(\what\gamma) g) \;  d\what\eta^{r(\what\gamma)} (\what\gamma_1)\\
& = &  \int_{\what{G}^{r(\what\gamma)}} \; \varphi (\gamma_1) \varphi' (\gamma_1^{-1}\gamma) \varrho (r(\what\gamma)) \varrho (s(\what\gamma) g) \left(\sum_{g_1\in \Gamma} \varrho^2 (s(\what\gamma_1) g_1) \right)  \;  d\what\eta^{r(\what\gamma)} (\what\gamma_1)\\
& = & \varrho (r(\what\gamma)) \varrho (s(\what\gamma) g)  \int_{\what{G}^{r(\what\gamma)}} \; \varphi (\gamma_1) \varphi' (\gamma_1^{-1}\gamma)  \;  d\what\eta^{r(\what\gamma)} (\what\gamma_1).
\end{eqnarray*}
But for any fixed $\what\gamma$ with projection  $\gamma$, and since the Haar system on $\what{G}$ is $\Gamma$-invariant, we have
$$
 \int_{\what{G}^{r(\what\gamma)}} \; \varphi (\gamma_1) \varphi' (\gamma_1^{-1}\gamma)  \;  d\what\eta^{r(\what\gamma)} (\what\gamma_1) =  \int_{G^{r(\gamma)}} \; \varphi (\gamma_1) \varphi' (\gamma_1^{-1}\gamma)  \;  d\eta^{r(\gamma)} (\gamma_1).
 $$ 
 This proves the multiplicicativity property.

Computing in the same way $(\Phi\varphi)^*$ we get
$$
(\Phi\varphi)^* (\what\gamma; g) = {\overline{\Phi\varphi ( \what\gamma^{-1}  g; g^{-1}})} 
=  {\overline{ \varphi (\gamma^{-1}) }} \varrho (s(\what\gamma) g) \varrho (r(\what\gamma))
=  (\Phi\varphi^*)  (\what\gamma; g).
$$

\medskip

The proof of (2) is  standard and we shall be brief. There is an equivalence bimodule $\maE$ which is defined as follows. The space $C_c(\what{G})$ is endowed with the structure of a  pre-Hilbert module over the algebra $C_c(G)$ which is given by the rules
\begin{itemize}
\item $\dd (f\varphi) (\what\gamma) := \int_{\what{G}^{r(\what\gamma)}} f(\what{\gamma}_1) \varphi (\gamma_1^{-1}\gamma)\; d\what\eta^{r(\what\gamma)} (\what\gamma_1)$, for $f\in C_c(\what{G})$ and $\varphi\in C_c(G)$;
\item $\dd \left\langle  f_1, f_2 \right\rangle (\gamma) := \sum_{g\in \Gamma} \int_{\what{G}^{r(\what\gamma) g}} \overline{f_1 (\what\gamma_1)} f_2 (\what\gamma_1^{-1} \what\gamma) \; d\what\eta^{r(\what\gamma)} (\what\gamma_1)$, for $f_1, f_2\in C_c(\what{G})$.
\end{itemize}
where $\gamma \in G$ is the class of $\what\gamma\in \what{G}$. The representation of the algebra $C_c(\what{G})\rtimes \Gamma$ as $C_c(G)$-linear operators of the above pre-Hilbert mopdule, is given by
$$
\Xi (f_1 \delta_g) (f_2) (\tm, \tm') := \int_{\what{G}^{r(\what\gamma)}}\; f_1 (\what\gamma_1) f_2 ((\what\gamma_1^{-1}\what\gamma)g) \; d\what\eta^{r(\what\gamma)} (\what\gamma_1).
$$
Moreover, we can view $C_c (\what{G})$ as a left pre-Hilbert module over the algebraic crossed product algebra $C_c(\what{G})\rtimes\Gamma$ using this left action and the scalar product given by
$$
\langle f_1 , f_2 \rangle (\what\gamma; g) := \int_{\what{G}^{r(\what\gamma)}}\; f_1 (\what\gamma_1) \overline{f_2 ((\what\gamma_1^{-1}\what\gamma)g)} \; d\what\eta^{r(\what\gamma)} (\what\gamma_1), \quad f_1, f_2\in C_c( \what{G}).
$$
The following fundamental relation is then easily verified
$$ 
\Xi (\langle f_1, f_2\rangle ) (h) = f_1 \langle f_2, h\rangle.
$$ 
Now, it is easy to see that the  completion $\maE$ with respect to the maximal norm of $C^*(G)$ yields a Hilbert bimodule which is a full imprimitivity bimodule, i.e. it is a full right Hilbert $C^* (G)$-module and the representation $\Xi$ identifies the maximal $C^*$-algebra $C^*(\what{G})\rtimes \Gamma$ with the compact operators of $\maE$.  See \cite{BenameurPiazza} for similar constructions in an easier situation but with the detailed proofs which can be immediately extended to the setting here. 
To finish this  proof, we point out that the range of the $*$-homomorphism $\Phi$ is identified through the representation $\Xi$ with a corner associated with the rank one projection  associated with the cutoff function $\varrho$. 
\end{proof}

\subsection{The $\Gamma$-invariant calculus and Atiyah's von Neumann algebra}

Fix an auxiliary  $\Gamma$-equivariant Hermitian vector bundle $\what\maE$ over $\what M$ which is the pull-back of some holonomy equivariant Hermitian bundle $\maE$ over $M$ for the foliation $\maF$. This means that the holonomy action preserves some Hermitian structure on $\maE$ so also the pull-back one on $\what\maF$.  Denote by $W$ this holonomy (and hence monodromy) action on $\maE$,  and by $\what W$ the pull-back  action of $\what\maG$ on $\what\maE$. So,
$$
{\what W}_{\what\gamma} : \what\maE_{s(\what\gamma)}\longrightarrow \what\maE_{r(\what\gamma)} \text{ is just } W_\gamma.
$$
Here and below removing the hat from $\what\gamma$ means that we take its projection to $\maG$. Note that the pull-back holonomy action is equal to the holonomy action upstairs for the pull-back foliation $\what\maF$.  Following Atiyah \cite{AtiyahCovering}, we consider the semi-finite von Neumann algebra $\maM$ of $\Gamma$-invariant bounded operators on the Hilbert space $\what\maH = L^2 (\what{M}; \what\maE)$ with respect to the induced unitary $\Gamma$-action. This von Neumann algebra is endowed with the semi-finite trace $\tau$ defined as follows. Recall that $\chi$ is the characteristic function of a fundamental domain  $\what U$.

\begin{definition}
For any $\Gamma$-invariant bounded operator $T$ which is nonnegative,  set
$$
\tau (T) := \Tr (M_\chi \circ T \circ M_\chi),
$$
where $M_\chi$ is the bounded operator on $\what\maH$ which is multiplication  by the Borel bounded function $\chi$.
\end{definition}

The following classical result was  proven by Atiyah in \cite{AtiyahCovering}.

\begin{proposition}\cite{AtiyahCovering}
The functional $\tau$ extends to a normal faithful  semi-finite positive trace on the von Neumann algebra $\maM$. In particular, $(\maM, \tau)$ is a semi-finite von Neumann algebra of type II$_\infty$. 
\end{proposition}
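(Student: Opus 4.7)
The plan is to verify in turn the standard list of axioms for a normal faithful semi-finite trace, following Atiyah's original argument but phrased so that the bifoliation and the coefficient bundle $\what\maE$ play no role beyond providing the Hilbert space $\what\maH=L^2(\hM,\what\maE)$ carrying a commuting pair (a unitary $\Gamma$-action and the multiplication representation by bounded Borel functions on $\hM$). Throughout I use that, writing $U_g$ for the unitary on $\what\maH$ implementing $g\in\Gamma$, the relation $U_g M_\chi U_g^{-1}=M_{\chi_g}$ holds with $\chi_g$ the characteristic function of $g\what U$, and $\sum_{g\in\Gamma}\chi_g=1$ almost everywhere on $\hM$.

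First, I would check that $\tau$ is well-defined and linear on $\maM_+$. For $T\in\maM_+$ the operator $M_\chi T M_\chi$ is a nonnegative bounded operator on $\what\maH$, so $\Tr(M_\chi TM_\chi)\in[0,+\infty]$. Additivity and positive homogeneity on $\maM_+$ follow directly from those of $\Tr$, so $\tau$ extends by linearity to the linear span and further to a positive tracial weight once the trace property is verified. Next I would fix an orthonormal basis $(e_i)_{i\in I}$ of $L^2(\what U,\what\maE|_{\what U})$; since $\Gamma$ acts freely and properly with fundamental domain $\what U$, the family $(U_g e_i)_{g\in\Gamma,\,i\in I}$ is an orthonormal basis of $\what\maH$ and one has $\tau(T)=\sum_i\langle Te_i,e_i\rangle$ for $T\in\maM_+$.

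For the trace property $\tau(S^*S)=\tau(SS^*)$, take $S\in\maM$ and set $A_{g,ij}:=\langle S e_i,U_g e_j\rangle$. Then
\begin{equation*}
\tau(S^*S)=\sum_i\|Se_i\|^2=\sum_{g,i,j}|A_{g,ij}|^2.
\end{equation*}
Writing $B_{g,ij}:=\langle S^*e_i,U_g e_j\rangle$ and using $\Gamma$-invariance $U_g^{-1}SU_g=S$, one gets $B_{g,ij}=\overline{\langle S e_j,U_{g^{-1}}e_i\rangle}=\overline{A_{g^{-1},ji}}$, whence $\tau(SS^*)=\sum_{g,i,j}|A_{g^{-1},ji}|^2=\tau(S^*S)$ after relabelling. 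Faithfulness is then straightforward: $\tau(T)=\|T^{1/2}M_\chi\|_2^2$, so $\tau(T)=0$ forces $T^{1/2}$ to annihilate every section supported in $\what U$, and the $\Gamma$-invariance $T^{1/2}U_g=U_gT^{1/2}$ then annihilates every section supported in each translate $g\what U$; by $\sum_g\chi_g=1$ and density this gives $T^{1/2}=0$, hence $T=0$.

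Normality reduces to that of $\Tr$: if $T_\lambda\nearrow T$ in $\maM_+$, then $M_\chi T_\lambda M_\chi\nearrow M_\chi TM_\chi$ in $B(\what\maH)_+$, and normality of the usual trace yields $\tau(T_\lambda)\nearrow\tau(T)$. For semi-finiteness I would exhibit an abundant supply of projections of finite $\tau$-trace: for any finite orthonormal family $\{f_1,\dots,f_k\}\subset L^2(\what U,\what\maE|_{\what U})$, the pairwise disjoint supports of $(U_g f_j)_{g\in\Gamma,\,1\le j\le k}$ show that the orthogonal projection $P$ onto the closed span of this family lies in $\maM$, with $\tau(P)=\sum_{j=1}^k\|f_j\|^2=k<\infty$; letting the family range over increasing finite subsets of an orthonormal basis produces projections in $\maM_+\cap\{\tau<\infty\}$ that increase strongly to the identity of $\what\maH$, which together with the trace property implies that $\{T\in\maM_+:\tau(T)<\infty\}$ generates a weakly dense two-sided ideal. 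Finally, type II$_\infty$ follows because $\tau$ is semi-finite but $\tau(\Id)=+\infty$ (the identity of $\what\maH$ has infinite trace as $\Gamma$ is infinite and each fundamental-domain copy contributes a rank one if $e_i$ is singled out), and $\maM$ has no nonzero minimal projections since already $L^2(\what U,\what\maE|_{\what U})$ is infinite-dimensional so each finite-trace $\Gamma$-invariant projection can be split. The main subtlety in this plan is the $\Gamma$-equivariant computation in the proof of the trace property; once this identification $B_{g,ij}=\overline{A_{g^{-1},ji}}$ is set up correctly, every other step is a routine consequence of the properties of the usual trace $\Tr$ on $B(\what\maH)$.
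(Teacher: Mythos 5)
The paper does not actually prove this proposition: it is quoted from Atiyah's covering-space paper, so there is no in-paper argument to compare against beyond the parallel proof given earlier for $(\maN,\TR)$, which likewise proceeds by matrix entries over $\Gamma$. Your reconstruction of the trace part is correct and is essentially Atiyah's standard argument: the fundamental domain identifies $\what\maH$ with $\ell^2\Gamma\otimes L^2(\what U,\what\maE|_{\what U})$, so that $(U_ge_i)_{g,i}$ is an orthonormal basis, $\tau(T)=\sum_i\langle Te_i,e_i\rangle$, the identity $B_{g,ij}=\overline{A_{g^{-1},ji}}$ gives $\tau(S^*S)=\tau(SS^*)$, faithfulness follows from $\tau(T)=\|T^{1/2}M_\chi\|_2^2$ together with $\Gamma$-invariance and $\sum_g\chi_g=1$, normality is inherited from $\Tr$, and the $\Gamma$-invariant projections onto $\overline{\operatorname{span}}\{U_gf_j\}$ have finite trace and increase to the identity, which with normality yields semi-finiteness. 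All of this is sound.

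The genuine gap is in the final "type II$_\infty$" step. Absence of minimal projections does not exclude type I: $L^\infty(S^1)\,\overline{\otimes}\,B(H)$ has no minimal projections and is of type I$_\infty$. In fact your own decomposition shows $\maM=\{U_g\}'\cong \maL(\Gamma)\,\overline{\otimes}\,B\bigl(L^2(\what U,\what\maE|_{\what U})\bigr)$ (group von Neumann algebra tensor a type I$_\infty$ factor), so the type of $\maM$ is governed by $\maL(\Gamma)$: for $\Gamma=\Z$, or any group whose group von Neumann algebra has a type I part, $\maM$ is not of type II$_\infty$, and the literal II$_\infty$ statement needs extra hypotheses (e.g. $\Gamma$ ICC) or must be read, as is customary in the $L^2$-index literature and in this paper, loosely as "semi-finite with the non-standard trace $\tau$ and $\tau(\Id)=+\infty$" --- which is exactly the part you did prove. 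A minor related slip: $\tau(\Id)=\dim L^2(\what U,\what\maE|_{\what U})=+\infty$ because the fundamental domain carries an infinite-dimensional $L^2$ space, not because $\Gamma$ is infinite; infiniteness of $\Gamma$ is irrelevant to that computation.
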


Since the Hermitian structure on $\what\maE$ is holonomy invariant, we were able to define the involutive representation $\pi$ of the algebra $C_u^\infty (\what\maG)$ of uniformly supported smoothly bounded functions on $\what\maG$, in the Hilbert space $\what\maH=L^2({\what M}; \what\maE)$. We thus obtain a well defined  involutive representation of $C^\infty_u(\maG)$ (and hence of $\maA$) in $\what\maH$ denoted again $\pi$,  obtained by using composition with the pull-back map corresponding to the projection $\what\maG \to \maG$. Notice that pull-backs respect the product and involution since our Haar system is $\Gamma$-invariant. More precisely, for any $k\in C_u^\infty (\maG)$ the operator $\pi (k)=\pi_{\what\maE}(k)$  is given by
$$
\pi (k) (u) (\what m) := \int_{ \gamma\in \maG^m} k (\gamma) W_{\gamma} u (s({ \widehat{\gamma}})) d\eta^{m} (\gamma),
$$
where $\what\gamma$ is the unique (leafwise equivalence class of the) path which covers $\gamma$ and satisfies $r(\what\gamma)=\what m$. Notice also that $\what\gamma$ is automatically contained in a leaf of $\what\maF$. It is then clear that the representation $\pi:C^\infty_u(\maG) \rightarrow B (\what\maH)$ is valued in the von Neumann algebra algebra $\maM$ of $\Gamma$-invariant operators. 
 
Denote by ${\Psi'}_{c-\Gamma}^{m,\ell} ({\what M}, \what\maF\subset  \what\maF'; \what\maE)$ the subspace of ${\Psi'}^{m,\ell} ({\what M}, \what\maF\subset  \what\maF'; \what\maE)$ which is composed of operators whose support projects to a compact subset of $M\times M$. Such operators are sometimes called $\Gamma$-compactly supported pseudodifferential operators. 

\begin{proposition}
The group $\Gamma$ acts on every ${\Psi'}^{m,\ell} ({\what M}, \what\maF\subset  \what\maF'; \what\maE)$ (resp. on every ${\Psi'}_{c-\Gamma}^{m,\ell} ({\what M}, \what\maF\subset  \what\maF'; \what\maE)$) and this induces a bifiltration-preserving action by $*$-automorphisms of the involutive algebra ${\Psi'}^{\infty, \infty}({\what M}, \what\maF\subset  \what\maF'; \what\maE)$ (resp. of the involutive algebra ${\Psi'}_{c-\Gamma}^{\infty, \infty}({\what M}, \what\maF\subset  \what\maF'; \what\maE)$),   which preserves the ideal $\Psi^{-\infty}({\what M}; \what\maE)$ (resp.  the ideal $\Psi_{c-\Gamma}^{-\infty}({\what M}; \what\maE)$) of uniformly (resp. $\Gamma$-compactly) supported smoothing operators. 
\end{proposition}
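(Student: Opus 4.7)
The plan is to define the $\Gamma$-action by conjugation with the induced unitaries on $L^2(\hM;\what\maE)$ and then verify each of the required properties essentially by naturality, exploiting the fact that $\Gamma$ acts by isometries of $\hM$ that preserve the bifoliation $\what\maF\subset\what\maF'$ and the Hermitian structure on $\what\maE$.

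First I would set, for $g\in\Gamma$ and any linear map $A$ on $C_c^\infty(\hM;\what\maE)$, the action
\[
g\cdot A \;:=\; U_g\circ A\circ U_g^{-1},
\]
where $U_g$ is the unitary in the earlier section. Since $U_g$ preserves $C_c^\infty(\hM;\what\maE)$, this makes sense at the algebraic level, and it is a $*$-action because $U_g$ is unitary (so $(g\cdot A)^*=g\cdot A^*$) and because conjugation is automatically multiplicative and involutive. The Schwartz kernel of $g\cdot A$ is the push-forward of the kernel of $A$ under the diagonal action $(\hm,\hm')\mapsto(\hm g^{-1},\hm' g^{-1})$.

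Next I would verify bifiltration preservation. Since $g$ acts by isometries of $\hM$, it preserves both the complete distance $d$ and the property of having a kernel supported in a ball $B(\hM,R)$; hence $g\cdot A$ has the same propagation as $A$, so uniform support is preserved, and analogously the $\Gamma$-compact case since the projection $\hM\to M$ is $\Gamma$-equivariant. The critical point is that the bifiltered class ${\Psi'}^{m,\ell}$ is defined via elementary local operators with symbols satisfying the estimate \eqref{LocalSymbol} with respect to the splitting $T\hM\simeq T\what\maF\oplus\what V\oplus\what N$ and the anisotropic scaling $\lambda\cdot(\zeta,\eta_v,\eta_n)=(\lambda\zeta,\lambda\eta_v,\lambda^2\eta_n)$. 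Because $g$ is a diffeomorphism sending $T\what\maF$ to $T\what\maF$ and $T\what\maF'$ to $T\what\maF'$ and acts by isometries for the chosen $\Gamma$-invariant metric, it maps a distinguished chart $(\hU,\hU')$ adapted to the bifoliation to another such chart with a $C^\infty$-bounded change of coordinates that is block-triangular with respect to the decomposition of the cotangent bundle into leafwise, vertical, and normal pieces. A standard check shows that the pullback of a symbol of class ${S'}^{m,\ell}$ under such a block-triangular bounded diffeomorphism is again of class ${S'}^{m,\ell}$ with comparable constants: the cotransverse derivatives $\pa^{\alpha}_{\zeta,\eta}$ transform tensorially, the anisotropic weight $(1+|\zeta|+|\eta|')^{m-\langle\alpha\rangle}$ is preserved up to a universal multiplicative constant because the transition matrix is triangular with orthogonal diagonal blocks, and similarly for the leafwise factor $(1+|\sigma|)^{\ell-|\beta|}$. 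Hence $g\cdot A$ lies in the same ${\Psi'}^{m,\ell}$-class as $A$, and $(g_1g_2)\cdot A=g_1\cdot(g_2\cdot A)$ is immediate.

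The remaining items are now essentially bookkeeping: that the uniform smoothing ideal $\Psi^{-\infty}(\hM;\what\maE)$ is preserved follows since $g$ is a $C^\infty$-bounded isometry with $C^\infty$-bounded derivatives of all orders, so pullback of a $C^\infty$-bounded uniformly supported Schwartz kernel has the same properties; the $\Gamma$-compact analogue is the same argument restricted to kernels whose support is $\Gamma$-compact. Combining these steps yields the claimed bifiltration-preserving $*$-action on ${\Psi'}^{\infty,\infty}$ and ${\Psi'}_{c-\Gamma}^{\infty,\infty}$ together with invariance of the smoothing ideals. I expect the only non-cosmetic obstacle to be the invariance of the symbol estimates \eqref{LocalSymbol} under the $\Gamma$-diffeomorphisms: one must keep track of the fact that the holonomy-compatible splitting may only be preserved up to a triangular transformation (not a diagonal one), so the verification relies on the observation, already used for the composition of principal symbols earlier in the text, that such triangular transformations preserve the anisotropic homogeneity class ${S'}^{m}$ and thus the full bifiltered symbol class.
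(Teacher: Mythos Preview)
Your proposal is correct and follows essentially the same approach as the paper, whose own proof is a two-sentence assertion that the result follows from $\Gamma$ acting by isometries preserving both foliations and the bundle $\what\maE$; you have simply supplied the details the paper leaves implicit. One minor remark: since the metric is $\Gamma$-invariant and $\Gamma$ preserves both $T\what\maF$ and $T\what\maF'$, the induced $\Gamma$-action on the orthogonal splitting $T\hM\simeq T\what\maF\oplus\what V\oplus\what N$ is in fact block-\emph{diagonal}, not merely triangular---the triangularity you worry about pertains to the holonomy action of $\what\maF$, not to the $\Gamma$-action---so the invariance of the symbol estimates \eqref{LocalSymbol} is even more immediate than you indicate.
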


\begin{proof}
Since any element $g\in \Gamma$ acts as an isometry of ${\what M}$ and since it preserves each of the foliations $\what\maF$ and $\what\maF'$, we know that for any such $\Gamma$-equivariant bundle $\what\maE$, $g$ induces a linear isomorphism on ${\Psi'}^{m,\ell} ({\what M}, \what\maF\subset  \what\maF'; \what\maE)$ which moreover preserves  ${\Psi'}_{c-\Gamma}^{m,\ell} ({\what M}, \what\maF\subset  \what\maF'; \what\maE)$ and respects the product and $*$ structures on ${\Psi'}^{\infty,\infty} ({\what M}, \what\maF\subset  \what\maF'; \what\maE)$ and ${\Psi'}_{c-\Gamma}^{\infty,\infty} ({\what M}, \what\maF\subset  \what\maF'; \what\maE)$. The rest of the proof is clear. 
\end{proof}

Denote by ${\Psi'}^{m, \ell} ({\what M}, \what\maF\subset  \what\maF'; \what\maE)^\Gamma$ the subspace of ${\Psi'}^{m, \ell} ({\what M}, \what\maF\subset  \what\maF'; \what\maE)$ composed of $\Gamma$-invariant operators. Similarly, ${\Psi'}_{c-\Gamma}^{m,\ell} ({\what M}, \what\maF\subset  \what\maF'; \what\maE)^\Gamma$ is the subspace of $\Gamma$-compactly supported operators which are $\Gamma$-invariant. As an obvious corollary of the results of the previous sections, we have

\begin{proposition}
If $A \in {\Psi'}^{m, \ell} ({\what M}, \what\maF\subset  \what\maF'; \what\maE)^\Gamma$  and $B \in {\Psi'}^{m', \ell'} ({\what M}, \what\maF\subset  \what\maF'; \what\maE)^\Gamma$, then  $A\circ B$ $\in  {\Psi'}^{m+m', \ell+\ell'} ({\what M}, \what\maF\subset  \what\maF'; \what\maE)^\Gamma$,  and the formal adjoint $A^*$ belongs to ${\Psi'}^{m, \ell} ({\what M}, \what\maF\subset  \what\maF'; \what\maE)^\Gamma$. 
Moreover, if $A$ or $B$ belongs to ${\Psi'}_{c-\Gamma}^{*,*} ({\what M}, \what\maF\subset  \what\maF'; \what\maE)^\Gamma$ then $A\circ B$ belongs to ${\Psi'}_{c-\Gamma}^{m+m', \ell+\ell'} ({\what M}, \what\maF\subset  \what\maF'; \what\maE)^\Gamma$,  and the adjoint of a $\Gamma$-compactly supported operator is $\Gamma$-compactly supported. 
 \end{proposition}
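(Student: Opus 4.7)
The proposition naturally decomposes into three independent claims: that composition and adjoint preserve the bifiltration, that they preserve $\Gamma$-invariance, and that composition with (or adjoint of) a $\Gamma$-compactly supported operator yields a $\Gamma$-compactly supported operator. The first claim is precisely the non-equivariant composition-and-adjoint proposition already established in the excerpt for ${\Psi'}^{m,\ell}(\hM, \what\maF \subset \what\maF'; \what\maE)$, so I would simply invoke it.

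For the $\Gamma$-invariance claim, the preceding proposition states that $\Gamma$ acts on ${\Psi'}^{\infty,\infty}$ by bifiltration-preserving $*$-automorphisms; denote this action by $\alpha_g$. Multiplicativity and $*$-compatibility of $\alpha_g$ then give
$$
\alpha_g(A \circ B) = \alpha_g(A) \circ \alpha_g(B) = A \circ B
\quad\text{and}\quad
\alpha_g(A^*) = \alpha_g(A)^* = A^*
$$
whenever $A$ and $B$ are $\Gamma$-invariant, so the invariant subspace is automatically a $*$-subalgebra. No further argument is needed.

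The $\Gamma$-compactness claim is where any work actually happens. The adjoint case is immediate: the Schwartz kernel of $A^*$ has support $\sigma(\Supp K_A)$ where $\sigma(x,y) = (y,x)$ is the $\Gamma$-equivariant swap of $\hM \times \hM$, which descends to the swap on $M \times M$ and carries compact sets to compact sets. For composition, I would treat the case $A \in {\Psi'}^{m,\ell}_{c-\Gamma}$ and $B \in {\Psi'}^{m',\ell'}$, the other case being symmetric by passing to adjoints. Using $\Gamma$-invariance of $A$, write $\Supp K_A \subset \Gamma \cdot K$ for some compact $K \subset \hM \times \hM$; since $B$ has some finite propagation $R$, one has $\Supp K_B \subset T_R := \{(y,z) : d_{\hM}(y,z) \leq R\}$. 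Then
$$
\Supp K_{A \circ B} \;\subset\; \Gamma \cdot (K \circ T_R),
$$
and it remains to argue that $K \circ T_R$ is compact in $\hM \times \hM$, so that its image in $M \times M$ is the required compact set witnessing $\Gamma$-compactness of $A \circ B$.

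The only nontrivial input is therefore the compactness of $K \circ T_R$, and this is the single place where the bounded-geometry hypothesis is genuinely used: completeness of the Riemannian metric on $\hM$ ensures that closed balls of bounded radius are compact, so the $R$-neighborhood of the compact projection of $K$ to $\hM$ is compact, and $K \circ T_R$ sits inside a compact product. This is the only step with any potential obstacle, and it is a standard bounded-geometry fact; everything else reduces either to the prior non-equivariant proposition or to the tautological multiplicative/$*$-compatibility of the $\Gamma$-action, which is why the authors label the result an ``obvious corollary.''
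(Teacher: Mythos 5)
Your proposal is correct and is essentially the argument the paper intends: the paper gives no separate proof (the statement is presented as an obvious corollary), and its implicit reasoning is exactly your two ingredients, namely the non-equivariant composition/adjoint proposition plus the preceding proposition that $\Gamma$ acts by bifiltration-preserving $*$-automorphisms, with the support bookkeeping in the $\Gamma$-compactly supported case treated as clear. One small caveat on wording: the containment $\Supp K_A \subset \Gamma\cdot K$ (diagonal action, $K$ compact) does not follow from $\Gamma$-invariance alone --- a diagonally invariant set whose projection to $M\times M$ is compact can violate it --- but it does follow from the finite propagation (uniform support) built into the class ${\Psi'}^{m,\ell}$ combined with the $\Gamma$-compact support, so your argument goes through as written.
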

 
 For $p\geq 1$,  denote by $\maL^p(\maM, \tau)$, or simply $\maL^p(\maM)$ the $p$-Schatten space associated with the semi-finite von Neumann algebra $\maM$ (with respect to its trace $\tau$) \cite{Benameur2003, BenameurFack}.  The following proposition is the exact counterpart of Proposition \ref{Properties-maN} in the Galois covering case. 

\begin{proposition}\label{SemiFinite}\
\begin{enumerate}
\item For any $k\in \maA$, the operator $\pi (k)$ belongs to ${\Psi'}_{c-\Gamma}^{0, -\infty} ({\what M}, \what\maF\subset  \what\maF'; \what\maE)^\Gamma$.
\item  If $T$ is an element of  ${\Psi'}^{m, \ell} ({\what M}, \what\maF\subset  \what\maF'; \what\maE)^\Gamma$ with $m\leq 0$ and $\ell\leq 0$, then it induces a bounded operator on $L^2 ({\what M}; \what\maE)$ which  belongs to the von Neumann algebra $\maM$.  
\item If $T$ is an element of  ${\Psi'}_{c-\Gamma}^{m, \ell} ({\what M}, \what\maF\subset  \what\maF'; \what\maE)^\Gamma$ with $m<-(v+2n)$ and $\ell<-p$, then the induced operator has continuous Schwartz kernel and belongs to the Schatten  ideal $L^1(\maM)\cap \maM$.
\item If $T$ is an element of  ${\Psi'}_{c-\Gamma}^{m, \ell} ({\what M}, \what\maF\subset  \what\maF'; \what\maE)^\Gamma$ with $m<0$ and $\ell<0$, then the induced operator belongs to the ideal $\maK (\maM)$ of $\tau$-compact operators in $\maM$.
\end{enumerate}
\end{proposition}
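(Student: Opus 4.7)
The plan is to adapt the proof of Proposition \ref{Properties-maN}, replacing the crossed-product trace $\TR$ on $\maN$ with Atiyah's trace $\tau$ on $\maM$ and the $\ell^2\Gamma$ bookkeeping with a fundamental-domain cutoff. The key elementary identity is that for any $T\in\maM$,
$$
\tau(T^*T)=\Tr(M_\chi T^*T M_\chi)=\|TM_\chi\|_{HS}^2,
$$
where $\chi$ is the characteristic function of $\what U$ (using $M_\chi^2=M_\chi$). This reduces every trace-ideal statement to a local pseudodifferential fact already proved in Section 3.

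Items (1) and (2) are short. For (1), lift $k\in\maA$ to $\wtit k\in C^\infty_{c-\Gamma}(\what\maG)^\Gamma$ along $p:\what\maG\to\maG$. Since the Haar system on $\what\maG$ is $\Gamma$-invariant and descends to that on $\maG$, the defining formula for $\pi(k)$ coincides with $\pi_{\what\maE}(\wtit k)$; Lemma \ref{pi(k)} applied chart by chart then places $\pi(k)$ in ${\Psi'}_{c-\Gamma}^{0,-\infty}(\hM,\what\maF\subset\what\maF';\what\maE)$, with $\Gamma$-invariance automatic from that of $\wtit k$ and of the lifted holonomy action $\what W$. For (2), Proposition \ref{Bounded}(2), applied without crossed product, extends $T$ to a bounded operator on $\what{\oH}^{0,0}=\what\maH$, and the hypothesis $U_g T U_g^{-1}=T$ places this extension in $\maM$.

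For (3), continuity of the Schwartz kernel follows from absolute convergence of the defining oscillatory integral under the bi-order bounds $m<-(v+2n)$, $\ell<-p$, by the estimate already carried out in the proof of Lemma \ref{TraceClass}. For the $\tau$-trace-class property I factor $T=T_1T_2$ inside ${\Psi'}_{c-\Gamma}^{*,*}(\hM,\what\maF\subset\what\maF';\what\maE)^\Gamma$, with bi-orders $(m_i,\ell_i)$ satisfying $m_1+m_2=m$, $\ell_1+\ell_2=\ell$, $m_i<-(v+2n)/2$, $\ell_i<-p/2$. Each factor then satisfies
$$
\|T_i\|_{\maL^2(\maM,\tau)}^2=\tau(T_i^*T_i)=\|T_iM_\chi\|_{HS}^2,
$$
which is finite by the local Hilbert–Schmidt estimate from the proof of Lemma \ref{TraceClass}, using finite propagation plus $\Gamma$-invariance of $T_i$ to confine $y$ to the compact closure of $\what U$. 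Hence $T=T_1T_2\in\maL^1(\maM,\tau)\cap\maM$. For (4), $(T^*T)^N\in{\Psi'}_{c-\Gamma}^{2Nm,2N\ell}$ meets the hypotheses of (3) for $N$ large, so it is $\tau$-trace class and in particular $\tau$-compact; the Borel functional calculus for $\tau$-measurable self-adjoint operators then gives $|T|=(T^*T)^{1/2}\in\maK(\maM,\tau)$, and the polar decomposition $T=U|T|$ in $\maM$ yields $T\in\maK(\maM,\tau)$.

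The main technical obstacle is the factorization step in (3): the factors must be simultaneously $\Gamma$-invariant, $\Gamma$-compactly supported, and of prescribed bi-order. A naive use of local parametrices breaks $\Gamma$-compact support, so one has to combine $\Gamma$-invariant powers of the leafwise and transverse Laplacians (available in the invariant bi-filtered calculus by bounded geometry) with a $\Gamma$-invariant, $\Gamma$-compactly-supported smooth approximation to the identity. Verifying that the resulting operators genuinely live in ${\Psi'}_{c-\Gamma}^{*,*}$ and that their $\maL^2(\maM,\tau)$-norms are controlled by the local Hilbert–Schmidt estimates of Lemma \ref{TraceClass} is where the bulk of the technical work lies.
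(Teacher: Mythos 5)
Your proposal is correct and follows essentially the same route as the paper: items (1), (2) and (4) are argued identically (pull-back plus Lemma \ref{pi(k)}, the Sobolev boundedness of the Appendix plus $\Gamma$-invariance, and the power trick $T^k$ with functional calculus), and for item (3) the paper likewise reduces the $\tau$-trace-class property to the $\tau$-Hilbert--Schmidt statement for half bi-orders via the factorization argument of Lemma \ref{TraceClass}, verifying the Hilbert--Schmidt estimate by square-integrability of the $\Gamma$-invariant kernel over a fundamental domain, which is exactly your $\tau(T_i^*T_i)=\|T_iM_\chi\|_{HS}^2$ computation. The factorization issue you flag (keeping the factors $\Gamma$-invariant and $\Gamma$-compactly supported) is real but is treated just as tersely in the paper, and your proposed remedy is the intended one.
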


\begin{proof}\
(1)  The pull-back of $k$ is uniformly supported and smoothly bounded.    By Lemma \ref{pi(k)},  $\pi (k)$ belongs to ${\Psi'}^{0, -\infty} (\hM, \what\maF\subset  \what\maF'; \what\maE)$  and it is obviously $\Gamma$-invariant. Moreover, since $k\in \maA$, the support of its pull-back projects to its support which is a compact subset of $\maG$ and hence $\pi(k)$ belongs to ${\Psi'}_{c-\Gamma}^{0, -\infty} (\hM, \what\maF\subset  \what\maF'; \what\maE)^\Gamma$. 

(2)  Apply  Proposition \ref{SobolevBound} of  the Appendix to $(\hM, \what\maF\subset \what\maF')$ with $m\leq 0$ and $\ell\leq 0$ to  deduce that $T$ induces a bounded operator on  $L^2 (\hM; \what\maE)$. Since $T$ is $\Gamma$-invariant by hypothesis,  $T$ belongs to the von Neumann algebra $\maM$. 

(3) Since the polar decomposition of any element of $\maM$ holds in $\maM$, the argument in the proof of Lemma \ref{TraceClass} applies to reduce the problem to proving  the following implication
$$
T\in {\Psi'}_{c-\Gamma}^{m, \ell} ({\what M}, \what\maF\subset  \what\maF'; \what\maE)^\Gamma\text{ with }m<-(v+2n)/2\text{ and }\ell<-p/2 \; \Longrightarrow \; T \in \maL^2 (\maM, \tau).
$$
$\maL^2 (\maM, \tau)$ is the Schatten space of Hilbert-Schmidt $\tau$-measurable operators in the semi-finite von Neumann algebra $(\maM, \tau)$ \cite{BenameurFack}. Since $T$ is $\Gamma$-compactly supported, it can be written as a finite sum of operators acting, over distinguished trivializing open sets $p^{-1}V$ and $p^{-1}V'$, from $C_{c-\Gamma}^\infty (p^{-1}V, \C^d)$ to $C_{c-\Gamma}^\infty (p^{-1}V', \C^d)$.   They are $\Gamma$-invariant and live in the class ${\Psi'}_{c-\Gamma}^{m, \ell}$ with respect to the restricted trivial foliations on $p^{-1}V$ and $p^{-1}V'$.  Since the Schwartz kernel $K (\hm, \hm')$ of such an operator is $\Gamma$-invariant and $\Gamma$-compactly supported in $p^{-1}V\times p^{-1}V'$, it belongs to $L^2(\maM, \tau)$ if and only if $\vert K\vert^2$ is integrable over some fundamental domain, or equivalently is integrable over any compact subspace of $p^{-1}V\times p^{-1}V'$. Therefore, we need  to justify that any (single) elementary operator which is compactly supported and associated with $k(z, x, y; \zeta, \eta, \sigma)$ from the class ${\Psi'}^{m, \ell}$ with $m<-(v+2n)/2$ and $\ell<-p/2$, has square integrable Schwartz kernel. The proof of Lemma \ref{TraceClass} then applies here, mutatis mutandis.

(4)  Note that for any continuous function $f:[0, \infty)\to \R$ with $f(0)=0$ and any compact nonnegative operator $S$, the operator  $f(S)$ is  $\tau$-compact. On the other hand, the operator $T\in \maM$ is $\tau$-compact if and only if the operator $T^*T$ is $\tau$-compact. Now, $T^*T$ is easily seen to belong to ${\Psi'}_{c-\Gamma}^{m,\ell} (\hM, \what\maF\subset \what\maF'; \what\maE)^\Gamma$.  Since the extension of the formal adjoint $T^*$ to $\what\maH$ is obviously the adjoint of the extension of $T$, we may assume without lost of generality that $T$ extends to a nonnegative operator in $\maM$.  Since $m<0$ and $\ell <0$, for $k\geq 1$ large enough the nonnegative operator $T^k$ satisfies the conditions of item (3). Therefore the extension of $T^k$ to a bounded operator in $\maM$ is an element the Schatten ideal $L^1(\maM, \tau) \cap \maM$ and hence belongs to the ideal $\maK(\maM)$. But this implies that $T$ is $\tau$-compact,  by using for instance the continuous function $t^{1/k}$ which vanishes at zero.
\end{proof}

\begin{corollary}
 If $k\in \maA$ and  $T$ in ${\Psi'}^{m, \ell} (\hM, \what\maF\subset  \what\maF'; \what\maE)^\Gamma$ with $m < -(v+2n)$, then the composite operator $\pi (k)\circ T$ extends to an element of $\maM$ which is $\tau$-trace class. 
\end{corollary}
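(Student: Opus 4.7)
The plan is to reduce the statement to part (3) of Proposition \ref{SemiFinite} applied to the composite operator $\pi(k)\circ T$, by showing that this composite sits in the right bifiltered class of $\Gamma$-compactly supported $\Gamma$-invariant operators.

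First I would invoke part (1) of Proposition \ref{SemiFinite} to record that $\pi(k)\in {\Psi'}_{c-\Gamma}^{0,-\infty}(\hM,\what\maF\subset\what\maF';\what\maE)^{\Gamma}$; in particular $\pi(k)$ is $\Gamma$-invariant, $\Gamma$-compactly supported, and of bidegree $(0,\ell')$ for every $\ell'\in\Z$. Next, since $T\in {\Psi'}^{m,\ell}(\hM,\what\maF\subset\what\maF';\what\maE)^{\Gamma}$ is uniformly supported and $\Gamma$-invariant, the composition formula for the bifiltered calculus (the $\Gamma$-invariant version of Proposition on composition) yields
$$
\pi(k)\circ T \;\in\; {\Psi'}^{m,\ell+\ell'}(\hM,\what\maF\subset\what\maF';\what\maE)^{\Gamma} \qquad \text{for every } \ell'\in\Z,
$$
and the composite is again $\Gamma$-invariant. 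Choosing $\ell'$ so that $\ell+\ell'<-p$ gives an element of ${\Psi'}^{m,\ell''}(\hM,\what\maF\subset\what\maF';\what\maE)^{\Gamma}$ with $\ell''<-p$.

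The second step is to verify that $\pi(k)\circ T$ is in fact $\Gamma$-compactly supported. This uses that composition of a $\Gamma$-compactly supported operator with a uniformly (i.e.\ finite propagation) supported operator is $\Gamma$-compactly supported: if $(\hm_1,\hm_2)\in\supp(\pi(k)\circ T)$ there exists $\hm_3$ with $(\hm_1,\hm_3)\in\supp(\pi(k))$ and $(\hm_3,\hm_2)\in\supp(T)$; the projection of the first pair is confined to a compact subset of $M\times M$ by $\Gamma$-compactness of $\pi(k)$, while $d_{\hM}(\hm_3,\hm_2)$ is bounded by the propagation of $T$, so $\hm_2$ projects to a relatively compact set as well. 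Hence
$$
\pi(k)\circ T \;\in\; {\Psi'}_{c-\Gamma}^{m,\ell''}(\hM,\what\maF\subset\what\maF';\what\maE)^{\Gamma}.
$$

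Finally, with $m<-(v+2n)$ by hypothesis and $\ell''<-p$ by construction, part (3) of Proposition \ref{SemiFinite} applies directly and shows that the bounded extension of $\pi(k)\circ T$ belongs to $\maL^1(\maM,\tau)\cap\maM$, i.e.\ it is $\tau$-trace class in the Atiyah von Neumann algebra $\maM$. There is no real obstacle here: the only slightly delicate point is the bookkeeping that shows $\Gamma$-compactness is preserved under composing with a uniformly supported $\Gamma$-invariant operator, but this is immediate from the finite-propagation property combined with $\Gamma$-invariance of the distance on $\hM$.
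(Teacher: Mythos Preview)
Your proposal is correct and follows essentially the same route as the paper's proof: both argue that $\pi(k)\in{\Psi'}_{c-\Gamma}^{0,-\infty}$, compose to land in ${\Psi'}_{c-\Gamma}^{m,-\infty}$ (your explicit choice of $\ell'$ with $\ell+\ell'<-p$ just unpacks this), and then invoke part (3) of Proposition \ref{SemiFinite}. Your justification of the $\Gamma$-compact support of the composite via finite propagation is a slightly more detailed version of the paper's ``as above'' remark.
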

 
 \begin{proof}
We apply the previous proposition. As above, the composition of a $\Gamma$-compactly supported $\Gamma$-invariant operator with a uniformly supported $\Gamma$-invariant operator is automatically a $\Gamma$-compactly supported $\Gamma$-invariant operator.  The operator $\pi (k)$ was shown to belong to ${\Psi'}^{0, -\infty}_{c-\Gamma} (\hM, \what\maF\subset  \what\maF'; \what\maE)^\Gamma$, so the composite operator $\pi (k)\circ T$ belongs to ${\Psi'}_{c-\Gamma}^{m, -\infty} (\hM, \what\maF\subset  \what\maF'; \what\maE)^\Gamma$. Since $m< -v-2n$, we have that the operator $\pi (k)\circ T$, which lives in $\maM$, is actually $\tau$-trace class by Proposition \ref{SemiFinite}. 
 \end{proof}

\subsection{The Atiyah-Connes semi-finite triple}

For the next results, note that any leafwise uniformly supported pseudodifferential operator of order $\ell$ on $\hM$ with respect to the foliation $\what\maF$ and with coefficients in $\what\maE$  which is $\Gamma$-invariant lives in   ${\Psi'}^{0,\ell} (\hM, \what\maF\subset \what\maF'; \what\maE)^\Gamma$. In the same way, any $\Gamma$-invariant pseudodifferential operator on $\hM$ from the Connes-Moscovici calculus with respect to the foliation $\what\maF'$, and with coefficients in $\what\maE$,  yields an operator in the class ${\Psi'}^{m, 0}(\hM, \what\maF\subset\what\maF'; \what\maE)^\Gamma$. 

Recall the notion of transverse order $m\leq \ell$ of a pseudo differential operator from the Connes-Moscovici calculus ${\Psi'}^\ell (\what{M}, \what\maF'; \what\maE)$ defined above. The following is a consequence of Lemma \ref{TransversalOrder}. 

\begin{lemma}\label{TransversalOrderII}
Assume that $P\in {\Psi'}^\ell ({\what M}, \what\maF'; \what\maE)^\Gamma$ is a  uniformly supported order $\ell$ operator and is $\Gamma$-invariant with transversal order $m\leq \ell$. Then the operator $P$ belongs to the pseudodifferential  class  ${\Psi'}^{m, \ell-m} (\hM, \what\maF\subset \what\maF'; \what\maE)^\Gamma$. 
\end{lemma}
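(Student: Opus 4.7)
My plan is to derive this lemma directly from Lemma \ref{TransversalOrder} by observing that the construction in that lemma is $\Gamma$-equivariant, so the $\Gamma$-invariance of $P$ is automatically preserved.

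More concretely, I would first forget the $\Gamma$-invariance and view $P$ simply as a uniformly supported operator of order $\ell$ in the Connes-Moscovici calculus ${\Psi'}^\ell(\hM, \what\maF'; \what\maE)$ having transversal order $m \leq \ell$ in the sense of the preceding definition. Lemma \ref{TransversalOrder} then applies verbatim and yields
$$
P \;\in\; {\Psi'}^{m, \ell-m}(\hM, \what\maF \subset \what\maF'; \what\maE).
$$
The point is that Lemma \ref{TransversalOrder} is proved by producing, in any distinguished chart of a uniform atlas, a total symbol satisfying the bifiltered estimate \eqref{estimate}, obtained from the original Connes-Moscovici symbol by the formula $a(s,x,y;\zeta,\eta,\sigma) := p(x,y;\zeta,\eta)$. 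This reassembly is purely symbolic and does not involve any choice of cutoff or localization that would break equivariance.

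Next I would record that $\Gamma$-invariance passes through the inclusion. By the previous proposition, $\Gamma$ acts by bifiltration-preserving $*$-automorphisms on ${\Psi'}^{\infty,\infty}(\hM, \what\maF \subset \what\maF'; \what\maE)$, and by definition ${\Psi'}^{m,\ell-m}(\hM, \what\maF \subset \what\maF'; \what\maE)^\Gamma$ is simply the fixed-point set of this action inside ${\Psi'}^{m,\ell-m}(\hM, \what\maF \subset \what\maF'; \what\maE)$. Since $P$ is assumed $\Gamma$-invariant as an operator on $C_c^\infty(\hM; \what\maE)$, and since the inclusion ${\Psi'}^{\ell}(\hM, \what\maF'; \what\maE) \hookrightarrow {\Psi'}^{m, \ell-m}(\hM, \what\maF \subset \what\maF'; \what\maE)$ provided by Lemma \ref{TransversalOrder} is merely a reinterpretation of the same operator in a finer class, the image is invariant as well. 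Hence $P \in {\Psi'}^{m, \ell-m}(\hM, \what\maF \subset \what\maF'; \what\maE)^\Gamma$, which is the claim.

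There is essentially no genuine obstacle here: the content was already in Lemma \ref{TransversalOrder}, and the only thing to check is compatibility with the $\Gamma$-action, which is immediate since neither the notion of transversal order, nor the reassembly of symbols, nor the definition of the bifiltered class involves any non-equivariant choice. The one mild point worth making explicit is that the ``$C^\infty$-boundedness'' hypothesis built into the class ${\Psi'}^{m,\ell-m}$ and required in the proof of Lemma \ref{TransversalOrder} is automatic in the $\Gamma$-equivariant setting here, because the uniform estimates on the local symbols of $P$ are inherited from those on finitely many fundamental-domain representatives by $\Gamma$-translation under the fixed $\Gamma$-invariant metric and $\Gamma$-invariant Hermitian structure on $\what\maE$.
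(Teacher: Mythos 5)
Your proposal is correct and follows the paper's own argument: apply Lemma \ref{TransversalOrder} to place $P$ in ${\Psi'}^{m,\ell-m}(\hM,\what\maF\subset\what\maF';\what\maE)$, and then observe that the $\Gamma$-invariance of $P$ as an operator immediately places it in the $\Gamma$-fixed subclass. The additional equivariance remarks you make are harmless but not needed, since the inclusion is just a reinterpretation of the same operator.
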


\begin{proof}
Applying Lemma  \ref{TransversalOrder}, we have that $P$ belongs to ${\Psi'}^{m, \ell-m} (\hM, \what\maF\subset \what\maF'; \what\maE)$. So $P$ is $\Gamma$-invariant, the proof is complete.
\end{proof}

We also have the following $\Gamma$-invariant corollary of Theorem \ref{Parametrix}.

\begin{proposition}\label{GammaParametrix}\
Let $P\in {\Psi'}^\ell(\hM, \what\maF'; \what\maE)^\Gamma$ be a $\Gamma$-invariant pseudodifferential operator from the Connes-Moscovici calculus with respect to the foliation $\what\maF'$ on $\hM$. Assume that $P$ is uniformly transversely elliptic. Then there exists a $\Gamma$-invariant operator $Q \in {\Psi'}^{-\ell} (\hM, \what\maF'; \what\maE)^\Gamma$  such that
$$
R=I-QP \text{ and  }S=I-PQ \;\; \in  \Psi^{-\infty} (\what\nu ^*, \what\maE)^\Gamma \;\; \; \; (\cap {\Psi'}^0(\hM, \what\maF'; \what\maE)^\Gamma).
$$
\end{proposition}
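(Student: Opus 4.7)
The plan is to exploit the functoriality of the Beals-Greiner bifiltered calculus under the Galois covering $p \colon \hM \to M$, reducing the $\Gamma$-invariant problem to the non-equivariant Theorem \ref{Parametrix} applied downstairs. Since the $\Gamma$-action is free, proper, isometric, and preserves the bifoliation $\what\maF \subset \what\maF'$, the quotient $M = \hM/\Gamma$ is a bounded-geometry manifold equipped with the induced bifoliation $\maF \subset \maF'$, and the $\Gamma$-equivariant bundle $\what\maE$ descends to a holonomy-equivariant bundle $\maE \to M$ with $\what\maE = p^*\maE$.

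First I would establish the correspondence: uniformly supported $\Gamma$-invariant operators $P \in {\Psi'}^\ell(\hM, \what\maF \subset \what\maF'; \what\maE)^\Gamma$ are in bijection with elements $P_M \in {\Psi'}^\ell(M, \maF \subset \maF'; \maE)$ via $P = p^*P_M$. The point is that the covering $p$ is a local diffeomorphism intertwining the decompositions $T\hM = T\what\maF \oplus \what{V} \oplus \what{N}$ and $TM = T\maF \oplus V \oplus N$, hence it intertwines the local symbol classes ${S'}^{m,\ell}$ and the quantization formula. Using a locally finite cover of $M$ by distinguished charts together with its $\Gamma$-equivariant lift to $\hM$, one sees that this bijection preserves the bifiltration, the involution, composition, the principal symbol (viewed on $T^*\hM \cong p^*T^*M$), and the notion of uniform (respectively transverse) ellipticity. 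In particular, the uniform transverse ellipticity of $P$ with respect to $\what\maF$ is equivalent to the uniform transverse ellipticity of $P_M$ with respect to $\maF$.

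Next I would apply Theorem \ref{Parametrix} to $P_M$ on $(M, \maF \subset \maF')$, producing a parametrix $Q_M \in {\Psi'}^{-\ell}(M, \maF'; \maE)$ with
\[
I - Q_M P_M,\ I - P_M Q_M \ \in\ \Psi^{-\infty}(\nu^*, \maE) \cap {\Psi'}^0(M, \maF'; \maE).
\]
Setting $Q := p^*Q_M$ gives a $\Gamma$-invariant element of ${\Psi'}^{-\ell}(\hM, \what\maF'; \what\maE)^\Gamma$, and the functoriality of the correspondence with respect to composition yields $I - QP = p^*(I - Q_M P_M)$ and $I - PQ = p^*(I - P_M Q_M)$, placing both remainders in $\Psi^{-\infty}(\what\nu^*, \what\maE)^\Gamma \cap {\Psi'}^0(\hM, \what\maF'; \what\maE)^\Gamma$, as required.

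The main obstacle is the careful verification of the pullback-pushforward correspondence, in particular that uniform support of an operator upstairs matches proper (finite propagation) support of the descended operator downstairs, and that the bigraded symbol estimates and the conic region where the parametrix is smoothing are preserved under $p^*$. This is essentially a routine functoriality check, handled by working in a $\Gamma$-invariant atlas of bounded-geometry charts lifted from an atlas of $M$, and by noting that all the symbolic classes in Definition \ref{BigradedPseudo} are defined by local estimates on $T^*\hM$ that are automatically $\Gamma$-invariant whenever the operator is. A purely equivariant alternative would be to redo the construction of Theorem \ref{Parametrix} upstairs using a $\Gamma$-invariant cutoff on $T^*\hM$ and a $\Gamma$-invariant partition of unity on $\hM$ (which exists by freeness and properness), asymptotically summing the Neumann-type correction terms with $\Gamma$-invariance preserved at every stage; this yields the same $Q$.
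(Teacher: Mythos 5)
Your closing ``purely equivariant alternative'' is exactly the paper's proof: the paper simply applies Theorem \ref{Parametrix} to $P$ on $\hM$ and remarks that, since $P$ and its symbol are $\Gamma$-invariant, every step of the classical construction (inversion of the symbol on a conic neighborhood of $\what\nu^*$, choice of cutoffs and partitions of unity, asymptotic summation) can be carried out $\Gamma$-invariantly, so the resulting $Q$ is $\Gamma$-invariant. If you had led with that, the review would end here.

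Your main route, descending to $(M,\maF\subset\maF')$ and lifting a parametrix, is a genuinely different reduction and can be made to work, but the pivot you hang it on --- that uniformly supported $\Gamma$-invariant operators upstairs are in bijection with operators downstairs via $P=p^*P_M$ --- is not true as stated. The correct correspondence is the descent map obtained by summing the $\Gamma$-invariant Schwartz kernel over the deck group, $k_M(m,m')=\sum_{g\in\Gamma}\what{k}(\hm,\hm' g)$; this is a surjective $*$-homomorphism onto the uniformly supported calculus downstairs, but it is not injective (already for $\R\to S^1$ a nonzero $\Gamma$-invariant convolution kernel can have vanishing periodization), and a lift of a downstairs operator is canonical only when its propagation is below the injectivity radius of the covering; in general it is defined chartwise and only up to $\Gamma$-invariant uniform smoothing operators. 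Consequently a given $\Gamma$-invariant $P$ need not be an exact pullback, and the identity $I-QP=p^*(I-Q_MP_M)$ does not hold on the nose. The repair is to work modulo $\Gamma$-invariant uniform smoothing operators: in an evenly covered chart only the $g=e$ term of the descent contributes to the singularity, so descent preserves full symbols, hence transverse ellipticity, and the discrepancies $P-\mathrm{lift}(P_M)$ and $Q-\mathrm{lift}(Q_M)$ are uniform smoothing operators, which are absorbed into $\Psi^{-\infty}(\what\nu^*,\what\maE)^\Gamma\cap{\Psi'}^0(\hM,\what\maF';\what\maE)^\Gamma$. With that modification your reduction yields the stated conclusion; what it buys is that the non-equivariant theorem is quoted verbatim on the quotient, at the cost of this kernel bookkeeping, whereas the paper's one-line argument (and your alternative) never leaves $\hM$.
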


\begin{proof}
We only need to apply Theorem \ref{Parametrix}. Note that  in the construction of the parametrix $Q$, we can ensure that it is $\Gamma$-invariant.
\end{proof}

In particular,  using the previous results we get the following.

\begin{corollary}
Let $P\in {\Psi'}^\ell (\hM, \what\maF'; \what\maE)^\Gamma$ be a $\Gamma$-invariant pseudodifferential operator in the Connes-Moscovici calculus with respect to the foliation $\what\maF'$ on $\hM$. Assume that $P$ is uniformly transversely elliptic and let $Q \in {\Psi'}^{-\ell} (\hM, \what\maF'; \what\maE)^\Gamma$ be a parametrix as in Proposition \ref{GammaParametrix}. Then for any $k\in C_c^\infty (\maG)$ the operators $\pi (k) (I-QP)$ and $\pi (k) (I-PQ)$ belong to the von Naumann algebra $\maM$ and are $\tau$-trace class operators, i.e. belong to the Schatten ideal $\maM \cap L^1(\maM, \tau)$ in $\maM$.
\end{corollary}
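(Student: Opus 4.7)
The plan is to mirror the proof of Corollary \ref{TraceParametrix}, keeping careful track of $\Gamma$-invariance and $\Gamma$-compact support, and then to invoke Proposition \ref{SemiFinite} in place of Lemma \ref{TraceClass} to convert trace-class estimates on $\what\maH$ into $\tau$-trace-class estimates in the Atiyah von Neumann algebra $\maM$.

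First I would apply Proposition \ref{GammaParametrix} to obtain the $\Gamma$-invariant parametrix, so that both remainders $R := I - QP$ and $S := I - PQ$ lie in $\Psi^{-\infty}(\what\nu^*, \what\maE)^\Gamma \cap {\Psi'}^0(\hM, \what\maF'; \what\maE)^\Gamma$. In particular, $R$ and $S$ are $\Gamma$-invariant uniformly supported order-zero operators in the Connes--Moscovici calculus whose transversal order with respect to $\what\maF$ is any prescribed negative integer; I would fix this transversal order to be $-(v+2n)-1$. Lemma \ref{TransversalOrderII} then places $R$ and $S$ in ${\Psi'}^{-(v+2n)-1,\, (v+2n)+1}(\hM, \what\maF\subset\what\maF'; \what\maE)^\Gamma$.

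Next I would bring in $\pi(k)$. By item (1) of Proposition \ref{SemiFinite}, $\pi(k)$ belongs to ${\Psi'}_{c-\Gamma}^{0,-\infty}(\hM, \what\maF\subset\what\maF'; \what\maE)^\Gamma$. Composing the $\Gamma$-compactly supported operator $\pi(k)$ with the uniformly supported $\Gamma$-invariant operators $R$ and $S$ produces operators whose supports project into a compact subset of $M\times M$ (since the propagation of $R, S$ is bounded and $\pi(k)$ already has $\Gamma$-compact support), hence they are again $\Gamma$-compactly supported and $\Gamma$-invariant. Combining with the standard bi-filtration additivity recorded in Proposition \ref{Bounded}, one obtains
\[
\pi(k)\,R,\ \pi(k)\,S \ \in\ {\Psi'}_{c-\Gamma}^{-(v+2n)-1,\, -\infty}(\hM, \what\maF\subset\what\maF'; \what\maE)^\Gamma,
\]
and a fortiori in ${\Psi'}_{c-\Gamma}^{-(v+2n)-1,\, -p-1}(\hM, \what\maF\subset\what\maF'; \what\maE)^\Gamma$.

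Finally, the bi-order $(-(v+2n)-1,\, -p-1)$ satisfies the strict inequalities $m<-(v+2n)$ and $\ell<-p$ demanded by item (3) of Proposition \ref{SemiFinite}, which immediately yields the claim: $\pi(k)(I-QP)$ and $\pi(k)(I-PQ)$ extend to elements of $\maM$ lying in the Schatten ideal $\maL^1(\maM,\tau)\cap\maM$. The only point requiring genuine care, and where I expect the main (minor) obstacle, is verifying that composition behaves correctly with respect to the two different notions of support involved: that $\pi(k)$ is $\Gamma$-compactly supported while $R$ and $S$ are only uniformly supported. This is handled by observing that the support of the product $\pi(k)\,R$ projects into $\mathrm{supp}(\pi(k)) \cdot B(\hM, C)$ for $C$ the propagation of $R$, and this is $\Gamma$-compact downstairs; everything else is a direct transcription of the Hausdorff-case argument into the $\Gamma$-equivariant framework built in Proposition \ref{SemiFinite}.
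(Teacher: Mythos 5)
Your proposal is correct and follows essentially the same route as the paper: $\Gamma$-invariant parametrix, Lemma \ref{TransversalOrderII} to place the remainders in ${\Psi'}^{-(v+2n)-1,\,(v+2n)+1}(\hM,\what\maF\subset\what\maF';\what\maE)^\Gamma$, composition with $\pi(k)\in{\Psi'}_{c-\Gamma}^{0,-\infty}$, and then item (3) of Proposition \ref{SemiFinite}; the only cosmetic difference is that you keep the second order at $-\infty$ before specializing to $(-(v+2n)-1,-p-1)$, while the paper fixes $\pi(k)$'s leafwise order first, and your support/invariance remark about composing a $\Gamma$-compactly supported operator with a uniformly supported $\Gamma$-invariant one is exactly the point the paper also records.
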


\begin{proof}
This is the $\Gamma$-invariant version of Corollary \ref{TraceParametrix} and the proof follows the same lines. 
By Proposition \ref{GammaParametrix}, we know in that
$$
I-QP \text{ and  }I-PQ \;\; \in   {\Psi'}^0({\what M}, \what\maF'; \what\maE)^\Gamma \cap {\Psi'}^{-n-2v-1} (\what\nu ^*, \what\maE)^\Gamma.
$$
On the other hand, by Proposition \ref{TransversalOrderII}, we have that 
$$
I-QP \text{ and  }I-PQ \;\; \in   {\Psi'}^{-(n+2v)-1, n+2v+1} ({\what M}, \what\maF\subset \what\maF' ; \what\maE)^\Gamma.
$$
But for $k\in C_c^\infty (\maG)$, by  item (1) of Proposition \ref{SemiFinite},  $\pi (k) \in {\Psi'}_{c-\Gamma}^{0, -\infty} ({\what M}, \what\maF\subset\what\maF' ; \what\maE)^\Gamma$. Therefore, 
$$
\pi (k) \in {\Psi'}_{c-\Gamma}^{0, -(p+v+2n)-2} ({\what M}, \what\maF \subset \what\maF'; \what\maE)^\Gamma.
$$
As a consequence, we obtain
$$
\pi (k) (I-QP) \text{ and  }\pi (k) (I-PQ) \;\; \in   {\Psi'}_{c-\Gamma}^{-(v+2n)-1, -p-1} ({\what M}, \what\maF\subset \what\maF' ; \what\maE)^\Gamma.
$$
The proof is completed using item (3) of  Proposition  \ref{SemiFinite}.
\end{proof}

\begin{proposition}
Assume that $P\in {\Psi'}^1 (\hM, \what\maF'; \what\maE)^\Gamma$ is a $\Gamma$-invariant pseudodifferential operator from the Connes-Moscovici calculus with respect to the foliation $\what\maF'$. Assume that $P$ is uniformly transversely elliptic and  that it induces an (unbounded) invertible operator on $L^2 (\hM, \what\maE)$ (injective with dense image and bounded inverse). Then for any $k\in C_c^\infty (\maG)$ the operator $\pi (k) P^{-1}$ belongs to the ideal $\maK (\maM, \tau)$ of $\tau$-compact operators in $\maM$. More precisely, $\pi (k) P^{-1}$ belongs to the Schatten ideal $L^r(\maM, \tau)\cap \maM$ for any $r> v+2n$. 
\end{proposition}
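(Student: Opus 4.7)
The plan is to mimic the proof of the uniformly supported analogue proved just above, transposed into the semi-finite framework of $(\maM,\tau)$. First I would apply Proposition \ref{GammaParametrix} to obtain a $\Gamma$-invariant parametrix $Q\in {\Psi'}^{-1}(\hM,\what\maF';\what\maE)^\Gamma$ whose remainder $R=I-QP$ lies in $\Psi^{-\infty}(\what\nu^*,\what\maE)^\Gamma\cap{\Psi'}^0(\hM,\what\maF';\what\maE)^\Gamma$. Since $P^{-1}$ is bounded and $\Gamma$-invariant, it lies in $\maM$ (by item (2) of Proposition \ref{SemiFinite} applied to $P^{-1}$ seen as an order-$0$ object, or directly from the hypothesis), and right-multiplying the identity $I=QP+R$ by $P^{-1}$ gives the key decomposition
$$
\pi(k)\,P^{-1} \;=\; \pi(k)\,Q \;+\; \pi(k)\,R\,P^{-1}.
$$

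The second summand is trace class in $\maM$. Indeed, by the corollary just before the statement, $\pi(k)R$ already belongs to $\maL^1(\maM,\tau)\cap\maM$; composing on the right with $P^{-1}\in\maM$ preserves membership in this ideal, so $\pi(k)R\,P^{-1}\in\maL^1(\maM,\tau)\cap\maM$ and hence lies in $\maL^r(\maM,\tau)$ for every $r\geq 1$.

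The first summand is handled via the bifiltration. By item (1) of Proposition \ref{SemiFinite}, $\pi(k)\in{\Psi'}_{c-\Gamma}^{0,-\infty}(\hM,\what\maF\subset\what\maF';\what\maE)^\Gamma$; on the other hand $Q$, being an order $-1$ Connes--Moscovici operator for $\what\maF'$, sits in ${\Psi'}^{-1,0}(\hM,\what\maF\subset\what\maF';\what\maE)^\Gamma$ (this is the trivial case $m=\ell=-1$ of Lemma \ref{TransversalOrderII}). Composition produces a $\Gamma$-compactly supported element
$$
\pi(k)\,Q \;\in\; {\Psi'}_{c-\Gamma}^{-1,-\infty}(\hM,\what\maF\subset\what\maF';\what\maE)^\Gamma.
$$
For any positive integer $N$, the iterate $\bigl((\pi(k)Q)^*\pi(k)Q\bigr)^N$ lies in ${\Psi'}_{c-\Gamma}^{-2N,-\infty}$; choosing $N$ with $2N>v+2n$, item (3) of Proposition \ref{SemiFinite} makes it $\tau$-trace-class, which is the statement $\pi(k)Q\in\maL^{2N}(\maM,\tau)\cap\maM$. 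Since $\pi(k)Q$ is bounded, the elementary inequality $|T|^r=\|T\|^{r-r'}|T|^{r'}$ propagates this to $\maL^r(\maM,\tau)$ for all $r\geq 2N$; in particular $\pi(k)Q$ is $\tau$-compact.

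To sharpen the Schatten range to the full half-line $r>v+2n$, which is the only delicate point of the proof, I would invoke Proposition \ref{Dixmier}: it identifies the exact spectral dimension of the bifoliation calculus at $v+2n$, so that a $\Gamma$-compactly supported Beals--Greiner order $-1$ operator like $\pi(k)Q$ belongs to the Dixmier ideal $\maL^{v+2n,\infty}(\maM,\tau)$, hence to $\maL^r(\maM,\tau)$ for every $r>v+2n$. Combined with the trace-class estimate for $\pi(k)RP^{-1}$, this gives $\pi(k)P^{-1}\in\maL^r(\maM,\tau)\cap\maM$ for every $r>v+2n$, and a fortiori $\pi(k)P^{-1}\in\maK(\maM,\tau)$. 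The main obstacle is precisely this last refinement: the pseudodifferential bi-order only directly yields Schatten class for $r$ an integer exceeding $v+2n$, and bridging the gap to the continuous range requires the explicit symbolic computation of the trace carried out in Proposition \ref{Dixmier}.
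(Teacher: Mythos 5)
Your proposal follows essentially the same route as the paper: the $\Gamma$-invariant parametrix of Proposition \ref{GammaParametrix}, the decomposition $\pi(k)P^{-1}=\pi(k)RP^{-1}+\pi(k)Q$ with $R=I-QP$, the observation that $\pi(k)R$ (hence $\pi(k)RP^{-1}$, since $P^{-1}\in\maM$ and $\maL^1(\maM,\tau)\cap\maM$ is an ideal) is $\tau$-trace class by the corollary preceding the statement, and the placement $\pi(k)Q\in{\Psi'}^{-1,-\infty}_{c-\Gamma}(\hM,\what\maF\subset\what\maF';\what\maE)^\Gamma$. The paper then concludes directly by citing Proposition \ref{SemiFinite} (the $\Gamma$-invariant counterpart of Corollary \ref{Traceability}), whose proof of items (3)--(4) is exactly your even-power argument on $\bigl((\pi(k)Q)^*\pi(k)Q\bigr)^N$; it makes no use of Proposition \ref{Dixmier}.

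The one place where you diverge is the final sharpening to the full range $r>v+2n$, and as written that step does not go through. Proposition \ref{Dixmier} is stated for the crossed-product von Neumann algebra $(\maN,\TR)$ acting on $\what\maH\otimes\ell^2\Gamma$ and for operators in the class $\Psi_{c,\Gamma}^{-(v+2n),-\infty}$, i.e.\ of Beals--Greiner order exactly $-(v+2n)$; it says nothing about the Atiyah algebra $(\maM,\tau)$, and $\pi(k)Q$ has bi-order $(-1,-\infty)$, so the proposition simply does not apply to it. To extract weak-Schatten membership $\pi(k)Q\in\maL^{v+2n,\infty}(\maM,\tau)$ from a Dixmier-type estimate you would need $|\pi(k)Q|^{v+2n}$ to lie in the order $-(v+2n)$ part of the calculus, which is a non-integer power of $(\pi(k)Q)^*\pi(k)Q$ when $v+2n$ is odd and is not provided by the calculus developed in the paper. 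In fairness, what your argument rigorously establishes ($\tau$-compactness and $\maL^{2N}(\maM,\tau)$ for any even $2N>v+2n$) is precisely what the results the paper cites actually deliver; so the core of your proof matches the paper, but the claimed bridge to all real $r>v+2n$ via Proposition \ref{Dixmier} should either be removed or replaced by a genuine singular-value (Weyl-type) estimate for order $-1$ operators in the $\Gamma$-invariant calculus.
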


\begin{proof}
Let $Q \in \Psi^{-1} (\hM, \what\maF'; \what\maE)^\Gamma$ be a parametrix for $P$ as in Proposition \ref{GammaParametrix}. Then we get
$$
\pi (k) P^{-1} = \pi (k) R P^{-1} + \pi (k) Q \text{ where } R=I-QP.
$$
From the previous corollary, we know that $\pi (k) R $ (and hence also $\pi (k) R P^{-1}$) is $\tau$-trace class. Since it is  $L^2$ bounded, it thus belongs to the Schatten ideal   $L^1(\maM, \tau) \cap \maM$. Therefore, we see that $\pi (k) R P^{-1}$ is in particular $\tau$-compact in $\maM$.

Since $Q\in {\Psi'}^{-1, 0} (\hM, \what\maF'; \what\maE)^\Gamma$,  $\pi (k) Q\in \Psi^{-1, -\infty}_{c-\Gamma} (\hM, \what\maF\subset\what\maF'; \what\maE)^\Gamma$. Applying  item (2) of Proposition \ref{SemiFinite}, we deduce that  $\pi (k) Q$ extends to a bounded $\Gamma$-invariant operator on $L^2(\hM, \what\maE)$.
\end{proof}

\begin{corollary}\label{ResolventII}
Let $D \in {\Psi'}^{1} (\hM, \what\maF'; \what\maE)^\Gamma$ be a uniformly transversely elliptic pseudodifferential operator from the Connes-Moscovici calculus with respect to the foliation $\what\maF'$. Assume that $D$ induces an essentially self-adjoint operator on the Hilbert space  $L^2(\hM, \what\maE)$. Then for any $k\in C_c^\infty(\maG)$ the operator $\pi (k) (D+i)^{-1}$ extends to a $\tau$-compact operator in $\maM$.  More precisley,  $\pi (k) (D+i)^{-1}$ belongs to the ideal $L^r(\maM, \tau)\cap \maM$ for any $r> v+2n$.
\end{corollary}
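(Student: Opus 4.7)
The plan is to deduce this corollary directly from the immediately preceding proposition, exactly as Corollary \ref{Resolvent} was deduced from its preceding proposition in the non-invariant setting. Concretely, I would set $P := D + i$ and verify the three hypotheses needed: (a) $P$ lies in ${\Psi'}^1(\hM, \what\maF'; \what\maE)^\Gamma$, which is immediate because adding a constant multiple of the identity is an order-zero ($\Gamma$-invariant) perturbation; (b) $P$ is uniformly transversely elliptic, since the transverse principal symbol of $P$ equals that of $D$; (c) $P$ induces an invertible operator on $L^2(\hM, \what\maE)$ with bounded inverse, which is exactly the statement that $i \notin \Spec(D)$ for the essentially self-adjoint operator $D$ (so that $(D+i)^{-1}$ is a bounded operator with dense image, and $D+i$ is injective).

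Having checked these, the preceding proposition applied to $P = D+i$ and to the given $k \in C_c^\infty(\maG)$ yields that $\pi(k)(D+i)^{-1}$ extends to an element of the Schatten ideal $L^r(\maM, \tau) \cap \maM$ for every $r > v + 2n$, and in particular to a $\tau$-compact operator in $\maM$. The $\Gamma$-invariance needed to land in $\maM$ is automatic: $D$, and hence $(D+i)^{-1}$, commutes with the $\Gamma$-action, while $\pi(k)$ is $\Gamma$-invariant by construction (the Haar system used to define $\pi$ is $\Gamma$-invariant and $k$ is pulled back from $\maG$).

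There is essentially no obstacle here — the work has all been done in the preceding proposition, whose proof invoked the parametrix Proposition \ref{GammaParametrix}, the trace-class corollary about $\pi(k)(I - QP)$, and item (2) of Proposition \ref{SemiFinite} applied to $\pi(k)Q \in {\Psi'}^{-1,-\infty}_{c-\Gamma}(\hM, \what\maF \subset \what\maF'; \what\maE)^\Gamma$. The only small point worth remarking on is that the precise Schatten exponent $r > v + 2n$ comes from the combination of the parametrix decomposition
\[
\pi(k)(D+i)^{-1} \;=\; \pi(k)\,R\,(D+i)^{-1} \;+\; \pi(k)\,Q,
\]
where the first term is $\tau$-trace class (hence in every $L^r$) and the second term is controlled by the Beals--Greiner/Kordyukov Schatten estimate for $\pi(k)Q \in {\Psi'}^{-1,-\infty}_{c-\Gamma}$, whose singular values decay at the transverse rate governed by the Beals--Greiner codimension $v + 2n$.
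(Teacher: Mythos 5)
Your proposal is correct and follows exactly the route of the paper, whose entire proof is the one-line instruction to apply the preceding proposition to $P = D+i$; your verification of the hypotheses (order, transverse ellipticity, invertibility of $D+i$ from essential self-adjointness, and $\Gamma$-invariance) simply makes explicit what the paper leaves implicit.
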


\begin{proof}
Simply apply the previous proposition to $P=D+i$.
\end{proof}

Using the above results, we can now prove  the main result of this sub-section.

\begin{theorem}\label{typeII}
Assume that ${\what D}$ is transversely elliptic $\Gamma$-invariant pseudodifferential operator from the Connes-Moscovici calculus ${\Psi'}^1 (\hM, \what\maF'; \what\maE)^\Gamma$ for the foliation $\what\maF'$, acting on the smooth sections of the Hermitian bundle $\what\maE:=p^*\maE$ over $\hM$, with  the holonomy invariant {\em{transverse principal symbol}}. Assume also that $\what{D}$ is essentially self-adjoint on $\what\maH$ with the initial domain $C_c^\infty (\hM; \what\maE)$. 
Then  the triple $(\maA, \maM, \what{D})$   is a semi-finite spectral triple which is finitely summable of dimension $v+2n$ in the sense of \cite{BenameurFack}. 
\end{theorem}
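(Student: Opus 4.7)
The plan is to verify the four bullet points of Definition~\ref{def.spectral} with $p=v+2n$, reusing systematically the pseudodifferential machinery already built in the $\Gamma$-invariant setting. Since $\what M$ is Riemannian and complete, and since $\what{D}$ is essentially self-adjoint with initial domain $C_c^\infty(\hM;\what\maE)$ by hypothesis, I first invoke standard functional analysis to produce the closure, which I will still denote by $\what{D}$, with dense domain $\Dom(\what D)\subset\what\maH$. Since $\what{D}$ commutes with the unitary $\Gamma$-action (as it is $\Gamma$-invariant), all its spectral projections and $(\what{D}+i)^{-1}$ commute with $\Gamma$ as well, so $\what{D}$ is affiliated with the Atiyah von Neumann algebra $\maM$. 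This settles the ``$\tau$-measurable self-adjoint affiliated'' part.

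Next, I would identify $\maA=C_c^\infty(\maG)$ with its image under $\pi$ in $\maM$: by item~(1) of Proposition~\ref{SemiFinite}, for any $k\in\maA$ the operator $\pi(k)$ belongs to ${\Psi'}_{c-\Gamma}^{0,-\infty}(\hM,\what\maF\subset\what\maF';\what\maE)^\Gamma$, so by item~(2) it extends to a bounded $\Gamma$-invariant operator, hence to an element of $\maM$; that $\pi$ is a $*$-homomorphism is standard. For the resolvent condition, Corollary~\ref{ResolventII} gives directly that $\pi(k)(\what{D}+i)^{-1}\in L^r(\maM,\tau)\cap\maM$ for every $r>v+2n$; to upgrade this to membership in the Dixmier ideal $L^{v+2n,\infty}(\maM,\tau)$, I would adapt Proposition~\ref{Dixmier} to the Galois cover: the point is that $\pi(k)(\what D^2+1)^{-(v+2n)/2}$ belongs to ${\Psi'}_{c-\Gamma}^{-(v+2n),-\infty}(\hM,\what\maF\subset\what\maF';\what\maE)^\Gamma$ (compose the CM-calculus power of the resolvent, whose transverse order is controlled by the transverse ellipticity, with $\pi(k)$, and use Lemma~\ref{TransversalOrderII}), and then a Weyl-type asymptotic for the singular values of such $\Gamma$-compactly supported $\Gamma$-invariant operators in $(\maM,\tau)$ yields the sharp dimension.

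For the commutator condition, I would apply Corollary~\ref{Commutator} with $P_1=\what D\in{\Psi'}^1(\hM,\what\maF';\what\maE)$ (which has holonomy invariant transverse principal symbol by hypothesis) and $P_2=\pi(k)\in{\Psi'}_{c-\Gamma}^{0,-\infty}(\hM,\what\maF\subset\what\maF';\what\maE)^\Gamma$ (which is scalar). Since one of them is scalar, Corollary~\ref{Commutator} gives
\[
[\what D,\pi(k)]\in{\Psi'}^{0,-\infty}(\hM,\what\maF\subset\what\maF';\what\maE),
\]
and $\Gamma$-invariance of $\what D$ together with $\Gamma$-compact support of $\pi(k)$ places this commutator in ${\Psi'}_{c-\Gamma}^{0,-\infty}(\hM,\what\maF\subset\what\maF';\what\maE)^\Gamma$. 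By item~(2) of Proposition~\ref{SemiFinite} it therefore extends to a bounded operator lying in $\maM$. That $\pi(k)$ preserves $\Dom(\what D)$ is automatic once $[\what D,\pi(k)]$ is bounded on the smooth core $C_c^\infty(\hM;\what\maE)$, which is dense in $\Dom(\what D)$.

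The main technical obstacle is the fourth step, namely going from the \emph{soft} membership $\pi(k)(\what D+i)^{-1}\in\bigcap_{r>v+2n}L^r(\maM,\tau)$ given by Corollary~\ref{ResolventII} to the \emph{sharp} membership in the Dixmier ideal $L^{v+2n,\infty}(\maM,\tau)$ that certifies the dimension $v+2n$. This requires transferring the semi-finite Weyl-type asymptotic of Proposition~\ref{Dixmier}, which was proved for the crossed-product trace $\TR$ on $\maN$, to the Atiyah trace $\tau$ on $\maM$ in the Galois covering setting; the argument is parallel---cut off by a fundamental domain, compare the leafwise Schwartz kernel with its Euclidean model at order $-(v+2n)$, and integrate the top symbol on the sphere $\{\vert\eta\vert'=1\}$---but the book-keeping with the anisotropic scaling $\lambda\cdot(\eta_v,\eta_n)=(\lambda\eta_v,\lambda^2\eta_n)$ and the proper support condition requires care. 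Once this Dixmier estimate is in place, all axioms of Definition~\ref{def.spectral} are satisfied and the triple $(\maA,\maM,\what D)$ is a semi-finite spectral triple of dimension $v+2n$.
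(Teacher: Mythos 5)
Your proposal is correct and follows essentially the same route as the paper's proof: affiliation of $\what D$ with $\maM$, Lemma \ref{pi(k)} and Proposition \ref{SemiFinite} to place $\pi(k)$ in $\maM$, Corollary \ref{ResolventII} for the summability of $\pi(k)(\what D+i)^{-1}$, and Corollary \ref{Commutator} (via the holonomy invariance of the transverse principal symbol) for the boundedness of $[\what D,\pi(k)]$ in $\maM$. The only difference is that you make explicit the sharp Dixmier-ideal estimate certifying dimension exactly $v+2n$, a step the paper dispatches in one line by invoking the local Laplacians and the analogue of Proposition \ref{Dixmier}.
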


We have implicitely used in the above statement the involutive representation $\pi$ of $\maA=C_c^\infty (\maG)$, and the normal semi-finite trace $\tau$.  

\begin{remark}
If we assume moreover that $\what{D}^2$ has a scalar principal symbol then the semi-finite spectral triple  $(\maA, \maM, \hD)$  is  regular and  has simple dimension spectrum contained in the set  $\{n \in \N \, | \, n \leq q \}$. The  proof is long but straightforward with our tools, and is omitted.
\end{remark}

In the case $\what\maF=0$ and $T\what\maF'=T\hM$,  this theorem coincides with the semi-finite spectral triple for coverings as defined in \cite{BenameurFack}.

\begin{proof}\
The proof is an easy extension of the proof of Theorem \ref{CM-triple} to the semi-finite setting of Atiyah's von Neumann algebra. We first note that the essentially self-adjoint $\Gamma$-invariant operator $\what{D}$ is automatically affiliated with the von Neumann algebra $\maM$. We proved in Corollary \ref{ResolventII} that for any $k\in \maA= C_c^\infty(\maG)$ the operator $\pi (k) (\what{D}+i)^{-1}$ belongs to the Schatten ideal $\maL^r(\maM, \tau)\cap \maM$ for any $r > v+2n$. We also proved that $\pi (k)$ belongs to ${\Psi'}^{0, -\infty} (\what{M}, \what\maF \subset \what\maF')^\Gamma$.  Therefore, using Corollary \ref{Commutator} since $\what{D}$ has a holonomy invariant transverse principal symbol, we get that the commutator
$$
\left[ \what{D}, \pi (k)\right] = \what{D}\pi (k) - \pi (k)\what{D},
$$
belongs to  ${\Psi'}^{0, -\infty} (\what{M}, \what\maF \subset \what\maF'; \what\maE)^\Gamma$ and hence yields a bounded operator on the Hilbert space $L^2(\what{M}; \what\maE)$ which is $\Gamma$-invariant.  Thus the commutator $\left[ \what{D}, \pi (k)\right]$ belongs to the von Neumann algebra $\maM$. By classical arguments, we have that $(C_c^\infty (\maG), (\maM, \tau),  {\what D})$ is a (semi-finite) spectral triple with finite dimension $\leq v+2n$. It is easy to check using the local Laplacians for our $\Gamma$-invariant metric, that the dimension is precisely $v+2n$. 
\end{proof}

Note that any operator $\what{D}$ which satisfies the assumptions of Theorem \ref{typeII} induces downstairs an operator $D$ in the Connes-Moscovici calculus which yields a type I spectral triple  called the Connes Moscovici spectral triple and given by $(\maA, L^2 (M, \maE), D)$, see \cite{CM95}. The main result of \cite{BenameurHeitsch17} is that when $G$ is torsion free, the Connes-Chern characters of the two spectral triples in periodic cyclic cohomology coincide as morphisms on the range of the Baum-Connes map. As a corollary, we deduce the following.

\begin{theorem}
Assume that the Baum-Connes map for the monodromy groupoid $G$ is surjective and that $G$ is torsion free, then the morphism induced by the semi-finite spectral triple of Theorem \ref{typeII} 
$$
K_*(C^*(G))\longrightarrow \R,
$$
is integer valued. 
\end{theorem}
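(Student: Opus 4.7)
The plan is to deduce this from the comparison theorem of \cite{BenameurHeitsch17} together with a standard integrality statement for the type I Connes-Moscovici spectral triple downstairs. First, I would observe that any $\Gamma$-invariant operator $\what{D}$ satisfying the hypotheses of Theorem \ref{typeII} descends to an operator $D$ on the quotient $(M,\maF\subset\maF')$, acting on sections of $\maE = \what\maE/\Gamma$, which still lies in the Connes-Moscovici calculus ${\Psi'}^1(M,\maF';\maE)$ and is uniformly transversely elliptic with holonomy invariant transverse principal symbol. Applying Theorem \ref{CM-triple} in the trivial-$\Gamma$ case (equivalently, the construction of \cite{CM95, K97}) to $(M,\maF\subset\maF')$ yields a type I (ordinary) spectral triple $(\maA, L^2(M,\maE), D)$ over the same algebra $\maA = C_c^\infty(G)$. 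Since this is an honest spectral triple with a type I trace, its Connes-Chern character pairs with $K_*(C^*(G))$ through a Fredholm index and therefore induces an \emph{integer-valued} morphism
$$
\Ind_D \; : \; K_*(C^*(G)) \longrightarrow \Z.
$$

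Next, I would invoke the main comparison result of \cite{BenameurHeitsch17}, quoted in the paragraph preceding the statement: when the monodromy groupoid $G$ is torsion free, the Connes-Chern character in periodic cyclic cohomology of the semi-finite Atiyah-Connes spectral triple $(\maA,\maM,\what{D})$ of Theorem \ref{typeII} coincides, as a functional on $K_*(C^*(G))$, with the Connes-Chern character of the type I triple $(\maA,L^2(M,\maE),D)$ on the image of the maximal Baum-Connes assembly map $\mu: K_*^{top}(G) \to K_*(C^*(G))$. In particular, the two induced real-valued pairings agree on $\Im(\mu)$.

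Finally, the surjectivity assumption on the Baum-Connes map gives $\Im(\mu) = K_*(C^*(G))$. Combining this with the previous two steps, the morphism induced by the semi-finite spectral triple of Theorem \ref{typeII} equals $\Ind_D$ on all of $K_*(C^*(G))$, hence takes values in $\Z$.

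The only substantive step is the invocation of the comparison theorem from \cite{BenameurHeitsch17}; everything else is packaging. The integrality of the type I pairing is automatic from Theorem \ref{CM-triple} and standard Fredholm theory, and the surjectivity of $\mu$ is used merely to extend equality from $\Im(\mu)$ to the whole $K$-group. The potential subtlety to check carefully is that the torsion-freeness hypothesis on $G$ used in \cite{BenameurHeitsch17} is exactly what is needed to identify the two Connes-Chern characters on $\Im(\mu)$ (as opposed to only rationally), which is what upgrades the conclusion from rationality (as in Conjecture \ref{Rational}) to genuine integrality.
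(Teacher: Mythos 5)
Your proposal follows essentially the same route as the paper: the theorem is stated there as a direct corollary of the observation that $\what{D}$ descends to the type I Connes--Moscovici triple $(\maA, L^2(M,\maE), D)$, whose pairing is an integer-valued Fredholm index, combined with the comparison theorem of \cite{BenameurHeitsch17} identifying the two Connes--Chern characters on the range of the maximal Baum--Connes map in the torsion-free case, and surjectivity to cover all of $K_*(C^*(G))$. Your packaging of these steps, including the role of torsion-freeness and surjectivity, matches the paper's intended argument.
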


\subsection{The crossed product triple vs the invariant triple}

We now compare the two spectral triples associated with Galois coverings of bifoliations, via their Connes-Chern characters. Recall that working with the monodromy groupoids allows the use of the Morita morphism $\Phi:  C_c^\infty (\maG)\to \maB$ which induces an isomorphism between the $K$-theory groups of the completions. 
The main result of this sub-section is that the maps on $K$-theory which correspond to the Connes-Chern characters of both semi-finite spectral triples agree, using this Morita isomorphism. 

Recall that $\what\maH=L^2(\hM; \what\maE)$.  Define  linear maps $\lambda:\what\maH\to \what\maH\otimes \ell^2\Gamma$ and $\what\lambda: \what\maH\otimes\ell^2\Gamma \to \what\maH$ by setting
$$
\lambda (\eta) (\hm; g):= \varrho (\hm) \eta (\hm g)  \quad\quad \text{ and } \quad\quad\what\lambda (\xi) (\hm):=\sum_{g\in \Gamma} \varrho (\hm g) \xi (\hm g; g^{-1}).
$$
It is clear by definition  that  $\what\lambda \circ \lambda= \Id$, that $\what\lambda=\lambda^*$ and hence that  $\lambda\what\lambda$ is an orthogonal projection on $\what\maH\otimes \ell^2\Gamma$, given by
$$
(\lambda\what\lambda)(\xi) (\tv, x; \gamma):=\varrho (\tv, x) \sum_{\gamma_1\in \Gamma} \varrho(\tv\gamma_1, \gamma_1^{-1} x) \xi(\tv\gamma_1, \gamma_1^{-1}x; \gamma_1^{-1}\gamma).
$$
It is easy to check that $\lambda\what\lambda$ belongs to the von Neumann algebra $\maN$.

\begin{proposition}\label{Lambda}
For $T\in \maM$, 
$$
\Lambda (T):= \lambda\circ T \circ \what\lambda.
$$
is a $*$-monomorphism which identifies $\maM$ with the von Neumann sub algebra of $\maN$ which is the corner associated with the self-adjoint idempotent $\lambda\circ\what\lambda$ in $\maN$ 
In particular $\what\lambda \circ {\tilde \pi} (\Phi\varphi) \circ \lambda  = \pi (\varphi)$ and $\Lambda (\pi(\varphi)) := \lambda\circ \pi(\varphi)\circ \what\lambda = {\tilde\pi} (\Phi\varphi)$. 
\end{proposition}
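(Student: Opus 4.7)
The plan is to verify the three assertions in turn, drawing on three preliminary facts already recorded before the proposition: $\what\lambda\lambda = \Id$, $\lambda^* = \what\lambda$, and $p := \lambda\what\lambda$ is a self-adjoint projection in $\maN$.

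\textbf{Algebraic part.} That $\Lambda$ is a $*$-homomorphism of $B(\what\maH)$ into $B(\what\maH\otimes\ell^2\Gamma)$ follows at once from $\what\lambda\lambda = \Id$: multiplicativity comes from $\Lambda(T_1)\Lambda(T_2) = \lambda T_1 \what\lambda\lambda T_2 \what\lambda = \lambda T_1T_2\what\lambda$, and $\lambda^*=\what\lambda$ gives $\Lambda(T)^* = (\lambda T\what\lambda)^* = \lambda T^*\what\lambda = \Lambda(T^*)$. Injectivity is equally immediate since $\what\lambda\Lambda(T)\lambda = T$ forces $\Lambda(T)=0 \Rightarrow T=0$.

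\textbf{Landing in $\maN$ and the corner identification.} One has $p\Lambda(T)p = \lambda\what\lambda\lambda T\what\lambda\lambda\what\lambda = \lambda T\what\lambda = \Lambda(T)$, so it suffices to show $\Lambda(T)\in\maN$ when $T\in\maM$ and, conversely, that every element of $p\maN p$ is of the form $\Lambda(T)$. For the first direction, I would use the matrix description of $\maN$ recalled in the paper, namely that $S\in\maN$ iff its matrix entries satisfy $S_{g',g} = S_{g'g^{-1}}\circ U_{g'g^{-1}}$: unfolding the definitions of $\lambda,\what\lambda$ and using the $\Gamma$-invariance $U_g T = T U_g$ together with the partition identity $\sum_{g}\varrho^2\circ R_g = 1$ yields precisely this matrix form for $\Lambda(T)$. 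For the converse, given $S\in p\maN p$, set $T := \what\lambda S \lambda \in B(\what\maH)$; then $\Lambda(T) = \lambda\what\lambda S\lambda\what\lambda = pSp = S$, while the $\Gamma$-invariance of $T$ comes from the commutation of $S$ with the right $\Gamma$-action on $\what\maH\otimes\ell^2\Gamma$ combined with the dual intertwining of $\lambda$ and $\what\lambda$ with this action. Hence $\Lambda(\maM) = p\maN p$.

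\textbf{The compatibility with $\Phi$.} For $\what\lambda\circ\tilde\pi(\Phi\varphi)\circ\lambda = \pi(\varphi)$ I would compute directly. Using
\[
 \Phi(\varphi)(\what\gamma;g) = \varphi(\gamma)\,\varrho(r(\what\gamma))\,\varrho(s(\what\gamma)g),
\]
together with the explicit formula for $\tilde\pi$, one obtains upon composing with $\lambda$ on the right and $\what\lambda$ on the left an expression containing four factors of $\varrho$ and a double $\Gamma$-sum (plus a fiber integral over $\what\maG^{\hm}$); the partition identity collapses one of the sums, a second application collapses the other, and the $\Gamma$-invariance of the Haar system $\what\eta$ together with the covering $\what\maG\to\maG$ reduces the integral to one over $\maG^{\pi(\hm)}$, which matches the defining formula of $\pi(\varphi)$. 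The companion identity $\Lambda(\pi(\varphi)) = \tilde\pi(\Phi\varphi)$ can then be obtained for free: the cutoff factors $\varrho(r(\what\gamma))$ and $\varrho(s(\what\gamma)g)$ in $\Phi\varphi$ immediately give $p\tilde\pi(\Phi\varphi)p = \tilde\pi(\Phi\varphi)$, and applying $\lambda(\cdot)\what\lambda$ to the first identity together with $\what\lambda\lambda = \Id$ gives the conclusion.

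\textbf{Main obstacle.} The only genuine technical nuisance will be the careful bookkeeping of the $\Gamma$- and holonomy-actions on the fibers of $\what\maE$, which are suppressed in the short-hand formulas for $\lambda,\what\lambda,\pi,\tilde\pi$. Once these are made precise, every step above is a routine manipulation using the cutoff partition $\sum_g \varrho^2\circ R_g = 1$, the $\Gamma$-invariance of the chosen Haar system on $\what\maG$, and the identity $\what\lambda\lambda = \Id$.
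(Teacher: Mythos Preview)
Your proposal is correct and follows essentially the same route as the paper. The paper verifies $\Lambda(T)\in\maN$ by checking commutation with the right representation $R_\alpha$ directly, which is equivalent to your matrix criterion; and for the corner identification it likewise uses $T\mapsto \what\lambda T\lambda$ as the inverse.

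One small comment on the $\Phi$-compatibility: the paper computes $\lambda\,\pi(\varphi)\,\what\lambda=\tilde\pi(\Phi\varphi)$ directly, whereas you propose to first compute $\what\lambda\,\tilde\pi(\Phi\varphi)\,\lambda=\pi(\varphi)$ and then invoke $p\,\tilde\pi(\Phi\varphi)\,p=\tilde\pi(\Phi\varphi)$. That last identity is true, but it is not quite ``immediate'' from the cutoff factors: showing that $\tilde\pi(\Phi\varphi)(\xi)(\hm;g)=\varrho(\hm)\cdot(\text{function of }\hm g)$ (i.e.\ lands in $\operatorname{ran}\lambda$) already requires the $\Gamma$-invariance of the Haar system and the compatibility $(g\cdot)\circ W_{\what\gamma g}=W_{\what\gamma}\circ(g\cdot)$ --- the same change-of-variables the paper uses in its direct computation. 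So your ordering is fine but does not save work; you may find it cleaner to compute $\lambda\,\pi(\varphi)\,\what\lambda$ directly, as the paper does, and read off the other identity by composing with $\what\lambda$ and $\lambda$.
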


\begin{proof}
We need to check that for any $T\in \maM$, we have $\Lambda (T)\circ R_\alpha=R_\alpha\circ \Lambda(T)$. But a straightforward calculation shows that
$$
[R_\alpha \circ \Lambda (T)](\xi)_\gamma = \Lambda (T)(\xi)_{\gamma\alpha}= \varrho \times (\gamma\alpha) T(\what\lambda \xi).
$$
Now, $\what\lambda (\xi)=\sum_{\gamma_1\in \Gamma} \gamma_1 (\varrho \times \xi_{\gamma_1^{-1}})$, and since $T$ is $\Gamma$-invariant,
$$
[R_\alpha \circ \Lambda (T)](\xi)_\gamma = \varrho \times \sum_{\gamma_1\in \Gamma} (\gamma\alpha\gamma_1) T(\varrho\times \xi_{\gamma_1^{-1}}).
$$
Computing $[\Lambda (T)\circ R_\alpha] (\xi)_\gamma$ we find
$$
[\Lambda (T)\circ R_\alpha] (\xi)_\gamma =\varrho\times \gamma [(T\circ \what\lambda\circ R_\alpha)(\xi)] = \varrho\times \gamma \sum_{\gamma_1\in \Gamma} \gamma_1[T(\varrho\times \xi_{\gamma_1^{-1}\alpha})].
$$
By setting $\gamma_1=\alpha\gamma_2$:, we get
$$
[\Lambda (T)\circ R_\alpha] (\xi)_\gamma = \varrho\times \sum_{\gamma_2\in \Gamma} (\gamma\alpha\gamma_2) T(\varrho\times \xi_{\gamma_2^{-1}}).
$$
This proves that $\Lambda$ sends $\maM$ to $\maN$. That this is a $*$-monomorphism is a consequence of the relations $\what\lambda \lambda=  \Id$ and $\what\lambda=\lambda^*$. 

If  $T=\pi(\varphi)$ for a smooth compactly supported function $\varphi$ then for $\xi \in \what\maH$, we have
$$
T(\varrho \times \xi_{\gamma_1^{-1}}) (\tv, x) = \int_{\tV} \varphi[\tv, \tv', x] \varrho (\tv',x) \xi (\tv', x; \gamma_1^{-1}) d\tv '.
$$
Therefore, 
$$
[(\gamma\gamma_1) T(\varrho \times \xi_{\gamma_1^{-1}})] (\tv, x) = \int_{\tV} \varphi[\tv\gamma\gamma_1, \tv', \gamma_1^{-1}\gamma^{-1} x] \varrho (\tv', \gamma_1^{-1}\gamma^{-1} x) \xi (\tv', \gamma_1^{-1}\gamma^{-1} x ; \gamma_1^{-1}) d\tv '.
$$
Setting $\tv'=\tv '_1\gamma\gamma_1$ we get
$$
[\Lambda (T)( \xi)] (\tv, x; \gamma)  = \varrho(\tv, x) \times\sum_{\gamma_1\in \Gamma} \int_{\tV} \varphi[\tv\, \tv'_1, x] \varrho (\tv'_1\gamma\gamma_1, \gamma_1^{-1}\gamma^{-1} x) \xi (\tv'_1\gamma\gamma_1, \gamma_1^{-1}\gamma^{-1} x ; \gamma_1^{-1}) d\tv '.
$$
But this coincides with
$$
\sum_{\gamma_1\in \Gamma} \int_{\tV} (\Phi\varphi) (\tv, \tv'_1,x;\gamma\gamma_1) \xi (\tv'_1\gamma\gamma_1, \gamma_1^{-1}\gamma^{-1} x; \gamma_1^{-1}) d\tv'_1.
$$
Setting $\gamma'_1=\gamma\gamma_1$ we  get ${\tilde\pi} (\Phi\varphi) (\xi) (\tv, x; \gamma)$. 

What remains is to notice  that for any ${\what T} \in \maN$, the operator $\what\lambda\circ {\what T}\circ \lambda$ belongs to $\maM$. This allows us to deduce that
$$
\Lambda: \maM \stackrel{\cong}{\longrightarrow} (\lambda\what\lambda) \maN (\lambda\what\lambda). 
$$
\end{proof}

\begin{proposition}\label{TR=tau}
$\TR \circ \Lambda = \tau.$
\end{proposition}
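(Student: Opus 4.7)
The plan is to verify the equality on $\maM_+$, which suffices since both $\TR\circ\Lambda$ and $\tau$ are normal positive functionals on $\maM$. Fix $T\in\maM$ with $T\geq 0$ and an orthonormal basis $(f_i)$ of $\what\maH$. First I would unwind
$\TR(\Lambda(T)) = \sum_i \langle T\,\what\lambda(\delta_e\otimes f_i),\,\what\lambda(\delta_e\otimes f_i)\rangle$
using $\what\lambda=\lambda^*$. A direct inspection of $\what\lambda(\xi)(\hm) = \sum_g \varrho(\hm g)\xi(\hm g;g^{-1})$ shows that only $g=e$ contributes when $\xi = \delta_e\otimes f_i$, so $\what\lambda(\delta_e\otimes f_i) = M_\varrho f_i$. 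Summing over $i$ therefore gives $\TR(\Lambda(T)) = \Tr(M_\varrho T M_\varrho)$, which rewrites as $\Tr(M_{\varrho^2}T)$ via the Hilbert--Schmidt identity $\|T^{1/2}M_\varrho\|_2 = \|M_\varrho T^{1/2}\|_2$ applied to $T^{1/2}$.

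Next I would reduce $\tau(T) = \Tr(M_\chi T M_\chi)$ to $\Tr(M_\chi T)$ using that $M_\chi$ is an orthogonal projection, and then bridge $\Tr(M_\chi T) = \Tr(M_{\varrho^2}T)$ using the two partitions of unity $\sum_g g\cdot\varrho^2 = 1$ and $\sum_g g\cdot\chi = 1$ (the latter because the $\Gamma$-translates of $\hU$ tile $\hM$ up to a negligible set, where $(g\cdot\varphi)(\hm):=\varphi(\hm g)$). Inserting the first partition gives $\Tr(M_\chi T) = \sum_g \Tr(M_{\chi\,(g\cdot\varrho^2)}T)$; the relation $U_g M_\varphi U_g^{*} = M_{g\cdot\varphi}$ combined with the $\Gamma$-invariance $U_g^{*}TU_g = T$ then rewrites each term as $\Tr(M_{(g^{-1}\cdot\chi)\,\varrho^2}T)$, after which summing over $g$ collapses $\sum_g g^{-1}\cdot\chi$ to $1$ and yields $\Tr(M_{\varrho^2}T)$.

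The only delicate point will be legitimising the interchange of the $g$-sum with the trace and the use of cyclicity in $[0,\infty]$. Because every term in sight is a trace of a nonnegative operator, Tonelli handles the $i$- and $g$-sums simultaneously, and the cyclicity $\Tr(AB)=\Tr(BA)$ is valid whenever $A,B\geq 0$. No individual step is deep; the content of the proposition is essentially that the Atiyah trace $\tau$ is unchanged when the sharp cutoff $\chi$ is replaced by any $\rho\geq 0$ with $\sum_g g\cdot \rho = 1$, and that the compression by the projection $\lambda\what\lambda\in\maN$ implements precisely this replacement (with $\rho = \varrho^2$).
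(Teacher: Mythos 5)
Your proposal is correct and takes essentially the same route as the paper: both compute $\TR(\Lambda(T))=\Tr(M_\varrho\,T\,M_\varrho)$ by observing that $\what\lambda(\delta_e\otimes f_i)=M_\varrho f_i$ (only $g=e$ contributes) and then identify this with $\tau(T)$. The only difference is that the paper simply asserts $\Tr(M_\varrho T M_\varrho)=\tau(T)$, i.e.\ the independence of the Atiyah trace from the choice of cutoff, whereas you supply the standard averaging argument for it (inserting $\sum_g g\cdot\varrho^2=1$, using $U_gM_\varphi U_g^*=M_{g\cdot\varphi}$ and the $\Gamma$-invariance of $T$, with Tonelli justifying the interchanges) — a harmless and welcome amplification of the same proof.
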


\begin{proof}
Fix $T\in \maM^+$ and an orthonormal basis $(f_i)_i$ of the Hilbert space $\maH$.   Then 
\begin{eqnarray*}
\TR (\Lambda (T)) &=&  \sum_i < (\lambda\circ T\circ \what\lambda )(f_i\otimes \delta_e) , f_i\otimes \delta_e >\\
& = &  \sum_i < (M_\varrho \circ T \circ M_\varrho) (f_i) , f_i >\\
& = & \Tr (M_\varrho \circ T \circ M_\varrho)
\end{eqnarray*}
We have used the relation $(\lambda \circ T\circ \what\lambda) (f\otimes \delta_\alpha) = \varrho \times (U_{\alpha^{-1}} \circ T) (\varrho f)$ but only for $\alpha=e$. But
$$
 \Tr (M_\varrho \circ T \circ M_\varrho) = \tau (T).
$$
and the proof is complete. 
\end{proof}

{{Gathering the previous results together, we can now  state the main result of this section.}}

\begin{theorem}\label{Versus}
Assume that ${\what D}$ is transversely elliptic $\Gamma$-invariant pseudodifferential operator from the Connes-Moscovici calculus ${\Psi'}^1 (\hM, \what\maF'; \what\maE)^\Gamma$ for the foliation $\what\maF'$, acting on the smooth sections of the Hermitian bundle $\what\maE:=p^*\maE$ over $\hM$, with  holonomy invariant {\em{transverse principal symbol}}. Assume also that $\what{D}$ is essentially self-adjoint on $\what\maH$ with the initial domain $C_c^\infty (\hM; \what\maE)$. 
Then the Connes-Chern characters of the semi-finite spectral triple  $(\maA, \maM, \what{D})$   coincides with the pull-back under the Morita map $\Phi$ of the Connes-Chern character of the semi-finite spectral triple $(\maB, \maN, \what{D}_\rtimes)$. 
\end{theorem}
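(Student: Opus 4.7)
The approach is to prove the equality of Connes--Chern characters by comparing JLO-cocycle representatives and exploiting Propositions \ref{Lambda} and \ref{TR=tau}: $\Lambda(T) := \lambda T\what\lambda$ is a $*$-isomorphism $\maM \simeq p\maN p$ for $p := \lambda\what\lambda \in \maN$, satisfying $\Lambda(\pi(\varphi)) = \what\pi(\Phi\varphi)$, and the traces intertwine via $\TR \circ \Lambda = \tau$. Both spectral triples being regular and finitely summable of dimension $v+2n$, their characters in $\HP^*$ are represented by the semi-finite JLO cocycles
$$\tau^{J}_\ell(a_0,\dots,a_\ell) \,=\, \int_{\Delta_\ell} \tau\bigl(\pi(a_0)e^{-s_0\hD^2}[\hD,\pi(a_1)]\cdots[\hD,\pi(a_\ell)]e^{-s_\ell\hD^2}\bigr)\,ds$$
and analogously $\TR^{J}_\ell$ built from $(\what\pi,\TR,\hD_\rtimes)$; we must show $\Phi^*\TR^{J}_\ell = \tau^{J}_\ell$ in $\HP^*(\maA)$.

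The first observation is that, under the identification $\ell^2\Gamma \otimes \what\maH \simeq \ell^2(\Gamma,\what\maH)$, one has $\hD_\rtimes = \Id_{\ell^2\Gamma} \otimes \hD$, so $f(\hD_\rtimes) = \Id \otimes f(\hD)$ for every Borel $f$; in particular $e^{-s\hD_\rtimes^2} = \Id \otimes e^{-s\hD^2}$. A direct computation using $\sum_{g\in\Gamma} g^*\varrho^2 = 1$ and the $\Gamma$-invariance of $\hD$ yields an intertwining-up-to-bounded relation
$$\hD \circ \what\lambda \,-\, \what\lambda \circ \hD_\rtimes \,=\, \sum_{g\in\Gamma} [\hD, M_{g^*\varrho}] \circ T^g,$$
where $T^g$ is right translation by $g$ on $\what\maH$ and each commutator $[\hD, M_{g^*\varrho}]$ is bounded since $\hD$ is of order one in the Connes--Moscovici calculus. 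It follows that $[\hD_\rtimes, \Lambda(T)] = \Lambda([\hD,T]) + R_T$ and $p\,e^{-s\hD_\rtimes^2}\,p = \Lambda(e^{-s\hD^2}) + S_s$ with explicit bounded ``boundary'' remainders $R_T,\,S_s \in \maN$.

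Substituting $\what\pi(\Phi a_i) = \Lambda(\pi(a_i)) \in p\maN p$ into $\Phi^*\TR^{J}_\ell$, the principal term collapses via $\TR \circ \Lambda = \tau$ to the JLO integrand of $(\maA,\maM,\hD)$. The case $\ell = 0$ is already explicit: using $\what\lambda(\delta_e \otimes f) = M_\varrho f$ one gets
$$\TR\bigl(\Lambda(\pi(a_0))(\Id\otimes e^{-s\hD^2})\bigr) \,=\, \Tr\bigl(M_\varrho\,\pi(a_0)e^{-s\hD^2}\,M_\varrho\bigr) \,=\, \tau\bigl(\pi(a_0)e^{-s\hD^2}\bigr),$$
the last equality being Atiyah's cutoff formula for the $\Gamma$-invariant operator $\pi(a_0)e^{-s\hD^2}$. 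For general $\ell \geq 1$, the same partition-of-unity identity folds every translated piece of the integrand back to a single fundamental domain, reducing the principal part to $\tau^{J}_\ell(a_0,\ldots,a_\ell)$.

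The main obstacle is to absorb the boundary remainders $R_{\pi(a_i)}$ and $S_{s_i}$. I propose to handle them by a transgression argument in cyclic cohomology: interpolating between $\hD_\rtimes$ and the block-diagonal operator $p\hD_\rtimes p + (1-p)\hD_\rtimes(1-p)$ through a smooth $1$-parameter family of self-adjoint operators affiliated with $\maN$, the transgression formula for the JLO cocycle identifies the difference as an explicit $(b+B)$-coboundary, and hence produces no change in $\HP^*$. Equivalently, one may recognise the whole comparison as an instance of the compatibility of semi-finite Connes--Chern characters with an unbounded Kasparov product applied to the Mishchenko-type bimodule $\maE$ (equipped with the connection induced by $\varrho$) built in the proof of the Morita equivalence. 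Either route delivers $\Phi^*\ch(\hD_\rtimes) = \ch(\hD)$ in $\HP^*(\maA)$, proving the theorem.
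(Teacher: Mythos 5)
Your overall skeleton is close in spirit to the paper's: both arguments rest on Proposition \ref{Lambda} (the compression $\what\pi(\Phi\varphi)=\lambda\,\pi(\varphi)\,\what\lambda$), Proposition \ref{TR=tau} ($\TR\circ\Lambda=\tau$), and a bounded-commutator identity of the type $\what\lambda\,\what{D}_\rtimes\,\lambda-\what{D}=\sum_{g}(g^{-1}\varrho)\,[\what{D},g^{-1}\varrho]$. However, your route has two genuine gaps. First, you take for granted that both Connes--Chern characters are represented by semi-finite JLO cocycles. That requires $\theta$-summability, i.e.\ that $\pi(a)e^{-s\what{D}^2}$ is $\tau$-trace class (and the analogue in $\maN$), which is not established anywhere and is not automatic here: $\what{D}$ is only \emph{transversely} elliptic, so $e^{-s\what{D}^2}$ is not a smoothing operator, and the only summability proved in the paper is the resolvent estimate $\pi(k)(\what{D}+i)^{-1}\in\maL^{r}$ for $r>v+2n$. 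This is precisely why the paper's proof works with the bounded picture: it represents both characters by the finitely summable cyclic cocycles $(k_0,\dots,k_r)\mapsto \TR\bigl(\what\pi(\Phi k_0)[\what{F}_\rtimes,\what\pi(\Phi k_1)]\cdots[\what{F}_\rtimes,\what\pi(\Phi k_r)]\bigr)$ built from the symmetry $\what{F}$, for which the proven $r$-summability suffices, and then concludes by recognizing $\what\lambda\,\what{F}_\rtimes\,\lambda$ as the symmetry of $\what\lambda\,\what{D}_\rtimes\,\lambda$, a bounded $\Gamma$-invariant perturbation of $\what{D}$, and invoking invariance of the character under such perturbations.

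Second, the step you yourself flag as the main obstacle --- absorbing the off-corner remainders $R_T$ and $S_s$ --- is exactly where the content lies, and it is not carried out: the interpolation between $\what{D}_\rtimes$ and $p\what{D}_\rtimes p+(1-p)\what{D}_\rtimes(1-p)$ needs a transgression formula valid for these semi-finite, merely transversely elliptic operators with uniform control of summability along the path, and the alternative appeal to ``compatibility with an unbounded Kasparov product'' is not an argument. Moreover your claim that the case $\ell=0$ is ``already explicit'' is incorrect as an exact identity: computing the $(e,e)$-entry gives $\TR\bigl(\Lambda(\pi(a_0))(\Id\otimes e^{-s\what{D}^2})\bigr)=\Tr\bigl(M_\varrho\,\pi(a_0)\,M_\varrho\,e^{-s\what{D}^2}\bigr)$, not $\Tr\bigl(M_\varrho\,\pi(a_0)\,e^{-s\what{D}^2}M_\varrho\bigr)$; the discrepancy is a commutator $[M_\varrho,e^{-s\what{D}^2}]$ term, i.e.\ already at $\ell=0$ the ``principal part'' does not collapse to $\tau\bigl(\pi(a_0)e^{-s\what{D}^2}\bigr)$ on the nose, and everything is deferred to the unproven homotopy step. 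To repair the argument along the paper's lines, drop the heat-kernel representatives, work with the $F$-picture cocycles, compress with $\lambda,\what\lambda$ using $\what\lambda\lambda=\Id$ and $\TR\circ\Lambda=\tau$, and finish with the bounded-perturbation invariance applied to $\what\lambda\,\what{D}_\rtimes\,\lambda$.
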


\begin{proof}
We treat the odd case.   The even case is similar and only needs the use of  supertraces in place of  traces. A consequence of the local index theorem is that the Connes-Chern character of the Atiyah-Connes semi-finite spectral triple in periodic cyclic cohomology is invariant under bounded perturbations of the operator $\what{D}$ in the Atiyah von Neumann algebra $\maN$.  But the operator $\what\lambda \what{D}_\rtimes  \lambda$ is also affiliated with the von Neumann algebra $\maN$ and is a bounded perturbation of $\what{D}$ in $\maN$. Indeed, we have by direct computation 
$$
\what\lambda \what{D}_\rtimes  \lambda  - \what{D} = \sum_{g\in \Gamma} (g^{-1} \varrho)\;  [\what{D}, g^{-1}\varrho].
$$
Since the commutator $[\what{D}, g^{-1}\varrho]$ is bounded and belongs to $\maN$, the operator $\sum_{g\in \Gamma} (g^{-1} \varrho)\,  [\what{D}, g^{-1}\varrho]$ is well defined in $\maN$ as a strong limit of elements of $\maN$, so it belongs to $\maN$. Therefore, we may represent the Connes-Chern character of the Atiyah-Connes spectral triple by using the operator $\what\lambda \what{D}_\rtimes  \lambda$ in place of $\what{D}$. 
Note that the algebra $C_c^\infty (G)$ is not unital, but there is a classical trick which allows one to replace it by its unitalization.   Then the Connes-Chern character of the crossed product  semi-finite spectral triple, when pulled back under the Morita homomorphism $\Phi$, is represented for any $r>v+2n$ by the cyclic cocycle 
$$
(k_0, \cdots, k_r) \longmapsto  \TR \left(\what\pi (\Phi(k_0)) [\what{F}_\rtimes, \what\pi (\Phi (k_1)) ] \cdots [\what{F}_\rtimes, \what\pi (\Phi (k_r)) ]\right),
$$
where the operator $\what{F}$ is the symmetry built out of $\what{D}$ as usual.  It follows that  the corresponding symmetry constructed out of $\what{D}_\rtimes$ is nothing but $\what{F}_\rtimes$, thanks to the compatibility of the functional calculi that we have already explained.  From Proposition \ref{Lambda} we have for any $i$
$$
\what\pi (\Phi(k_i)) = \lambda\circ \pi (k_i)\circ \what\lambda.
$$
Thus, the pull-back under $\Phi$ of the Connes-Chern character of the crossed product spectral triple is represented by 
$$
(k_0, \cdots, k_r) \longmapsto  \TR \left(\lambda \pi (k_0)\what\lambda [\what{F}_\rtimes, \lambda \pi (k_1)\what\lambda ] \cdots [\what{F}_\rtimes, \lambda \pi (k_1)\what\lambda  ]\right),
$$
which equals
 $$
 \TR \left(\lambda \pi (k_0) [\what\lambda (\what{F}_\rtimes)\lambda, \pi (k_1) ] \cdots [\what\lambda (\what{F}_\rtimes) \lambda, \pi (k_1)] \what\lambda  \right).
 $$
 Using Proposition \ref{TR=tau}, we deduce that this latter is equal to 
 $$
  \tau \left(\pi (k_0) [\what\lambda \what{F}_\rtimes\lambda, \pi (k_1) ] \cdots [\what\lambda \what{F}_\rtimes \lambda, \pi (k_1)]  \right).
 $$
 Since $\what\lambda \what{F}_\rtimes\lambda$ is the symmetry which corresponds to the operator $\what\lambda \what{D}_\rtimes  \lambda$, and we are done.
\end{proof}

Using this compatibility result together with the main theorem of \cite{BenameurHeitsch17}, we have the claimed  integrality theorem for Riemannian foliations. More precisely, we have the following important integrality theorem.

\begin{theorem}\label{AtiyahTheorem}\
Assume that the foliation $(M, \maF)$ is Riemannian with torsion free monodromy groupoid $G$ and that the maximal Baum-Connes map for $G$ is surjective. Then the Connes-Chern character of any semi-finite spectral triple associated with a transversely hypo-elliptic  operator $\what{D}$ as in Theorem  \ref{CM-triple} is integral, that is induces an integer valued pairing with the $K$-theory of the maximal $C^*$-algebra $B=C^*(\what{G})\rtimes \Gamma$. 
\end{theorem}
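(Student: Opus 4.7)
The plan is to reduce the integrality statement for $(\maB,\maN,\what{D}_\rtimes)$ to a corresponding statement for the Atiyah-Connes triple $(\maA,\maM,\what{D})$, and then to the classical Connes-Moscovici type~I triple where integrality is automatic.

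First I would transport the pairing from $K_*(B)$ to $K_*(C^*G)$. Under the assumption that we are working with the monodromy groupoid, the Mishchenko-type $C^*$-homomorphism $\Phi$ induces a Morita isomorphism $\Phi_*:K_*(C^*G)\xrightarrow{\simeq}K_*(B)$. By Theorem \ref{Versus}, the Connes-Chern character $[\what{D}_\rtimes]\in \HP^*(\maB)$ pulls back under $\Phi$ to the Connes-Chern character $[\what{D}]\in \HP^*(\maA)$ of the Atiyah-Connes semi-finite spectral triple. Hence, for any class $x\in K_*(B)$, the pairing $\langle \Ch(x),[\what{D}_\rtimes]\rangle$ equals $\langle \Ch(\Phi_*^{-1}x),[\what{D}]\rangle$. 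Thus it suffices to prove that the pairing of $[\what{D}]$ with $K_*(C^*G)$ is integer-valued.

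Next I would invoke the main result of \cite{BenameurHeitsch17}, as recalled just after Theorem \ref{typeII}: when $G$ is torsion free, the Connes-Chern character of the semi-finite Atiyah-Connes spectral triple $(\maA,(\maM,\tau),\what{D})$ agrees, as a morphism on the image of the maximal Baum-Connes assembly map $\mu:K_*^{\rm top}(G)\to K_*(C^*G)$, with the Connes-Chern character of the type~I Connes-Moscovici spectral triple $(\maA,L^2(M,\maE),D)$ obtained by descending $\what{D}$ to the quotient. The latter is an honest (type~I) Fredholm module: its pairing with $K_*(C^*G)$ factors through the integer-valued Fredholm index pairing
\[
{\Sign}^\perp_{M,\maF}\;:\;K_*(C^*G)\longrightarrow \Z,
\]
and in particular is integer-valued.

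Finally, I would use the surjectivity hypothesis on the Baum-Connes map $\mu$ for $G$. By the previous step, the composition $\langle \Ch(\cdot),[\what{D}]\rangle\circ \mu:K_*^{\rm top}(G)\to \R$ coincides with the integer-valued type~I pairing composed with $\mu$, and hence takes values in $\Z$. Since $\mu$ is surjective, this forces $\langle \Ch(\cdot),[\what{D}]\rangle:K_*(C^*G)\to \R$ itself to be integer-valued, and transporting back along $\Phi_*$ yields the claim for $(\maB,\maN,\what{D}_\rtimes)$. The main conceptual difficulty is entirely absorbed in the cited comparison theorem from \cite{BenameurHeitsch17} relating the two Connes-Chern characters on the range of Baum-Connes; everything else in the argument here is a formal combination of Theorem \ref{Versus}, Morita invariance of $K$-theory, and surjectivity of~$\mu$.
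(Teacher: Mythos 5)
Your argument is correct and follows essentially the same route as the paper: transfer the pairing from $K_*(B)$ to $K_*(C^*G)$ via Theorem \ref{Versus} and the Morita isomorphism, then invoke the main theorem of \cite{BenameurHeitsch17} (valid for Riemannian foliations with torsion free $G$) to identify the Atiyah-Connes pairing with the integer-valued type I Connes-Moscovici index pairing on the range of the maximal Baum-Connes map, and conclude by surjectivity of that map. No gaps beyond what the paper itself delegates to the cited comparison theorem.
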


\begin{proof}
In \cite{BenameurHeitsch17}, we proved that for Riemannian foliations the pairing of the Connes-Chern character of the semi-finite Atiyah-Connes spectral triple with the index classes of {\underline{leafwise elliptic}} operators on closed foliated manifolds coincides with the corresponding pairing for the type I Connes-Moscovici spectral triple. Therefore, this pairing is integer valued since it corresponds to a Fredholm index in the usual sense. On the other hand, when $G$ is torsion free, the range of the Baum-Connes map is built out of indices of such leafwise elliptic operators as a limit. So the proof is complete. 
\end{proof}

\begin{remark}\
\begin{itemize}
\item When the foliation $\maF$ is zero dimensional, the above theorem is a consequence of the Atiyah $L^2$-index theorem for coverings applied to elliptic first order operators with coefficients in compatcly supported virtual bundles on $M$. 
\item When the foliation $\maF$ is top dimensional, the groupoid $G$ is isomorphic to  ${\tM}\times_{\pi_1M} {\tM}$ and is Morita equivalent to the fundamental group $\pi_1M$. Then the theorem is the well-known statement that the regular trace and the averaging trace (i.e. the trivial representation) on the maximal $C^*$-algebra of $\pi_1M$, induce the same $K$-theory morphism on the range of the maximal Baum-Connes map for $\pi_1M$, provided this latter is torsion free. 
\end{itemize}
\end{remark}

\appendix

\section{Adapted Sobolev spaces}\label{Sobolev}

We give here the anisotropic Sobolev spaces adapted to our pseudodifferential operators. Suppose that $u \in C^{\infty}_c(\R^{\om}, \C^{a })$,  and denote its Fourier transform by $\what{u}$.   We shall use the decomposition of $\R^{\om}$ into $\R^p\times \R^v\times \R^{q-v}$ as above. For all $s,k \in \R$, the Sobolev $s,k$ norm of $u$ is defined in the classical way
$$
\|u\|^2_{s,k} \,\, = \,\, \int_{\zeta \in \R^p,  \eta \in \R^q} \; \; |\what{u}(\zeta,\eta)|^2(1 + \vert\xi\vert') ^{2s} (1 + |\zeta|)^{2k} d\zeta d\eta.
$$
Recall that ${\vert\xi \vert'}^2=\vert\zeta\vert ^2 + {\vert\eta\vert'}^2$. 
The case where $v=0$ was used in  \cite{K97} with the notation $\oH^{s,k}$.   Our modification allows us to prove results for our operators similar  to those used in \cite{K97}. See also \cite{GreenleafUhlmann}.  

\begin{definition}
The space ${\oH'}^{s,k}(\R^{\om},\R^p; \C^{a })$ is the completion of $C^{\infty}_c(\R^{\om}, \C^{a })$ under the norm $\| \cdot \|_{s,k}$. So, ${\oH'}^{s,k}(\R^{\om},\R^p; \C^{a })$ is a Banach space when endowed with  the norm $\|u\|_{s,k}$.
\end{definition}

Although the above spaces are not Hilbert spaces when $s\neq 0$, they will be convenient to exploit the pseudo' filtration in terms of the properties of the corresponding extended operator  on Sobolev spaces. Following the seminal references \cite{BealsGreiner} and \cite{CM95}, we have used the usual  quantization map.  The main results we will need will indeed be true with this quantization, as we shall see. Note first that by the Cauchy-Schwarz inequality, it is clear that the $L^2$ scalar product of any smooth compactly supported $u, v$ can be estimated as usual
$$
\vert <u, v>_{L^2} \vert \;\leq \; \vert\vert u\vert\vert_{s, k} \; \times \; \vert\vert v \vert\vert_{-s, -k}.
$$
This allows us to compute $\vert\vert u\vert\vert_{s, k}$  as the supremum over $\vert\vert v \vert\vert_{-s, -k}\leq 1$, of the $L^2$ expressions $\vert <u, v>_{L^2}\vert $. 

If $U$  is an open set in $\R^{\om}$ which is a product of open sets in $\R^p, \R^v$ and $\R^{q-v}$ respectively, and if $V$ is an extra open set in $\R^p$, then we define similarly the space ${\oH'}^{s,k}(U,V,\C^{a })$.
Let $(\hU_i, \hT_i)_{i\in I}$ be a  good open cover of the foliation $(\hM, \what\maF)$ with finite multiplicity and such that $\hU_i \simeq \R^p \times \hT_i$ and $\hT_i\simeq \R^v\times \R^{q-v}$ so that $\hU_i\simeq \R^p\times\R^v\times \R^{q-v}$. Using a classical lemma due to Gromov \cite{Gromov}, we know that such an open cover always exists. Moreover, we may assume that the open sets $\hU_i$ are products of metric balls in $\R^p$, $\R^v$ and $\R^{q-v}$ which are diffeomorphic ranges of the local exponential maps and such that any plaque of $\what\maF$ in any $\hU_i$ is the diffeomorphic range of the leafwise exponential map and similarly for the plaques of the foliation $\what\maF'$.  Let $\{\what\phi_i\}$  be a $C^\infty$-bounded partition of unity subordinate to the cover $\{\hU_i\}$ of $\hM$, see  \cite{Shubin92}. For $u \in C^{\infty}_c(\hM, \what\maE)$, and using the local trivializations of $\what\maE$ over the $\hU_i$, we define its $s,k$ norm as 
$$
\|u\|_{s,k} \,\, = \,\,  \sum_{i}   \| \what\phi_i \cdot u \|_{s,k},
$$
where on the right we are thinking of the product $\what\phi_i \cdot u$  as an element in $ C^{\infty}_c(\R^{\om}, \C^{a })$ using the trivializations, and the norm $ \| \cdot \|_{s,k}$ is pulled back from the norm of ${\oH'}^{s,k}(\R^{\om},\R^p,\C^{a })$.
\begin{definition}
The bigraded Sobolev space ${\oH'}^{s,k}(\hM,  \what\maF\subset  \what\maF';  \what\maE)$ is the completion of $C^{\infty}_c(\hM, \what\maE)$ under the  norm  $\| \cdot \|_{s,k}$. 
\end{definition}

Classical arguments show that although the norms depend on the choices, the bigraded Sobolev spaces ${\oH'}^{s,k}(\hM,  \what\maF\subset  \what\maF'; \what\maE)$ do not.  Recall the bigraded Hilbert space ${\oH}^{s,k}$ used in \cite{K97} given by
$$
{\oH}^{s,k} (\hM, \what\maF; \what\maE) = {\oH'}^{s,k} (\hM, \what\maF\subset T\hM; \what\maE).
$$
We naturally denote denote by ${\oH}^{s}$ the classical $s$-Sobolev space. 

\begin{remark}\label{Comparison}\
\begin{itemize}
\item When $s\geq 0$ then ${\oH}^{s,k}\subset {\oH'}^{s,k}$, while for $s\leq 0$, ${\oH'}^{s,k}\subset {\oH}^{s,k}$. 
\item If $k\geq 0$ then ${\oH}^{s+k, 0} \subset  {\oH}^{s,k}$ and ${\oH'}^{s+k, 0} \subset  {\oH'}^{s,k}$.
\item If  $s\geq 0$ and $k\geq 0$, then one has continuous inclusions
$$
{\oH}^{s+k}={\oH}^{s+k, 0} \subset  {\oH}^{s,k}\subset {\oH'}^{s,k}\subset {\oH}^{s/2,k} \subset {\oH}^{s/2}.
$$ 
\item If $s\leq 0$ and $k\leq 0$, then one has continuous inclusions
$$
{\oH}^{s,k} \subset {\oH'}^{2s, k}\subset {\oH}^{2s, k} \subset {\oH}^{2s+k,0}={\oH}^{2s+k}.
$$
There are similar inclusions for $s\geq 0$ or $k\leq 0$ or when $s\leq 0$ and $k\geq 0$. 
\end{itemize}
\end{remark}

\begin{proposition}\label{SobolevBound} (Compare \cite{K97})\
\begin{enumerate}
\item A uniform smoothing operator $T$ induces bounded operators
$$
T: {\oH'}^{s,k}(\hM, \what\maF\subset\what\maF' ; \what\maE) \longrightarrow {\oH'}^{s',k'}(\hM, \what\maF\subset\what\maF'; \what\maE), \quad \forall s, s', k, k'.
$$
\item Any operator $A\in {\Psi'}^{m,\ell}(\hM,  \what\maF\subset \what\maF'; \what\maE)$ extends to bounded operators
$$
A:{\oH'}^{s,k}(\hM,  \what\maF\subset  \what\maF'; \what\maE)\longrightarrow {\oH'}^{s-m,k-\ell}(\hM, \what\maF\subset  \what\maF';  \what\maE)), \quad \forall s, k.
$$
In particular, when $m\leq 0$ and $\ell\leq 0$, the operator $A$ extend to an $L^2$-bounded operator. 
\end{enumerate}
\end{proposition}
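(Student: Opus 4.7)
The plan is to reduce both assertions to a local $L^2$-boundedness statement on $\R^{\om}=\R^p\times\R^v\times\R^{q-v}$, using the good bounded-geometry cover $(\hU_i)$ and the $C^\infty$-bounded partition of unity $\{\what\phi_i\}$ that define the norm $\|\cdot\|_{s,k}$. The key auxiliary objects are the local model operators
$$
A_{s,k} \; := \; (I+\Delta_{\R^{p+v}}^2+\Delta_{\R^n})^{s/4}\,(I+\Delta_{\R^p})^{k/2} \;\in\; {\Psi'}^{s,k},
$$
already introduced in the proof of Lemma \ref{TraceClass}; each $A_{s,k}$ is invertible in the calculus with inverse of class $(-s,-k)$, and, up to uniformly equivalent norms, $\|u\|_{s,k}\simeq\|A_{s,k}u\|_{L^2}$. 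Consequently, the mapping property ${\oH'}^{s,k}\to{\oH'}^{s-m,k-\ell}$ for an operator $T$ is equivalent to the $L^2$-boundedness of the conjugate $A_{s-m,k-\ell}\,T\,A_{-s,-k}$.

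For (1), a uniform smoothing operator $T$ has finite propagation $C$ and $C^\infty$-bounded Schwartz kernel. By the finite multiplicity of the cover and the finite propagation, the double sum $T=\sum_{i,j}\what\phi_i\, T\,\what\phi_j$ has a uniformly bounded number of nonzero terms for each fixed $i$, and each local piece pulled back to $\R^{\om}\times\R^{\om}$ has a smooth kernel supported in a set of diameter $\leq C$, with $C^\infty$-bounds independent of $(i,j)$. Such a kernel belongs to ${S'}^{-N,-N}$ for every $N$, so its quantization sits in ${\Psi'}^{-N,-N}_c$; composition with $A_{s-m,k-\ell}$ and $A_{-s,-k}$ keeps it in ${\Psi'}^{-N',-N'}_c$ for all $N'$. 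In particular each piece is $L^2$-bounded, with uniform bounds, and the total operator, having uniformly bounded row/column structure with respect to $(\what\phi_i)$, is $L^2$-bounded by a Schur-type argument.

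For (2), write $A=A_0+R$ with $R\in\Psi^{-\infty}(\hM,\what\maE)$ handled by (1), and $A_0$ a locally finite sum of elementary operators as in Definition \ref{BigradedPseudo}. The composition rules for the bifiltered calculus place the conjugate $B:=A_{s-m,k-\ell}\,A\,A_{-s,-k}$ into ${\Psi'}^{0,0}(\hM,\what\maF\subset\what\maF';\what\maE)$, with uniform symbol bounds guaranteed by the $C^\infty$-bounded geometry hypothesis. The problem therefore reduces to the $L^2$-boundedness of operators of class ${\Psi'}^{0,0}$, i.e.\ to a Calder\'on--Vaillancourt type statement for the Beals--Greiner symbols with the anisotropic scaling $\lambda\cdot(\zeta,\eta_v,\eta_n)=(\lambda\zeta,\lambda\eta_v,\lambda^2\eta_n)$. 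In a local chart of $\R^{\om}$ this is exactly the estimate proved in \cite{K97} for the two-step calculus, obtained via dyadic decomposition in $(|\zeta|,|\eta|')$ and a Cotlar--Stein almost-orthogonality argument; the global statement is then assembled by patching over $(\hU_i)$ using the bounded multiplicity of the cover and the uniform bounds on the elementary pieces. The last clause, $L^2$-boundedness for $m,\ell\leq 0$, follows by specializing to $(s,k)=(0,0)$ and noting that $(m,\ell)\leq(0,0)$ makes the target norm no stronger than $\|\cdot\|_{L^2}$.

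The main obstacle is the local $L^2$-bound for the ${\Psi'}^{0,0}$ class, where the two independent scalings $|\zeta|$ (leafwise) and $|\eta|'$ (modified transverse) must be handled simultaneously. Remark \ref{rhodelta} embeds the class into an exotic H\"ormander $(\varrho,\delta)=(0,1/2)$ class on which Calder\'on--Vaillancourt is available, but one still has to verify that after conjugation by the $A_{s,k}$ and localization by $\what\phi_i$, the constants in the symbol seminorms are uniform in the chart and in the elementary operators composing $A_0$. This bookkeeping is the genuine adaptation required beyond \cite{K97}, and it is furnished precisely by the $C^\infty$-bounded geometry of $(\hM,\what\maF\subset\what\maF')$ together with the bounded-geometry hypothesis on $\what\maE$, which ensure that the constants $C_{\alpha,\beta,\gamma}$ in the symbol estimates \eqref{LocalSymbol} and in the composition formula are chart-independent.
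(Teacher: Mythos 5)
There is a genuine gap at the crux of your argument: the local $L^2$-boundedness of the zero-order class, to which you reduce everything by conjugating with the model operators $A_{s,k}$, is never actually established. Your first justification, that Remark \ref{rhodelta} places ${\Psi'}^{0,0}$ in an exotic H\"ormander class with $(\varrho,\delta)=(0,1/2)$ ``on which Calder\'on--Vaillancourt is available,'' is incorrect: Calder\'on--Vaillancourt covers $S^0_{\varrho,\varrho}$ with $\varrho<1$ (and $S^0_{0,0}$), but for $\delta>\varrho$ the class $S^0_{0,1/2}$ is \emph{not} $L^2$-bounded in general (one needs strictly negative order, roughly $m\leq -\tfrac{\om}{2}(\delta-\varrho)$), so the embedding of the remark gives you nothing at order zero. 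Your second justification, that the local estimate is ``exactly'' the one in \cite{K97}, is also inapposite: Kordyukov's bifiltered calculus uses the standard transverse scaling, whereas the whole point here is the Beals--Greiner anisotropic scaling $\lambda\cdot(\zeta,\eta_v,\eta_n)=(\lambda\zeta,\lambda\eta_v,\lambda^2\eta_n)$ and the homogeneous norm $\vert\eta\vert'$, so the needed estimate is a modification of \cite{K97}, not a citation of it. Since this local weighted bound is precisely the content of the proposition, deferring it to these two sources leaves the proof incomplete. (A secondary, repairable issue: the global norm equivalence $\|u\|_{s,k}\simeq\|A_{s,k}u\|_{L^2}$ and the claim that the conjugates $A_{s-m,k-\ell}\,A\,A_{-s,-k}$ stay in ${\Psi'}^{0,0}$ involve composing uniformly supported operators with the non-properly-supported fractional powers $A_{s,k}$ and commutators with the cutoffs $\what\phi_i$; the paper sidesteps this entirely.)

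For comparison, the paper's proof does not conjugate at all. Part (1) is handled by quoting the classical fact that uniform smoothing operators on bounded-geometry manifolds are bounded between all ordinary Sobolev spaces and then sandwiching the spaces ${\oH'}^{s,k}$ between classical Sobolev spaces via Remark \ref{Comparison}. Part (2) is reduced by the partition of unity to a single elementary local operator, and the weighted bound is proved directly: one writes $\langle Au,v\rangle$ in Fourier variables with the kernel $K(\xi,\xi-\xi')$, integrates by parts in $z$ and $(x,y)$ (the symbol estimates \eqref{LocalSymbol} lose nothing under spatial derivatives) to obtain decay $(1+\vert\sigma-\zeta\vert)^{-N}(1+\vert\xi-\xi'\vert)^{-M}$, and then applies Peetre's inequality for both $\vert\cdot\vert$ and $\vert\cdot\vert'$ together with Cauchy--Schwarz and a Schur-type integrability bound on the weight $\rho_m(\xi,\xi')$. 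If you want to keep your reduction-to-order-zero architecture, you must supply an analogous direct (or Cotlar--Stein) proof of the anisotropic $L^2$ bound for ${\Psi'}^{0,0}$ rather than invoking Calder\'on--Vaillancourt or \cite{K97}.
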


\begin{proof}\ 
It is well known that any $R\in \Psi^{-\infty} (\hM, \what\maE)$ induces a bounded operator between any two classical Sobolev spaces, see \cite{Shubin92}. This essentially gives the conclusion. More precisely, if we assume for instance that $s, s', k, k'$ are nonnegative, then given that such $R$ extends to a bounded operator from $\oH^{s/2}(\hM, \what\maE)$ to $\oH^{s'+k'}(\hM, \what\maE)$, we see that  $R$ extends to a bounded operator from ${\oH'}^{s,k}(\hM, \what\maF\subset\what\maF', {\what V} ; \what\maE)$ to ${\oH'}^{s',k'}(\hM, \what\maF\subset\what\maF'; \what\maE)$. Notice that the different extensions of $R$ are compatible with the inclusions of Remark \ref{Comparison}. 

Now given $A\in {\Psi'}^{m,\ell}(\hM,  \what\maF\subset\what\maF'; \what\maE)$,  we may use (1) and  a  partition of unity argument to reduce the proof of (2) to the case of an elementary local operator of type $(m, \ell)$. So we are reduced to  the  similar result   on $\I^{\om}=\I^p\times \I^v\times \I^{q-v}$ with $\what\maE$ being the trivial bundle.  Forgetting the constants, we then have (using the previous notations, e.\ g.\ $\xi= (\zeta, \eta)$ and $\xi'= (\zeta', \eta')$ etc)
$$
{\what{Au}} (\xi'=(\zeta', \eta')) = \int (Au) (x,y) e^{-i(x\zeta'+ y\eta')} dx dy.
$$
Replacing $Au$ by its local expression in terms of its symbol $k(z, x, y; \sigma, \zeta, \eta)$, one gets
$$
{\what{Au}} (\xi') =\int K(\xi, \xi-\xi') {\what u} (\xi) d\xi \text{ where } K(\xi_1, \xi_2) = \int k(z, x, y; \sigma, \xi_1) e^{iz(\sigma-\zeta_1)} e^{i(x\zeta_2+ y \eta_2)} dz d\sigma dx dy.
$$
Therefore,  
\begin{eqnarray*}
\left<  Au, v\right> & = & \int K(\xi, \xi-\xi') {\what u} (\xi) {\overline{{\what v} (\xi')}} d\xi d\xi'\\
& = & \int \rho_{m} (\xi, \xi') \left[ (1+\vert\xi\vert')^s (1+\vert \zeta\vert)^k {\what u} (\xi) \right] \left[ (1+\vert\xi'\vert)^{m-s} (1+\vert\zeta'\vert)^{\ell-k}{\what v} (\xi') \right] d\xi d\xi'\\
& \leq & \left[ \int \vert\rho_{m}(\xi,\xi')\vert \cdot \vert {\what u}_{s,k} (\xi)\vert^2 d\xi d\xi'\right]^{1/2} \times \left[\int \vert\rho_{m}(\xi,\xi')\vert \cdot\vert {\what v}_{m-s, \ell-k} (\xi') \vert^2 d\xi d\xi' \right]^{1/2}.
\end{eqnarray*}
We have denoted here 
$$
{\what u}_{s,k} (\xi) =(1+\vert\xi\vert')^s (1+\vert \zeta\vert)^k {\what u} (\xi),  \quad\quad {\what v}_{m-s, \ell-k} (\xi') = (1+\vert\xi'\vert)^{m-s} (1+\vert\zeta'\vert)^{\ell-k}{\what v} (\xi'), 
$$
and
$$
\rho_{m} (\xi, \xi') = K(\xi, \xi-\xi') (1+\vert \xi\vert')^{-s} (1+\vert\zeta\vert)^{-k} (1+\vert\xi'\vert')^{s-m} (1+\vert\zeta'\vert)^{k-\ell}.
$$
Now, for any $M, N\in \N$ there exists $C_1\geq 0$ such that
$$
\vert \partial_z^N \partial_{x, y}^M k(z, x, y; \sigma, \xi) \vert \leq C_1 (1+ \vert\xi\vert ')^m (1+\vert\sigma\vert)^\ell.
$$
Therefore,  
$$
\vert K(\xi, \xi-\xi') \vert \leq  C_2 (1+\vert\xi\vert')^m (1+\vert\xi -\xi'\vert) ^{-M} \int (1+\vert\sigma\vert)^\ell (1+\vert \sigma-\zeta\vert )^{-N} d\sigma.
$$
Note that the following relations hold
$$
(1+\vert\xi_1\vert)^{1/2} \leq 2 (1+\vert\xi_1\vert')  \leq 4 (1+\vert\xi_1\vert), \quad\forall \xi_1\in \R^p\times \R^q.
$$
Now,  apply Petree's inequality for the norm $\vert\cdot\vert$, as well as for our norm $\vert \cdot\vert'$, and the fact that $1+\vert X\vert' \leq 2 (1+\vert X\vert)$.   This easily gives the existence of $C_3\geq 0$ and then $C_4\geq 0$ such that the following inequality holds
\begin{eqnarray*}
\vert\rho_{m} (\xi, \xi')\vert  &\leq &C_3 (1+\vert\xi\vert')^{m-s} (1+\vert\zeta\vert)^{\ell-k} (1+ \vert \xi-\xi'\vert)^{-M}  (1+\vert\xi'\vert')^{s-m} (1+\vert\zeta'\vert)^{k-\ell}\\
& \leq & C_4 (1+\vert \xi'-\xi\vert')^{\vert s-m\vert + \vert k-\ell\vert - M}.
\end{eqnarray*}
This finishes the proof as for $M$ large enough, we know that 
$$
\int (1+\vert \xi'-\xi\vert')^{\vert s-m\vert + \vert k-\ell\vert - M} \; d\xi \; = \; \int (1+\vert \xi'-\xi\vert')^{\vert s-m\vert + \vert k-\ell\vert - M} \; d\xi'  \; <\; +\infty.
$$
\end{proof}

\end{document}